\documentclass[10pt,a4paper]{article}
\usepackage{amsmath, amssymb, graphics, setspace}
\usepackage{empheq}
\usepackage{soul}
\usepackage{amssymb,amsbsy,amsmath,amsfonts,amssymb,amscd,amsthm, mathrsfs, bbm, bigints}
\textheight 22.3cm
\textwidth 15cm
\voffset=-1.6cm
\hoffset=-1.0cm
\hfuzz=1pt
\usepackage{amssymb}
\usepackage{color}

\usepackage{hyperref}

\newcommand{\1}{\mathbbm{1}}

\newcommand{\CC}{\mathbb{C}}
\newcommand{\NN}{\mathbb{N}}
\newcommand{\RR}{\mathbb{R}}

\newtheorem{theo}{Theorem}
\newtheorem{prop}[theo]{Proposition}
\newtheorem{lem}[theo]{Lemma}
\newtheorem{cor}[theo]{Corollary}
\newtheorem{rem}[theo]{Remark}

\newcommand{\beqn}{\begin{equation}}
\newcommand{\eeqn}{\end{equation}}
\newcommand{\bear}{\begin{eqnarray}}
\newcommand{\eear}{\end{eqnarray}}
\newcommand{\bean}{\begin{eqnarray*}}
\newcommand{\eean}{\end{eqnarray*}}

\begin{document}

\title{Regularizing effects in a linear kinetic equation \\ for cubic interactions.}
\maketitle
\begin{center}
{\large M. Escobedo}\\
{\small Departamento de Matem\'aticas,} \\
{\small Universidad del
Pa{\'\i}s Vasco,} \\
{\small Apartado 644, E--48080 Bilbao, Spain.}\\
{\small E-mail~: {\tt miguel.escobedo@ehu.es}}
\end{center}
\noindent
{\bf Abstract}: 
We describe regularizing  effects in the linearization of a kinetic equation that arises  in study of a system of nonlinear waves satisfying  the Schr\"odinger equation in terms of weak turbulence and condensate. The problem is first considered in spaces of bounded functions with weights, where existence of solutions and some first regularity properties are proved. After a suitable change of variables the equation is written in terms of a pseudo differential operator. Homogeneity of the equation and  classical arguments of freezing of coefficients may then be used to prove regularizing  effect in local Sobolev type  spaces.

\noindent
Subject classification: 45K05, 45A05, 45M05, 82C40, 82C05, 82C22.

\noindent
Keywords:  Cauchy problem, regularity of solutions, Sobolev spaces, derivative of  logarithmic order.

\section{Introduction}
\setcounter{equation}{0}
\setcounter{theo}{0}

The non linear  kinetic equation arising  in the study of a system of nonlinear waves satisfying  the Schr\"odinger equation in terms of weak turbulence if one  considers the  
fluctuations  of the solutions around a Dirac measure has been considered by several authors  (\cite{DPR},\cite{D},\cite{N},\cite{Zbook}). That equation has a family of stationary solutions $n_0(X)=CX^{-1}$, $C >0$ constant,  where $X$ is the frequency of the waves. When only the three wave interaction process are kept,  and under suitable assumptions on the physical system (in particular on its temperature; cf. \cite{D, Eckern, Kirkpatrick}), the evolution of an initial perturbation 
$n_0(1+v)$ of  the equilibrium $n_0$ may be described in terms of the function $v(t, X)$ itself, solution of the following problem (using scaled variables and $ C =1$  in order to get rid of  constants unnecessary for our analysis),
\begin{align}
&\frac {\partial v (t, X  )} {\partial t}=\mathscr L(v (t))(X )+N (v (t))(X ),t>0,\,\,X>0\label{S2Ewxi2}\\
&v(0, X)=v(X),\,X>0\\
&\mathscr L(v (t))(X )=\int _0^\infty (v (t ,  Y  )-v (t , X  ))   M(X , Y ) dY  \label{S3E23590L}\\
& M(X , Y)= \frac {1} {X  ^{1/2} }\left(\frac {1} { | X -Y |}-\frac {1} { (X +Y ) } \right) , \label{S3E23590M}
\end{align}
where the nonlinear operator $N$ in (\ref {S2Ewxi2}) reads, in terms of the function $\widetilde v(X)= X^{-1}v(X)$,

\begin{align*}
&N[\omega](X)=X^{1/2}\int _0^X \Bigg\{\widetilde v(X-X_1)\left(\widetilde v(X_1)-\widetilde v(X)\right)
+\left( \widetilde v(X_1)+\widetilde v(X)\right)\widetilde v(X+X_1)\Bigg\}dX_1-\nonumber\\
&\hskip 1.7cm -X^{1/2}\int _X^\infty \Bigg\{\widetilde v(X_1-X\left( \widetilde v(X)-\widetilde v(X_1)\right)
+\left( \widetilde v(X_1)+\widetilde v(X)\right)\widetilde v(X+X_1)\Bigg\}dX_1.
\end{align*}
(cf. \cite{m, EPV}). The same equation appears also as classical limit of the description of a weakly Bose gas in presence of a condensate when only the collisions involving the condensate are kept (\cite{Eckern},\cite{Kirkpatrick},\cite{LLPR}). The variable $X$ is then related to the energy of the particles.
 
The singularity of the kernel $M$ along the line $X=Y$ indicates that $\mathscr L$ must have have some regularizing effects. We  are then interested in the  regularity properties of  linearized problem,
\begin{align}
&\frac {\partial v(t, X  )} {\partial t}=\mathscr L(v(t))(X )+\nu (t, X),\,t>0, \, X>0 \label{S2Ewxi2L}\\
&v(0, X)=0,\,\,X>0. \label{S2Ewxi2Lb}
\end{align} 
{\bf {\large 1.1}}
The  Cauchy problem for (\ref{S2Ewxi2L}) with $\nu \equiv 0$ and initial data $v_0$ has been studied in \cite{m},   using  the variables $x=X^{1/2}$ and $u(t, x)=v(t, X)$ so that the equation reads,
\begin{align}
&\frac {\partial u (t, x)} {\partial t}= L(u (t))(x),\,\,t>0, x>0,\label{EL}\\
&L(u )(x)\int _0^\infty (u(y)-u(  x)) K(x, y) dy, \label{S3EMK2}\\
&K(x, y)=\left(\frac {1} {|x^2-y^2|}-\frac {1} {x^2+y^2} \right)\frac {y} {x},\,\,\forall x>0,\,\,\forall y>0,\,\,x\not =y.\label{S3E2359B}
 \end{align}
The existence of a fundamental solution $\Lambda(t, x)$ for (\ref{EL}) was shown in \cite{m}. It was also proved  that for all  initial data  $u _0\in L^1(0, \infty)\cap L^\infty _{ loc }(0, \infty)$ a solution $u \in C((0, \infty); L^1(0, \infty))\cap L^\infty((0, \infty); L^1(0, \infty))$ exists,  that satisfies $u _t(t, x)=L(u(t))(x)$ for $t>0$, $x>0$ and $u (t)\rightharpoonup v_0$ in $\mathscr D'(0, \infty)$ as $t\to 0$. These results apply  of course to equation (\ref{S2Ewxi2L}) with $\nu=0$,  using the change of variables $X=x^2$.

It is  proved in Section \ref{S7.1App} that the homogeneous equation (\ref{EL})is also well posed in the spaces $X _{ \theta, \rho  }$ defined, for $\theta\in \RR$ and $\rho \in \RR$, as
\begin{align*}
&X _{ \theta, \rho  }=\left\{ g \in C(0, \infty);\,\,||g|| _{ \theta, \rho  }<\infty \right\}\\
&||g|| _{ \theta, \rho  }=\sup _{ X>0 }X^{\theta}(1+X)^{\rho }|g(X)|.
\end{align*}
 and has a regularizing effect at the origin (cf. Corollary \ref{S7cor1} in Section \ref{S7.1App}  below). 
These properties are then used to solve the problem  (\ref{S2Ewxi2L}), (\ref{S2Ewxi2Lb}) and prove the following,
\begin{theo}
\label{S7cor2}
Suppose that  $\nu \in C  ((0, T);  X _{\theta, \rho })\cap  L^\infty((0, T); X _{ \theta, \rho  })$ for some $T>0$,  $ \theta\ge 0$, $\rho >0$ such that $\theta+\rho \in (0, 3/2)$ and  consider the function $v$ defined as
\begin{align}
\label{E1.10}
v(t, X)=\int _0^t (\mathscr S(t-s)\nu(s))(X)ds,\,\forall t\in (0, T),\,\forall X>0.
\end{align}
Then, for all $t\in (0, T)$ and $\theta'$, $\rho '$ satisfying (\ref{thetaP}),
\begin{align*}
&(i)\quad v\in L^\infty((0, T); X _{ \theta, \rho  })\cap C((0, T); X _{ \theta', \rho ' })\\
&\qquad\left|\frac {\partial v} {\partial t} \right|+\left| \mathscr L(v)\right|\in L^\infty _{ \text{loc} }((0, \infty)\times (0, \infty))\\
&\hskip 1cm ||v(t)|| _{ \theta, \rho  } \le Ct\,\sup _{ 0\le s\le t  }||\nu(s)|| _{ X _{ \theta, \rho  }},\,\forall t>0,\\
&(ii)\,\, \text{the function}\,\,v\,\text{satisfies (\ref{S2Ewxi2L}) for }\,\, t\in (0, T),\,X>0.
\end{align*}
\begin{align*}
&(iii)\quad \forall t\in (0, T),\,s\in (0, t),\, \exists L(t-s, \nu(s))\in \RR,\, \text{such that for}\, p\in (-1, 1/2), \\
&\qquad \int _0^{t-X^{1/2}} L(t-s; \nu (s)) ds\le  C\sup _{ 0\le s\le t }||\nu(s)|| _{ p, \rho  }t^{1-2p},\, \forall X\in (0, t^2),\nonumber\\
& \qquad \int _{t-X^{1/2}}^t L(t-s; \nu (s)) ds\le C\sup _{ 0\le s\le t }||\nu(s)|| _{ \theta, \rho  }X^{\frac {1} {2}-\theta},\, \forall X\in (0, t^2).\\
&(iv)\quad \text{For},\,t>0,\,\, X\in (0, \min(1,t^2))\\
&\hskip 1cm  v(t, X)=\int _0^{t } L(t-s; \nu (s))ds+R_3(t, \nu, X),\nonumber\\
&\hskip 1cm |R_3(t, \nu, X)|\le  C\sup _{ 0\le s\le t } ||\nu (s)|| _{ \theta, \rho  }X^{1/2}  \left( X^{- \theta }+t^{1-2\theta}\right)\\
&(v) \quad  \quad |v(t, X)|\le C\sup _{ 0\le s\le t } ||\nu (s)|| _{ \theta, \rho  }
X^{-3/2}t^{4-2\theta}(1+t)^{-2\rho },\,\forall X>t^2, t\in (0,  T), \nonumber
\end{align*}
\end{theo}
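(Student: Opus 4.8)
The formula (\ref{E1.10}) is the Duhamel (variation of constants) representation of the solution of (\ref{S2Ewxi2L})--(\ref{S2Ewxi2Lb}), where $\mathscr S(\tau)$ denotes the solution operator of the homogeneous equation (\ref{EL}) whose mapping and regularizing properties in the scale $X_{\theta,\rho}$ are supplied by Corollary \ref{S7cor1}. The whole plan is to transfer pointwise, self-similar bounds on $\mathscr S(\tau)\nu(s)$ onto the time integral. The key structural fact is that the homogeneous equation is invariant under the scaling $(t,x)\mapsto(\lambda t,\lambda x)$ in the variable $x=X^{1/2}$, equivalently $(t,X)\mapsto(\lambda t,\lambda^2 X)$; this is exactly why the threshold $t-s=X^{1/2}$ (elapsed time equal to $x$) and the cone $X=t^2$ govern the whole statement. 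First I would record, in the self-similar form dictated by this scaling, the three consequences of Corollary \ref{S7cor1} used repeatedly below: (a) the uniform bound $\|\mathscr S(\tau)g\|_{\theta,\rho}\le C\|g\|_{\theta,\rho}$; (b) existence of a trace at the origin together with $|(\mathscr S(\tau)g)(0^+)|\le C\,\tau^{-2q}\|g\|_{q,\rho}$ for admissible $q$, and the quantitative modulus of continuity $|(\mathscr S(\tau)g)(X)-(\mathscr S(\tau)g)(0^+)|\le C\,\tau^{-1-2\theta}X^{1/2}\|g\|_{\theta,\rho}$ for $X\le\tau^2$; and (c) the far field decay $|(\mathscr S(\tau)g)(X)|\le C\,\tau^{3-2\theta}(1+\tau)^{-2\rho}X^{-3/2}\|g\|_{\theta,\rho}$ for $X\ge\tau^2$.

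Parts (i) and (ii) are then essentially formal. For (i), Minkowski's integral inequality and (a) give $\|v(t)\|_{\theta,\rho}\le\int_0^t\|\mathscr S(t-s)\nu(s)\|_{\theta,\rho}\,ds\le C\,t\sup_{0\le s\le t}\|\nu(s)\|_{\theta,\rho}$, the announced estimate; the continuity $v\in C((0,T);X_{\theta',\rho'})$ for the relaxed exponents of (\ref{thetaP}) follows from strong continuity of $s\mapsto\mathscr S(t-s)\nu(s)$ combined with the regularizing gain of Corollary \ref{S7cor1}, which is precisely what lets one trade a little of the weight for continuity, while the local boundedness of $\partial_t v$ and $\mathscr L(v)$ away from $t=0$ and $X=0$ is read off the same smoothing bounds. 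For (ii) I would differentiate (\ref{E1.10}) under the integral sign; since $\mathscr S$ is generated by $\mathscr L$ one obtains $\partial_t v(t)=\nu(t)+\int_0^t\mathscr L\big(\mathscr S(t-s)\nu(s)\big)\,ds=\nu(t)+\mathscr L(v(t))$, which is (\ref{S2Ewxi2L}); the differentiation is legitimized by the uniform local bounds on $\mathscr L\,\mathscr S(\tau)$ for $\tau$ away from $0$ provided by the regularizing effect.

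The heart of the matter is the behaviour near the origin, parts (iii) and (iv). I would set $L(\tau;g):=(\mathscr S(\tau)g)(0^+)$, well defined by (b), so that $L(t-s;\nu(s))$ is the object in (iii), and write $R_3(t,\nu,X)=\int_0^t\big[(\mathscr S(t-s)\nu(s))(X)-L(t-s;\nu(s))\big]\,ds$. The two bounds of (iii) come from inserting the trace estimate in (b) and splitting the $s$-integral at the self-similar time $s=t-X^{1/2}$: on $(0,t-X^{1/2})$ one integrates $\tau^{-2p}$ over $\tau\in(X^{1/2},t)$, which for $p\in(-1,1/2)$ is $\le C\,t^{1-2p}$, while on $(t-X^{1/2},t)$ one integrates $\tau^{-2\theta}$ over $\tau\in(0,X^{1/2})$, producing $C\,X^{1/2-\theta}$. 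For (iv) I would use the same splitting on $R_3$: on the regularized range $t-s>X^{1/2}$ the increment bound of (b) gives $\int_{X^{1/2}}^t\tau^{-1-2\theta}X^{1/2}\,d\tau\le C\,X^{1/2-\theta}$, contributing the $X^{1/2}X^{-\theta}$ term, whereas on the complementary short-time range $t-s<X^{1/2}$ the two terms are estimated separately by their sup norms, and the surviving contribution of the trace is what yields the $X^{1/2}t^{1-2\theta}$ term.

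Part (v) is purely a far field computation: for $X>t^2\ge(t-s)^2$ every factor $(\mathscr S(t-s)\nu(s))(X)$ falls in the regime of (c), and integrating $\int_0^t(t-s)^{3-2\theta}(1+t-s)^{-2\rho}\,ds\le C\,t^{4-2\theta}(1+t)^{-2\rho}$ against the $X^{-3/2}$ prefactor gives exactly the stated bound. The genuinely hard part is none of these time integrations, which are routine bookkeeping once the pointwise input is available; it is the derivation in Section \ref{S7.1App} of the sharp self-similar bounds (b) and (c) for the fundamental solution of the homogeneous equation — in particular the existence and $\tau^{-2q}$ size of the trace at $X=0$, the $X^{1/2}$ modulus of continuity up to the origin (the quantitative content of the regularizing effect), and the $X^{-3/2}$ tail at infinity. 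Once Corollary \ref{S7cor1} is available in this quantitative form, the theorem follows by the splitting described above.
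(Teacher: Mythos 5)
Your proposal is correct and is essentially the paper's own proof: the paper obtains Theorem \ref{S7cor2} from Proposition \ref{ptdm1}(ii) (the Duhamel formula solves the inhomogeneous equation, yielding (i)--(ii)) together with Lemma \ref{S7P141B}, whose proof is precisely your splitting of the time integral at $s=t-X^{1/2}$, with $L(t-s;\nu(s))$ the trace $\ell$ of Proposition \ref{S7P14} (constructed there as a Mellin contour integral, your trace bound (b) being (\ref{S7P14E3})/Corollary \ref{S7cor1}(vi)), and with (v) obtained, exactly as you say, by integrating the far-field estimate (\ref{S7P14D1B}) in time. One bookkeeping remark: the increment estimate (\ref{S7P14E1B}) actually carries an extra term $+x$ (i.e.\ $+X^{1/2}(t-s)^{-2\theta}$ after the change of variables $X=x^2$), and it is this term integrated over the long range $(0,t-X^{1/2})$ --- not the short-time range $t-s<X^{1/2}$, which only contributes $X^{1/2-\theta}=X^{1/2}X^{-\theta}$ --- that produces the $X^{1/2}t^{1-2\theta}$ piece of $R_3$; since your final bound includes both terms, this misattribution does not affect the validity of the argument.
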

Theorem \ref{S7cor2} follows  from the properties of $\Lambda$ (the fundamental solution of  (\ref{EL})), using  similar arguments to those in \cite{m}.\\

\noindent
{\bf {\large 1.2.}}
On the other hand,  under the change of variables
\begin{align}
\label{S1ChV}
v(t, X)=w(t, \xi ),\,\,\nu(t, X)=Q(t, \xi ),\,\,\xi =\log X
\end{align}
the problem  (\ref{S2Ewxi2L}), (\ref{S2Ewxi2Lb}) is  transformed into
\begin{align}
&\frac {\partial w (t, \xi   )} {\partial t}=P(w (t))(\xi  )+Q (t, \xi ),\,\, t>0,\,\,\xi \in \RR, \label{S2Ewxi2Lc}\\
&w(0, \xi )=0,\,\xi \in \RR \label{S2Ewxi2Ld}
\end{align} 
 where $Q(t, \xi )=\nu(t, X)$ and,
\begin{align*}
&P(w)(\xi )=\int  _{ \RR }\widehat w(k)e^{ik\xi }p(\xi , k)dk,\,\,\forall \xi \in \RR,\,\forall k\in \RR,,\\
&p(\xi , k)=-e^{-\frac {\xi} {2} }\rho_0 (k)\\
&\rho _0(k)=\frac {1} {2} \left( \text{\rm Log}(4)+ \Psi {\left(\frac {1} {2} +\frac {ik} {2}\right) }+ \Psi \left(1+\frac {ik} {2}\right) +2\gamma _E\right),
\end{align*}
where $ \Psi $  is the Digamma function and $\gamma _E$ denotes the Euler's Gamma constant. 

The presence of the pseudo differential operator $P$ suggests to study the regularizing effects of problem (\ref{S2Ewxi2Lc}), (\ref{S2Ewxi2Ld}) using the classical method of freezing coefficients, and then to translate the result to  problem (\ref{S2Ewxi2L}), (\ref{S2Ewxi2Lb}), using the homogeneity properties of the linear operator $\mathscr L$.

We denote $H^\sigma (\RR)$ and $H^\sigma (I)$  the classical Sobolev spaces on $\RR$ and any open bounded interval $I$ respectively.
The properties of $\rho _0$ (cf. Proposition \ref{S2PP1} in Section \ref{Ppseudo}) suggest  the use of the  following spaces  (cf.  \cite{Ka}),
\begin{align*}
&H^\sigma _{ \log } (\RR)=\left\{w,\,\,\text{measurable on}\, \RR;\, ||w|| _{ H^\sigma _{ \log }  }<\infty \right\}\\
& ||w||^2 _{ H^\sigma  _{ \log } (\RR)}=\int  _{ \RR }|\widehat w(k)|^2(1+|k|^2)^\sigma (1+\log(1+|k|))dk,
\end{align*}
\begin{align*}
&H^\sigma _{ \log^{-1} } (\RR)=\left\{w,\,\,\text{measurable on}\, \RR;\, ||w|| _{ H^\sigma _{ \log^{-1} }  }<\infty \right\}\\
& ||w||^2 _{ H^\sigma  _{ \log^{-1} } (\RR)}=\int  _{ \RR }|\widehat w(k)|^2(1+|k|^2)^\sigma (1+\log(1+|k|))^{-1}dk.
\end{align*}
As proved in  Theorem 1.4 of \cite{Brue}, when $\sigma =0$,
\begin{align}
&H^0 _{ \log } (\RR)=\left\{w,\,\,\text{measurable on}\, \RR;\, ||w|| _{ H^0 _{ \log }  }<\infty \right\} \label{S1EHl1}\\
& ||w||^2 _{ H^0 _{ \log } (\RR) }= ||w|| _{ L^2(\RR) }^2+\int  _{ B _{ 1/3 } }\int  _{ \RR }\frac {|w(\xi +h)-w(\xi )|^2} {|h|}d\xi dh\label{S1EHl2}.
\end{align}
and then, for $\sigma =m\in \NN$, the norm $ ||w||^2 _{ H^m  _{ \log } (\RR)}$ is equivalent to
\begin{align*}
||w||^2 _{ H^m(\RR)}+\int  _{ B _{ 1/3 } }\int  _{ \RR }\frac {|w^{(m)}(\xi +h)-w^{(m)}(\xi )|^2} {|h|}d\xi dh
\end{align*}
Notice also,  for $\sigma =0$,
\begin{align*}
L^2(\RR)\subset H^0 _{ \log^{-1} }(\RR),\,\,\text{and}\,\,\,\,||w|| _{ H^0 _{ \log ^{-1} }(\RR) }\le ||w|| _{ L^2(\RR) },\,\,\,\forall w \in L^2(\RR).
\end{align*}
The corresponding local spaces, $H^\sigma  _{ \log }(I)$ for open, bounded intervals $I\subset \RR$ may then be defined in the usual way, only involving values of the function on $I$ (cf. \cite{Ka}).\\
Using the  change of variables (\ref{S1ChV}), we define the spaces $M _{ \sigma  }(\RR), M _{ \sigma , \log }(\RR)$, $M _{ \sigma , \log^{-1} }(\RR)$,
\begin{align}
&v\in M _{ \sigma  }(0, \infty)\Longleftrightarrow  w\in H^\sigma (\RR),\,\,\,||v|| _{ M _{ \sigma  } (0, \infty)}=||w|| _{ H^\sigma  (\RR)}\label{S1EMH}\\
&v\in M _{ \sigma, \log   }(0, \infty)\Longleftrightarrow  w\in H^\sigma _{ \log }(\RR),\,\, ||v|| _{ M _{ \sigma, \log   } (0, \infty)}=||w|| _{ H^\sigma  _{ \log } (\RR)}\label{S1EMHl}\\
&v\in M _{ \sigma, \log^{-1}  }(0, \infty)\Longleftrightarrow  w\in H^\sigma _{ \log^{-1}} (\RR),\,\,||v|| _{ M _{ \sigma, \log^{-1}   }(0, \infty) }=
||w|| _{ H^\sigma  _{ \log^{-1} }(\RR) }.\label{S1EMH}
\end{align}
Given an open bounded  intervals  $J\subset (0, \infty)$ and $I\subset \RR$ let us denote
\begin{align*}
\log J=\left\{\xi \in \RR;  \exists X\in J:\,\,\xi =\log X \right\}\\
\exp(I)=\left\{X>0;  \exists \xi \in I:\,\, X=e^\xi  \right\}
\end{align*}
and define the local  spaces,  $M _{ \sigma  }(I), M _{ m , \log }(I)$  for $m\in \{0, 1, 2, \cdots \}$ as follows
\begin{align}
&v\in M _{ \sigma  }(J)\Longleftrightarrow  w\in H^\sigma (\log J),\,\,\,||v|| _{ M _{ \sigma  } (J)}=||w|| _{ H^\sigma  ( \log J)}\label{S1EMHl}\\
&v\in M _{ m, \log   }( J)\Longleftrightarrow  w\in H^m _{ \log }(\log J),\,\, ||v|| _{ M _{ m, \log   } ( J)}=||w|| _{ H^m  _{ \log } (\log J)}. \label{S1EMHll}
\end{align}
It is proved in Section  \ref{back} that for $\sigma \in (0, 2)$,  for all open bounded interval $J\subset \overline J\subset (0, \infty)$, the space $M_\sigma (J)$ coincides with the usual Sobolev space $H^\sigma (J)$ in the sense of equivalence of the norms.

In general we shall denote $H^\sigma , H^\sigma  _{ \log },  H^\sigma  _{ \log^{-1} }$, $||\cdot|| _{ H^\sigma  }, ||\cdot|| _{ H^\sigma _{ \log }},   ||\cdot|| _{ H^\sigma _{ \log^{-1} }},$ the spaces $H^\sigma (\RR)$, $H^\sigma  _{ \log }(\RR), H^\sigma  _{ \log^{-1} }(\RR)$ and their respective norms. We denote the norms of the Lebesgue's spaces $L^p$, on $\RR$ or an interval $I$, as $||\cdot||_p$.

Let us denote, 
 $I_1=(1/8, 4)$, $I_2=(1/2, 2)$, $I_3=(5/8, 11/8)$ and  $I_4=(3/4, 5/4)$,
and define
\begin{align*}
&\eta _0(X )=
\begin{cases}
1,\,\,X \in I_3,\,\,\\
0,\,\,\,X \not \in I_2,
\end{cases}\\
&\eta _{ 0, R }(X)=\eta_0(X/R),\,\,\forall R>0.
\end{align*}
Our second result is on the smoothing effects of equation (\ref{S2Ewxi2L}) in terms of the  Sobolev norms defined above.
\begin{theo}
\label{mtheo}
Let $\theta\in (0, 1/4)$ and $\rho $ be such that $\theta+\rho\in (1/2, 3/2)$ and suppose that
\begin{align}
&\sup _ {\substack{ 0< t_0 < T^*\\ 0<R<1}}  \left( R   \int  _{ t_0 }^{\min(t_0+R^{1/2}, T_*)}||\chi _R \nu (t)||^2 _{ M _{ \sigma , \log^{-1} }   }dt  \right)^{1/2}+\nonumber\\
&+ \sup _{ R>1 }   \left( R \int  _{ 0 } ^{T_* } \left|\left |\chi _R \nu(t ))\right|\right|^2 _{  M _{ \sigma , \log^{-1} } }dt  \right)^{1/2}
+ \int _0^{T_*} t^{-4\theta }||\nu(t)|| _{ \theta, \rho  }^2dt
<\infty.  \label{mtheoE1}
\end{align}
Then, the function $v$ defined by  (\ref{E1.10}) satisfies,
\begin{align}
\hskip -1cm &(a)\quad (i) \sup _ {\substack{ 0< t_0 < T^*\\ 0<R<1}}\Bigg(\int  _{ t_0}^{\min(t_0+R^{1/2}, T_*)} ||v(t)||^2 _{ M_\sigma (RJ_3)}dt \Bigg)^{1/2} \le  C \left(1+T_*^{2(1-\theta)}\right)\times \nonumber\\
&\times \left( \int _0^{T_*}(1+t^{-4\theta}) ||\nu(t)|| ^2_{ \theta, \rho  } dt\right)^{1/2}\!\!\!\!\!\!\! + C
\!\!\!\!\!\!\!  \sup _ {\substack{ 0< t_0 < T^*\\ 0<R<1}}  \left( R   \int  _{ t_0 }^{\min(t_0+R^{1/2}, T_*)}\!\!\!\!||\eta _{ 0, R } \nu (t)||^2 _{ M _{ \sigma , \log^{-1} }   }dt  \right)^{1/2}, \label{mtheoE2a}
\end{align}
\begin{align}
\qquad & (ii) \,\,\sup _{ R>1 } \Bigg(\int _0^{ T_*} ||v(t)||^2 _{ M_\sigma (RJ_3) }dt \Bigg)^{1/2}\le  C \left(1+T_*^{2(1-2\theta)}\right)\times \nonumber \\
&\times \left( \int _0^{T_*}(1+t^{-4\theta}) ||\nu(t)|| ^2_{ \theta, \rho  } dt\right)^{1/2} +C \sup _{ R>1 }   \left( R \int  _{ 0 } ^{T_* } \left|\left |\eta _{ 0, R } \nu(t ))\right|\right|^2 _{  M _{ \sigma , \log^{-1} } }dt  \right)^{1/2}. \label{mtheoE2b}
\end{align}
\begin{align}
(b) & \sup _ {\substack{ 0< t_0 < T^*\\ 0<R<1}}\Bigg(\int  _{ t_0}^{\min(t_0+R^{1/2}, T_*)}  (N _{ R, \sigma  }[v(t) ])^2dt \Bigg)^{1/2} \le C\left(1+T_*^{2(1-\theta)}\right)\times \nonumber\\
& \times \left(\int _0^{T_*} \left(1+t^{-4\theta}\right) ||\nu(t)|| ^2_{ \theta, \rho  } dt\right)^{1/2}\!\!\!+C\!\!\! \sup _ {\substack{ 0< t_0 < T^*\\ 0<R<1}}  \left( R \int  _{ t_0 }^{\min(t_0+R^{1/2}, T_*)}\!\!\!\!\!\!\! ||\chi _R \nu (t)||^2 _{ M _{ \sigma } (I_2)  }dt  \right)^{1/2}   \label{mtheoE2c}\\ 
&\text{with}:\,(N _{ R, \sigma  }[v(t) ])^2=\frac {1} {R}\int  _{ \RR }\Big| \mathcal M\left( \eta_0\, v_R(t)\Big)(-ik)\right|^2(1+|k|^2)^\sigma \times\nonumber\\
&\hskip 7cm \times \left(1 _{ |k|<1 }+\1 _{ |k|>1 }\rho_0 (k)\right)dk \nonumber,
\end{align}
where $\mathcal  M$ denotes the Mellin transform, $v_R(t, x)=v(t, Rx)$ for $x>0$.\\
As a particular case, for $\sigma =0$,
 \begin{align}
& \sup _ {\substack{ 0< t_0 < T^*\\ 0<R<1}}\Bigg(\int  _{ t_0}^{\min(t_0+R^{1/2}, T_*)} \hskip -1cm \big[R^{-1}||v(t)||^2 _{ L^2(RJ_3) }+R^{-2}[[v (t )]]^2 _{RJ_3 }\big]dt \Bigg)^{1/2} \le  C\left(1+T_*^{2(1-2\theta)}\right)\times 
\nonumber \\
&\times \left(\int _0^{T_*} \left(1+t^{-4\theta}\right) ||\nu(t)|| ^2_{ \theta, \rho  } dt\right)^{1/2}
+C \sup _ {\substack{ 0< t_0 < T^*\\ 0<R<1}}  \left(  \int  _{ t_0 }^{\min(t_0+R^{1/2}, T_*)}||  \nu (t)||^2 _{ L^2(RJ_2)}dt  \right)^{1/2}. \label{sgfrd1}
\end{align}
where $[[v (t )]]^2 _{RJ_3 }=\iint  _{J_3^2 }\frac {|v(X)-v(Y)|^2} {|X-Y|}dXdY$.
\end{theo}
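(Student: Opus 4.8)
The plan is to carry out the proof entirely in the logarithmic variable $\xi=\log X$, where by \eqref{S2Ewxi2Lc}–\eqref{S2Ewxi2Ld} the operator $\mathscr L$ becomes the pseudo-differential operator $P$ with symbol $p(\xi,k)=-e^{-\xi/2}\rho_0(k)$. Since the spaces $M_\sigma, M_{\sigma,\log^{-1}}$ are \emph{defined} through this change of variables in \eqref{S1EMH}–\eqref{S1EMHll}, it suffices to establish the analogous space–time inequalities for $w$ solving \eqref{S2Ewxi2Lc} and read them back in $X$. The first step is a dyadic localization: the multiplier $\eta_{0,R}(X)=\eta_0(X/R)$ becomes, after $\xi=\log X$, a fixed bump $\eta_0(e^{\xi-\log R})$ translated by $\log R$ and supported where $\xi-\log R$ lies in a fixed bounded set. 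On that support $e^{-\xi/2}=R^{-1/2}(1+O(1))$ stays comparable to $R^{-1/2}$, which is exactly what makes the \emph{freezing of coefficients} admissible, i.e.\ it legitimizes replacing $P$ there by the constant-coefficient operator $P_R=-R^{-1/2}\rho_0(D)$.

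The core is an energy estimate for the localized piece $\eta_{0,R}w$, which solves $\partial_t(\eta_{0,R}w)=P_R(\eta_{0,R}w)+Q_R$, with $Q_R=\eta_{0,R}Q$ plus error terms discussed below. Testing against $(1+|D|^2)^\sigma\overline{\eta_{0,R}w}$ and taking real parts gives, schematically,
\begin{align*}
\frac12\frac{d}{dt}\|\eta_{0,R}w\|_{H^\sigma}^2 = -R^{-1/2}\int_{\RR}|\widehat{\eta_{0,R}w}(k)|^2(1+|k|^2)^\sigma\,\mathrm{Re}\,\rho_0(k)\,dk + \mathrm{Re}\,\langle Q_R,\eta_{0,R}w\rangle_{H^\sigma}.
\end{align*}
By Proposition~\ref{S2PP1} one has $\mathrm{Re}\,\rho_0(k)\ge 0$, with $\rho_0(0)=0$ and $\mathrm{Re}\,\rho_0(k)\gtrsim\log(1+|k|)$ as $|k|\to\infty$ (from the Digamma asymptotics), so the dissipative term controls $R^{-1/2}$ times the $H^\sigma_{\log}$-weighted quantity; the vanishing at $k=0$ is precisely why the weight in $N_{R,\sigma}$ keeps only the plain $H^\sigma$ part $\1_{|k|<1}$ near the origin and gains the $\rho_0(k)$ factor for $|k|>1$. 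Integrating over a time window of the natural length $R^{1/2}$ cancels the prefactor $R^{-1/2}$ and produces the $O(1)$ gain recorded in (b) as $N_{R,\sigma}[v]$, once written through the Mellin transform $\mathcal M$ (the Fourier transform in $\xi$). The source is handled by duality, $|\langle Q_R,\eta_{0,R}w\rangle_{H^\sigma}|\le\|Q_R\|_{H^\sigma_{\log^{-1}}}\|\eta_{0,R}w\|_{H^\sigma_{\log}}$, and absorbed by Young's inequality; this is exactly why the hypotheses \eqref{mtheoE1} and the right-hand sides of \eqref{mtheoE2a}–\eqref{mtheoE2c} are stated in the weaker $M_{\sigma,\log^{-1}}$ norm. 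The split into $R<1$ (sliding window $(t_0,\min(t_0+R^{1/2},T_*))$) and $R>1$ (full interval) reflects whether the diffusion time $R^{1/2}$ is shorter or longer than $T_*$.

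The delicate point, and the step I expect to be the main obstacle, is that $P$ is \emph{nonlocal}, so $\eta_{0,R}w$ does not solve the frozen equation exactly: one must control the commutator $[\eta_{0,R},P]$ together with the coefficient-freezing error $(e^{-\xi/2}-R^{-1/2})\rho_0(D)$. Because $\rho_0(D)$ has only logarithmic order these errors are far milder than a genuine derivative loss, yet they are truly nonlocal and feel the values of $v$ on all of $(0,\infty)$, not just on the dyadic annulus. I would estimate them using the pointwise weighted bounds on $v$ already established in Theorem~\ref{S7cor2}: the decay $|v(t,X)|\lesssim X^{-3/2}t^{4-2\theta}(1+t)^{-2\rho}\sup_s\|\nu(s)\|_{\theta,\rho}$ for $X>t^2$ and the near-origin behaviour $|v(t,X)|\lesssim X^{1/2}(X^{-\theta}+t^{1-2\theta})\sup_s\|\nu(s)\|_{\theta,\rho}$ feed directly the term $\int_0^{T_*}t^{-4\theta}\|\nu(t)\|_{\theta,\rho}^2\,dt$ and the prefactors $T_*^{2(1-\theta)}$, $T_*^{2(1-2\theta)}$, the powers of $t$ expressing that the active spatial scale at time $t$ is $X\sim t^2$ so that the weight $X^{-\theta}$ contributes $t^{-2\theta}$. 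It is precisely the convergence of all these space–time integrals that forces the restrictions $\theta\in(0,1/4)$ and $\theta+\rho\in(1/2,3/2)$.

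Finally I would assemble the local estimates through the Duhamel representation \eqref{E1.10}, sum the commutator and freezing errors over the dyadic scales $R$, and translate back via $X=e^\xi$ to obtain (a)\,(i)–(ii) in the $M_\sigma(RJ_3)$ norms and (b) in the sharp $N_{R,\sigma}$ quantity. For the special case $\sigma=0$ in \eqref{sgfrd1}, I would invoke the equivalence \eqref{S1EHl1}–\eqref{S1EHl2} identifying $\|\cdot\|_{H^0_{\log}}$ with the Gagliardo-type seminorm $[[\cdot]]$, so that after undoing the scaling $v_R(t,x)=v(t,Rx)$ the functional $N_{R,0}[v(t)]^2$ is seen to equal $R^{-1}\|v(t)\|_{L^2(RJ_3)}^2+R^{-2}[[v(t)]]^2_{RJ_3}$, completing the reduction of the abstract Fourier-side estimate to the concrete difference-quotient form.
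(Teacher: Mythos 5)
Your overall architecture (pass to $\xi=\log X$, rescale so each dyadic annulus becomes the fixed interval $I_2$, freeze the coefficient, exploit $\mathscr Re\,\rho_0\ge 0$, translate back through the Mellin transform, and use \eqref{S1EHl2} for $\sigma=0$) matches the paper's, and your use of the weighted bounds of Theorem \ref{S7cor2}/Corollary \ref{S7cor1} to produce the $\int_0^{T_*}t^{-4\theta}\|\nu(t)\|^2_{\theta,\rho}\,dt$ term and the restriction $\theta<1/4$ is exactly how the paper closes the loop (cf.\ \eqref{crdb1}). But there is a genuine gap at the step you yourself flag as delicate: you freeze the coefficient \emph{once per annulus}, replacing $P$ by $P_R=-R^{-1/2}\rho_0(D)$ on the support of $\eta_{0,R}$. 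On that support $e^{-\xi/2}$ is only comparable to $R^{-1/2}$ within a fixed multiplicative factor, so the freezing error $\bigl(e^{-\xi/2}-R^{-1/2}\bigr)\rho_0(D)(\eta_{0,R}w)$ carries a coefficient of the \emph{same size} $\sim R^{-1/2}$ as the dissipation, and is of the same (logarithmic) order: its $H^\sigma$ content is proportional to the unknown $\|\eta_{0,R}w\|_{H^\sigma_{\log}}$ with an $O(1)$ constant, and hence cannot be absorbed by the dissipative term in your energy identity. For the same reason your proposed remedy — controlling the freezing and commutator errors via the pointwise decay of $v$ from Theorem \ref{S7cor2} — cannot work for the top-order part: pointwise weighted decay bounds only control $L^\infty$-type quantities, while the error is proportional to the very $H^\sigma$ norm being estimated. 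This is why the paper (proof of Theorem \ref{S2Th3.1}) performs a \emph{second} localization inside $I_2$ at the small scale $\delta_{\xi_0}=\min(\delta_0,\varepsilon_0 e^{-1+\xi_0/2})$, so that $|e^{-\xi/2}-e^{-\xi_0/2}|\le \varepsilon_0 e^{-\xi_0/2}$ on each small patch; the multiplier bound \eqref{S3.3Fa21} then feeds Lemma \ref{SapPPCM0}, which yields exactly the absorbable structure $K\varepsilon_0\|\widetilde w\|_{H^\sigma}+C_{\delta_{\xi_0}}\|w\|_{L^\infty(I_1)}$, and a finite partition of unity over $I_2$ reassembles the estimate. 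The genuinely nonlocal commutator $[\eta,P_0]$ is treated separately (Lemma \ref{S7Lapbc}) through a loss of $\rho(t-s)=\kappa_1(t-s)$ derivatives compensated by interpolation with $\varepsilon=e^{-B(t-s)}$ and absorption for $B$ large — again a self-improvement mechanism, not an appeal to decay of $v$.

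A secondary defect: your energy inequality integrated from $t=0$ gives $\|h(t)\|^2_{H^\sigma}\le C\int_0^t\|Q\|^2_{H^\sigma_{\log^{-1}}}$, which grows linearly in $t$ because $\rho_0(0)=0$ leaves no spectral gap; since after rescaling the time horizon is $T_*R^{-1/2}\gg 1$ for $R$ small, this does not yield the uniform-in-$t_0$ windowed bounds \eqref{mtheoE2a}. The paper obtains uniformity in part (ii) of Theorem \ref{S2Th3.1} by cutting the Duhamel integral into unit time intervals $[n-1,n]$ and estimating each piece through the frozen semigroup (Lemma \ref{SapLPGCD1}), a step your sketch has no substitute for. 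So the skeleton is right, but the two mechanisms that actually make the argument close — the $\varepsilon_0$-fine second localization enabling absorption of the freezing error, and the unit-interval splitting giving uniformity over long rescaled time horizons — are missing, and the shortcut you propose in their place would fail.
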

\noindent
For the condition $\theta\in (0, 1/4)$ see however Remark \ref{theta14}.
\begin{rem}
Property (b) in Theorem \ref{mtheo} shows that $\eta_0v_R$ belongs to some generalised Liouville space of functions $h$ such that $ \mathcal M (h)(-ik)(1+|k|^2)^{\sigma /2} (1+\mathscr Re(\rho _0(k)\1 _{ |k|>1 })\in L^2(\RR)$. No general inner description of generalized Liouville spaces is known  for $\sigma\not \in \NN$, and so only for $\sigma =0$, use of (\ref{S1EHl2}) allows to describe the regularizing effect in terms of the $H^0 _{ \log }(RJ_3)$ local norm.
\end{rem}

Theorem \ref{S7cor2} is proved in Section \ref{S7.1App}.
The deduction of some properties of the operator $P$ are given in Section \ref{Ppseudo}. The auxiliary result describing the smoothing effect of equation (\ref{S2Ewxi2L2}) are proved in Section \ref{thm3.1}. Theorem \ref{mtheo} is proved in Section \ref{back}.  Section \ref{somelemmas} contains some technical but necessary results.

Smoothing effects in  a kinetic equation strongly  related to  (\ref{S2Ewxi2L}) are considered  in \cite{BL}.

\section{Proof of Theorem \ref{S7cor2}.}
\label{S7.1App}
\setcounter{equation}{0}
\setcounter{theo}{0}
Under the change of variables $x=\sqrt X$ and $u(t, x)=v(t, X)$, and up to some trivial time rescaling to get rid of some constants,
the homogeneous equation (\ref{S2Ewxi2L}) with $Q=0$ reads
\begin{align}
&\frac {\partial u} {\partial \tau }(\tau , x)=L(u(t))(x)=\int _0^\infty (u(\tau , y)-u(\tau , x)) K(x, y) dy,\,\,\tau >0,\,x>0 \label{S3EMK2}\\
&K(x, y)=\left(\frac {1} {|x^2-y^2|}-\frac {1} {x^2+y^2} \right)\frac {y} {x},\,\,\forall x>0,\,\,\forall y>0,\,\,x\not =y. \label{S3E2359B}
\end{align}
The Cauchy problem for (\ref{S3EMK2}) was considered in \cite{m}. A  fundamental solution $\Lambda(t, x)$ was obtained, unique under the condition that its Mellin transform $\mathcal M(\Lambda(t))$  is analytic in the strip $\mathscr R e(s)\in (0, 2)$.

It was shown that for all initial data $g\in L^1(0, \infty)$ the function,
\begin{align}
\label{lpgn1}
u(t, x)=(S(t)g)(x)=\int _0^\infty \Lambda\left(\frac {t} {y}, \frac {x} {y} \right)g(y)\frac {dy} {y}, \,\,t>0,\,x>0
\end{align}
 satisfies $u\in L^\infty((0, \infty); L^1(0, \infty))\cap C((0, \infty); L^1(0, \infty))$, 
is a weak solution of (\ref{S3EMK2}) and $u(t)\to g$ in $\mathscr D'(0, \infty)$ as $t\to 0$. It was also proved that if  $g\in L^1(0, \infty)\cap L _{ loc }^\infty(0, \infty)$ then $u$ satisfies the equation   (\ref{S3EMK2})   pointwise  for $t>0$ and $x>0$.\\
We consider here the Cauchy problem for  (\ref{S3EMK2})  for initial data in the spaces $X _{ \theta, \rho  }$Notice that $X _{ \theta, \rho  }\subset L^1(0, \infty)$ (with continuous injection)   if $\theta<1$ and $\rho +\theta>1$, but $X _{ \theta, \rho  }\not \subset L^1(0, \infty)$ in general in all the other cases. On the other hand
 \begin{prop}
 \label{Xapprox}
 Suppose that $g\in X _{ \theta, \rho  }$ for some $\theta\in \RR, \rho \in \RR$ and define $g_n$ as $g_n(x)=g(x)\1 _{ \frac {1} {n}<x<1 }+g(x)\1 _{1< x<n }$. Then,
 \begin{align*}
 \lim _{ n\to \infty  }||g-g_n|| _{ \theta', \rho ' }=0,\,\,\text{for all}\,\,\,\theta' >\theta,\,\rho '<\rho +\theta-\theta'.
 \end{align*}
 \end{prop}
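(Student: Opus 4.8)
The plan is to reduce everything to the single pointwise bound that the definition of $X_{\theta,\rho}$ provides. By definition of the norm, $g\in X_{\theta,\rho}$ means
\begin{align*}
|g(X)|\le ||g||_{\theta,\rho}\, X^{-\theta}(1+X)^{-\rho},\qquad\forall X>0.
\end{align*}
Since $g_n$ coincides with $g$ on $(1/n,n)$ and vanishes off this interval, the difference $g-g_n$ is supported on $(0,1/n]\cup[n,\infty)$, so
\begin{align*}
||g-g_n||_{\theta',\rho'}=\max\{A_n,\,B_n\},
\end{align*}
where $A_n=\sup_{0<X\le 1/n}X^{\theta'}(1+X)^{\rho'}|g(X)|$ and $B_n=\sup_{X\ge n}X^{\theta'}(1+X)^{\rho'}|g(X)|$. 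It therefore suffices to show $A_n\to 0$ and $B_n\to 0$ separately, and in each case I would insert the pointwise bound above to reduce the task to controlling $X^{\theta'-\theta}(1+X)^{\rho'-\rho}$ in the relevant regime.

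For the behaviour near the origin, take $n\ge 1$ so that $X\le 1/n\le 1$ and hence $1+X\in(1,2]$; then $(1+X)^{\rho'-\rho}$ is bounded by a constant $C_1=\max(1,2^{\rho'-\rho})$ independent of $X$ and $n$. Using $\theta'>\theta$, the exponent $\theta'-\theta$ is positive, so $X^{\theta'-\theta}$ is increasing on $(0,1/n]$ and
\begin{align*}
A_n\le C_1||g||_{\theta,\rho}\sup_{0<X\le 1/n}X^{\theta'-\theta}=C_1||g||_{\theta,\rho}\,n^{-(\theta'-\theta)}\xrightarrow[n\to\infty]{}0.
\end{align*}
This is the easy half: the decay comes directly from the hypothesis $\theta'>\theta$, with the weight $(1+X)^{\rho'}$ playing no role near $0$.

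The main point is the tail at infinity, where the two weight factors $X^{\theta'-\theta}$ (growing) and $(1+X)^{\rho'-\rho}$ (of a priori undetermined sign) compete. For $X\ge n\ge 1$ one has $X\le 1+X\le 2X$, and I would compare $(1+X)^{\rho'-\rho}$ with $X^{\rho'-\rho}$ according to the sign of $\rho'-\rho$: if $\rho'-\rho\ge 0$ then $(1+X)^{\rho'-\rho}\le 2^{\rho'-\rho}X^{\rho'-\rho}$, while if $\rho'-\rho<0$ then $(1+X)^{\rho'-\rho}\le X^{\rho'-\rho}$. In either case, with $C_2=\max(1,2^{\rho'-\rho})$,
\begin{align*}
X^{\theta'-\theta}(1+X)^{\rho'-\rho}\le C_2\,X^{(\theta'+\rho')-(\theta+\rho)},\qquad X\ge 1.
\end{align*}
The condition $\rho'<\rho+\theta-\theta'$ is exactly $(\theta'+\rho')-(\theta+\rho)<0$, so the exponent is strictly negative, the right-hand side is decreasing in $X$, and
\begin{align*}
B_n\le C_2||g||_{\theta,\rho}\,n^{(\theta'+\rho')-(\theta+\rho)}\xrightarrow[n\to\infty]{}0.
\end{align*}
Combining the two estimates yields $||g-g_n||_{\theta',\rho'}=\max\{A_n,B_n\}\to 0$, as claimed. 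The only delicate bookkeeping is this tail at infinity: one must verify that the product of the growing and the sign-indeterminate weight still decays, which is guaranteed precisely by the scalar inequality $\theta'+\rho'<\theta+\rho$. This explains why both hypotheses on $(\theta',\rho')$ are needed — $\theta'>\theta$ kills the contribution near $0$, while $\theta'+\rho'<\theta+\rho$ kills the contribution near $\infty$.
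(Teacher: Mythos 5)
Your proof is correct, and it is precisely the elementary verification the paper has in mind: Proposition \ref{Xapprox} is stated there without proof, and your decomposition into the region near the origin (where $\theta'>\theta$ forces $A_n\le C\,\|g\|_{\theta,\rho}\,n^{-(\theta'-\theta)}\to 0$) and the tail at infinity (where the hypothesis $\rho'<\rho+\theta-\theta'$, i.e. $\theta'+\rho'<\theta+\rho$, forces $B_n\le C\,\|g\|_{\theta,\rho}\,n^{(\theta'+\rho')-(\theta+\rho)}\to 0$) is the standard argument, with the constants handled correctly in both sign cases of $\rho'-\rho$. One pedantic remark: as literally written the indicators defining $g_n$ are strict at $x=1$, so $(g-g_n)(1)=g(1)$ for every $n$ and the support of $g-g_n$ also contains the point $\{1\}$; you tacitly read $g_n=g\,\1_{(1/n,\,n)}$, which is evidently the intended definition (otherwise the statement would fail whenever $g(1)\neq 0$), so this is a typo in the paper rather than a gap in your argument.
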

 
Using the estimates on $\Lambda$ the fundamental solution of (\ref{EL}) obtained in \cite{m}
it is not difficult to prove that, under suitable conditions on $\theta$ and $\rho $,  $S(t)$ is a continuous linear map from $X _{ \theta, \rho  }$ into itself   with norm independent of $t$. More precisely,

\begin{lem}
\label{lem1}
Suppose that   $\theta\ge 0$,  $\rho\ge 0 $ are such that  $0\le \theta +\rho  <3$. Then, there exists a constant $C>0$ such that for all $g\in X _{ \theta, \rho  }$ 
\begin{align}
\label{S7P141E}
|S(t)g(x)|\le C||g|| _{ \theta , \rho   } x^{-\theta }(1+x)^{-\rho } ,\,\,\forall t>0\,\,\,\forall x>0.
\end{align}
For all $\theta', \rho '$ such that
\begin{align}
\label{thetaP}
\theta' >\theta,\, \rho '\in [0, \rho +\theta-\theta'),\,\,0\le \theta'+\rho '<3
\end{align}
the map $t\to S(t)g$ belongs to $C((0, \infty); X _{ \theta', \rho ' })$.
\end{lem}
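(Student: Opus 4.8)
\emph{Plan.} The starting point is the positivity of the fundamental solution. Since the jump kernel is nonnegative, $K(x,y)\ge 0$ (indeed $|x^2-y^2|\le x^2+y^2$), the operator $L$ is the generator of a conservative, positivity preserving semigroup, so $\Lambda(t,x)\ge 0$ for all $t,x>0$; this is among the estimates established in \cite{m}. Writing $\phi_{\theta,\rho}(x)=x^{-\theta}(1+x)^{-\rho}$ and using the bound $|g(y)|\le \|g\|_{\theta,\rho}\,\phi_{\theta,\rho}(y)$ together with $\Lambda\ge 0$, the representation \eqref{lpgn1} gives at once
\begin{align*}
|S(t)g(x)|\le \|g\|_{\theta,\rho}\int_0^\infty \Lambda\!\left(\frac{t}{y},\frac{x}{y}\right)\phi_{\theta,\rho}(y)\,\frac{dy}{y}=\|g\|_{\theta,\rho}\,S(t)\phi_{\theta,\rho}(x),
\end{align*}
so that \eqref{S7P141E} reduces to the single scalar estimate $S(t)\phi_{\theta,\rho}(x)\le C\,\phi_{\theta,\rho}(x)$ for all $t,x>0$, with $C$ independent of $t$ and $x$.

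To prove this I would change variables $y=x/z$ in the integral, which turns it into
\begin{align*}
S(t)\phi_{\theta,\rho}(x)=x^{-\theta}\int_0^\infty \Lambda\!\left(\frac{tz}{x},z\right)z^{\theta+\rho-1}(z+x)^{-\rho}\,dz,
\end{align*}
so the claim becomes $\int_0^\infty \Lambda(tz/x,z)\,z^{\theta+\rho-1}(z+x)^{-\rho}\,dz\le C(1+x)^{-\rho}$. I would then split the $z$-integral into the near-diagonal region $z\sim x$ and the two tails $z\ll x$ and $z\gg x$, inserting in each region the pointwise and moment bounds on $\Lambda$ obtained in \cite{m}. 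The two hypotheses $\theta\ge 0$ and $\theta+\rho<3$ (together with $\rho\ge 0$) are exactly what make the resulting integrals converge at $z=0$ and at $z=\infty$ respectively, once the admissible range of Mellin exponents for $\Lambda$ (recall that $\mathcal M(\Lambda(t))$ is analytic in the strip $\mathscr{R}e(s)\in(0,2)$) is taken into account. Tracking these powers, and checking that the $t$--dependence entering through the first argument $tz/x$ can be absorbed into a constant uniform in $t$, is the main technical obstacle; it is precisely here that the quantitative decay estimates of \cite{m} are decisive.

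For the continuity statement I would argue by density and uniform boundedness. Fix $\theta',\rho'$ as in \eqref{thetaP}; then $0\le\theta'+\rho'<3$, so by the first part $S(t)$ is bounded on $X_{\theta',\rho'}$ uniformly in $t$. Given $g\in X_{\theta,\rho}$, Proposition \ref{Xapprox} (applicable because $\theta'>\theta$ and $\rho'<\rho+\theta-\theta'$) produces truncations $g_n$ with $\|g-g_n\|_{\theta',\rho'}\to 0$; hence $\sup_{t>0}\|S(t)(g-g_n)\|_{\theta',\rho'}\le C\|g-g_n\|_{\theta',\rho'}\to 0$, and it suffices to show $t\mapsto S(t)g_n\in C((0,\infty);X_{\theta',\rho'})$ for each $n$. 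Each $g_n$ is bounded with compact support in $(0,\infty)$, hence lies in $L^1(0,\infty)\cap L^\infty_{loc}(0,\infty)$, so $S(t)g_n$ is the solution constructed in \cite{m}, enjoying the regularizing effect of Corollary \ref{S7cor1}. For fixed $x$, continuity of $t\mapsto S(t)g_n(x)$ follows from \eqref{lpgn1} by dominated convergence, using continuity of $\Lambda(\cdot,x/y)$ for positive times and a dominating bound uniform for $t$ in a neighbourhood of any $t_0>0$; the passage from pointwise to $X_{\theta',\rho'}$-norm continuity is then obtained by controlling the tails $x\to 0^+$ and $x\to\infty$ with \eqref{S7P141E} and invoking equicontinuity on the remaining compact range of $x$. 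Finally, $S(\cdot)g$ is the uniform-in-$t$ limit in $X_{\theta',\rho'}$ of the continuous maps $S(\cdot)g_n$, hence itself continuous, which is the assertion. The delicate points here are the uniform domination and the transition from pointwise to weighted-sup continuity, both of which again rest on the behaviour of $\Lambda$ near the diagonal recorded in \cite{m}.
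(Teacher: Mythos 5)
Your second half (continuity) is essentially the paper's own argument: truncate with Proposition \ref{Xapprox}, use the uniform-in-$t$ bound \eqref{S7P141E} on $X_{\theta',\rho'}$ (legitimate since \eqref{thetaP} gives $0\le\theta'+\rho'<3$), and pass to the limit; your extra detail on tails and equicontinuity is if anything more complete than the paper's terse conclusion. The first half, however, has a genuine gap, and it sits exactly where the lemma's difficulty lives. Your reduction to the scalar bound $S(t)\phi_{\theta,\rho}\le C\phi_{\theta,\rho}$, $\phi_{\theta,\rho}(x)=x^{-\theta}(1+x)^{-\rho}$, is fine in principle, but the "pointwise and moment bounds on $\Lambda$" you propose to insert (Propositions 3.1, 3.2 and 3.5 of \cite{m}) only control the off-diagonal regions; they say nothing usable in the near-diagonal region $y\sim x$ with $y>t$, where $\Lambda\left(\frac{tz}{x},z\right)$ behaves like $\frac{tz}{x}|1-z|^{\frac{2tz}{x}-1}$ as $z\to1$. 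This exponent is borderline: the constant threatens to degenerate as $t/x\to0$, and the $t$-uniformity you claim "can be absorbed into a constant" is precisely the non-trivial point. The paper's proof spends most of its effort here: it factors out the singular weight, writes $I_{2,2}=\int_{1-\delta}^{1+\delta}\frac{tz}{x}|1-z|^{\frac{2tz}{x}-1}\psi(t,z;x)\,g(x/z)\frac{dz}{z}$, invokes Corollaries 3.13 and 3.14 of \cite{m} to get $\sup\{|\psi(t,z;x)|;\,x>t,\,|z-1|<\delta\}<\infty$ (an input of a different nature from the decay propositions you cite), and then performs the change of variables $y=\frac{2t}{x}\log(1-z)$, splitting into $J_1,J_2$ to obtain a uniform mass bound while tracking the weight $(1+x)^{-\rho}$ through $g_<$ and $g_>$. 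Declaring this step "the main technical obstacle" and deferring it to unspecified estimates of \cite{m} does not discharge it; without the sharp near-diagonal profile the argument fails, so as written the proposal does not prove \eqref{S7P141E}.

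A secondary, more minor flaw: you assert $\Lambda\ge0$ and attribute it to \cite{m}, but positivity is not among the estimates quoted anywhere in this paper, which systematically works with $|\Lambda|$ (the fundamental solution is built by inverse Mellin transform, where positivity is not automatic). Fortunately your reduction does not need it: replacing $\Lambda$ by $|\Lambda|$ in your first display yields $|S(t)g(x)|\le\|g\|_{\theta,\rho}\int_0^\infty\left|\Lambda\left(\frac{t}{y},\frac{x}{y}\right)\right|\phi_{\theta,\rho}(y)\frac{dy}{y}$, and the same regional splitting applies. I would also note that in the near-diagonal region the elementary comparability $\phi_{\theta,\rho}(x/z)\le C_\delta\,\phi_{\theta,\rho}(x)$ for $|z-1|<\delta$ lets you pull the weight out, so once the profile bound from Corollaries 3.13--3.14 of \cite{m} is in hand the estimate closes quickly; but that input must be named and used, not waved at.
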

\begin{proof}
By definition,
\begin{align*}
|S(t)g(x)|\le \int _0^\infty  \left|\Lambda\left( \frac {t} {y}, \frac{x}{y}\right)g(y)\right|\frac {dy} {y}=I_1+I_2
\end{align*}
In the first term $I_1$ $t/y>1$ and then, by Proposition 3.1 and Proposition 3.2 of \cite{m},
\begin{align*}
\left| \Lambda\left(\frac {t} {y}, \frac {x} {y} \right)\right|\le C\max(t, x)^{-3}y^3
\end{align*}
 and then, since $3-\theta-\rho >0$,
 \begin{align*}
|I_1|&\le C \max(t, x)^{-3}\int _0^t|g(y)|y^2dy\le C\max(t, x)^{-3}||g|| _{ \theta,\rho   }\int _0^t y^{-\theta }(1+y  )^{-\rho }y^2dy\nonumber \\
&\le C\max(t, x)^{-3}||g|| _{ \theta ,  \rho  }\int _0^t y^{2-\theta} (1+y  )^{-\rho }dy \le C\max(t, x)^{-3}||g|| _{ \theta ,  \rho  }t^{3-\theta}(1+t)^{-\rho }.
\end{align*} 
Since for all $x\in (0, t)$,
\begin{align*}
\max(t, x)^{-3}t^{3-\theta }(1+t )^{-\rho }=\frac {t^{-\theta }} {(1+t)^\rho }\le x^{-\theta }(1+x   )^{-\rho }
\end{align*}
and for $x>t$,
\begin{align}
 \label{pgcd590}
\max(t, x)^{-3}t^{3-\theta }(1+t )^{-\rho }&=\frac {x^{-3}t^{3-\theta } }{(1+t)^\rho  }\\
&\le
\begin{cases}
\displaystyle{\frac {2} {(1+t)^\rho  }}x^{-\theta }(1+x   )^{-\rho }\,\,\hbox{if}\,\,0<x<1\nonumber\\
\displaystyle{\frac {1} {(1+t)^\rho  }}x^{-\theta }(1+x  )^{-\rho },\,\,\hbox{if}\,\,x>1
\end{cases}\le \frac {C} {1+t}x^{-\theta }(1+x  )^{-\rho }\nonumber
\end{align}
it follows that
\begin{align}
\label{ppcm52}
|I_1|\le C||g|| _{ \theta, \rho  } x^{-\theta }(1+x )^{-\rho }\,\,\forall t>0,\,\,\forall x>0.
\end{align}
In  the term $I_2$ one has $y>t$.  If $x<t$ then by Proposition  3.5 in \cite{m},
\begin{align}
&\left|\Lambda\left(\frac {t} {y}, \frac {x} {y} \right)\right|\le Cxt^5y^{-6}\nonumber\\
&|I_2|\le Cxt^5\int _t^\infty |g(y)|y^{-7}dy\le  C||g|| _{ \theta, \rho  }xt^5\int _t^\infty y^{-\theta-7}(1+y  )^{-\rho }dy\nonumber \\
&\le C||g|| _{ \theta , \rho  }xt^5 t^{-\theta -6}( 1+t )^{-\rho }\le C||g|| _{ \theta, \rho  } x t^{-1-\theta }(1+t  )^{-\rho }\nonumber \\
&|I_2|\le C||g|| _{ \theta, \rho  } x^{-\theta }(1+x  )^{-\rho },\,\forall x\in (0, t). \label{SAL2EX}
\end{align}
On the other hand, when $y>t$ if $x>t$  there exists $\delta >0$ small enough such that $x>(1+\delta )t$. The term $I_2$ must then be split,
\begin{align}
\label{S5E29BX}
I_2=\int _t^{\frac {x} {1+\delta }}[\cdots]\frac {dy} {y}+\int _{\frac {x} {1+\delta }}^{\frac {x} {1-\delta }}[\cdots]\frac {dy} {y}+\int _{\frac {x} {1-\delta }}^\infty[\cdots]\frac {dy} {y}=I _{ 2,1 }+I _{ 2,2 }+I _{ 2,3 }
\end{align}
Since $\frac {x} {y}>1+\delta $ in $I _{ 2, 1 }$ and $\frac {x} {y}<1-\delta $ in $I _{ 2, 3 }$, by Proposition 3.5 in \cite{m} for all $\varepsilon >0$ as small as desired there exists a constant $C_\varepsilon >0$ such that, for $y\in (t, x/(1+\delta ))$ and $y>x/(1-\delta )$,
\begin{align*}
\left|\Lambda\left(\frac {t} {y}, \frac {x} {y} \right)\right|\le C_\varepsilon \left(x^{-3+\varepsilon }t^{9-\varepsilon }y^{-6}+x^{-5}t^7y^{-2} \right)
\end{align*}
and then, since $-\theta-\rho <1$,
\begin{align}
|I _{ 2, 1 }|+|I _{ 2, 3 }|&\le  Cx^{-3+\varepsilon } t^{9-\varepsilon } \int _t^\infty y^{-7}|g(y)|dy+
Cx^{-5}t^7\int _t^\infty y^{-3}|g(y)|dy\nonumber\\
&\le C||g|| _{ \theta, \rho   }x^{-3+\varepsilon } t^{9-\varepsilon } \int _t^\infty y^{-7}y^{-\theta}(1+y )^{-^{\rho}}dy+\nonumber \\
&\qquad+C||g|| _{ \theta, \rho  }x^{-5}t^7\int _t^\infty y^{-3}y^{-\theta}(1+y )^{-\rho }dy\nonumber\\
&\le C||g|| _{ ^{\rho}, \rho  }\left(x^{-3+\varepsilon }t^{3-\varepsilon -\theta}+x^{-5}t^{5-\theta } \right)(1+t  )^{-\rho }.\label{pgcd31416}
\end{align}
Since,
\begin{align*}
x^{-3+\varepsilon }t^{3-\varepsilon -\theta }
&\le 
\begin{cases}
C \displaystyle{\frac {1} {(1+t)^\rho }}x^{-\theta}(1+x )^{-\rho }\,\,\hbox{if},\,\,x<1\\
C \displaystyle{\frac {t^\rho } {(1+t)^\rho }}x^{-\theta}(1+x )^{-\rho }\,\,\hbox{if},\,\,x>1
\end{cases} \le C x^{-\theta }(1+x )^{-\rho } 
\end{align*}
and by the same argument
\begin{align*}
x^{-5+\varepsilon }t^{5-\varepsilon -\theta }\le C x^{-\theta }(1+x^\rho )^{-1}, 
\end{align*}
it follows using (\ref{SAL2EX}) too,
\begin{align}
\label{SAL2E0BX}
 |I_1|+|I _{ 2, 1 }|+|I _{ 2, 3 }|\le C||g|| _{ \theta, \rho  }x^{-\theta }(1+x )^{-\rho },\,\,\forall t>0,\,\,\forall x>0.
\end{align}

In the second term $I _{ 2,2 }$ at the right hand side of (\ref{S5E29BX}) the regularising effect of $S(t)$ must be used.With the change of variables $z=x/y$,
\begin{align*}
I _{ 2, 2 }=\int  _{ 1-\delta  }^{1+\delta }\Lambda\left(\frac {tz} {x}, z \right)g\left(\frac {x} {z}\right) \frac {dz} {z}
\end{align*}
In order to use Corollary 3.13 in \cite{m} write now $I _{ 2,2 }$ as follows,
\begin{align*}
I _{ 2, 2 }=\int  _{ 1-\delta  }^ {1+\delta }\frac {tz} {x}|1-z|^{\frac {2tz} {x}-1}\psi (t, z; x)g(x/z)\frac {dz}{z}.
\end{align*}
where, for all $x>0$, 
\begin{align*}
&\psi (t, z; x)=\left(\Lambda\left( \frac {tz} {x}, z\right)
\frac {x} {tz}|1-z|^{1-\frac {2tz} {x}}\right).
\end{align*}
 By Corollary 3.13  and Corollary 3.14 in \cite{m} 
\begin{align*}
&(t, z)\mapsto \psi (t, z; x) \,\,\hbox{belongs to}\,\, C([0, 1]\times [1-\delta , 1+\delta ] ).\\
&C_*(\psi , \delta )=\sup\Big\{|\psi (t, z; x)|; x>t, z\in (1-\delta , 1+\delta ) \Big\}<\infty
\end{align*}
and then,
\begin{align*}
|I _{ 2, 2 }|\le C_*(\psi , \delta )\int  _{ 1-\delta  }^ {1+\delta }\frac {tz} {x}|1-z|^{\frac {2tz} {x}-1}|g(x/z)|\frac {dz}{z}.
\end{align*}

Under the change of variables $y=2\frac {t} {x}\log(1-z)$, $z=1-e^{\frac {xy} {2t}}$, $dy=-2tdz/(x(1-z))$
\begin{align*}
\int  _{ 1-\delta  }^ 1&\frac {tz} {x}|1-z|^{\frac {2tz} {x}-1}\left|g\left( \frac {x} {z}\right)\right|\frac {dz}{z}\le\\
&\le\frac {1} {2(1-\delta )}\int  _{ -\infty }^ {\frac {2t\log \delta } {x}} e^{ y\left(1-e^{\frac {xy} {2t}} \right)}  \left|g\left( \frac {x} {1-e^{\frac {xy} {2t}}}\right)\right|dy=J_1+J_2
\end{align*}
where,
\begin{align*}
&J_1=\frac {1} {2(1-\delta )}\int  _{ -\infty }^ {\frac {2t\log \delta } {x}} e^{ y\left(1-e^{\frac {xy} {2t}} \right)}\left| g_<\left( \frac {x} {1-e^{\frac {xy} {2t}}}\right)\right|dy\\
&J_2=\frac {1} {2(1-\delta )}\int  _{ -\infty }^ {\frac {2t\log \delta } {x}} e^{y\left(1-e^{\frac {xy} {2t}} \right)}\left| g_>\left( \frac {x} {1-e^{\frac {xy} {2t}}}\right)\right|dy\\
&g_<(z)=g(z)\1 _{ z<1 },\,\,g_>(z)=g(z)\1 _{ z>1 }.
\end{align*}
Since $||g|| _{ \theta, \rho  }<\infty$, 
Because condition $x<1-e^{\frac {xy} {2t}}$ for some $y\in (-\infty, 2t\log \delta /x)$ requires $x<1$, it follows that $J_1(t, x)=0$ for $x>1$. On the other hand,  if  $y\in (-\infty, 2t\log \delta /x)$ then $e^{\frac {xy} {2t}}<\delta $ and so condition $x>1-e^{\frac {xy} {2t}}$ requires   $x>1-\delta >1/2$ from where $J_2(t, x)=0$ for all $x\in (0, 1/2)$.
For $0<t<x<1$, 
\begin{align}
|J_1|&\le \frac {  ||g|| _{ \theta, \rho  }x^{-\theta}} {2(1-\delta )}\int  _{ -\infty }^ {\frac {2t\log \delta } {x}} e^{ y\left(1-e^{\frac {xy} {2t}} \right)}\left(1-e^{\frac {xy} {2t}}\right)^{\theta }dy\nonumber\\
&\le C  ||g|| _{ \theta, \rho  } x^{-\theta } \int  _{ -\infty }^ {\frac {2t\log \delta } {x}} e^{ y\left(1-e^{\frac {xy} {2t}} \right)}  dy\le C  ||g|| _{ \theta, \rho  }x^{-\theta }  \int  _{ -\infty }^ {\frac {2t\log \delta } {x}} e^{ y\left(1- \delta  \right)}dy\nonumber\\
&\le C  ||g|| _{ \theta, \rho  } x^{-\theta}e^{-\frac {2(-1+\delta )t\log \delta } {x}}\le C  ||g|| _{ \theta, \rho  } x^{-\theta}. \label{SAL2E0CX}
\end{align}
When $x>1/2$,

\begin{align}
|J_2|&\le \frac {  ||g|| _{ \theta, \rho  }} {2(1-\delta )x^{\theta+\rho }}\int  _{ -\infty }^ {\frac {2t\log \delta } {x}} e^{ y\left(1-e^{\frac {xy} {2t}} \right)}\left(1-e^{\frac {xy} {2t}}\right) ^{\theta+\rho }dy\nonumber\\
&\le  C    ||g|| _{ \theta, \rho }x^ {-\theta-\rho }\int  _{ -\infty }^ {\frac {2t\log \delta } {x}} e^{ y\left(1-e^{\frac {xy} {2t}} \right)}   dy 
\le C  ||g|| _{ \theta, \rho  }x^{-\theta-\rho  }  \int  _{ -\infty }^ {\frac {2t\log \delta } {x}} e^{ y\left(1- \delta  \right)}   dy\nonumber\\
&\le C   ||g|| _{ \theta, \rho  } x^{-\theta-\rho }e^{-\frac {2(-1+\delta )t\log \delta } {x}} \le C   ||g|| _{ \theta, \rho  } x^{-\theta-\rho }.\label{SAL2E0DX}
\end{align}
It follows that for $x>t$ ,
\begin{align*}
|I _{ 2,2 }|\le C ||g|| _{\theta, \rho }  x^{-\theta}(1+x )^{-\rho },
\end{align*}
and then, by (\ref{pgcd31416}),
\begin{align}
\label{ppcm78}
 \left( |I_1|+|I_2|\right)\le C ||g|| _{ \theta, \rho  } x^{-\theta  }(1+x )^{-\rho },\,\forall x>t.
\end{align}
By (\ref{ppcm78}), (\ref{ppcm52}) and (\ref{SAL2EX}),
\begin{align}
\label{abcP1}
\left(|I_1|+|I_2|\right)\le C||g|| _{\theta, \rho  } x^{-\theta}(1+x )^{-\rho },\,\,\forall t>0,\,\forall x>0.
\end{align}

On the other hand for all $\theta', \rho '$ satisfying the hypothesis,
\begin{align*}
&||S(t_1)g-S(t_2)g|| _{ \theta', \rho ' }\le 
||S(t_1)g-S(t_1)g_n|| _{ \theta', \rho ' }+\\
&+||S(t_1)g_n-S(t_2)g_n|| _{ \theta', \rho ' }+||S(t_2)g_n-S(t_2)g|| _{ \theta', \rho ' }\\
&\le ||g-g_n|| _{ \theta', \rho ' }
+||S(t_1)g_n-S(t_2)g_n|| _{ \theta', \rho ' }+||g_n-g|| _{ \theta', \rho ' }
\end{align*}
from where, by Proposition \ref{Xapprox} the map $t\to S(t)g$ belongs to $C((0, \infty); X _{ \theta', \rho ' })$.
 \end{proof}
\begin{rem}
If  $\theta<3$ and $\theta+\rho >3$ then  $I_1(t, x)\le C\max(t, x)^{-3}||g|| _{ \theta, \rho  }$.
\end{rem}

\begin{prop}
\label{ptdm1}
Suppose that $g\in X _{ \theta, \rho  }$ with   $\theta\ge 0$,  $\rho\ge 0 $ are such that  $0\le \theta +\rho  <3$ and denote $u$ the function defined for all $t>0$ as $u(t)=S(t)g$. \\
(i) Then $u\in L^\infty (0, \infty; X _{ \theta, \rho  })\cap C((0, \infty); X _{ \theta', \rho ' })$ for $\theta', \rho '$ satisfying (\ref{thetaP}) and
\begin{align}
&\left|\frac {\partial u} {\partial t}\right|+|  L(u)|\in 
 L ^\infty_{ \text{loc} }((0, \infty)\times (0, \infty))  \label{ptdm1E1}\\
&\frac {\partial u(t, x)} {\partial t}=   L(u(t))(x),\,\,t>0, x>0. \label{ptdm1E01}\\
&\forall x>0,\,\,\lim _{ t\to 0 }u(t, x)=g(x).\label{ptdm1E0}
\end{align}
(ii) If $f\in C(0, T); X _{ \theta, \rho  })\cap L^\infty((0, T); X _{ \theta, \rho  })$ then the function $u$ defined as

\begin{align}
u(t)=S(t)g+\int _0^tS(t-s)f(s)ds \label{ptdm1E19}
\end{align}
is such that $u\in C ((0, T); X _{ \theta', \rho'  })$ for $\theta'$, $\rho '$ satisfying (\ref{thetaP}), it satisfies (\ref{ptdm1E1}) and
\begin{align}
\frac {\partial u(t, x)} {\partial t}=   L(u(t))(x)+f(t, x),\,\,t>0, x>0.\label{ptdm1E24}
\end{align}
\end{prop}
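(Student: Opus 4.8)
The plan is to treat the two parts in sequence, obtaining everything about the homogeneous semigroup first and then feeding it into the Duhamel formula.

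For part (i), the statements $u\in L^\infty(0,\infty;X_{\theta,\rho})$ and $u\in C((0,\infty);X_{\theta',\rho'})$ are already contained in Lemma \ref{lem1}, being exactly the estimate (\ref{S7P141E}) and the continuity of $t\mapsto S(t)g$. The real work is the pointwise equation (\ref{ptdm1E01}) together with the local bound (\ref{ptdm1E1}). Here I would argue by approximation: with $g_n=g\1_{1/n<x<1}+g\1_{1<x<n}$ as in Proposition \ref{Xapprox}, each $g_n$ lies in $L^1(0,\infty)\cap L^\infty_{loc}(0,\infty)$, so by the results recalled from \cite{m} the functions $u_n(t)=S(t)g_n$ satisfy $\partial_t u_n=L(u_n)$ pointwise for $t>0$. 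By Proposition \ref{Xapprox} and Lemma \ref{lem1}, $u_n\to u$ in $C((0,\infty);X_{\theta',\rho'})$, uniformly on compact time intervals. It then remains to pass to the limit in the equation, i.e. to show $L(u_n(t))(x)\to L(u(t))(x)$ for each fixed $t>0$, $x>0$.

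To control $L(u(t))(x)=\int_0^\infty(u(t,y)-u(t,x))K(x,y)\,dy$ I would split the integral into the region $|y-x|<\varepsilon$, where the singular factor $1/|x^2-y^2|$ in (\ref{S3E2359B}) lives, and its complement. On the complement the kernel is bounded and integrable against the weight $y^{-\theta'}(1+y)^{-\rho'}$ thanks to (\ref{S7P141E}), so convergence follows from the $X_{\theta',\rho'}$ convergence of $u_n$. Near $y=x$ the singular integral must be estimated using the regularizing effect of $S(t)$: for $t>0$ the solutions $u_n(t)$ and $u(t)$ are smooth (indeed the estimates on $\Lambda$ from \cite{m} provide local Hölder/differentiability bounds that are uniform in $n$), so the numerator $u(t,y)-u(t,x)$ vanishes fast enough to tame $1/|x^2-y^2|$, uniformly in $n$; dominated convergence then yields the limit. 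This simultaneously provides the local bound (\ref{ptdm1E1}), since $\partial_t u=L(u)$ and the same estimate bounds $L(u)$ on compact subsets of $(0,\infty)\times(0,\infty)$. For the pointwise initial value (\ref{ptdm1E0}) I would use that the fundamental solution acts as an approximate identity: writing $u(t,x)=\int_0^\infty \Lambda(t/y,x/y)g(y)\,dy/y$ and using that $\Lambda(t,\cdot)$ concentrates near $1$ as $t\to 0$ together with the continuity of $g$ and the uniform bound (\ref{S7P141E}), one gets $u(t,x)\to g(x)$ for every $x>0$.

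For part (ii), the continuity $u\in C((0,T);X_{\theta',\rho'})$ follows from Lemma \ref{lem1} applied to $S(t)g$ and, for the Duhamel term, from the continuity of $f$ into $X_{\theta,\rho}$ together with the uniform bound on $S(t-s)$. To obtain (\ref{ptdm1E24}) I would differentiate the Duhamel integral: formally $\frac{d}{dt}\int_0^t S(t-s)f(s)\,ds=f(t)+\int_0^t \partial_t[S(t-s)f(s)]\,ds=f(t)+\int_0^t L(S(t-s)f(s))\,ds$, and then interchange $L$ with the time integral to identify the last term with $L\big(\int_0^t S(t-s)f(s)\,ds\big)$. Combined with part (i) applied to $S(t)g$, this gives $\partial_t u=L(u)+f$, while (\ref{ptdm1E1}) for $u$ follows from the corresponding bound on each term. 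I expect the main obstacle to be the uniform control of the singular integral defining $L$ near $y=x$ in part (i): justifying the interchange of limit and integral there requires quantitative regularity of $S(t)g_n$ that is uniform in $n$, which is where the fine estimates on $\Lambda$ from \cite{m} (Corollaries 3.13--3.14) and the regularizing effect of the equation must be invoked; the differentiation-under-the-integral and the interchange $L\circ\int=\int\circ L$ in part (ii) are secondary but rely on the same local bounds.
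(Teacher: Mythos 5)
Your treatment of part (ii) and of the initial trace (\ref{ptdm1E0}) follows essentially the paper's own route: the paper also differentiates the Duhamel term through a difference quotient, dominates $\partial_t S(t-s)f(s)$ by a fixed $L^1$ function built from the bound (\ref{blb}), and obtains $\frac{1}{h}\int_t^{t+h}S(t+h-s)f(s)\,ds\to f(t,x)$ by combining the continuity of $f$ in $X_{\theta,\rho}$ with the already-proved trace property; likewise its proof of (\ref{ptdm1E0}) is exactly the approximate-identity analysis you sketch, made quantitative through the near-diagonal profile $\frac{tz}{x}|1-z|^{\frac{2tz}{x}-1}\psi(t,z;x)$ and Corollary 3.13 of \cite{m}, with each side $z<1$ and $z>1$ contributing $g(x)/2$ and the off-diagonal tails controlled by Proposition 3.5 of \cite{m}. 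The $L^\infty$ bound and the continuity in $X_{\theta',\rho'}$ via Lemma \ref{lem1} and Proposition \ref{Xapprox} also agree with the paper.

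The genuine divergence, and the gap, is in your derivation of (\ref{ptdm1E01})--(\ref{ptdm1E1}). The paper does not approximate at all: it extends the proof of Theorem 1.4 of \cite{m} directly to $X_{\theta,\rho}$ data by exhibiting a dominating function for $\partial_t\Lambda\left(\frac{t}{z},\frac{x}{z}\right)g(z)z^{-2}$, uniform for $t$ in small intervals, which yields (\ref{blb}) and hence differentiation under the integral sign; the pointwise equation then requires no limit interchange inside the singular integral $L$. Your density route instead hinges on the claim that the estimates of \cite{m} give local H\"older \emph{and differentiability} bounds for $u_n(t,\cdot)$, uniform in $n$, at fixed small $t>0$. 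Differentiability is simply false there: near the diagonal $\Lambda(\tau,z)\sim \tau|1-z|^{2\tau-1}$, so $\partial_z\Lambda\sim|1-z|^{2\tau-2}$ is not locally integrable for $\tau<1/2$; and since the symbol satisfies $\rho_0(k)\sim\log|k|$, the semigroup gains only about $\kappa_0 t$ derivatives in time $t$ (cf.\ the remark following the frozen-coefficient construction before Proposition \ref{nheh1}), so for small $t$ the solution is at best H\"older of order comparable to $\kappa_0 t$. A uniform-in-$n$ H\"older bound of some positive order would indeed give the Dini condition taming $\frac{1}{|x^2-y^2|}$, but it is not available off the shelf: Corollaries 3.13--3.14 of \cite{m} assert continuity and boundedness of the rescaled profile $\psi(t,z;x)$, not a modulus of continuity of $S(t)g_n$, so this step would have to be proved from scratch and is comparable in difficulty to the paper's direct argument. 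A secondary weakness of the same kind: to conclude $\partial_t u=L(u)$ from $\partial_t u_n=L(u_n)$ you need convergence of $L(u_n(t))(x)$ locally uniformly in $t$ (or an integrated identity $u_n(t_2,x)-u_n(t_1,x)=\int_{t_1}^{t_2}L(u_n(s))(x)\,ds$ passed to the limit by domination), not merely pointwise convergence at each fixed $t$; this again calls for the uniform quantitative control your sketch presupposes. The paper's dominated-differentiation argument bypasses all of this, which is why it is the shorter and safer route.
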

\begin{proof} (i)  If $\theta\in [0, 1)$ and $\rho >1$ this follows as Theorem 1.4 of \cite{m}.  The argument of the proof is that, for each $t_0>0$ and small open interval $I$ containing $t_0$ it is possible to find a  function $H(x, z)$, integrable with respect to $z$   that dominates $\left|\frac {\partial \Lambda} {\partial t} \left( \frac {t} {z}, \frac {x} {z}\right)\frac {f_0(z)} {z^2}\right|$ for each $x>0$,  uniformly for $t\in I$. For $\theta$ and $\rho $ satisfying the hypothesis the same proof shows that there exists a positive constant $C$ such that for $t>0$, $x>0$,
\begin{align}
&\left|\frac {\partial u(t, x)} {\partial t} \right|\le 
Cx^{-4}\int _0^{2t}|g(z)|z^2dz+Cx^{-3}t^6\left(\int  _{ 2t }^\infty \frac {|g(z)|dz} {z^3}\right)\1 _{ x>3t }+\nonumber\\
&+Cxt^4\left(\int  _{ 2t }^\infty \frac {|g(z)|dz} {z^7}\right)\1 _{ 0<x<\frac {2t}{3} }+
C\left(\sup _{ z\in (2t, 3x) }|g(z)|\right)\1 _{ \frac {2t} {3}<x<3t }.\label{blb}
\end{align}
The proof of (\ref{ptdm1E0}) requires some further results from \cite{m}.
In order to see this in some detail write  (\ref{lpgn1}) as follows,  using the change of variables $x/y=z$ and $a>0$ arbitrary,
\begin{align}
\label{S3ESPM1}
S(t)g(x)=\int  _{ 0 }^ {x/a}\Lambda\left( \frac {tz} {x}, z\right)g(x/z)\frac {dz} {z}+ \int  _{x/a}^\infty \Lambda\left( \frac {tz} {x}, z\right)g(x/z)\frac {dz} {z}=P_1+P_2.
\end{align}
Both terms $P_i$ are treated with similar arguments. Consider first $P_1$. For $x<a$ then $z$ is away from $z=1$ and two cases are possible. If $t>a$ then $x/t<x/a$ and
\begin{align}
\label{S3ESPM2}
|P_1|\le C||g|| _{\theta, \rho  } \left(\int  _{ 0 }^ {x/t}\left|\Lambda\left( \frac {tz} {x}, z\right)\right| p (x/z)\frac {dz} {z}
+\int  _{x/t}^{x/a}\left|\Lambda\left( \frac {tz} {x}, z\right)\right|p (x/z)\frac {dz} {z}\right)
\end{align}
where we have denoted
$$
p(x)=x^{-\theta}(1+x)^{-\rho }.
$$
In the first term at the right hand side of (\ref{S3ESPM2}) $tz/x<1$ and in the second  $tz/x<1$ while $x/t>1$ in both. By Proposition 3.5 of \cite{m}, using the change of variables $x/z=y$, for $\varepsilon >0$ as small as desired there is a constant $C_\varepsilon >0$ such that,
\begin{align}
&\int  _{ 0 }^ {x/t}\left|\Lambda\left( \frac {tz} {x}, z\right)\right|p (x/z)\frac {dz} {z}\le C_\varepsilon \int _0^{x/t}
\left(z^{-3+\varepsilon }\left( \frac {tz} {x}\right)^{9-\varepsilon }+z^{-5}\left( \frac {tz} {x}\right)^7 \right)p (x/z)\frac {dz} {z} \nonumber\\
&\!\le C_\varepsilon \Bigg( \frac{t^{9-\varepsilon }}{x^{3-\varepsilon }}\int _t^\infty \frac {y^{-7}dy} {y^\theta+y^{\theta+\rho }} +\frac{t^7}{x^{5}}\int _t^\infty  \frac {y^{-3}dy} {y^\theta+y^{\theta+\rho }}
\Bigg) \le C\left(\frac{ t^{3-\varepsilon -\theta}}{x^{3-\varepsilon }}+\frac{t^{5-\theta}}{x^{5}}\right)
\le C\frac{ t^{3-\varepsilon -\theta}}{x^{3-\varepsilon }}
\label{S3ESPM3}
\end{align}
and
\begin{align}
\int  _{x/t}^{x/a}\left|\Lambda\left( \frac {tz} {x}, z\right)\right|p (x/z)\frac {dz} {z}\le C\int  _{ x/t }^{x/a}\frac {z^{-4}dz} {(x/z)^{\theta}}
\le Cx^{-3}t^{3-\theta}.
\label{S3ESPM4}
\end{align}
Then by (\ref{S3ESPM2})--(\ref{S3ESPM4}),
\begin{align}
|P_1|\le C_\varepsilon ||g|| _{\theta, \rho  } t^{3-\varepsilon -\theta}x^{-3+\varepsilon },\,\,\forall (t, x);\,0<x<a<t.
\end{align}
If $t\in (0, a)$, then $tz/x<1$ for all $z\in (0, x/a)$ and by  Proposition 3.5, if $x>t$,

\begin{align}
 |P_1|&\le C\int _0^{x/a} \left(z^{-3+\varepsilon }\left( \frac {tz} {x}\right)^{9-\varepsilon }+z^{-5}\left( \frac {tz} {x}\right)^7 \right)p(x/z)\frac {dz} {z} \nonumber\\
&\le \int _0^{x/t} \left(z^{-3+\varepsilon }\left( \frac {tz} {x}\right)^{9-\varepsilon }+z^{-5}\left( \frac {tz} {x}\right)^7 \right)\frac {dz} {z ( x/z)^\theta}
\le C_\varepsilon \frac{ t^{3-\varepsilon -\theta}}{x^{3-\varepsilon }},
\label{S3ESPM5}
\end{align}
and,
\begin{align}
 |P_1|\le C\int _0^{x/a} z \left( \frac {tz} {x}\right)^5
 \frac {dz} {z (x/z)^\theta }\le Ca^{-6-\theta}t^5x,\,\,\forall x\in (0, t).
\label{S3ESPM6}
\end{align}
From (\ref{S3ESPM5}), (\ref{S3ESPM6}),
\begin{align}
 |P_1|\le C\left(   t^{3-\varepsilon -\theta}x^{3-\varepsilon }\1 _{a> x>t }+ a^{-6-\theta}t^5x\1 _{ x<t <a}\right).
\label{S3ESPM7}
\end{align}
Similar arguments show for $x>a$,
\begin{align}
\label{S3ESPM8}
|P_2|\le C\left( t^{3-\theta}x^{-5}+x^{-3+\varepsilon }t^{3+\theta} \1 _{ t<a<x } \right).
\end{align}

The cases $x>a$ in $P_1$ and $x<a$ in $P_2$ must now be treated. Consider first $x>a$ in $P_1$.
Since $a>0$ and $t\to 0$,  it may be assumed that $0<t<a$, from where $tz/x<t/a<1$. If $ x>a$ then $1\in (0, x/a)$ and
\begin{align*}
P_1=\int  _{0 }^ {x/a}\Lambda\left( \frac {tz} {x}, z\right)g(x/z)\frac {dz}{z}=\int  _{ 0 }^ {1-\delta }+\int  _{ 1-\delta  }^ {1+\delta }+\int _{ 1+\delta  }^{x/a}
\end{align*}
By Proposition 3.5 of \cite{m}, arguing as  for (\ref {S3ESPM3}) or (\ref{S3ESPM5}), 
\begin{align}
 \int _{0}^{1-\delta }[\cdots]dz+\int _{1+\delta }^{ x/a}[\cdots]dz\le  C_\varepsilon ||g|| _{\theta, \rho }\Big( t^{ 7} x^{-7 -\theta}+t^7x^{-5}a^{-2-\theta}\Big)\le 
\frac{  C_\varepsilon ||g|| _{\theta, \rho }t^7}{x^{5}a^{2+\theta} }\label{S5EWWW3}
\end{align}
Consider next,
\begin{align*}
\int  _{ 1-\delta  }^{1+\delta }\Lambda\left( \frac {tz} {x}, z\right)g(x/z)\frac {dz}{z}
=\int  _{ 1-\delta  }^ {1+\delta }\frac {tz} {x}|1-z|^{\frac {2tz} {x}-1}\psi (t, z; x)g(x/z)\frac {dz}{z}.
\end{align*}
with, for all $x>0$, 
\begin{align*}
&\psi (t, z; x)=\left(\Lambda\left( \frac {tz} {x}, z\right)
\frac {x} {tz}|1-z|^{1-\frac {2tz} {x}}\right)\\
&(t, z)\mapsto \psi (t, z; x) \,\,\hbox{belongs to}\,\, C((0, 1)\times (1-\delta , 1+\delta ) )
\end{align*}
Under the change of variables $y=2t\log(1-z)$, $z=1-e^{\frac {y} {2t}}$
\begin{align*}
\int  _{ 1-\delta  }^ 1\frac {tz} {x}|1-z|^{\frac {2tz} {x}-1}&\psi (t, z; x)g(x/z)\frac {dz}{z}=\\
&=\frac {1} {2x}\int  _{ -\infty }^ {2t\log \delta } e^{\frac {y} {x}\left(1-e^{\frac {y} {2t}} \right)}\psi (t, 1-e^{\frac {y} {2t}}; x) g\left( \frac {x} {1-e^{\frac {y} {2t}}}\right)dy.
\end{align*}
Similarly, under the change  $y=2t\log(z-1)$, $z=1+e^{\frac {y} {2t}}$
\begin{align*}
\int  _1^{ 1+\delta  }\frac {tz} {x}|1-z|^{\frac {2tz} {x}-1}&\psi (t, z; x)g(x/z)\frac {dz}{z}=\\
&=\frac {1} {2 x}\int  _{ -\infty }^ {2t\log \delta } e^{\frac {y} {x}\left(1+e^{\frac {y} {2t}} \right)}\psi (t, 1+e^{\frac {y} {2t}}; x) g\left( \frac {x} {1+e^{\frac {y} {2t}}}\right)dy.
\end{align*}
Let us write,
\begin{align*}
&\frac {1} {2x}\int  _{ -\infty }^ {2t\log \delta } e^{\frac {y} {x}\left(1-e^{\frac {y} {2t}} \right)}\psi (t, 1-e^{\frac {y} {2t}}; x) g\left( \frac {x} {1-e^{\frac {y} {2t}}}\right)dy-\frac {g(x)} {2}=\\
&\qquad =\frac {1} {2x}\int  _{ -\infty }^ {0} e^{\frac {y} {x}\left(1-e^{\frac {y} {2t}} \right)}\psi (t, 1-e^{\frac {y} {2t}}; x) g\left( \frac {x} {1-e^{\frac {y} {2t}}}\right)dy-\frac {g(x)} {2}+\\
& \hskip 2.4cm +\frac {1} {2x}\int  _ {2t\log \delta }^0 e^{\frac {y} {x}\left(1-e^{\frac {y} {2t}} \right)}\psi (t, 1-e^{\frac {y} {2t}}; x) g\left( \frac {x} {1-e^{\frac {y} {2t}}}\right)dy\\
&\qquad=\frac {1} {2}\left(\int  _{ -\infty }^ {0} \frac {1} {x}e^{\frac {y} {x}\left(1-e^{\frac {y} {2t}} \right)} dy-1\right)g(x)+\\
& \hskip 2.4cm+\frac {1} {2}\int  _{ -\infty }^ {0} \frac {1} {x}e^{\frac {y} {x}\left(1-e^{\frac {y} {2t}} \right)}
\left(\psi (t, 1-e^{\frac {y} {2t}}; x) g\left( \frac {x} {1-e^{\frac {y} {2t}}}\right) -g(x)\right)dy+\\
& \hskip 2.4cm+\frac {1} {2x}\int  _ {2t\log \delta }^0 e^{\frac {y} {x}\left(1-e^{\frac {y} {2t}} \right)}\psi (t, 1-e^{\frac {y} {2t}}; x) g\left( \frac {x} {1-e^{\frac {y} {2t}}}\right)dy
\end{align*}
and finally,
\begin{align*}
&\frac {1} {2x}\int  _{ -\infty }^ {2t\log \delta } e^{\frac {y} {x}\left(1-e^{\frac {y} {2t}} \right)}\psi (t, 1-e^{\frac {y} {2t}}; x) g\left( \frac {x} {1-e^{\frac {y} {2t}}}\right)dy-\frac {g(x)} {2}=I_1+I_2+I_3+I_4\\
&I_1=\frac {1} {2}\left(\int  _{ -\infty }^ {0} \frac {1} {x}e^{\frac {y} {x}\left(1-e^{\frac {y} {2t}} \right)} dy-1\right)g(x)\\
&I_2=\frac {1} {2}\int  _{ -\infty }^ {0} \frac {1} {x}e^{\frac {y} {x}\left(1-e^{\frac {y} {2t}} \right)}
\left(\psi (t, 1-e^{\frac {y} {2t}}; x)-1\right) g\left( \frac {x} {1-e^{\frac {y} {2t}}}\right)dy
\end{align*}
\begin{align*}
&I_3=\frac {1} {2}\int  _{ -\infty }^ {0} \frac {1} {x}e^{\frac {y} {x}\left(1-e^{\frac {y} {2t}} \right)}
 \left(g\left( \frac {x} {1-e^{\frac {y} {2t}}}\right) -g(x)\right)dy\\
&I_4=\frac {1} {2x}\int  _ {2t\log \delta }^0 e^{\frac {y} {x}\left(1-e^{\frac {y} {2t}} \right)}\psi (t, 1-e^{\frac {y} {2t}}; x) g\left( \frac {x} {1-e^{\frac {y} {2t}}}\right)dy.
\end{align*}
In the first term, the change of variables $y=x\, z$ gives
\begin{align*}
x|I_1(t, x)|\le C||g|| _{\theta, \rho  } \left| \int _{-\infty }^0 e^{z\left( 1-e^{\frac {x z} {2t}}\right)}dz-1\right|.
\end{align*}
Since
\begin{align*}
e^{z\left( 1-e^{\frac {x z} {2t}}\right)}>e^z,\,\,\forall z<0\Longrightarrow   \int _{-\infty }^0 e^{z\left( 1-e^{\frac {x z} {2t}}\right)}dz> \int _{-\infty }^0 e^{z}dz=1,
\end{align*}
it follows,
\begin{align*}
x|I_1(t, x)|\le C||g|| _{\theta, \rho  }\left( \int _{-\infty }^0 e^{z\left( 1-e^{\frac {x z} {2t}}\right)}dz-1\right).
\end{align*}
Since $x>a$, it follows, $e^{-ze^{\frac { xz} {2t}}}<e^{-ze^{\frac {az} {2t}}}$
and,
\begin{align*}
x|I_1(t, x)|\le C||g|| _{\theta, \rho  }\left( \int _{-\infty }^0 e^{z\left( 1-e^{\frac {a z} {2t}}\right)}dz-1\right).
\end{align*}
In the second term $I_2$, using that $p $ is a decreasing function,

\begin{align*}
\left|\frac {1} {2}\int  _{ -\infty }^ {0} \frac {1} {x}e^{\frac {y} {x}\left(1-e^{\frac {y} {2t}} \right)}
\left(\psi (t, 1-e^{\frac {y} {2t}}; x)-1\right) g\left( \frac {x} {1-e^{\frac {y} {2t}}}\right)dy\right|\le\\
\le C||g|| _{\theta, \rho  }
\int  _{ -\infty }^ {0} \frac {1} {x}e^{\frac {y} {x}\left(1-e^{\frac {y} {2t}} \right)}
\left|\psi (t, 1-e^{\frac {y} {2t}}; x)-1\right| p\left( \frac {x} {e^{\frac {y} {2t}}}\right)dy\\
\le  C||g|| _{\theta, \rho  }p (x) \int  _{ -\infty }^ {0} e^{z\left(1-e^{\frac {xz} {2t}} \right)}
\left|\psi (t, 1-e^{\frac {xz} {2t}}; x)-1\right| dz\\
\le  C||g|| _{\theta, \rho  } p (x)\int  _{ -\infty }^ {0}e^{z\left(1-e^{\frac {az} {2t}} \right)}
\left|\psi (t, 1-e^{\frac {xz} {2t}}; x)-1\right|dz
\end{align*}
By Corollary 3.13 in \cite{m}
\begin{align}
&\lim _{ t\to 0 }\, t^{-1}\left|e^{-1/t} Y\right|^{1-2t}\Lambda \left(t, 1+e^{-1/t}Y\right)=1 \label{S3Cor17E1}
\end{align}
uniformly  for $Y$ on bounded subsets of $\RR$. Since $x>a$, when $t\to 0$,  $0<tz/xtz/a\to 0$ uniformly for $x>a$, and $z\in (1-\delta , 1)$ and
\begin{align*}
\lim _{ t\to 0 }\sup _{ z\in (1-\delta (t), 0) }\left|\Lambda\left( \frac {tz} {x}, z\right)
\frac {x} {tz}|1-z|^{1-\frac {2tz} {x}}-1\right|=0.
\end{align*}
In the  term $I_3$, by the continuity of $g$ at $x$,
\begin{align*}
\lim _{ t\to 0 }|I_3(t, x)|=0.
\end{align*}
In the last term $I_4$, using again that $\rho $ is decreasing
\begin{align*}
&|I_4|\le \frac {1} {2x}\int  _ {2t\log \delta }^0 e^{\frac {y} {x}\left(1-e^{\frac {y} {2t}} \right)}\psi (t, 1-e^{\frac {y} {2t}}; x) \left|g\left( \frac {x} {1-e^{\frac {y} {2t}}}\right)\right|dy.\\
&\le C  ||g|| _{\theta, \rho  } \frac {1} {x}\int  _ {2t\log \delta }^0 e^{\frac {y} {x}\left(1-e^{\frac {y} {2t}} \right)}|\psi (t, 1-e^{\frac {y} {2t}}; x)|p \left( \frac {x} {e^{\frac {y} {2t}}}\right)dy\\
&\le C  ||g|| _{\theta, \rho  } p (x) \frac {1} {x}\int  _ {2t\log \delta }^0 e^{\frac {y} {x}\left(1-e^{\frac {y} {2t}} \right)}dy =
C||g|| _{\theta, \rho  }p (x)\int  _ {\frac {2t\log \delta} {x} }^0 e^{z\left(1-e^{\frac {xz} {2t}} \right)}dz
\end{align*}
and since $\frac {2t\log \delta} {x} > \frac {2t\log \delta} {a} $, $
|I_4|\le  ||g|| _{\theta, \rho  }\displaystyle{\int  _ {\frac {2t\log \delta} {a} }^0 e^{z\left(1-e^{\frac {az} {2t}} \right)}dz}$.
All this shows,
\begin{align}
\label{S3Em24}
\lim _{ t\to 0 } x\left|\int  _{ 1-\delta  }^ 1\frac {tz} {x}|1-z|^{\frac {2tz} {x}-1}\psi (t, z; x)g(x/z)\frac {dz}{z}-\frac {g(x)} {2}
 \right|=0.
\end{align}
A similar argument shows,
\begin{align}
\label{S3Em25}
\lim _{ t\to 0 } \, x\left| \int _1^{ 1+\delta  }\frac {tz} {x}|1-z|^{\frac {2tz} {x}-1}\psi (t, z; x)g(x/z)\frac {dz}{z}-\frac {g(x)} {2}
 \right|=0
\end{align}
and then, for $x>a$,
\begin{align}
\label{S3ESPM20}
\lim _{ t\to 0 } \left|S(t)g(x)-g(x) \right|=0.
\end{align}
The term  $P_2$ when $x<a$  is treated in the same way as $P_1$ for $x>a$, writing first,
\begin{align}
\label{S3Eq234}
|P_2|\le \int  _{x/a}^\infty \left|\Lambda\left( \frac {tz} {x}, z\right)g(x/z)\right|\frac {dz}{z}=
\int  _{ x/a }^ {1-\delta }[\cdots]dz+\int  _{ 1-\delta  }^ {1+\delta }[\cdots]dz+\int _{ 1+\delta  }^{\infty}[\cdots]dz.
\end{align}
In the first  and third integrals, two cases arise depending on whether $x>t$ or  $x<t$. In both cases, use of  Proposition 3.1, Proposition 3.2 and Proposition 3.5 of \cite{m}  yields the existence of a constant $C>0$ such that,
\begin{align}
\int  _{ x/a }^ {1-\delta }[\cdots]dz+\int _{ 1+\delta  }^{\infty}[\cdots]dz\le C ||g|| _{ X_\theta } \left(t^{-\theta}\1 _{ 0<x<t }+x^{-3}t^{3-\theta}\1 _{ t<x<a }
\right),\,\,0<x<a. \label{S3ESPM9}
\end{align}
The estimate of the second integral in the right hand side of (\ref{S3Eq234})  follows from the same arguments that lead to  (\ref{S3Em24}), (\ref{S3Em25}) and give,
\begin{align}
\lim _{ t\to 0 }p (x)\int  _{ 1-\delta  }^{1+\delta }[\cdots]dz=0. \label{S3ESPM12}
\end{align}
It follows from (\ref{S3ESPM9})--(\ref{S3ESPM12}) that for $x\in (0, a)$,
\begin{align}
\label{S3ESPM21}
\lim _{ t\to 0 } \left|S(t)g(x)-g(x) \right|=0,
\end{align}
and then (\ref{ptdm1E0}) follows from (\ref{S3ESPM20}) and (\ref{S3ESPM21}).

Suppose now that $f\in C(0, T; X _{ \theta, \rho  })$ and denote
$$
m(t, x)= \int _0^\infty S(t-s)f(s)(x) dx,\,\,t\in (0, T), x>0.
$$
Then, for $|h|>0$,
\begin{align}
\frac {1} {h}(m(t+h)-m(t))=\frac {1} {h}\int _t^{t+h}S(t+h-s)f(s)(x)dx+\nonumber\\
+\frac {1} {h}\int _0^t  \Big(S(t+h-s)f(s)(x)-S(t-s)f(s)(x)\Big) dx  \label{blb2}
\end{align}

In the second term at the right hand side of  (\ref{blb2}), for all   $s\in (0, t)$ and $x>0$,
\begin{align*}
\lim _{ h\to 0 }\frac {1} {h}\ \Big(S(t+h-s)f(s)(x)-S(t-s)f(s)(x)\Big) dx
= \frac {\partial } {\partial t}\bigg(S(t-s)f(s)\bigg)(x).
\end{align*}

Estimate  (\ref{blb}) gives now a suitable dominant function to apply the Lebesgue's convergence Theorem. To this end write first,
\begin{align*}
&\left|\left( \frac {\partial } {\partial t} S(t-s)f(s)(x)\right) \right|\le 
C\Bigg(x^{-4}\int _0^{2(t-s)}|f(s, z)|z^2dz+\\
&+Cx^{-3}(t-s)^6\left(\int  _{ 2(t-s) }^\infty \frac {|f(s, z)|dz} {z^3}\right)\1 _{ x>3(t-s) }+Cx(t-s)^4\left(\int  _{ 2(t-s) }^\infty \frac {|f(s, z)|dz} {z^7}\right)\1 _{ 0<x<\frac {2(t-s)}{3} }+\\
&\hskip 7.4cm +C\left(\sup _{ z\in (2(t-s), 3x) }|f(s, z)|\right)\1 _{ \frac {2(t-s)} {3}<x<3(t-s) }\Bigg).
\end{align*}
Since $t\in (0, T)$, $s\in (0, t)$ and $g\in X _{ \theta, \rho  }$,
\begin{align*}
\int _0^{2(t-s)}|f(s, z)|z^2dz\le C\sup _{ 0\le s\le T }||f(s)|| _{ \theta, \rho  }\int _0^{2T}z^{-\theta+2}dz,
\end{align*}
\begin{align*}
(t-s)^6\left(\int  _{ 2(t-s) }^\infty \frac {|f(s, z)|dz} {z^3}\right)\le 
\begin{cases}
C\sup _{ 0\le s\le T }||f(s)|| _{ \theta, \rho  }((t-s)^{6}+(t-s)^{4-\theta})\le\\
\le C\sup _{ 0\le s\le T }||f(s)|| _{ \theta, \rho  }T^{4-\theta},\,\, (t-s)<1,\\
C\sup _{ 0\le s\le T }||f(s)|| _{ \theta, \rho  }(t-s)^{4-\theta-\rho }\le\\
\le C\sup _{ 0\le s\le T }||f(s)|| _{ \theta, \rho  }T^{4-\theta-\rho},\,\,(t-s)>1,
\end{cases}
\end{align*}
\begin{align*}
x(t-s)^4\left(\int  _{ 2(t-s) }^\infty \frac {|f(s, z)|dz} {z^7}\right)\1 _{ 0<x<\frac {2(t-s)} {3} }&\le  C\sup _{ 0\le s\le T }||f(s)|| _{ \theta, \rho  }\, x(t-s)^{-2-\theta}\1 _{ 0<x<\frac {2(t-s)} {3} }\\\
&\le  C\sup _{ 0\le s\le T }||f(s)|| _{ \theta, \rho  }\, x^{-\theta-\rho -1}.
\end{align*}
All these estimates show that there exists a non negative function  $\psi \in L^1(0, \infty)$ such that 
\begin{align}
\label{blbpsi}
\left|\left( \frac {\partial } {\partial t} S(t-s)f(s)(x)\right)\right|\le \psi (x),\,\,\forall x>0,\,\forall s\in (0, t).
\end{align}
By Lebesgue's convergence theorem it then follows, for $t\in (0, T)$ and $x>0$,
\begin{align*}
\lim _{ h\to 0 }\frac {1} {h}\int _0^t \Big(S(t+h-s)f(s)(x)-S(t-s)f(s)(x)\Big) ds=\int _0^\infty \frac {\partial } {\partial t} S(t-s)f(s)(x)ds.
\end{align*}
By property (\ref{ptdm1E01}) already proved, 
\begin{align*}
\int _0^\infty \frac {\partial } {\partial t} S(t-s)f(s)(x)ds=\int _0^\infty \mathscr L(S(t-s)f(s))(x)ds
=\mathscr L\left(\int _0^\infty S(t-s)f(s)ds\right)(x).
\end{align*}

In order to pass to the limit as $h\to 0$ in the first term at the right hand side of  (\ref{blb2}) let $C>0$ be the constant given by (\ref{S7P141E}) in Lemma \ref{lem1}.  Since $f\in C([0, T); X _{ \theta, \rho  })$, for all $t\in (0, T)$ and $\varepsilon >0$ there exists $\delta >0$ there exists $\delta >0$ such that
\begin{align*}
&|s-t|<\delta \Longrightarrow ||f(s)-f(t)|| _{ \theta, \rho  }<\varepsilon C^{-1} x_0^\theta(1+x_0)^\rho.
\end{align*}
Then by Lemma \ref{lem1}, for all $s\in (t, t+h)$ with $|h|<\delta $,
\begin{align*}
\Longrightarrow ||S(t+h-s)(f(s)-f(t))|| _{ \theta, \rho  }\le||f(s)-f(t)|| _{ \theta,  \rho  } <\varepsilon  x_0^\theta(1+x_0)^\rho
\end{align*}
and in particular,
\begin{align*}
 |S(t+h-s)(f(s)-f( t))(x_0)|\le \varepsilon.
\end{align*}
On the other hand, since  $f(s)\in X _{ \theta, \rho  }\subset C(0, \infty)$ for all $s\in (0, T)$, by property  (\ref{ptdm1E0})  already proved, for each $x_0>0$ and $\varepsilon >0$ there exists $\delta' >0$ such that,
\begin{align*}
|h|<\delta' \Longrightarrow |S(t+h-s)f(t)(x_0)-f(t, x_0)|<\varepsilon,\,\text{for all}\, s\, \text{between}\, t, \text{and}\, t+h,
\end{align*}
then, for $|h|<\min(\delta , \delta') $ and  $s$ between $t$ and $t+h$,
\begin{align*}
&\left| (S(t+h-s)f(s))(x_0)-f(t) \right|\le  |S(t+h-s)(f(s)-f( t))(x_0)|+\\
&\hskip 5cm +|S(t+h-s)f(t)(x_0)-f(t, x_0)|\le 2\varepsilon ,\\
&\left|\frac {1} {h} \int _t^{t+h} (S(t+h-s)f(s))(x)ds-f(t, x_0)\right|\le 2\varepsilon, 
\end{align*}
and 
\begin{align*}
\lim _{ h\to 0 }\frac {1} {h}\int _t^{t+h}S(t+h-s)f(s)(x)dx=f(t, x_0)
\end{align*}
It then follows that the function $u$ defined in (\ref{ptdm1E19}) satisfies (\ref{ptdm1E24})
\end{proof}

The map $g\to S(t)g$ has  some regularizing effect at the origin as shown in the next result.
\begin{prop} 
\label{S7P14}
There exists a constant $B_1>0$ such that, if for some $\theta>0$, $\rho  >0$, $0\le \theta+\rho<3$,  $g\in X _{ \theta, \rho  }$ then for  $\delta>0$ small, $t>0$, $x>0$:
\begin{align}
&|S (t)g(x)|\le C_\delta ||g|| _{ \theta, \rho  } t^{-\theta}(1+t)^{-\rho } \left(1+\frac {x} {t}+x\right),\,x\in (0, t),\label{S7P14D1}\\
&|S (t)g(x)|\le  C||g|| _{ \theta, \rho  }t^{-\theta} (1+t)^{-\rho } \left(\frac {t} {x}\right)^3\,0<t<x.\label{S7P14D1B}
\end{align}
Moreover,
\begin{align}
&S (t)g)(x )-\ell (t; g)=R_0(t, g, x),\,x\to 0 \label{S7P14E1}\\
&|R_0(t, g, x)| \le C||g|| _{ \theta, \rho  } t^{-\theta}(1+t)^{-\rho }  \left(\frac {x} {t}+\left(\frac {x} {t}\right)^{1+\delta } +x\right), 0<x<\min (1, t)  \label{S7P14E1B} \\
&\ell (t; g)=\frac {6 B(1)}{\pi ^2}\frac {1} {2i\pi }\int  _{ \mathscr Re(r )=\beta  }\frac {\Gamma(r )} {B(r )}t^{-r }
\left(\int _0^{t} g(\zeta )\zeta ^{-1+ r }d\zeta  \right)dr,\,\,\beta \in (0, 2). \label{S7P14E2}
\end{align}
For all $T>0$ and all $p>-2$, there exists $C>0$ such that,
\begin{align}
\label{S7P14E3}
\left|\ell (t; g) \right| \le C||g|| _{ p, \rho }t^{-p},\,\,\forall t\in (0, T).
\end{align}
If $g(x)=\alpha x^{-\theta}(1+\mathcal O(x)^\varepsilon )$ as $x \to 0$ for some $\varepsilon >0$, then,
\begin{align}
\label{S7P14E5}
\ell (t; g)=\frac {12B(1)\Gamma (\theta)} {\pi ^2B(\theta)}t^{-\theta}+\mathcal O(t)^{-\theta+\varepsilon },\,\, t\to 0.
\end{align}
\end{prop}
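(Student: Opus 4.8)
The plan is to work throughout from the representation $S(t)g(x)=\int_0^\infty \Lambda(t/y,x/y)g(y)\,dy/y$ together with the pointwise estimates on the fundamental solution $\Lambda$ proved in \cite{m} (Propositions 3.1, 3.2 and 3.5, and Corollaries 3.13 and 3.14 near the self-similar line $z=1$). For the two bounds (\ref{S7P14D1}) and (\ref{S7P14D1B}) I would proceed exactly as in the proof of Lemma \ref{lem1}, splitting $\int_0^\infty$ according to whether $t/y\gtrless1$ and whether $x/y$ lies below, near, or above $1$, but now retaining the full dependence on $x$ rather than only the homogeneity weight. For $x\in(0,t)$ I would use $|\Lambda(\tau,z)|\le C$ for $\tau>1$ and $z$ bounded, the far--region decay $|\Lambda(\tau,z)|\le Cz\,\tau^{5}$ already exploited for $I_2$ in Lemma \ref{lem1}, and the behaviour near $z=1$ from Corollary 3.13; adding the contributions of the three ranges $y\lesssim t$, $y\sim x$, $y\gg t$ should reproduce the factor $t^{-\theta}(1+t)^{-\rho}\bigl(1+\tfrac{x}{t}+x\bigr)$. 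For $0<t<x$ the estimate $|\Lambda(t/y,x/y)|\le C\max(t,x)^{-3}y^{3}$ from Propositions 3.1 and 3.2 should give the decay $(t/x)^{3}$.

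The heart of the matter is the expansion (\ref{S7P14E1})--(\ref{S7P14E1B}), which I would obtain from the behaviour of $\Lambda(\tau,z)$ as $z\to0$. Writing $\Lambda(\tau,z)=\frac{1}{2i\pi}\int_{\mathscr{Re}(s)=c}\mathcal M(\Lambda(\tau))(s)\,z^{-s}\,ds$ for $c\in(0,2)$ and pushing the contour to the left across $s=0$, the residue there produces a $z$--independent term $\Lambda_\infty(\tau)$, while the shifted integral is $O(z)$ and, after one more contribution, $O(z^{1+\delta})$. Using the explicit form of the symbol encoded through $B$ in \cite{m}, this leading term should be
\[
\Lambda_\infty(\tau)=\frac{6B(1)}{\pi^2}\,\frac{1}{2i\pi}\int_{\mathscr{Re}(r)=\beta}\frac{\Gamma(r)}{B(r)}\,\tau^{-r}\,dr .
\]
Substituting $\tau=t/y$, $z=x/y$ into the representation of $S(t)g(x)$ and splitting at $y=t$, the range $y<t$ (where $\tau>1$ and $\Lambda_\infty$ is the genuine limit) yields the principal part $\int_{y<t}\Lambda_\infty(t/y)g(y)\,dy/y$, which is exactly $\ell(t;g)$ after exchanging the $r$-- and $y$--integrations in (\ref{S7P14E2}) by Fubini. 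The $O(z)$ and $O(z^{1+\delta})$ remainders, once integrated, should reproduce the $\tfrac{x}{t}$ and $(\tfrac{x}{t})^{1+\delta}$ terms of (\ref{S7P14E1B}), while the range $y>t$, where $|\Lambda(t/y,x/y)|\le Cx\,t^{5}y^{-6}$ by Proposition 3.5, together with the near--diagonal region $y\sim x$, accounts for the remaining $x$--dependent contributions. I would justify passing $\lim_{x\to0}$ inside the integral by dominated convergence, the majorant coming from the first step. I expect the main difficulty to be making the $z\to0$ expansion of $\Lambda(\tau,z)$ uniform for all $\tau\ge1$ --- in particular as $\tau\to\infty$, that is $y\to0$ --- so that the remainder really integrates against $g(y)\,dy/y$ with the sharp weights of (\ref{S7P14E1B}).

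The bound (\ref{S7P14E3}) I would read off directly from (\ref{S7P14E2}). If $|g(\zeta)|\le\|g\|_{p,\rho}\,\zeta^{-p}(1+\zeta)^{-\rho}\le\|g\|_{p,\rho}\,\zeta^{-p}$, then for any contour $\mathscr{Re}(r)=\beta$ with $\beta>p$ in the strip of analyticity of $\Gamma(r)/B(r)$,
\[
\Big|t^{-r}\!\int_0^t g(\zeta)\zeta^{r-1}\,d\zeta\Big|\le \|g\|_{p,\rho}\,t^{-\beta}\!\int_0^t\zeta^{\beta-p-1}\,d\zeta=\frac{\|g\|_{p,\rho}}{\beta-p}\,t^{-p},
\]
while $\int_{\mathscr{Re}(r)=\beta}|\Gamma(r)/B(r)|\,|dr|<\infty$ by Stirling's asymptotics, the $\Gamma$--factor decaying like $e^{-\pi|\mathscr{Im}\,r|/2}$ on the vertical line. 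Multiplying gives $|\ell(t;g)|\le C\|g\|_{p,\rho}\,t^{-p}$, valid in the stated range $p>-2$.

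Finally, for (\ref{S7P14E5}) I would split $g(\zeta)=\alpha\zeta^{-\theta}+h(\zeta)$ with $h(\zeta)=\mathcal O(\zeta^{-\theta+\varepsilon})$ as $\zeta\to0$. For the pure power the truncated Mellin transform is explicit, $\int_0^t\zeta^{r-\theta-1}\,d\zeta=t^{r-\theta}/(r-\theta)$ for $\mathscr{Re}(r)>\theta$, so that
\[
\ell(t;\alpha\zeta^{-\theta})=\frac{6B(1)\alpha}{\pi^2}\,t^{-\theta}\;\frac{1}{2i\pi}\int_{\mathscr{Re}(r)=\beta}\frac{\Gamma(r)}{B(r)\,(r-\theta)}\,dr ,\qquad \beta>\theta .
\]
Closing this contour to the left --- legitimate because the reflection formula $\Gamma(r)\Gamma(1-r)=\pi/\sin(\pi r)$ forces $\Gamma/B$ to decay away from its poles --- a residue computation at the simple pole $r=\theta$, contributing $\Gamma(\theta)/B(\theta)$, identifies the announced coefficient $\tfrac{12B(1)\Gamma(\theta)}{\pi^2B(\theta)}$ of $t^{-\theta}$. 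For the remainder, since $h\in X_{\theta-\varepsilon,\rho}$, the estimate (\ref{S7P14E3}) with $p=\theta-\varepsilon$ gives $|\ell(t;h)|\le C\|h\|_{\theta-\varepsilon,\rho}\,t^{-\theta+\varepsilon}=\mathcal O(t^{-\theta+\varepsilon})$, which is precisely the error term in (\ref{S7P14E5}).
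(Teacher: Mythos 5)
Your proposal is correct and takes essentially the same route as the paper: the paper likewise writes $S(t)g$ through the Mellin representation of $\Lambda$, shifts the contour from $\mathscr Re(s)=c\in(0,2)$ to $\mathscr Re(s)=-1-\delta$ past the simple poles at $s=0$ (whose residue, $\frac{12B(1)}{\pi^2}\frac{1}{2i\pi}\int_{\mathscr Re(r)=\beta}(t/y)^{-r}\frac{\Gamma(r)}{B(r)}dr$, produces $\ell(t;g)$) and at $s=-1$ (the $O(x/t)$ term), treats the region $y>t$ with Proposition 3.5 of \cite{m} to get the remaining $O(x)$ contribution and the bound (\ref{S7P14D1B}), and obtains (\ref{S7P14E3}) and (\ref{S7P14E5}) by exactly the direct contour estimate and the power-plus-remainder splitting you describe. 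The uniformity in $\tau=t/y\ge 1$ that you flag as the main difficulty is precisely what the paper's inputs from \cite{m} supply: Proposition 2.10 bounds $U(t/y,s)$ on the shifted line $\mathscr Re(s)=-1-\delta$ with decay in $t/y$, and the two residues are bounded by $C(t/y)^{-3}$ and $C(t/y)^{-4}$ by pushing the inner $r$-contour toward the zero of $B$ at $r=3$, which makes all remainders integrable against $g(y)\,dy/y$ down to $y=0$.
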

\begin{proof}  In order to prove   (\ref{S7P14D1}) and  (\ref{S7P14D1B}) we write, as given in \cite{m},
\begin{align}
(S(t)g)(x)&=\int _0^{t} g(y)\Lambda\left(\frac {t} {y}, \frac {x} {y} \right)\frac {dy} {y}+\int _{t}^\infty g(y)\Lambda\left(\frac {t} {y}, \frac {x} {y} \right)\frac {dy} {y}\nonumber \\
&=\frac {1} {2\pi i}\int  _{  \mathscr Re s=c }x^{-s}
\int _0^{t} g(y)U\left(\frac {t} {y}, s \right)y^{s-1}dyds+\int _{t}^\infty g(y)\Lambda\left(\frac {t} {y}, \frac {x} {y} \right)\frac {dy} {y}  \label{S7P14E2A0}
\end{align}
where $c\in (0, 2)$. We start considering the second term in the right hand side of  (\ref{S7P14E2A0}) where $y>t$. If we assume  $x<t$  then, by Proposition 3.5 in \cite{m},
\begin{align*}
\left|\int _{t}^\infty g(y)\Lambda\left(\frac {t} {y}, \frac {x} {y} \right)\frac {dy} {y}\right|\le Cxt^5\int  _{ t }^\infty |g(y)|\frac {dy} {y^6}\le C||g|| _{ \theta, \rho  }xt^{5}t^{-\theta-5}(1+t)^{-\rho }
\end{align*}
If  $0<t<x$, then by  Proposition 3.5 of \cite{m}, for all $\varepsilon >0$ as small as desired, and  $0<t<x, y>t$,
\begin{align*}
|\Lambda \left(\frac {t} {y}, \frac {x} {y}\right)|\le Cx^{-3+\varepsilon }t^{9-\varepsilon }y^{-6}+Cx^{-5}t^7y^{-2}
\end{align*}
from where for $0<t<x$,
\begin{align*}
&\left|\int _{t}^\infty g(y)\Lambda\left(\frac {t} {y}, \frac {x} {y} \right)\frac {dy} {y}\right|\le Ct^{-\theta}(1+t)^{-\rho }\left(\left(\frac {t} {x} \right)^3+C\left(\frac {t} {x} \right)^5\right)
\end{align*}
It follows that
\begin{align}
\label{S7P14E2D2}
&\left|\int _{t}^\infty g(y)\Lambda\left(\frac {t} {y}, \frac {x} {y} \right)\frac {dy} {y}\right|\le Ct^{-\theta}(1+t)^{-\rho }
\min (1, x) \left(\frac {t} {x} \right)^3
\end{align}
On the other hand, in the first term at the right hand side of (\ref{S7P14E2A0}),  for all $0<x<t$, $y>0$ the function $U(t/y, \cdot)$ is analytic for $\mathscr Re s\in (-1, 0)\cup(0, 2)$ and has  simple poles at $s=-1$ and $s=0$. Then for $x$ small and $\delta>0$ small,

\begin{align}
&\frac {1} {2\pi i} \int  _{  \mathscr Re s=c }x^{-s}
\int _0^{t} g(y)U\left(\frac {t} {y}, s \right)y^{s-1}dyds=\int _0^t g(y)y^{-1} \text{Res} \left(U\left(\frac {t} {y}, s\right); s=0  \right)dy+
\label{S7P14E2A}\\
&+x\int _0^t g(y)y^{-2} \text{Res} \left(U\left(\frac {t} {y}, s\right); s=-1  \right)dy+\frac {1} {2\pi i}\int  _{  \mathscr Re s=-1-\delta  }x^{-s}
\int _0^t g(y)U\left(\frac {t} {y}, s \right)y^{s-1}dyds. \nonumber
\end{align}
In the last term at the right hand side of (\ref{S7P14E2A}),  by Proposition 2.10 in \cite{m},
\begin{align*}
&\Big| \frac {1} {2\pi i}\int  _{  \mathscr Re s=-1-\delta  }x^{-s}
\int _0^t g(y)U\left(\frac {t} {y}, s \right)y^{s-1}dyds \Big|\le \nonumber\\
&\le Cx^{1+\delta }\int _{-\infty}^\infty ((1+\delta) ^2+v^2)^{-1} \int _0^t g(y)((1+\delta) ^2+v^2)^{-\frac {t} {y}}y^{-2-\delta }dydv\\
&\le Cx^{1+\delta}  t^{-1-\delta}   \int _0^1 g(t z)(1+\delta)^{-\frac {2} {z}}z^{-2-\delta }dz
\end{align*}
If $0<t<1$,
\begin{align*}
&\le Cx^{1+\delta} t^{-1-\delta-\theta}||g|| _{ \theta, \rho  } \int _0^1(1+ \delta)^{-\frac {2} {z}}z^{-\theta-\delta -2}dy=C||g|| _{ \theta, \rho  }x^{1+\delta} t^{-1-\delta-\theta}.
\end{align*}
but if $t>1$,
\begin{align*}
\int _0^1 g(t z)(1+\delta)^{-\frac {2} {z}}z^{-2-\delta }dz\le 
Ct^{-\theta} \int _0^{1/t} (1+\delta)^{-\frac {2} {z}}z^{-2-\delta-\theta }dz+\\
+Ct^{-\theta-\rho } \int _{1/t} ^1(1+\delta)^{-\frac {2} {z}}z^{-2-\delta-\theta-\rho  }dz.
\end{align*}
Use of Mathematica gives,
\begin{align*}
 \int _0^{1/t} (1+\delta)^{-\frac {2} {z}}z^{-2-\delta-\theta }dz=\Gamma \left(1+\delta +\theta, t\log(1+\delta ) \right)(\log(1+\delta ))^{-1-\delta -\theta}\\
 \le C  (1+\delta )^{-t}(\log(1+\delta ))^{-1}t^{\delta +\theta}\le Ce^{-\varepsilon t},\,\,t>1.
\end{align*}
for some $\varepsilon >0$ small. Since, 
\begin{align*}
 \int _{1/t} ^1(1+\delta)^{-\frac {2} {z}}z^{-2-\delta-\theta-\rho  }dz\le  \int _{0} ^1(1+\delta)^{-\frac {2} {z}}z^{-2-\delta-\theta-\rho  }dz\le C
\end{align*}
it follows that for $t>1$,
\begin{align*}
&\Big| \frac {1} {2\pi i}\int  _{  \mathscr Re s=-1-\delta  }x^{-s}
\int _0^t g(y)U\left(\frac {t} {y}, s \right)y^{s-1}dyds \Big|\le
C||g|| _{ \theta, \rho  }x^{1+\delta} t^{-1-\delta-\theta-\rho }.
\end{align*}
and then, for all $x\in (0, \min (1, t))$,  $t>0$,
\begin{align}
&\Big| \frac {1} {2\pi i}\int  _{  \mathscr Re s=-1-\delta  }x^{-s}
\int _0^t g(y)U\left(\frac {t} {y}, s \right)y^{s-1}dyds \Big|\le
C||g|| _{ \theta, \rho  }x^{1+\delta} t^{-1-\delta-\theta}(1+t)^{-\rho }. \label{S7P14D4}
\end{align}
In the first and second  terms at the right hand side of (\ref{S7P14E2A}), by (2.21) and (2.27) in \cite{m},

\begin{align}
\label{S7P14E2B}
&\text{Res}\left( U\left(\frac {t} {y}, s\right)s=0\right)=\frac {12 B(1)} {\pi ^2}\frac {1} {2i\pi }\int  _{ \mathscr Re (r) =\beta  }\left(\frac {t} {y} \right)^{-r }\frac {\Gamma (r )} {B(r ) }dr\\
\label{S7P14E2C}
&\text{Res}\left( U\left(\frac {t} {y}, s\right)s=-1\right) =\text{Res}\left(B(s); s=-1) \right)\frac {1} {2i\pi }\int  _{ \mathscr Re (r) =\beta  }\left(\frac {t} {y} \right)^{-r-1 }\frac {\Gamma (r+1 )} {B(r ) }dr
\end{align}
where $B(1)>0$ by (2.7) in \cite{m}. And since  $0<y<t$ in these  terms, the estimates of the integrals in the right hand side of  (\ref{S7P14E2B}) and  (\ref{S7P14E2C})  are obtained for large values of $\beta $. Since the function $\Gamma $ is analytic for $\beta >0$, and the function $B$ is meromorphic,  analytic for $\beta \in (0, 3)$ and has an isolated zero at $r=3$ it follows,
\begin{align*}
\left|\text{Res}\left( U\left(\frac {t} {y}, s\right)s=0\right)\right|\le C\left(\frac {t} {y} \right)^{-3 },\,\,\,
\left|\text{Res}\left( U\left(\frac {t} {y}, s\right)s=-1\right)\right|\le C\left(\frac {t} {y} \right)^{-4 }
\end{align*}
from where, usig that $\rho +\theta<\rho $,
\begin{align}
\label{S7P14D6}
\left|\int _0^t g(y)y^{-1} \text{Res} \left(U\left(\frac {t} {y}, s\right); s=0  \right)dy\right|\le Ct^{-3}||g|| _{ \theta, \rho  }
\int _0^t y^{2-\theta}(1+y)^{-\rho } dy\nonumber\\
\le C||g|| _{ \theta, \rho  }t^{-\theta}(1+t)^{-\rho }
\end{align}
\begin{align}
\label{S7P14D6b}
\left|x\int _0^t g(y)y^{-2} \text{Res} \left(U\left(\frac {t} {y}, s\right); s=-1 \right)dy\right|\le Cxt^{-4}||g|| _{ \theta, \rho  } \int _0^t y^{2-\theta}(1+y)^{-\rho } dy\nonumber \\
\le Cxt^{-1-\theta}(1+t)^{-\rho }||g|| _{ \theta, \rho  }
\end{align}
Estimate (\ref{S7P14D1})    follows from  (\ref{S7P14E2A0})--(\ref{S7P14D4}), (\ref{S7P14D6}) and (\ref{S7P14D6b}).

For $0<t<x$ and $y\in (0, t)$, by Proposition 3.1 and Proposition 3.2 of \cite{m}, for all $\delta >0$ as small as desired,
\begin{align*}
\Lambda \left(\frac {t} {y}, \frac {x} {y}\right)=C_1\left(\frac {t} {y} \right)^{-3}\left(\left(\frac {x} {t} \right)^{-3} +
\mathcal O\left(\frac {x} {t} \right)^{-4+\delta } \right)=C_1x^{-3}y^3+y^3\mathcal O\left(t^{1-\delta } x^{-4+\delta} \right).
\end{align*}
from where,
\begin{align}
\label{S7P14E2DF27}
&\left|\int _{0}^t g(y)\Lambda\left(\frac {t} {y}, \frac {x} {y} \right)\frac {dy} {y}\right|\le Ct^{-\theta}(1+t)^{-\rho }\left(\left(\frac {t} {x} \right)^3+\left(\frac {t} {x} \right)^{4-\delta }\right)
\end{align}
and (\ref{S7P14D1B}) follows   from (\ref{S7P14E2A0}), (\ref{S7P14E2D2}), (\ref{S7P14E2DF27})

On the other hand, for $0<x<t$ and $x<1$ if we pass the first term in the right hand side of (\ref{S7P14E2A}) to the left hand side of   (\ref{S7P14E2A0})  it follows by  (\ref{S7P14E2A0})--(\ref{S7P14D4}), (\ref{S7P14D6}) and (\ref{S7P14D6b}) that for $x\in (0, 1)$,
\begin{align*}
\left|S(t)g(x)-\ell(t, g)\right|\le  C||g|| _{ \theta, \rho  }\left(t^{-\theta}  \left(\frac {x} {t}+\left(\frac {x} {t}\right)^{1+\delta } +x\right)+xt^5\right),
\end{align*}
and that shows (\ref {S7P14E1}).

In order to prove (\ref{S7P14E5}) notice that by  hypothesis,
\begin{align*}
g(y)y^{-1+r }=\alpha x^{-1+r-\theta}+s(x),\,\,|s(x)|\le C x^{-1+r -\theta+\varepsilon },\,\,\forall x\in (0, t).
\end{align*}
Then,
\begin{align*}
\ell (t, g)=\frac {12\alpha B(1)t^{-\theta}}{\pi ^2}\frac {1} {2i\pi }\int  _{ \mathscr Re(r )=\beta  }\frac {\Gamma(r )} {B(r )}\frac {dr } {(\sigma -\theta)}+\mathcal O(t)^{-\theta+\varepsilon },\,\,t\to 0
\end{align*}
from where (\ref{S7P14E5}) follows.
\end{proof}

\begin{lem}
\label{S7P141B}
For all $\theta \ge 0, \rho \ge 0$, $\theta+\rho <3$, $p\in (-2,1)$ there exists a constant $C>0$ such that, for all $g\in L^\infty ((0, T; X _{\theta, \rho  })$,

\begin{align}
\label{S7P141ABE}
(i) \quad & \int _0^{t-x}\Big(\ell(t-s, g(s))\Big)ds\le  C\sup _{ 0\le s\le t }||g(s)|| _{ p, \rho  }t^{1-p},\,\forall x\in (0, t).\\
\label{S7P141ABCE}
(ii)  \quad &  \int _{t-x}^t\Big(\ell(t-s, g(s))\Big)ds\le C\sup _{ 0\le s\le t }||g(s)|| _{ \theta, \rho  }x^{1-\theta},\,\forall x\in (0, t)\\
\label{S7P141BE}
(iii) \quad &\forall t\in (0, T),\,\,\forall x\in (0,t),\, x<1,\nonumber \\
& \int _0^t (S(t-s)g (s))(x)ds=\int _0^{t}\Big(\ell(t-s, g(s))\Big)ds+R_1(t, g, x) \nonumber \\
&|R_1(t, g, x)| \le C\sup _{ 0\le s\le t } ||g(s)|| _{ \theta, \rho  }x \left( x^{-\theta }+t^{1-\theta}\right).\\
\label{S7P141ABCEP}
(iv) \quad &\forall t>0,\,\,\forall x>t,\nonumber \\
& \left|\int _0^t (S(t-s)g (s))(x)ds\right|\le C\sup _{ 0\le s\le t } ||g(s)|| _{ \theta, \rho  }
x^{-3}t^{4-\theta}(1+t)^{-\rho }
\end{align}
\end{lem}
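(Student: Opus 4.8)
The plan is to derive all four estimates from the pointwise information on $S(t)g$ and on $\ell(t;g)$ gathered in Proposition \ref{S7P14}, reducing every assertion to an elementary one–dimensional integral in the variable $\tau=t-s$. Parts (i) and (ii) are then immediate. Applying the bound (\ref{S7P14E3}) with exponent $p$ (respectively with $p=\theta$) gives $|\ell(t-s;g(s))|\le C\sup_{0\le s\le t}||g(s)||_{p,\rho}(t-s)^{-p}$, and the substitution $\tau=t-s$ turns the integrals in (\ref{S7P141ABE}) and (\ref{S7P141ABCE}) into $\int_x^t\tau^{-p}\,d\tau$ and $\int_0^x\tau^{-\theta}\,d\tau$. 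Since $p<1$ (resp.\ $\theta<1$, which holds in the regime of interest) these equal $\tfrac{t^{1-p}-x^{1-p}}{1-p}\le\tfrac{t^{1-p}}{1-p}$ and $\tfrac{x^{1-\theta}}{1-\theta}$, which are exactly the claimed right–hand sides.

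The heart of the lemma is (iii). First I would split the time integral at $s=t-x$, i.e.\ at $\tau=x$. On $0<s<t-x$ one has $\tau=t-s\in(x,t)$, hence $0<x<\min(1,\tau)$, so the expansion (\ref{S7P14E1})--(\ref{S7P14E1B}) applies and $(S(t-s)g(s))(x)=\ell(t-s;g(s))+R_0(t-s,g(s),x)$; on $t-x<s<t$ one has $0<\tau<x$, so the decay estimate (\ref{S7P14D1B}) applies. Taking $\int_0^t\ell\,ds$ as the target, the remainder is
\[
R_1=-\int_{t-x}^t\ell(t-s;g(s))\,ds+\int_0^{t-x}R_0(t-s,g(s),x)\,ds+\int_{t-x}^t (S(t-s)g(s))(x)\,ds,
\]
and it remains to bound the three pieces. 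The first is controlled by part (ii), giving $Cx^{1-\theta}\le Cx(x^{-\theta}+t^{1-\theta})$. The third, via (\ref{S7P14D1B}) and $\tau=t-s$, is dominated by $\tfrac{C}{x^3}\int_0^x\tau^{3-\theta}\,d\tau\le Cx^{1-\theta}$. The middle term carries the work: inserting the three summands of (\ref{S7P14E1B}) and substituting $\tau=t-s$, I must estimate $x\int_x^t\tau^{-\theta-1}\,d\tau$, $x^{1+\delta}\int_x^t\tau^{-\theta-1-\delta}\,d\tau$ and $x\int_x^t\tau^{-\theta}\,d\tau$; the first two give $Cx^{1-\theta}$ (using $\theta>0$, as required by Proposition \ref{S7P14}) and the last gives $Cxt^{1-\theta}$. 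Summing yields $|R_1|\le C\sup_{0\le s\le t}||g(s)||_{\theta,\rho}\,x(x^{-\theta}+t^{1-\theta})$, which is (\ref{S7P141BE}).

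For (iv) the hypothesis $x>t$ forces $\tau=t-s<t<x$ throughout, so (\ref{S7P14D1B}) is valid on the whole range and
\[
\left|\int_0^t (S(t-s)g(s))(x)\,ds\right|\le \frac{C}{x^3}\sup_{0\le s\le t}||g(s)||_{\theta,\rho}\int_0^t\tau^{3-\theta}(1+\tau)^{-\rho}\,d\tau.
\]
The only remaining point is the elementary estimate $\int_0^t\tau^{3-\theta}(1+\tau)^{-\rho}\,d\tau\le Ct^{4-\theta}(1+t)^{-\rho}$, valid because $3-\theta>-1$ and $3-\theta-\rho>-1$ (both guaranteed by $\theta+\rho<3$): for $t\le1$ the weight $(1+\tau)^{-\rho}$ is comparable to $1$, while for $t>1$ one splits at $\tau=1$ and uses $\int_1^t\tau^{3-\theta-\rho}\,d\tau\le Ct^{4-\theta-\rho}$. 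This gives (\ref{S7P141ABCEP}). I expect the main obstacle to be precisely the bookkeeping in the middle term of $R_1$: one must keep the three contributions of $R_0$ separate and exploit $\theta>0$ to integrate $\tau^{-\theta-1}$ and $\tau^{-\theta-1-\delta}$ against the prefactors $x$ and $x^{1+\delta}$, so that the apparent $x^{-\theta}$ singularities collapse into the clean power $x^{1-\theta}$ without a logarithmic loss from the transition region $\tau\sim x$.
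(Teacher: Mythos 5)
Your proof is correct and follows essentially the same route as the paper: parts (i), (ii) and (iv) are obtained from (\ref{S7P14E3}) and (\ref{S7P14D1B}) exactly as in the text, and for (iii) the paper performs the very same split at $s=t-x$, bounding $\int_{t-x}^t S(t-s)g(s)(x)\,ds$, $\int_{t-x}^t\ell(t-s;g(s))\,ds$ and $\int_0^{t-x}R_0(t-s,g(s),x)\,ds$ by the identical elementary integrals in $\tau=t-s$. You in fact supply two details the paper leaves implicit --- the term-by-term integration of the three summands in (\ref{S7P14E1B}) and the closing estimate $\int_0^t\tau^{3-\theta}(1+\tau)^{-\rho}\,d\tau\le Ct^{4-\theta}(1+t)^{-\rho}$ --- and you correctly flag the restrictions $\theta>0$ (from Proposition \ref{S7P14}) and $\theta<1$ that the paper's computations also use tacitly.
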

\begin{proof}
Property (i) follows from  (\ref{S7P14E3}). By (\ref{S7P14D1B}),
\begin{align*}
\left|\int  _{ t-x }^tS(t-s)g(s)(X)ds\right|\le C\sup _{ 0\le s\le t } ||g(s)|| _{ \theta, \rho  }x^{-3}\int  _{ t-x }^t(t-s)^{3-\theta}ds\\
\le C\sup _{ 0\le s\le t } ||g(s)|| _{ \theta, \rho  }x^{-3}t^{4-\theta}\le C\sup _{ 0\le s\le t } ||g(s)|| _{ \theta, \rho  }t^{1-\theta}.
\end{align*}

On the other hand, in order to prove (iii) write
\begin{align*}
 \int _0^t S(t-s)g (s)(x)ds =\int _0^{t-x}S(t-s)g (s)(x)ds+  \int _{t-x}^tS(t-s)g (s)(x)ds=I_1+I_2.
\end{align*}
In  the term $I_2$, if $t<1$, then $t-s<1$. If $t>1$, since $x<1$ it follows that $t-1<t-x<s$. In both cases $0<t-s<1$  and then by (\ref{S7P14D1B}),  
\begin{align*}
I_2\le C x^{-3} \int _{t-x}^t ||g(s)|| _{ \theta, \rho  } (t-s)^{3-\theta} ds\le \sup _{ 0\le s\le t } ||g(s)|| _{ \theta, \rho  } x^{1-\theta}.
\end{align*}
Similarly,
\begin{align*}
 \int _0^t \ell(t-s, g(s))ds =\int _0^{t-x}\ell(t-s, g(s))ds+  \int _{t-x}^t\ell(t-s, g(s))ds=J_1+J_2.
\end{align*}
and, by (\ref{S7P14E3}), for $\theta>-1$,
\begin{align*}
|J_2|\le C\sup _{ 0<s<T }||g(s)|| _{ \theta, \rho  }\int  _{ t-x }^t(t-s)^{-\theta}ds\le C\sup _{ 0<s<T }||g(s)|| _{ \theta, \rho  }x^{1-\theta}
\end{align*}
By (\ref{S7P14E1}),(\ref{S7P14E1B}), if $0<x<t<1$, then $0<t-s<1$ and
\begin{align}
&|I_1-J_1|=\int _0^{t-x}\Big(S(t-s)g (x)-\ell(t-s, g(s))\Big)ds \nonumber\\
&\le C_\delta \int _0^{t-x} ||g(s)|| _{ \theta, \rho  }(t-s)^{-\theta} \left(\frac {x} {t-s}+\left(\frac {x} {t-s}\right)^{1+\delta }+x \right)ds \label{E7.43}\\
&\le C \sup _{ 0\le s\le t } ||g(s)|| _{ \theta, \rho  } x \left( x^{-\theta }+t^{1-\theta}\right).\nonumber
\end{align}
and this proves (iii). 
For the last point of the Lemma, since $x>t$, $x>t-s$ for every $s\in (0, t)$ and by (\ref{S7P14D1B}) again

\begin{align*}
\left|\int _0^t S(t-s)g (s)(x)ds\right|\le C \sup _{ 0\le s\le t } ||g(s)|| _{ \theta, \rho  }
x^{-3}\int _0^t(t-s)^{3-\theta}(1+(t-s))^{-\rho }ds
\end{align*}
and $(iv)$ follows.
\end{proof}

\begin{cor}
\label{S7cor1}
For all $v_0\in in X _{\theta,  \rho  }$ with $\theta \ge 0$, $\rho >0$ and $\theta+\rho <3/2$ consider the function $v$ defined as
\begin{align*}
&v(t, X)= ( S(t)g)(X^{1/2})\\
&g(x)=w_0(x^2).
\end{align*}
and denote $v(t, X)=( \mathscr S(t)\omega_0 )(X)$. Then, for $\theta'$ and $\rho '$ satisfying (\ref{thetaP}),
\begin{align*}
&(i) \quad  v\in L^\infty((0, \infty); X _{ \theta, \rho  })\cap C((0, \infty); X _{ \theta', \rho ' }))\\
&\qquad \exists C>0;\,\,||v(t)|| _{  \theta, \rho  }\le C ||\omega_0|| _{\theta, \rho  },\,\,\forall t>0,\\
&\qquad\left|\frac {\partial v} {\partial t} \right|+\left| \mathscr L(v)\right|\in L^\infty _{ \text{loc} }((0, \infty)\times (0, \infty))\\
&\qquad\forall X>0,\,\,\lim _{t\to 0 }v(t, X)=v_0(X),\\
&(ii) \quad \frac {\partial v (t, X  )} {\partial t}=\mathscr L(v (t))(X ),\,\,\forall t>0,\,\,\forall X>0.
\end{align*} 
There exists positive constants $C>0$  such that
\begin{align*}
&(iii)\quad|v(t, X)|\le C ||\omega_0|| _{  \theta, \rho  }  t^{-2\theta}(1+t)^{-2\rho } \left(1+\frac {\sqrt X} {t}+\sqrt X \right)\,0<X<t^2,\\
&(iv)\quad |v(t, X)|\le  C||v_0|| _{ \theta, \rho  }t^{-2\theta} (1+t)^{-2\rho }\left(\frac {t} {\sqrt X}\right)^3,\,0<t^2<X.
\end{align*}
Moreover,
\begin{align*}
&(v)\quad v(t, X)=L (t; v_0)+R_2(t, \omega, X),\,\forall X\in (0, \min(1,t^2))\\
&|R_2(t, \omega, X)|\le  ||\omega_0|| _{  \theta, \rho  }t^{-2\theta}(1+t)^{-2\rho } \left(\frac {\sqrt X} {t}+\sqrt X \right),\,0<X<\min (1, t^2)\\
&L(t; v_0)= \frac {6 B(1)}{\pi ^2}\frac {1} {2i\pi }\int  _{ \mathscr Re(r )=\beta  }\frac {\Gamma(r )} {B(r )}t^{-r }
\left(\int _0^{t} v_0(\zeta^2 )\zeta ^{-1+ r }d\zeta  \right)dr,\,\,\beta \in (0, 2)\\
&(vi)\quad \forall T>0, \forall p>-1,\,\,\exists C>0;\,\,|L(t, v_0)|\le C||v_0|| _{ p, \rho  }t^{-2p}\\
&(vii)\quad \text{If for some}\,\, \varepsilon >0:\,\,v_0(X)=\alpha X^{-\theta}(1+\mathcal O(X)^\varepsilon ),\,X\to 0:\\
&\qquad \quad  L(t; v_0)=\frac {12B(1)\Gamma (2\theta)} {\pi^2 B(2\theta)}t^{-2\theta}+\mathcal O(t)^{-2\theta+\varepsilon },\,t\to 0.
\end{align*}
\end{cor}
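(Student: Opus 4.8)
The plan is to obtain every assertion by transporting the corresponding statement about the semigroup $S(t)$ in the variable $x$ through the substitution $X=x^2$, $v(t,X)=u(t,\sqrt X)$, where $u(t)=S(t)g$ and $g(x)=\omega_0(x^2)$. Thus the corollary is essentially bookkeeping on top of Lemma \ref{lem1} and Propositions \ref{ptdm1} and \ref{S7P14}; the only genuine points are how the weights and the operator transform. The basic identity is the norm equivalence: for $g(x)=\omega_0(x^2)$ one has $\|\omega_0\|_{\theta,\rho}=\sup_{X>0}X^\theta(1+X)^\rho|\omega_0(X)|=\sup_{x>0}x^{2\theta}(1+x^2)^\rho|g(x)|$, and since $(1+x^2)^\rho\asymp(1+x)^{2\rho}$ for $\rho\ge0$ this gives $\|\omega_0\|_{\theta,\rho}\asymp\|g\|_{2\theta,2\rho}$. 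Hence $X=x^2$ identifies $X_{\theta,\rho}$ in $X$ with $X_{2\theta,2\rho}$ in $x$, the hypothesis $\theta+\rho<3/2$ becomes $2\theta+2\rho<3$ (the range admitted in the cited results), and condition (\ref{thetaP}) for $(\theta',\rho')$ relative to $(\theta,\rho)$ becomes (\ref{thetaP}) for $(2\theta',2\rho')$ relative to $(2\theta,2\rho)$, so the continuity spaces match.

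Next I would establish the operator intertwining. Substituting $Y=y^2$ in the definition (\ref{S3E23590L}) of $\mathscr L$, using $v(Y)=u(y)$, the form (\ref{S3E23590M}) of $M$ with $M(x^2,y^2)=x^{-1}\big(|x^2-y^2|^{-1}-(x^2+y^2)^{-1}\big)$, and $dY=2y\,dy$, gives $\mathscr L(v)(X)=2\,L(u)(\sqrt X)$, the factor $2$ being the constant absorbed by the time rescaling already used to pass from (\ref{S2Ewxi2L}) to the equation for $u$. With this, parts (i) and (ii) follow from Proposition \ref{ptdm1}: the pointwise equation (\ref{ptdm1E01}) yields $\partial_t v=\mathscr L(v)$; the local bound (\ref{ptdm1E1}) transfers because $X\mapsto\sqrt X$ is a smooth diffeomorphism of $(0,\infty)$; the initial trace (\ref{ptdm1E0}) gives $v(t,X)\to g(\sqrt X)=\omega_0(X)$; the continuity in $C((0,\infty);X_{\theta',\rho'})$ is the image of the continuity in Proposition \ref{ptdm1}(i) under the norm equivalence; and the uniform bound $\|v(t)\|_{\theta,\rho}\le C\|\omega_0\|_{\theta,\rho}$ is (\ref{S7P141E}) of Lemma \ref{lem1} rewritten with doubled weights.

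For the pointwise estimates I would apply Proposition \ref{S7P14} to $g\in X_{2\theta,2\rho}$ and set $x=\sqrt X$. Then (\ref{S7P14D1}) and (\ref{S7P14D1B}) convert $x\in(0,t)$ into $X\in(0,t^2)$ and $t<x$ into $t^2<X$, while $t^{-\theta}(1+t)^{-\rho}$ becomes $t^{-2\theta}(1+t)^{-2\rho}$; this is precisely (iii) and (iv). For (v) one notes that $\ell(t;g)=L(t;\omega_0)$ term by term, since $g(\zeta)=\omega_0(\zeta^2)$, and reads off (\ref{S7P14E1})--(\ref{S7P14E1B}); on the range $x=\sqrt X<\min(1,t)$ the middle term $(x/t)^{1+\delta}$ is dominated by $x/t$, so the three-term remainder collapses to the two-term bound claimed for $R_2$. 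Part (vi) is (\ref{S7P14E3}) with $p$ replaced by $2p$ (allowed since $p>-1\Leftrightarrow2p>-2$) combined with the norm equivalence. Finally, for (vii), $g(x)=\omega_0(x^2)=\alpha x^{-2\theta}(1+\mathcal O(x)^{2\varepsilon})$, so (\ref{S7P14E5}) with exponent $2\theta$ and error $2\varepsilon$ furnishes the stated expansion of $L(t;\omega_0)$.

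The one step requiring care — the main obstacle — is making the weight bookkeeping exact rather than merely comparable, i.e.\ checking that the discrepancy between $(1+x^2)^\rho$ and $(1+x)^{2\rho}$ costs only universal constants (true for $\rho\ge0$) and handling the endpoint $\theta=0$, which is admitted here but excluded from the hypothesis $\theta>0$ of Proposition \ref{S7P14}: one treats it either by applying the estimates with a slightly larger $\theta'>0$ and letting $\theta'\downarrow0$, or by the $\theta=0$ instance of the same bounds. Everything else is a direct transcription through $X=x^2$.
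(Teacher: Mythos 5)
Your proposal is correct and follows essentially the same route as the paper, whose entire proof consists of setting $g(x)=w_0(x^2)\in X_{2\theta,2\rho}$ and invoking Proposition \ref{ptdm1} for (i)--(ii) and Proposition \ref{S7P14} for (iii)--(vii). The bookkeeping you make explicit --- the norm equivalence $\|\omega_0\|_{\theta,\rho}\asymp\|g\|_{2\theta,2\rho}$, the intertwining $\mathscr L(v)(X)=2\,L(u)(\sqrt X)$ absorbed by the time rescaling, the collapse of the three-term remainder to the two-term bound in (v), and the $\theta=0$ endpoint of Proposition \ref{S7P14} --- is precisely what the paper leaves implicit, and your handling of it is sound.
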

\begin{proof}
 For any function $w_0\in X _{ \theta, \rho }$, with $ \theta \ge 0$ and $ \theta+\rho \in (0, 3/2)$, consider the function $g$ defined as $g(x)=w_0(x^2) \in X _{2 \theta, 2\rho  }$. Then (i) and (ii) follow from  (\ref{ptdm1E1})-(\ref{ptdm1E0}) in Proposition \ref{ptdm1}.
 Properties  $(iii)$ to $(vii)$ follow from Proposition \ref{S7P14} .
\end{proof}

\begin{proof}
[\upshape\bfseries{Poof of Theorem \ref{S7cor2}}]
Theorem \ref{S7cor2}  follows from Proposition \ref{ptdm1}, (ii) and Lemma \ref{S7P141B}.
\end{proof}

\section{The pseudo differential operator $P$.}
\label{Ppseudo}
\setcounter{equation}{0}
\setcounter{theo}{0}
After the change of variables $X=e^\xi , Y=e^\zeta , U(X)=w(\xi ), dY=e^\zeta d\zeta $,
\begin{align*}
&\mathscr L(v)(X)=\frac {e^{-\frac {\xi } {2}}} {2}\int  _{ \RR } (w(\zeta )-w(\xi ) )\left(\frac {1} {|e^\xi-e^\zeta |}-\frac {1} {e^\xi+e^\zeta } \right) e^{\zeta } d\zeta\\
&=\frac {e^{\frac {\xi } {2}}} {2}\int  _{ \RR } (w(\zeta )-w(\xi ) )\left(\frac {1} {|e^\xi-e^\zeta |}-\frac {1} {e^\xi+e^\zeta } \right) e^{\zeta -\xi  } d\zeta.
\end{align*}
Use of the change of variable $\zeta =\xi -h$, 

\begin{align*}
&=\frac {e^{\frac {\xi } {2}}} {2}\int  _{ \RR }(w(\xi -h )-w(\xi ) )\left(\frac {1} {|e^\xi-e^{\xi -h} |}-\frac {1} {e^\xi+e^{\xi -h} } \right) e^ { -h} 
dh\\
&=\frac {e^{-\frac {\xi } {2}}} {2}\int  _{ \RR }(w(\xi -h )-w(\xi ) )\left(\frac {1} {|1-e^{ -h} |}-\frac {1} {1+e^{ -h} } \right) e^ { -h} 
dh
=:P (w)(\xi )
\end{align*}
and since,
\begin{align*}
w(\xi -h )-w(\xi )=\int  _{ \RR }e^{ik\xi }\left(e^{-ikh}-1\right)\widehat w(k)dk
\end{align*}
it follows,
\begin{align*}
\mathscr L(v)(X)&=P(w)(\xi )\\
P(w)(\xi )&=\frac {e^{-\xi /2}} {2}\int  _{ \RR }\widehat w(k)e^{ik\xi }\int  _{ \RR }\left(\frac {1} {|1-e^{ -h} |}-\frac {1} {1+e^{ -h} } \right) e^{-h}\left(e^{-ikh}-1\right) dhdk\\
&=\int  _{ \RR }\widehat w(k)e^{ik\xi }p(\xi , k)dk\\
p(\xi , k)&=\frac {e^{-\xi /2}} {2}\int  _{ \RR }\left(\frac {1} {|1-e^{ -h} |}-\frac {1} {1+e^{-h} } \right) e^{- h}\left(e^{-ikh}-1\right) dh=-e^{-\frac {\xi} {2} }\rho_0 (k)\\
\rho _0(k)&=\frac {1} {2} \left( \text{\rm Log}(4)+ \Psi {\left(\frac {1} {2} +\frac {ik} {2}\right) }+ \Psi \left(1+\frac {ik} {2}\right) +2\gamma _E\right)
\end{align*}
where $ \Psi (s)$  is the Digamma function and $\gamma _E$ denotes the Euler's Gamma constant. 
The operator $P$ is  a pseudo differential operator, whose symbol is $p(\xi , k)=-e^{-\frac {\xi } {2}}\rho _0(k)$. 
The linear equation (\ref{S2Ewxi2L}) reads in these new variables,
\begin{align}
\label{S2Ewxi2L2}
\frac {\partial w (t, \xi )} {\partial t  }=P(w (t))(\xi )+Q(t, \xi ).
\end{align}

The following Proposition directly follows from the well known asymptotic behavior of the Digamma function $\Psi $.
\begin{prop}
\label{S2PP1}
The function $\rho_0 $ is meromorphic, analytic on the strip $\mathscr Re(k)\in (-1, 1)$. Moreover,
\begin{align*}
\rho_0 (k)&=  (\gamma _E+\log |k|)-\frac {i} {2k}+\mathcal O(k)^{-2},\,\,|k|\to \infty \\
\rho_0  (k)&=Z(3) k^2-\frac {i\pi ^2k} {12}+\mathcal O (k)^3,\,\,k\to 0,\\
|\rho _0(k)|&> 0,\,\,\forall k\in \RR\setminus\{0\}
\end{align*}
There exists $C^*>0$ and $C_*>0$ such that
\begin{align*}
C_*\left(k^2\1 _{ |k|<1}+ 2\1 _{ |k|>1}\log |k|\right)\le \mathscr Re( \rho_0 (k))\le C^*\left(k^2\1 _{ |k|<1 }+\1 _{ |k|>1 }\log |k|\right),\,\,\forall k\in \RR.
\end{align*}
\end{prop}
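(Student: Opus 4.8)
The plan is to collapse the two Digamma terms into a single one. Applying Legendre's duplication formula $\Psi(z)+\Psi(z+\tfrac12)=2\Psi(2z)-2\log 2$ with $z=\tfrac12+\tfrac{ik}{2}$ (so that $z+\tfrac12=1+\tfrac{ik}{2}$ and $2z=1+ik$), and using $\mathrm{Log}(4)=2\log 2$, the definition of $\rho_0$ reduces to
\begin{equation*}
\rho_0(k)=\gamma_E+\Psi(1+ik).
\end{equation*}
Every assertion of the Proposition then becomes a statement about $\Psi$ restricted to the vertical line $1+i\RR$, which is precisely the regime where the classical properties of $\Psi$ are available directly.

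First I would settle the analyticity. Since $\Psi$ is meromorphic with simple poles exactly at the non-positive integers, $\rho_0$ has poles precisely where $1+ik\in\{0,-1,-2,\dots\}$, that is at $k\in\{i,2i,3i,\dots\}$; these all lie on the imaginary axis at height $\ge 1$, so $\rho_0$ is analytic on the strip $|\mathscr{Im}(k)|<1$ about the real axis. For the remaining real-variable claims I would use the Weierstrass expansion $\Psi(1+z)=-\gamma_E+\sum_{n\ge1}\big(\tfrac1n-\tfrac1{n+z}\big)$ with $z=ik$. Taking the real part of $\tfrac1n-\tfrac1{n+ik}$, the constant $-\gamma_E$ cancels against $\gamma_E$ and yields
\begin{equation*}
\mathscr{Re}\,\rho_0(k)=\sum_{n\ge1}\frac{k^2}{n(n^2+k^2)}.
\end{equation*}
Because every summand is strictly positive for $k\ne0$, this gives at once $\mathscr{Re}\,\rho_0(k)>0$, hence $|\rho_0(k)|>0$ for all $k\ne0$ (while $\rho_0(0)=0$).

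The two expansions are then read off from the two standard descriptions of $\Psi$. For $k\to0$ I would Taylor-expand $\Psi(1+ik)$ using $\Psi(1)=-\gamma_E$, $\Psi'(1)=\zeta(2)=\pi^2/6$ and $\Psi''(1)=-2\zeta(3)$: the constant cancels, the quadratic term produces the real contribution $\zeta(3)k^2$ (identifying the statement's $Z(3)$ with $\zeta(3)$), and the linear term is purely imaginary, giving the $k\to0$ formula modulo $\mathcal{O}(k^3)$. For $|k|\to\infty$ I would substitute $z=1+ik$ into $\Psi(z)=\log z-\tfrac1{2z}+\mathcal{O}(z^{-2})$; then $\mathscr{Re}\log(1+ik)=\log|k|+\mathcal{O}(k^{-2})$ and $-\tfrac1{2(1+ik)}=\tfrac{i}{2k}+\mathcal{O}(k^{-2})$, so that $\mathscr{Re}\,\rho_0(k)=\gamma_E+\log|k|+\mathcal{O}(k^{-2})$, which is the announced leading behaviour.

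Finally, the two-sided bounds on $\mathscr{Re}\,\rho_0$ follow from the explicit series. For $|k|>1$ I would compare it with $\int_1^\infty \tfrac{k^2}{x(x^2+k^2)}\,dx=\tfrac12\log(1+k^2)$ to get $\mathscr{Re}\,\rho_0(k)\asymp\log|k|$, and for $|k|<1$ the termwise estimate $\tfrac{k^2}{n(n^2+k^2)}\asymp\tfrac{k^2}{n^3}$ summed over $n$ gives $\mathscr{Re}\,\rho_0(k)\asymp k^2$. The only genuinely delicate point I anticipate is choosing a single pair of constants $C_*,C^*$ for which the integral/series comparison holds simultaneously and uniformly across the crossover region $|k|\sim1$; once that matching is arranged the two displayed inequalities follow, and everything else is a direct transcription of the textbook behaviour of the Digamma function.
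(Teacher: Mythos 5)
Your reduction via the Legendre duplication formula to $\rho_0(k)=\gamma_E+\Psi(1+ik)$ is correct, and it effectively supplies a complete proof where the paper offers none: the paper simply asserts that the Proposition ``directly follows from the well known asymptotic behavior of the Digamma function,'' working with the two digamma terms as given. Your route buys something concrete beyond that one-liner: the Weierstrass series yields the exact positive representation $\mathscr{Re}\,\rho_0(k)=\sum_{n\ge1}k^2/\bigl(n(n^2+k^2)\bigr)$, from which strict positivity for $k\ne0$, the two-sided bound $\asymp k^2$ for $|k|\le1$ (between $k^2\sum_{n\ge1}1/(n(n^2+1))$ and $\zeta(3)k^2$), and the logarithmic growth via the comparison $\int_1^\infty k^2\,dx/\bigl(x(x^2+k^2)\bigr)=\tfrac12\log(1+k^2)$ all follow immediately; the Taylor data $\Psi(1)=-\gamma_E$, $\Psi'(1)=\pi^2/6$, $\Psi''(1)=-2\zeta(3)$ and the Stirling expansion of $\Psi$ then give the two asymptotic formulas. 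This is cleaner than manipulating $\Psi(\tfrac12+\tfrac{ik}2)+\Psi(1+\tfrac{ik}2)$ directly.

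Two caveats, both of which are really defects of the stated Proposition that your computation exposes rather than flaws in your method. First, your expansions silently correct the statement: with the paper's definition of $\rho_0$, the small-$k$ linear term is $+\,i\pi^2k/6$ (as your Taylor computation gives), not $-\,i\pi^2k/12$; at infinity $\mathscr{Im}\,\rho_0(k)=\arctan k+\tfrac1{2k}+\mathcal O(k^{-2})\to\pm\pi/2$, a constant absent from the stated expansion; and the analyticity strip must be $\mathscr{Im}(k)\in(-1,1)$, as you implicitly use (poles at $k=in$, $n\ge1$) — harmless downstream, since only $\mathscr{Re}\,\rho_0$ and $|\rho_0|$ are used in the rest of the paper. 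Second, the crossover issue you flag as ``delicate'' is in fact unfixable for the literal inequality: since each term of your series is increasing in $|k|$, $\mathscr{Re}\,\rho_0(k)\to\mathscr{Re}\,\rho_0(1)>0$ as $|k|\to1^+$ while $C^*\log|k|\to0$, so no choice of $C^*$ makes the stated upper bound hold near $|k|=1$. The correct version — and the one actually invoked later, e.g.\ $|\rho_0(k)|\le C\log(1+|k|)$ in Lemma \ref{S7Lapbc} — is the upper bound with $\log(1+|k|)$ (or with $|k|$ bounded away from $1$), and your series comparison proves that form verbatim; the lower bound is fine as stated because there the vanishing of $\log|k|$ at $|k|=1$ works in your favor. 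So: approach sound and complete, but you should state these corrections explicitly rather than promising a constant-matching that cannot succeed.
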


It follows that for all compact $K\subset \RR$, there exists $C>0$ and $c>0$ such that $-p(\xi , k)\ge c\log |k|$ for all $\xi \in K$ and $|k|\ge C$.

Then $-P$  may  be  seen as an elliptic pseudo differential operator of  logarithmic order. It is of constant strength and for any $\xi_0\in \RR$ fixed, the operator $-P _{ \xi _0 }$ with symbol $e^{-\xi _0/2}\rho _0(k)$ is hypoelliptic. It follows that $-P$ is also hypoelliptic by Theorem 7.4.1 in \cite{H}.  However, $-P$ is not uniformly elliptic since  the previous bound from below on $-p(\xi , k)$ can not be uniform for all $\xi \in \RR$. 
\begin{prop}
\label{S2PP2}
If $w\in \mathscr S'(\RR)$ is such that $\widehat w\, \rho _0\in L^2(\RR)$, then  $e^{\xi/2} P(w)\in L^2(\RR)$ and
\begin{align*}
&P(\xi )=-e^{-\xi/2 }\int  _{ \RR } e^{ik\xi }\widehat w(k)\rho _0(k)dk\\
&\widehat{e^{\xi/2 }P}(k)=-\widehat w(k)\rho _0(k),\,\,\forall k\in \RR.\\
&e^{\xi/2 }P(w)\in L^2(\RR) \Longleftrightarrow \widehat w\, \rho_0 \in L^2(\RR),\,\,\, ||e^{\xi/2 }P(f)||_2=|| \widehat w\, \rho_0 ||_2.
\end{align*}
\end{prop}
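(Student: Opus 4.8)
The plan is to read the conclusion essentially off the definition of $P$ and then invoke Plancherel's theorem. First I would recall from Section 1.2 that the symbol of $P$ is $p(\xi,k)=-e^{-\xi/2}\rho_0(k)$, so that
\begin{align*}
P(w)(\xi)=\int_{\RR}\widehat w(k)\,e^{ik\xi}\,p(\xi,k)\,dk
=-e^{-\xi/2}\int_{\RR}\widehat w(k)\,e^{ik\xi}\,\rho_0(k)\,dk .
\end{align*}
The factor $e^{-\xi/2}$ depends only on $\xi$ and pulls out of the integral, which is exactly the first displayed identity. Multiplying through by $e^{\xi/2}$ then exhibits $e^{\xi/2}P(w)$ as (minus) the inverse Fourier transform of $\widehat w\,\rho_0$.

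Before taking this at face value I would check that the right-hand side makes sense when $w$ is merely a tempered distribution. By Proposition \ref{S2PP1} the function $\rho_0$ is smooth on all of $\RR$: for real $k$ the arguments $\tfrac{1}{2}+\tfrac{ik}{2}$ and $1+\tfrac{ik}{2}$ of the Digamma function have real parts $\tfrac12$ and $1$, hence never meet a non-positive integer, so no poles occur, while the only zero sits at $k=0$. Together with the logarithmic growth and decaying derivatives recorded in Proposition \ref{S2PP1}, this makes $\rho_0$ a multiplier on $\mathscr S'(\RR)$, so $\widehat w\,\rho_0$ is always a well-defined tempered distribution; the hypothesis is precisely that it happens to lie in $L^2(\RR)$. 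Writing $g:=\widehat w\,\rho_0\in L^2(\RR)$, its inverse Fourier transform $\mathcal F^{-1}g$ is a genuine $L^2$ function, the integral above is meaningful, and $e^{\xi/2}P(w)=-\mathcal F^{-1}g$ almost everywhere.

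The remaining assertions follow from Plancherel's theorem in the isometric normalization used throughout the paper. Taking Fourier transforms gives $\widehat{e^{\xi/2}P(w)}(k)=-g(k)=-\widehat w(k)\rho_0(k)$, the second identity; and the isometry property yields $\|e^{\xi/2}P(w)\|_2=\|\mathcal F^{-1}g\|_2=\|g\|_2=\|\widehat w\,\rho_0\|_2$. Since Plancherel runs in both directions, $e^{\xi/2}P(w)\in L^2(\RR)$ if and only if its Fourier transform $-\widehat w\,\rho_0$ lies in $L^2(\RR)$, which establishes the stated equivalence.

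The only genuinely delicate point is the middle step: legitimizing the formal integral defining $P(w)$ for a distributional $w$. Everything hinges on the fact that multiplication by the smooth, slowly growing symbol $\rho_0$ turns $\widehat w$ into a bona fide $L^2$ function; once that is in place, the identification with an inverse Fourier transform and the application of Plancherel are routine, and the sign and the $e^{\pm\xi/2}$ bookkeeping are the only things left to track.
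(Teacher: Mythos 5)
Your proof is correct and follows essentially the same route as the paper's: factor the symbol $p(\xi,k)=-e^{-\xi/2}\rho_0(k)$ to identify $e^{\xi/2}P(w)$ with $-\mathscr F^{-1}(\widehat w\,\rho_0)$, then invoke the Fourier transform's isometry on $L^2(\RR)$ for the remaining identities. The only difference is that you spell out why $\widehat w\,\rho_0$ is a well-defined tempered distribution (smoothness and slow growth of $\rho_0$, via the Digamma function), a point the paper leaves implicit; this is a harmless and correct addition.
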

\begin{proof}
If $\widehat w\, \rho_0 \in L^2(\RR)$, then $\mathscr F^{-1}\left( \widehat w\, \rho_0\right)\in L^2(\RR)$. Therefore,  by definition of $P$,
\begin{align*}
e^{\xi /2}P(w)(\xi )&=-\int  _{ \RR }\widehat w(k)e^{ik\xi }\rho_0 (k) dk\equiv\mathscr F^{-1}\left( \widehat w\, \rho_0\right)(\xi )\in L^2\\
P(w)(\xi )&=\int  _{ \RR }\widehat w(k)e^{ik\xi }p(\xi , k)dk=-e^{-\xi/2 }\int  _{ \RR }\widehat w(k)e^{ik\xi }\rho_0 (k) dk\\
\widehat{e^{\xi/2 }P(w)}(k)&=- \widehat w(k)\rho_0 (k).
\end{align*}
and the  Proposition follows from the properties of the Fourier transform on $L^2(\RR)$.
\end{proof}
In order to describe the regularizing properties of the  equation (\ref{S2Ewxi2Lc})
the classical method of localization and freezing of coefficients may now be used.
\subsection{Freezing the coefficient in $P$.}
For $\xi _0\in \RR$, define,
\begin{align*}
&M _{ \xi _0 }h (\xi )=e^{-\frac {\xi _0} {2}}h (\xi )\\
&P_0h (\xi )=\frac {1} {2}\int  _{ \RR }(h(\xi -h )-h(\xi ) )\left(\frac {1} {|1-e^{ -h} |}-\frac {1} {1+e^{ -h} } \right) e^ { -h}dh\\
&P _{ \xi _0 }h (\xi )=e^{-\frac {\xi_0 } {2}}( P_0h )(\xi )=M _{ \xi _0 }P_0(w)(\xi )
\end{align*}
and denote
\begin{align*}
e^{-\frac {\xi _0} {2}}=\kappa _0.
\end{align*}
The operator $P _{ \xi _0 }$ satisfies then,
\begin{align*}
\widehat {P _{ \xi _0 }h }(k)= - e^{-\xi _0/2} \rho _0(k)\widehat h (k)= - \kappa _0 \rho _0(k)\widehat h (k),\,\,\text{if}\,\,  \rho _0(k) \widehat h (k)\in \mathscr S'(\RR).
\end{align*}
It immediately follows from Proposition \ref{S2PP1} that  $P_0$ is linear and continuous from $H^\sigma $ to $H^\sigma  _{ \log^{-1 }}$ and from $H^\sigma  _{ \log }$ into $H^\sigma $ for all $\sigma>0$, and that  $||P_0 w||^2 _{ H _{ \log^{-1} }^\sigma  }$ defines a norm  in $H^\sigma$, equivalent  to $||w||^2 _{ H^\sigma  }$.
\begin{rem}
The solution $h$ of the Cauchy problem for the homogeneous equation
\begin{align*}
\frac {\partial h (t, \xi )} {\partial t}= P _{ \xi _0 }h (t) (\xi ),\,\,h (0, \xi )=h _0(\xi )
\end{align*}
such that $\widehat h\,\rho _0\in \mathscr S'(\RR)$, is given as follows,
\begin{align*}
h (t, \xi )=S _{ \xi _0}( t)(h _0)(\xi )=\mathscr F^{-1}\left(\exp\left(- t   \kappa _0 \rho _0 \right) \widehat h _0 \right)(\xi ),\,\,\forall t>0,\, \forall \xi \in \RR.
\end{align*}
If $h _0\in H^\sigma (\RR)$,
\begin{align*}
\int  _{ \RR }|\widehat{ S _{ \xi _0 }(t)h_0}(k)|^{2} (1+|k|^2)^{\sigma +t \kappa _0}dk=
\int  _{ \RR }|\widehat h_0 (k)|^{2} (1+|k|^2)^{\sigma + t\kappa _0 } \exp(-2t \kappa _0\mathscr Re(\rho _0(k)))dk\\
\le C\int  _{ \RR }|\widehat h_0 (k)|^{2} (1+|k|^2)^{\sigma } dk
\end{align*}
and the following regularizing effect of $S _{ \xi _0 }$ follows,
\begin{align*}
\left|\left|S _{ \xi _0 }(t)(h _0) \right|\right| _{H^{\sigma + t \kappa _0  }  }\le C||h _0|| _{ H^\sigma  },\,\forall t>0.
\end{align*}
\end{rem}
For the non homogeneous equation the following holds,
\begin{prop}
\label{nheh1}
Suppose that $  Q\in L^2((0, T); H^0 _{ \log^{-1} })$ 
and $h_0\in L^2$. Then, for every $\xi _0\in \RR$, the Cauchy problem 

\begin{align*}
&\frac {\partial h} {\partial t}(t, \xi )=P _{ \xi _0 }(h(t))(\xi )+Q(t, \xi )\\
&h(0, \xi )=h_0(\xi )
\end{align*}
has a unique solution $h$ such that, for each $t>0$, $\partial _t \hat h$ and $\hat h(t)$ are well defined  functions for almost every $t>0$ and $\xi >0$,  and is given by
\begin{align}
\label{nheh2}
\hat h(t, k )=e^{-\kappa _0\rho _0(k)t}\hat h_0(k)+\int _0^t e^{-\kappa _0\rho _0(k)(t-s)}\widehat Q(s, k)ds\\
\int _0^T||h(t)||^2_2dt\le CT||h_0||_2^2+\int _0^T||Q(s)||^2 _{ H^0 _{ \log^{-1} } }ds
\end{align}
\end{prop}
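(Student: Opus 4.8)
The plan is to diagonalize the frozen problem with the Fourier transform in $\xi$, under which $P_{\xi_0}$ becomes multiplication by $-\kappa_0\rho_0(k)$. Taking $\widehat{\cdot}$ in the equation and using $\widehat{P_{\xi_0}h}(k)=-\kappa_0\rho_0(k)\widehat h(k)$, the Cauchy problem decouples into the family of scalar linear ODEs $\partial_t\widehat h(t,k)=-\kappa_0\rho_0(k)\widehat h(t,k)+\widehat Q(t,k)$ with datum $\widehat h(0,k)=\widehat h_0(k)$, one for each fixed $k$. Each is solved explicitly by variation of constants, which is exactly formula (\ref{nheh2}); existence follows once one checks that its right-hand side defines, for a.e.\ $t$, a function of $k$ in $L^2$, which is a consequence of the energy estimate below together with $\mathscr Re(\rho_0)\ge 0$ from Proposition \ref{S2PP1}. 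Uniqueness is immediate, since the difference of two solutions has Fourier transform satisfying the homogeneous scalar ODE with zero datum for each $k$, hence vanishes; the stated well-definedness of $\widehat h(t)$ for a.e.\ $t$ comes from membership in $L^2_{t,k}$, and that of $\partial_t\widehat h$ from reading it off the ODE.

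For the estimate I would work entirely on the Fourier side through Plancherel, writing $\|h(t)\|_2^2=\int_\RR|\widehat h(t,k)|^2\,dk$ and splitting (\ref{nheh2}) into its homogeneous and Duhamel parts via $|a+b|^2\le 2|a|^2+2|b|^2$. Put $a(k)=\kappa_0\mathscr Re(\rho_0(k))\ge 0$. The homogeneous part is controlled by $e^{-2a(k)t}|\widehat h_0(k)|^2\le|\widehat h_0(k)|^2$, so integrating in $t\in(0,T)$ and then in $k$ produces the term $2T\|h_0\|_2^2$.

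The core of the argument is the Duhamel term, where the logarithmic weight of $H^0_{\log^{-1}}$ must be generated. For fixed $k$ the inner integral is the time-convolution of $G_a(\tau)=e^{-a(k)\tau}\1_{\tau>0}$ with $|\widehat Q(\cdot,k)|$, so Young's inequality on $(0,T)$ gives $\int_0^T\bigl|\int_0^t e^{-a(k)(t-s)}\widehat Q(s,k)\,ds\bigr|^2\,dt\le\|G_a\|_{L^1(0,T)}^2\int_0^T|\widehat Q(s,k)|^2\,ds$, with $\|G_a\|_{L^1(0,T)}=(1-e^{-a(k)T})/a(k)\le\min(T,1/a(k))$. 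Integrating in $k$ and invoking the lower bound $\mathscr Re(\rho_0(k))\ge C_*\bigl(k^2\1_{|k|<1}+2\log|k|\,\1_{|k|>1}\bigr)$ of Proposition \ref{S2PP1}, the factor $\min(T^2,a(k)^{-2})$ is dominated by $C(1+\log(1+|k|))^{-1}$, which converts the right-hand side into $\int_0^T\|Q(s)\|^2_{H^0_{\log^{-1}}}\,ds$ and closes the estimate.

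The delicate step is exactly this weight matching, and it is where I expect the only real difficulty. For large $|k|$ the damping $a(k)\sim\kappa_0\log|k|$ makes $a(k)^{-2}\lesssim(\log|k|)^{-2}$, even better than the weight $(1+\log(1+|k|))^{-1}$ requires. The obstacle is the low-frequency region, and the neighborhood of $|k|=1$, where $\mathscr Re(\rho_0)$ degenerates to $0$: there the decay of $G_a$ is useless and one must use the finite-horizon value $\|G_a\|_{L^1(0,T)}\le T$ instead, so that the constant in front of the forcing term depends on $T$ through $\max(T^2,\cdot)$. An equivalent and perhaps cleaner route, avoiding the explicit kernel, starts from the energy identity $\tfrac12\frac{d}{dt}\|h(t)\|_2^2=-\kappa_0\int_\RR\mathscr Re(\rho_0(k))|\widehat h(t,k)|^2\,dk+\mathscr Re\int_\RR\widehat Q(t,k)\overline{\widehat h(t,k)}\,dk$: one bounds the coupling term by Cauchy--Schwarz against the weight $1+\log(1+|k|)$, absorbs the high-frequency contribution into the nonnegative dissipation via Proposition \ref{S2PP1}, and closes the degenerate low-frequency part by Gronwall, reaching a bound of the same structure.
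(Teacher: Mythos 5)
Your proposal is correct and takes essentially the same route as the paper: Fourier diagonalization of $P_{\xi_0}$ into multiplication by $-\kappa_0\rho_0(k)$, the explicit variation-of-constants formula (\ref{nheh2}), and a low/high frequency splitting in which the high-frequency part gains the logarithmic weight by integrating the decay $e^{-\kappa_0(t-s)\mathscr Re(\rho_0(k))}\le |k|^{-\kappa_0(t-s)}$ in time (the paper does this via Fubini, producing exactly the factor $(2\kappa_0\log|k|)^{-1}$; your Young's-inequality packaging with $\|G_a\|_{L^1(0,T)}\le\min(T,1/a(k))$ is the same computation and even yields the stronger $(\log|k|)^{-2}$ at high frequency), while the degenerate low frequencies are handled by the trivial $T$-dependent bound, matching the paper up to constants. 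The only minor imprecision is that $\mathscr Re(\rho_0)$ itself vanishes only at $k=0$, not near $|k|=1$ (only the lower bound of Proposition \ref{S2PP1} degenerates there), but your argument does not actually use positivity on that region, so nothing breaks.
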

\begin{proof}
If for all $t>0$  $\partial _t \hat h(t)$ and $\hat h(t)$are two functions, we may Fourier transform the equation to obtain, for all $t\in (0, T)$,
\begin{align*}
\frac {\partial \hat h(t, k)} {\partial t }=-\kappa _0\rho _0(k)\hat h(t, k)+\widehat Q(t, k)
\end{align*}
and then, integration in time gives (\ref{nheh2}) for $t\in (0, T)$ and $k\in \RR$. 
By Proposition \ref{S2PP1}, there exists a constant $C>0$ such that
\begin{align*}
 \int  _{ \RR} e^{-2\kappa _0\mathscr Re(\rho _0(k))t} |\hat h_0(k)|^2dk\le C ||h_0||_2^2
\end{align*}
By the hypothesis on $Q$, for all $t\in (0, T)$ and $s\in (0, t)$
\begin{align*}
&\int  _{ \RR }\left|\widehat Q(s, k)e^{-\kappa _0\rho _0(k)(t-s)} \right|^2dk
\le\int  _{ \RR }\left|\widehat Q(s, k) \right|^2e^{-2\kappa _0 \mathscr Re(\rho _0(k)(t-s)}dk\\
&=\int  _{|k|<R }\left|\widehat Q(s, k) \right|^2e^{-2\kappa _0 \mathscr Re(\rho _0(k)(t-s)}dk+\int  _{ |k|>R }\left|\widehat Q(s, k) \right|^2e^{-2\kappa _0 \mathscr Re(\rho _0(k)(t-s)}dk
\end{align*}
By,  Proposition \ref{S2PP1}
\begin{align*}
\int  _{|k|<R }\left|\widehat Q(s, k) \right|^2e^{-2\kappa _0 \mathscr Re(\rho _0(k)(t-s)}dk \le C
\int  _{|k|<R }\left|\widehat Q(s, k) \right|^2e^{-2\kappa _0 |k|^2(t-s)}dk,
\end{align*}
and
\begin{align*}
\int  _{|k|>R }\left|\widehat Q(s, k) \right|^2e^{-2\kappa _0 \mathscr Re(\rho _0(k)(t-s)}dk \le C
\int  _{|k|>R }\left|\widehat Q(s, k) \right|^2e^{-2\kappa _0 (\log |k|)(t-s)}dk\\
=\int  _{|k|>R }\left|\widehat Q(s, k) \right|^2 |k|^{-2\kappa _0(t-s)}dk
\end{align*}
from where 
\begin{align*}
||h(t)||^2_2\le  C||h_0||^2_2+\int _0^t\int  _{|k|<R }\left|\widehat Q(s, k) \right|^2e^{-2\kappa _0 |k|^2(t-s)}dkds+\\
+\int _0^t\int  _{|k|>R }\left|\widehat Q(s, k) \right|^2 |k|^{-2\kappa _0(t-s)}dkds
\end{align*}
and then,
\begin{align*}
\int _0^T||h(t)||^2_2dt\le  CT||h_0||^2_2+\int _0^T\int _0^t\int  _{|k|<R }\left|\widehat Q(s, k) \right|^2dkdsdt+\\
+\int _0^T\int _0^t\int  _{|k|>R }\left|\widehat Q(s, k) \right|^2 |k|^{-2\kappa _0(t-s)}dkdsdt
\end{align*}

\begin{align*}
\int _0^T\int _0^t  \left|\widehat Q(s, k) \right|^2 |k|^{-2\kappa _0(t-s)}dsdt
&=\int _0^T\left|\widehat Q(s, k) \right|^2\int _s^T  |k|^{-2\kappa _0(t-s)}dtds\\
&=\int _0^T\left|\widehat Q(s, k) \right|^2\left(\frac {1} {2\kappa _0(\log |k|)}-\frac {e^{-2\kappa _0 (\log |k|)(T-s)}} {2\kappa _0(\log |k|)}\right)ds\\
&< \frac {1} {2\kappa _0(\log |k|)}\int _0^T\left|\widehat Q(s, k) \right|^2ds
\end{align*}
and then
\begin{align*}
&\int _0^T\int _0^t\int  _{|k|>R }\left|\widehat Q(s, k) \right|^2 |k|^{-2\kappa _0(t-s)}dkdsdt
\le \frac {1} {2\kappa _0}\int _0^T\int  _{ |k|>R } \left|\widehat Q(s, k) \right|^2 (\log |k|)^{-1}dkds\\
&||h(t)||^2_2\le CT||h_0||_2+C\int _0^T||Q(s)||^2 _{ H^0 _{ \log^{-1} } }ds.
\end{align*}
\vskip -0.5cm
\end{proof}

\section{Regularising result in  $\xi, w$ variables.}
\label{thm3.1}
\setcounter{equation}{0}
\setcounter{theo}{0}
\subsection{Localization.}
Let us denote, $J_i=\log I_i$, for $i=1, \cdots, 4$ where, as in the Introduction,
 $I_1=(1/8, 4)$, $I_2=(1/2, 2)$, $I_4=(3/4, 5/4)$ and $I_3=(5/8, 11/8)$
and define
\begin{align*}
&\chi _0(\xi )=\eta_0(e^{\xi})
\begin{cases}
1,\,\,\xi \in J_3,\,\,\\
0,\,\,\,\xi \not \in J_2,
\end{cases}\\
&\widetilde w(t, \xi )=\chi_0(\xi )w(t, \xi )
\end{align*}

\begin{theo}
\label{S2Th3.1} 
(i)
Suppose that  $\chi _0Q\in L^2((0, 2); H^{\sigma } _{ \log^{-1} }(\RR))$ for some $\sigma\ge 0 $  and $w\in L^\infty((0, 1)\times I_1)\cap H^1((0, 1); L^2(I_2))$ is such that $w=0$ if $\xi \not \in I_1$ and satisfies

\begin{align}
\label{S2Th3.1E1}
&\frac {\partial w(t)} {\partial t}= P(w(t))+Q,\,\,\forall \xi \in I_2, t\in (0, 1)\\
&w(0, \xi )=0,\,\forall \xi\in I_2\nonumber
\end{align}
Then if $\widetilde v=\chi _0\,v$,
\begin{align*}
\int _0^1||\widetilde w(t)|| _{ H^\sigma  }^2dt\le Ce^{\frac {\sigma B} { \kappa _1 }}\int _0^1||w(s)|| _{ L^\infty ( I_1)}^2ds
+C \int _0^1||  \chi _0 Q(s) ||^2 _{ H^\sigma _{ \log^{-1} }  }ds.
\end{align*}\\

\noindent
(ii) Suppose moreover that for some $T _{ \max }>0$, $\chi _0Q\in L^2((0, T _{ \max }); H^{\sigma} _{ \log^{-1} }(\RR))$, $w\in L^\infty((0, T _{ \max })\times I_1)\cap H^1((0, T _{ \max }); L^2(I_2))$ for some $\sigma\ge 0$ satisfies
\begin{align}
\label{S2Th3.1E2}
&\frac {\partial w(t)} {\partial t}=P(w(t))+Q,\,\,\forall \xi \in I_2, t\in (0, 1)\\
&w(0, \xi )=0,\,\forall \xi\in I_2.\nonumber
\end{align}
Then, for all $t\in [0, T _{ \max }-1]$,
\begin{align}
&\sup _{ 0\le T\le T _{ \max } }\left(\int _T^{\min(T+1, T _{ \max })}||\widetilde w(t)||^2 _{ H^\sigma  }dt\right)^{1/2}\le C\left(\int _0^{T _{ \max }}||w(t) ||^2 _{ L^\infty(I_1) }dt\right)^{1/2}+ \nonumber\\
&+C\sup _{ 0\le T\le T _{ \max } }\left(\int  _{ T } ^{\min(T+1, T _{ +1\max })}\left|\left |\chi _0 Q(s))\right|\right|^2 _{ H^{\sigma} _{ \log^{-1} }  }ds \right)^{1/2}.\label{S4PUE3}
\end{align}
\\

\noindent
(iii) Suppose  that for some $T _{ \max }>0$, $\chi _0Q\in L^2((0, T _{ \max }); H^{\sigma} (\RR))$, $w\in L^\infty((0, T _{ \max })\times I_1)\cap H^1((0, T _{ \max }); L^2(I_2))$  for some $\sigma\ge 0$ satisfies
\begin{align}
\label{S2Th3.1E2}
&\frac {\partial w(t)} {\partial t}=P(w(t))+Q,\,\,\forall \xi \in I_2, t\in (0, 1)\\
&w(0, \xi )=0,\,\forall \xi\in I_2\nonumber.
\end{align}
Then,
\begin{align}
&\sup _{ 0\le T\le T _{ \max } }\left(\int _T^{\min(T+1, T _{ \max })} ||T_1\widetilde w(s)|| _{ H^\sigma  }^2ds\right)^{1/2}\le C\left(\int _0^{T _{ \max }}|| w(t)||^2 _{ L^\infty(I_1) }dt\right)^{1/2}+\nonumber \\
&+ C\sup _{ 0\le T\le T _{ \max } }\left(\int  _{ T } ^{\min(T+1, T _{ +1\max })}\left|\left |\chi _0 Q(s))\right|\right|^2  _{ H^{\sigma}  }ds \right)^{1/2}, \label{S3emc8}
\end{align}
where $T_1$ is the operator defined by $\widehat {-T_1(h)}(k)=\widehat h(k)\mathscr Re(\rho _0(k))\1 _{ |k|>1 }$.
\end{theo}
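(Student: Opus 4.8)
The estimate (\ref{S3emc8}) is the maximal $L^2$‑regularity counterpart of the energy bounds (i)--(ii). By Proposition \ref{S2PP1} one has $|\rho_0(k)|\sim \mathscr{Re}(\rho_0(k))\sim\log|k|$ as $|k|\to\infty$, so on $|k|>1$ the quantity $||T_1 h||_{H^\sigma}$ is comparable to $||\kappa_0 P_0 h||_{H^\sigma}$, i.e. to the full action of the frozen operator, which is one logarithmic power stronger than the dissipation $\int\mathscr{Re}(\rho_0)|\widehat h|^2(1+|k|^2)^\sigma dk$ controlled in (i)--(ii). This is precisely why the source is required in $H^\sigma$ rather than in $H^\sigma_{\log^{-1}}$. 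The plan is therefore to localise and freeze the coefficient exactly as in the proof of (i), so that for $\widetilde w=\chi_0 w$, using $P(w)=e^{-\xi/2}P_0(w)$ and a freezing point $\xi_0\in J_2$ with $\kappa_0=e^{-\xi_0/2}$, one gets the localised equation
\begin{align*}
\frac{\partial \widetilde w}{\partial t}=P_{\xi_0}(\widetilde w)+E+\chi_0 Q,\qquad \widetilde w(0,\cdot)=0,\qquad E=\chi_0(e^{-\xi/2}-\kappa_0)P_0(w)+\kappa_0[\chi_0,P_0]w,
\end{align*}
and then to replace the energy argument by a maximal-regularity argument based on Young's inequality in time.

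The error $E$ is handled as in (i). The commutator $[\chi_0,P_0]$ is smoothing, since the smoothness of $\chi_0$ cancels the $|h|^{-1}$ singularity of the kernel of $P_0$; hence $||[\chi_0,P_0]w(t)||_{H^\sigma}\le C||w(t)||_{L^\infty(I_1)}$. The first term of $E$ carries the factor $(e^{-\xi/2}-\kappa_0)$ on $\mathrm{supp}\,\chi_0$; after splitting $J_2$ into patches small enough that this factor is below the maximal-regularity threshold, it is a relatively bounded perturbation of $\kappa_0 P_0$ and is absorbed into the left-hand side. I expect this freezing term, being of the same logarithmic order as the leading operator, to be the main obstacle: controlling it is exactly the technical core already carried out for (i), and once it is granted the remaining steps are routine.

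The genuinely new step is the frozen model estimate. For $\partial_t h=P_{\xi_0}(h)+G$ with $h(0)=0$, Proposition \ref{nheh1} gives $\widehat h(t,k)=\int_0^t e^{-\kappa_0\rho_0(k)(t-s)}\widehat G(s,k)\,ds$. On $|k|>1$ set $a_k=\kappa_0\mathscr{Re}(\rho_0(k))>0$; then $|\widehat h(\cdot,k)|$ is dominated by the convolution $(e^{-a_k\cdot}\1_{[0,\infty)})*|\widehat G(\cdot,k)|$, and Young's inequality in time (with $||e^{-a_k\cdot}||_{L^1}=a_k^{-1}$) yields $\int_0^{T'}|\widehat h(t,k)|^2dt\le a_k^{-2}\int_0^{T'}|\widehat G(s,k)|^2ds$. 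Multiplying by $(\mathscr{Re}(\rho_0(k)))^2\1_{|k|>1}(1+|k|^2)^\sigma$ and integrating in $k$, the exact cancellation $(\mathscr{Re}\rho_0)^2a_k^{-2}=\kappa_0^{-2}$ on $|k|>1$ gives
\begin{align*}
\int_0^{T'}||T_1 h(t)||_{H^\sigma}^2\,dt\le \kappa_0^{-2}\int_0^{T'}||G(s)||_{H^\sigma}^2\,ds .
\end{align*}
The two powers of $\mathscr{Re}(\rho_0)$ carried by $T_1$ are thus absorbed by the $a_k^{-2}$ produced by integrating the dissipation in time, and since $T_1$ vanishes for $|k|<1$ there is no low-frequency degeneracy of $a_k$; this is the precise gain of (iii) over (i)--(ii).

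Finally, to reach the uniform-in-window form I would localise in time. For a window $[T,\min(T+1,T_{\max})]$ choose $\tau_T$ smooth, equal to $1$ there, supported in $[\max(0,T-1),\min(T+2,T_{\max})]$ and vanishing at the left end of its support. Then $W=\tau_T\widetilde w$ has zero data there and solves the frozen equation with source $\tau_T(E+\chi_0 Q)+\tau_T'\widetilde w$, since $P_{\xi_0}$ acts only in $\xi$. Applying the model estimate to $W$ and restricting to $[T,T+1]$, where $W=\widetilde w$, bounds $\int_T^{\min(T+1,T_{\max})}||T_1\widetilde w||_{H^\sigma}^2$ by $C\int_{\mathrm{supp}\,\tau_T}(||E||_{H^\sigma}^2+||\chi_0 Q||_{H^\sigma}^2)\,dt+C\int_{\mathrm{supp}\,\tau_T'}||\widetilde w||_{H^\sigma}^2\,dt$. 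The $E$-contribution is controlled by $\int||w||_{L^\infty(I_1)}^2$; the $\chi_0 Q$-contribution, over the at most three unit windows meeting $\mathrm{supp}\,\tau_T$, by the windowed supremum of $||\chi_0 Q||_{H^\sigma}^2$; and the $\tau_T'\widetilde w$ term, supported in neighbouring windows, by part (ii), using $||\chi_0 Q||_{H^\sigma_{\log^{-1}}}\le ||\chi_0 Q||_{H^\sigma}$ to match the right-hand side of (\ref{S4PUE3}). Taking the supremum over $T$ and square roots yields (\ref{S3emc8}).
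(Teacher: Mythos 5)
Your proposal addresses only part (iii), taking (i)--(ii) as established; this matches the logical structure of the paper, whose proof of (iii) likewise reuses the localization, freezing, and decomposition $g=g_1+g_2+g_3+g_4$ from (i)--(ii), so the restriction is legitimate. On (iii) itself your argument is correct but genuinely different from the paper at two points. First, the frozen maximal-regularity estimate: the paper's Lemma \ref{SapPGCD0} proves $\int_0^1\big\|\int_0^t T_1e^{\kappa_0T_1(t-s)}h(s)ds\big\|^2_{H^\sigma}dt\le C\int_0^1\|h\|^2_{H^\sigma}dt$ by an energy method (multiplying the equation satisfied by $\varphi=\int_0^te^{\kappa_0T_1(t-s)}h\,ds$ by $-T_1(M_{2\sigma}\varphi)$ and by $-T_1\varphi$, then integrating in time), whereas you obtain the same bound frequency-by-frequency via Young's inequality in time with the exact cancellation $(\mathscr Re\,\rho_0(k))^2a_k^{-2}=\kappa_0^{-2}$ on $|k|>1$; your route is more elementary, valid on any time window with a constant independent of its length, needs only $a_k>0$ pointwise (no uniform lower bound), and makes transparent why the source must be taken in $H^\sigma$ rather than $H^\sigma_{\log^{-1}}$. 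Second, the uniformity in $T$: the paper splits $[0,t]$ into unit intervals, translates, and sums $\sum_{n=1}^{[T]}$, which for summability relies on the exponential decay of $T_1S_{\xi_0}(T-n)$ (the $e^{-A(T-n)}$ factors), a point treated rather implicitly in the text; your smooth time cutoff $\tau_T$, with the extra source $\tau_T'\widetilde w$ fed back through part (ii) via $\|\chi_0Q\|_{H^\sigma_{\log^{-1}}}\le\|\chi_0Q\|_{H^\sigma}$, reaches the windowed supremum directly and sidesteps that issue — arguably a cleaner mechanism. One caution: absorbing the freezing term $\chi_0(e^{-\xi/2}-\kappa_0)P_0(w)$ is not quite ``exactly as in (i)'', because the left-hand side now carries $T_1$; you need the $T_1$-variants of the two key tools, namely the $\|T_1(\alpha_0f)\|_{H^\sigma}\le K\varepsilon\|f\|_{H^\sigma}+C\|f\|_{L^\infty}$ half of Lemma \ref{SapPPCM0} and the commutator estimate (\ref{S7LapbcE2}) in place of (\ref{S7LapbcE1}) — exactly the substitutions the paper makes in Section 4.3 — after which your direct commutator bound $\|[\chi_0,P_0]w\|_{H^\sigma}\le C\|w\|_{L^\infty(I_1)}$ (valid since the kernel $(\chi_0(\xi)-\chi_0(\zeta))K(\xi-\zeta)$ is bounded with bounded $\xi$-derivatives thanks to the first-order cancellation on the diagonal, and $w$ vanishes off $I_1$) closes the argument.
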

\begin{rem}
Property (iii) in Theorem \ref{S2Th3.1} shows that $\widetilde w(t)$ belongs to the generalised Liouville space of functions $h$ such that $ \widehat h(k)(1+|k|^2)^{\sigma /2} (1+\mathscr Re(\rho _0(k)\1 _{ |k|>1 })\in L^2(\RR)$.  No general inner description of such spaces is known. It can be deduced however that
\begin{align*}
&\sup _{0< |h|<1 }|h|^{-\sigma }\log |h|\int  _{ \RR }|\widetilde w(x+h)-2\widetilde w(x)+\widetilde w(x-h)|^2dx<\infty\\
&\int  _{ \RR }\left|\int  _{ \RR } \frac {\Delta _h^{[\sigma ]}\widetilde w(x)}{h^{1+\sigma} } |\log |h||dh\right|^2dx<\infty.
\end{align*}
where $\Delta _hf(x)=f(x+h/2)-f(x-h/2), \Delta_h ^2 =\Delta _h\Delta _h $, (cf. \cite{BelT}).
\end{rem}
In order to prove Theorem \ref{S2Th3.1}, multiply the equation (\ref{S2Ewxi2Lc})by $\chi_0$,
\begin{align}
&\frac {\partial \widetilde w (t, \xi )} {\partial t}=\chi_0(\xi ) P(w(t))(\xi )+\chi _0(\xi )Q \label{S2Ewxi2L4i}\\
&= P( \widetilde w (t))(\xi )+ \widetilde Q_1+\widetilde Q_2 \label{S2Ewxi2L4ii}\\
&\widetilde Q_1= \int  _{ \RR }w(\xi -h)(\chi _0(\xi )-\chi _0(\xi -h)) \left(\frac {1} {|1-e^{-h}|}- \frac {1} {|1+e^{-h}|}\right)e^{-h}dh\label{S2Ewxi2L4iii}\\
&\widetilde Q_2=\chi _0\,Q. \label{S2Ewxi2L4iiii}
\end{align}
After the change of variables $\xi -h\to \zeta $ in $\widetilde Q_1$, its derivative may be computed  by differentiation of  the function $\left(\frac {1} {|1-e^{\zeta -\xi }|}- \frac {1} {|1+e^{\zeta -\xi }|}\right)e^{\zeta -\xi }$ and using the fact that  $\chi _0(\xi )-\chi _0(\zeta )$ vanishes if $\zeta $ is near $\xi $. This gives the existence of a constant $C>0$ such that,
\begin{align}
\label{estTQ1}
||\widetilde Q_1|| _{H^1 }\le C||v|| _{ L^\infty( I_1) }.
\end{align}
\subsection{Proof of Theoremm. \ref{S2Th3.1}, (i).}
Let $\delta _0>0$ be such that,
\begin{align*}
|\xi -\xi _0|\le 3\delta_0 \Longrightarrow  \left|e^{-\xi +\xi _0} -1\right|\le 3|\xi -\xi _0|.
\end{align*}
For all $\varepsilon _0>0$ small denote then 
\begin{align*}
\delta  _{ \xi _0 }=\min \left(\delta _0, \varepsilon _0 e^{-1+\xi _0/2}\right).
\end{align*}
Consider now  the function $\widetilde w$ that satisfies  equation, (\ref{S2Ewxi2L4ii}).\\

\textbf{\underline{For $\xi _0\in I_2$}} define $\chi \in \mathscr D(\RR)$ such that
\begin{align*}
\chi (\xi )=
\begin{cases}
1,\,\,|\xi -\xi _0|\le \delta _{ \xi _0 } \\
0,\,\,|\xi -\xi _0|>2\delta _{ \xi _0 }.
\end{cases}
\end{align*}
If we multiply the  non homogeneous equation, (\ref{S2Ewxi2L4ii})
\begin{align*}
&\frac {\partial \widetilde w (t, \xi )} {\partial t}=  P( \widetilde w (t))(\xi )+\widetilde Q_1+\widetilde Q_2
\end{align*}
by $\chi $ and  denote $g=\chi \widetilde w $,
\begin{align}
\frac {\partial g(t, \xi )} {\partial t}=\chi (\xi )   P(\widetilde v(t) )(\xi )+\chi  (\widetilde Q_1+\widetilde Q_2) \label{S2Ewxi2L5}
\end{align}
with,
\begin{align*}
\chi (\xi ) P(\widetilde v(t) )(\xi )=\frac {\chi (\xi )\kappa _0} {2}
\int  _{ \RR } (\widetilde v(\zeta )-\widetilde v(\xi ) )\left(\frac {1} {|e^\xi-e^\zeta |}-\frac {1} {e^\xi+e^\zeta } \right) e^{\zeta } d\zeta+\\
+\frac {\chi (\xi )(e^{-\frac {\xi } {2}}-e^{-\frac {\xi_0 } {2}})} {2}
 \int  _{ \RR } (\widetilde v(\zeta )-\widetilde v(\xi ) )\left(\frac {1} {|e^\xi-e^\zeta |}-\frac {1} {e^\xi+e^\zeta } \right) e^{\zeta } d\zeta 
\end{align*}
and
\begin{align*}
&\chi (\xi )\int  _{ \RR } (\widetilde v(\zeta )-\widetilde v(\xi ) )\left(\frac {1} {|e^\xi-e^\zeta |}-\frac {1} {e^\xi+e^\zeta } \right) e^{\zeta } d\zeta=\\
&=\int  _{ \RR } (\chi (\zeta )\widetilde v(\zeta )-\chi (\xi )\widetilde v(\xi ) )\left(\frac {1} {|e^\xi-e^\zeta |}-\frac {1} {e^\xi+e^\zeta } \right) e^{\zeta } d\zeta-\\
&-\int  _{ \RR } (\chi (\zeta ) -\chi (\xi ) )\widetilde v(\zeta )\left(\frac {1} {|e^\xi-e^\zeta |}-\frac {1} {e^\xi+e^\zeta } \right) e^{\zeta } d\zeta
\end{align*}
and then,
\begin{align*}
 \chi (\xi )P( \widetilde vv((t) )(\xi )
 &= P _{ \xi _0 }(g(t))(\xi )-\frac {  \kappa _0} {2}
 \int  _{ \RR } (\chi (\xi )-\chi (\zeta ))\widetilde w(\zeta )\left(\frac {1} {|e^\xi-e^\zeta |}-\frac {1} {e^\xi+e^\zeta } \right) e^{\zeta } d\zeta+\\
 &+ \frac {\chi (\xi )(e^{-\frac {\xi } {2}}-e^{-\frac {\xi_0 } {2}})} {2}
 \int  _{ \RR } (\widetilde w(\zeta )-\widetilde w(\xi ) )\left(\frac {1} {|e^\xi-e^\zeta |}-\frac {1} {e^\xi+e^\zeta } \right) e^{\zeta } d\zeta
\end{align*}
and the equation (\ref{S2Ewxi2L5}) reads,
\begin{align*}
\frac {\partial g(t, \xi )} {\partial t}-& P _{ \xi _0 }(g(t))(\xi )=-\frac {  \kappa _0} {2}
 \int  _{ \RR } (\chi (\xi )-\chi (\zeta ))\widetilde w(\zeta )\left(\frac {1} {|e^\xi-e^\zeta |}-\frac {1} {e^\xi+e^\zeta } \right) e^{\zeta } d\zeta+\nonumber\\
 &+ \frac {\chi (\xi )(e^{-\frac {\xi } {2}}-e^{-\frac {\xi_0 } {2}})} {2}
 \int  _{ \RR } (\widetilde w(\zeta )-\widetilde w(\xi ) )\left(\frac {1} {|e^\xi-e^\zeta |}-\frac {1} {e^\xi+e^\zeta } \right) e^{\zeta } d\zeta+\nonumber\\
 &+ \chi  (\widetilde Q_1+\widetilde Q_2) 
\end{align*}

If $g(0, \xi )=0$ we have then,
\begin{align*}
&g(t, \xi )=-\int _0^t S _{ \xi _0 } (t-s)  \frac { \kappa _0} {2}
\int  _{ \RR } (\chi (\xi )-\chi (\zeta ))\widetilde v(s, \zeta )\left(\frac {1} {|e^\xi-e^\zeta |}-\frac {1} {e^\xi+e^\zeta } \right) e^{\zeta } d\zeta ds+\nonumber\\
&+\frac {1 } {2}\int _0^t S _{ \xi _0 }(t-s) \chi (\xi )(M _{ \xi  }-M _{ \xi _0 })P_0(\widetilde w(s))(\xi )ds+\int _0^t S _{ \xi _0 } (t-s) \chi  (\widetilde Q_1(s)+\widetilde Q_2(s))ds
\end{align*}
Use of the commutator, $[A, B](h)=A(B(h))-B(A(h))$,
\begin{align}
g(t, \xi )&= \int _0^t S _{ \xi _0 }( t-s) \chi  (\widetilde Q_1(s)+\widetilde Q_2(s))ds\nonumber\\
&- \int _0^t S _{ \xi _0 }( t-s)  \frac {\kappa _0} {2}\int  _{ \RR } (\chi (\xi )-\chi (\zeta ))\widetilde v(s, \zeta )\left(\frac {1} {|e^\xi-e^\zeta |}-\frac {1} {e^\xi+e^\zeta } \right) e^{\zeta } d\zeta ds+\nonumber\\
&+ \int _0^t S _{ \xi _0 }(t-s)P_0\Big( \chi (M _{ \xi  }-M _{ \xi _0 }) \widetilde w(s)\Big)(\xi )ds+\nonumber\\
&+ \int _0^t S _{ \xi _0 }(t-s)\Big[\chi (M _{ \xi  }-M _{ \xi _0 }),P_0 \Big](\widetilde w(s))(\xi )ds\nonumber \\
&=g_1(t, \xi )+g_2(t, \xi )+g_3(t, \xi )+g_4(t, \xi ).\label{lzm68}
\end{align}
\begin{align}
||g_1(t)||^2 _{ H^\sigma  } \le \int _0^t||S _{ \xi _0 }(t-s)\chi \widetilde Q_1(s)||^2  _{ H^\sigma  } ds+
\left|\left|\int _0^tS _{ \xi _0 }( t-s)\chi \widetilde Q_2(s) ds\right|\right|^2 _{ H^\sigma  }\label{S3g1T}
\end{align}
By the estimate on $Q_1$,
\begin{align}
 \int _0^t||S _{ \xi _0 }(t-s)\chi \widetilde Q_1(s)||^2 _{ H^\sigma  } ds\le C\int _0^t||w(s)||^2 _{ L^\infty (I_1)}ds. \label{lzm175}
\end{align}
In order to estimate the second term in the right hand side of (\ref{S3g1T}), use of the notation $e^{-\frac {\xi _0} {2}}=\kappa _0$ gives,
\begin{align*}
&\left|\left|\int _0^tS _{ \xi _0 }(t-s)h(s) ds\right|\right|^2 _{ H^\sigma  }=
\left|\left|\int _0^t e^{\kappa _0P_0(t-s)} h(s) ds \right|\right| _{ H^\sigma  }^2\\
&=\int  _{ \RR }\left(\int _0^t e^{-\kappa _0(t-s_1)\rho _0(k)} \widehat h(s_1, k)ds_1\int _0^t e^{-\kappa _0(t-s_2)\overline {\rho _0}(k)} \overline{\widehat h(s_2, k)}ds_2\right)(1+|k|^2)^\sigma dk\\
&\le \int  _{ \RR }\left(\int _0^t e^{-\kappa _0 (t-s_1)\mathscr Re \rho _0(k)} |\widehat h(s_1, k)|^2ds_1\right)^2(1+|k|^2)^\sigma dk.
\end{align*}
By Lemma \ref{SapLPGCD1},
\begin{align*}
&\int _0^1\left|\left|\int _0^tS _{ \xi _0 }(t-s)h(s) ds\right|\right|^2 _{ H^\sigma  }dt\le  C(1+\kappa _0^{-2}) \int _0^1||\chi \widetilde Q_2(s) ||^2 _{ H^\sigma _{ \log^{-1} }  }ds
\end{align*}

We deduce
\begin{align*}
\int _0^1||g_1(s)|| ^2_{ H^\sigma  }ds\le C\int _0^1||w(t)|| _{ L^\infty (I_1)}^2dt+C(1+\kappa _0^{-2})\int _0^1||\chi \widetilde Q_2(s) ||^2 _{ H^\sigma _{ \log^{-1} }  }ds.
\end{align*}
By Lemma \ref{SapLPGCD2},
\begin{align*}
\int _0^1||g_1(s)|| ^2_{ H^\sigma  }ds\le C\int _0^1||w(t)|| _{ L^\infty (I_1)}^2dt+C(1+\kappa _0^{-2})\int _0^1||\chi_0  Q(s) ||^2 _{ H^\sigma _{ \log^{-1} }  }ds.
\end{align*}
\subsubsection{Estimate of $g_3$.}
Denote $\psi (t, \xi )=\chi (\xi )(M_\xi -M _{ \xi _0 })\widetilde w(t, \xi )$. Then
\begin{align*}
&|\hat g_3(t, k )|^2 \le  \int _0^t\int _0^t  \exp \left( - (t-s_1)\kappa _0\rho _0(k)\right)  \widehat{ P_0\psi (s_1)}(k)\times \\
&\times \exp \left( - (t-s_2) \kappa _0\overline{\rho _0(k)}\right) \overline{ \widehat{ P_0\psi (s_2)}(k)}ds_1ds_2\\
&\le \left(\int _0^t  \exp \Big( - (t-s) \kappa _0\mathscr Re(\rho _0(k))\Big)  \left|\widehat{ P_0\psi (s)}(k)\right|ds\right)^2\\
&=  \left(\int _0^t  \exp \Big( - (t-s) \kappa _0\mathscr Re(\rho _0(k))\Big)\left| \kappa _0 \rho _0(k)\right| |\widehat \psi (s, k)| ds\right)^2\\
&\le  \kappa _0^2\left(\int _0^t  \exp \Big( - (t-s) \kappa _0\mathscr Re(\rho _0(k))\Big)\left| \rho _0(k)\right| |\widehat \psi (s, k) |ds\right)^2
\end{align*}

After multiplication by $(1+|k|^2)^\sigma $,
\begin{align*}
|\hat g_3(t, k )&|^2(1+|k|^2)^\sigma\le \\ 
&\le \kappa _0^2 \left(\int _0^t  \exp \Big( - (t-s)\kappa _0\mathscr Re(\rho _0(k))\Big)\left| \rho _0(k)\right|
(1+|k|^2)^{\sigma /2}| \widehat  \psi (s, k)| ds\right)^2
\end{align*}
By Proposition \ref{S2PP1},
\begin{align*}
|\rho _0(k)|\le C\left(\mathscr Re(\rho _0(k))\1 _{ |k|\ge 1 }+\1 _{ |k|\le 1 }\right).
\end{align*}
Let us then define the  operators, $T_1$, $T_2$, $M_\sigma $ and $M$ such that
\begin{align*}
&\widehat {-T_1(h)}(k)=\hat h(k)\mathscr Re(\rho _0(k))\1 _{ |k|>1 }\\
&\widehat {T_2(h)}(k)=\hat h(k)\1 _{ |k|\le 1 }\\
&\widehat {M_\sigma h}(k)=|k|^{\sigma }\\
&M(\varphi )(\xi )=\frac {1} {\sqrt{2\pi }}\int  _{ \RR }|\widehat \varphi (k)|e^{ik\xi }=\mathscr F^{-1}(|\widehat \varphi |)(\xi ).
\end{align*}
Then,
\begin{align*}
|\rho _0(k)||\widehat\psi (s, k)|\le \mathscr F(T_1(M\psi (s)))(k)+\mathscr F(T_2(M\psi (s)))(k)
\end{align*}

and
\begin{align*}
&|\hat g_3(t, k )|^2(1+|k|^2)^\sigma \le C   \kappa _0^2 \times\\
&\times  \left(\int _0^t  \exp \Big( - (t-s)  \kappa _0\mathscr Re(\rho _0(k))\Big) 
(1+|k|^2)^{\sigma /2} \mathscr F(T_1(M\psi (s)))(k) ds \right)^2+\\
&+C   \kappa _0^2\left(\int _0^t  \exp \Big( - (t-s)\kappa _0\mathscr Re(\rho _0(k))\Big) 
(1+|k|^2)^{\sigma /2}| \mathscr F(T_2(M\psi (s)))(k)ds \right)^2\\
&=\rho _1(t, k)+\rho _2(t, k).
\end{align*}
In the second term,
\begin{align*} 
(1+|k|^2)^{\sigma /2} \mathscr F(T_1(M\psi (s)))(k)\le (1+|k|^2)^{\sigma /2} |\widehat \psi (s, k)|\1 _{ |k|\le 1 }\le 2^{\sigma/2}  |\widehat \psi (s, k)|
\end{align*}
from where,
\begin{align*}
&\rho _2(t, k)\le  C   \kappa _0^2 \left(\int _0^t 
(1+|k|^2)^{\sigma /2} |\widehat{ \psi (s, k)}|   ds \right)^2
\le C t\,  \kappa _0^2\int _0^t  |\widehat{ \psi (s, k)}|^2 ds\\
&\Longrightarrow \int  _{ \RR }\rho _2(t, k) dk \le C t\,   \kappa _0^2\int _0^t ||\psi (s)|| _ 2^2 ds.
\end{align*}

On the other hand, in the first term $\rho _1(t, \xi )$ it may be written
\begin{align*}
\left(\int _0^t  \exp \Big( - (t-s) \kappa _0\mathscr Re(\rho _0(k))\Big) 
(1+|k|^2)^{\sigma /2}  \mathscr F(T_1(M\psi (s)))(k)  ds \right)^2=\\
=\left(\int _0^t  e^{\kappa _0  T_1( t-s)}
\mathscr F(T_1(M\psi (s)))(k) ds \right)^2(1+|k|^2)^{\sigma } 
\end{align*}
from where,
\begin{align*}
\int  _{ \RR } \rho _1(t, k)dk=\left|\left|   \int _0^t   e^{\kappa _0  T_1(t-s)}
T_1(M\psi (s)) ds  \right|\right| ^2 _{ H\sigma  }
\end{align*}
By Lemma \ref{SapPGCD0},
 we have then,
\begin{align*}
\int  _{ \RR }\rho _1(t, k)dk\le C  \kappa _0^{-2} \int _0^t ||M\psi(s) || _{ H^\sigma  }^2dt
\end{align*}
and therefore
\begin{align}
 ||g_3(t)|| ^2_{ H^\sigma  }\le  C(1+t) \kappa _0^{-2} \int _0^t ||\psi(s)|| ^2_{ H^\sigma  }ds\label{S3.3G3a}\\
 \int _0^1 ||g_3(t)|| ^2_{ H^\sigma  }\le C\kappa _0^{-2}\int _0^1 ||\psi(s)|| ^2_{ H^\sigma  }ds.\label{S3.3G3}
\end{align}
The last term in the right hand side of (\ref{S3.3G3}), where $\psi (t, \xi )=\chi (\xi)(M_\xi -M _{ \xi _0 })w(t, \xi )$ must now be estimated. 

Let then $\chi  _1\in C_c^\infty ( \RR)$ such that  $\chi  _1(\xi )=1$ for $|\xi -\xi _0|\le 2\delta _{ \xi _0 } $ and $\chi  _1(\xi )=0$ if $|\xi -\xi _0|>3\delta _{ \xi _0 }$. By construction,
\begin{align*}
\chi (\xi)(M_\xi -M _{ \xi _0 })\widetilde w(t, \xi )=\chi  _1(\xi )(M_\xi -M _{ \xi _0 })\chi (\xi )\widetilde w(t, \xi )
=\chi  _1(\xi )(M_\xi -M _{ \xi _0 })g(t, \xi ).
\end{align*}
Indeed, if $|\xi -\xi _0|\le 2 \delta  _{ \xi _0 }$ then $\chi  _1(\xi )\chi (\xi)(M_\xi -M _{ \xi _0 })\widetilde w(t, \xi )=\chi (\xi)(M_\xi -M _{ \xi _0 })\widetilde w(t, \xi )$ and if $|\xi -\xi _0|>2\delta  _{ \xi _0 }$ then $\chi (\xi)(M_\xi -M _{ \xi _0 })\widetilde w(t, \xi )=0$ and therefore $\chi  _1(\xi )(M_\xi -M _{ \xi _0 })\chi (\xi )\widetilde w(t, \xi )=0$ too.

If we denote 
\begin{align}
\label{S3alfa}
\alpha (\xi )=\chi  _1(\xi )(M_\xi -M _{ \xi _0 })
\end{align}
then,
\begin{align*}
|\alpha (\xi )|\le |e^{-\xi/2 }-e^{-\xi _0/2}|\chi  _1 (\xi )=e^{-\xi _0/2}|1-e^{(\xi _0-\xi )/2}|\chi  _1 (\xi )\\
\le e^{-\xi _0/2}\frac {|\xi -\xi _0|} {2}\chi  _1 (\xi )\le  e^{-\xi _0/2}\frac {|\xi -\xi _0|} {2}\chi  _1 (\xi ).
\end{align*}
and,
\begin{align}
\label{S3.3Fa1}
e^{\frac {\xi _0} {2}}|\widehat \alpha (k )|\le \frac {\delta  _{ \xi _0 }} {2}\int _ {\xi _0-3\delta  _{ \xi _0 }}^{\xi _0+3\delta  _{ \xi _0 }}d\xi \le C\delta  _{ \xi _0 }^2.
\end{align}
On the other hand,  for all $k\not=0$,
\begin{align*}
|\widehat \alpha (k)|\le \frac {C} {|k|^2}\int  _{ \RR }|\alpha'' (\xi )|d\xi 
\end{align*}
with
\begin{align*}
\alpha ''(\xi )=\frac {1} {4}e^{-\frac {\xi } {2}}\chi  _1(\xi )-\frac {1} {2}e^{-\frac {\xi } {2}}\chi  _1'(\xi )+(e^{-\frac {\xi } {2}}-e^{-\frac {\xi _0} {2}})\chi  _1''(\xi )
\end{align*}
and,
\begin{align*}
\int  _{ \xi _0-3\delta  _{ \xi _0 } }^{\xi _0+3\delta  _{ \xi _0 }}e^{-\xi/2}|\chi  _1(x)|d\xi\le 
C\int  _{ \xi _0-3\delta  _{ \xi _0 } }^{\xi _0+3\delta  _{ \xi _0 }}e^{-\xi/2}d\xi
 = C e^{-\frac {\xi _0+3\delta  _{ \xi _0 }} {2}}(e^{3\delta  _{ \xi _0 }}-1)\le Ce^{-\frac {\xi _0} {2}}\delta  _{ \xi _0 }, 
\end{align*}

\begin{align*}
\int  _{ \xi _0-3\delta  _{ \xi _0 } }^{\xi _0+3\delta  _{ \xi _0 }}e^{-\xi/2}|\chi  _1'(x)|d\xi\le C\delta  _{ \xi _0 }^{-1}
\int  _{ \xi _0-3\delta  _{ \xi _0 } }^{\xi _0+3\delta  _{ \xi _0 }}e^{-\xi/2}d\xi
 = C \delta  _{ \xi _0 }^{-1}e^{-\frac {\xi _0+3\delta  _{ \xi _0 }} {2}}(e^{3\delta  _{ \xi _0 }}-1)\le Ce^{-\frac {\xi _0} {2}},
\end{align*}
\begin{align*}
\int  _{ \xi _0-3\delta  _{ \xi _0 } }^{\xi _0+3\delta  _{ \xi _0 }} |e^{-\frac {\xi } {2}}-e^{-\frac {\xi _0} {2}}||\chi  _1''(\xi )|d\xi\le 
C\delta  _{ \xi _0 }^{-2}e^{-\frac {\xi _0} {2}}\int  _{ \xi _0-3\delta  _{ \xi _0 } }^{\xi _0+3\delta  _{ \xi _0 }} |1-e^{ \frac {\xi -\xi _0} {2}}| d\xi\\
\le C\delta  _{ \xi _0 }^{-2}e^{-\frac {\xi _0} {2}}
\int  _{ \xi _0-3\delta  _{ \xi _0 } }^{\xi _0+3\delta  _{ \xi _0 }} |\xi -\xi _0|d\xi
 \le Ce^{-\frac {\xi _0 } {2}}. 
\end{align*}
Therefore,
\begin{align}
e^{\frac {\xi  _{ 0 }} {2}}|\widehat \alpha (k)|\le \frac {C} {|k|^2},\,\,\forall k\not =0,\label{S3.3Fa2}
\end{align}
and then,  by (\ref{S3.3Fa1}), (\ref{S3.3Fa2} 
\begin{align}
e^{\frac {\xi  _{ 0 }} {2}}|\widehat \alpha (k)|&\le \frac {C \delta^2  _{ \xi _0 }} { 1+\delta^2  _{ \xi _0 }k^2}\,\,\forall k\in \RR.\label{S3.3Fa21}
\end{align}

It then follows that Lemma \ref{SapPPCM0} may be applied to the function $\alpha _0=e^{\frac {\xi  _{ 0 }} {2}}\alpha $ and  there exists  two positive constants $K _{ \xi_0  } $, depending on $\xi _0$, but  independent of  $\delta  _{ \xi _0 }$,  and $C_{ \delta  _{ \xi _0 }}$, depending on  
$\delta  _{ \xi _0 }$ such that, in the right hand side of  (\ref{S3.3G3})  
\begin{align}
\kappa _0^{-2}||\psi (s)||^2 _{ H^\sigma  }=\kappa _0^{-2}||\alpha g(s)||^2 _{ H^\sigma  }\le K  _{ \xi _0 } \delta^2  _{ \xi _0 } ||\widetilde v(s)||^2 _{ H^\sigma  }+C_{ \delta  _{ \xi _0 }}
 ||v(s) || ^2_{ L^\infty(I_1) }.\label{lzm90}
\end{align}
and then, for all $\varepsilon _0>0$ arbitrarily small, there exists $\delta _{ \xi _0 }$ sufficiently small such that
\begin{align}
\label{S3.2EG3}
\int _0^1||g_3(t)||^2 _{ H^\sigma  }dt \le    \varepsilon _0\int _0^1||\widetilde v(s)|| ^2_{ H^\sigma  }ds+C_{ \delta  _{ \xi _0 }} \int _0^1||v(s) ||^2 _{ L^\infty(I_1) }ds.
\end{align}

\subsubsection{The term $g_4$.}
By (\ref{S7LapbcE1}) in Lemma \ref{S7Lapbc}, with $\eta(\xi )=\chi (\xi )(M_\xi -M _{ \xi _0 })\in C_c^\infty(\RR)$

\begin{align}
&||S _{ \xi _0 }(t-s)\left([\eta, P_0](\widetilde w(s) )\right)|| _{ H^\sigma  }^2\le C ||\widetilde w(s)  ||^2_{H ^{ \sigma -\rho ( (t-s))}} \nonumber\\
&||g_4(t)||^2 _{ H^\sigma  }=\int _0^t ||S _{ \xi _0 }(t-s)\left([\eta, P_0](\widetilde w(s) )\right)|| _{ H^\sigma  }^2\le C  \int _0^t || \widetilde w(s)  ||^2_{H ^{ \sigma -\rho ( (t-s))}}ds
\nonumber\\
&\int _0^1||g_4(t)||^2 _{ H^\sigma  }dt \le C\int _0^1 \int _0^t || \widetilde w(s)  ||^2_{H _{ \sigma -\rho (t-s)}}ds dt.\label{S3.2estG4}
\end{align}

A simple   interpolation argument  gives for all $\varepsilon >0$ the existence of a constant $C_\varepsilon >0$ such that
\begin{align*}
||\widetilde w(s)|| _{ H^{(\sigma -1)_+} }\le 
\varepsilon ||\widetilde w(s)|| _{ H^{\sigma  }} +C_\varepsilon  ||\widetilde w(s)||_{ L^2  }. 
\end{align*}
On the other hand, if $\sigma -\rho (t-s)<0$, then $L^2\subset H^{\sigma -\rho (t-s)}$ and 
\begin{align*}
||\widetilde w (s)|| _{ H^{\sigma -\rho (t-s)} }\le ||\widetilde w(s)||_2
\end{align*} 

If on the contrary $\sigma -\rho (t-s)>0$ then  $L^2\subset H^{\sigma -\rho ( t-s)}\subset H^\sigma $, and then by  interpolation,
\begin{align*}
&||\widetilde w(s)|| _{ H^{\sigma -\rho (t-s)} }\le 
||\widetilde w(s)|| _{ H^{\sigma  }}^{1-\theta(t, s)}||\widetilde w(s)||_2^{\theta(t,s)} \\
&\theta(t, s)=\frac {\rho (t-s)} {\sigma }
\end{align*}
and for all  $\varepsilon >0$, that may depend on $t$ and $s$,
\begin{align*}
||\widetilde w(s)|| _{ H^{\sigma -\rho (t-s)} }\le 
\varepsilon ||\widetilde w(s)|| _{ H^{\sigma  }} + \varepsilon ^{-  \frac {\sigma -\rho (t-s)} {\rho (t-s)}} ||\widetilde w(s)||_{ 2 }
\end{align*}
and then,
\begin{align*}
\int _0^1 \int _0^t || \widetilde w(s)  ||^2_{H _{ \sigma -\rho (t-s)}}ds dt=\int _0^1\int  _{s\in (0, t), \rho (t-s)>\sigma }\!\!\![\cdots]dsdt
+\int _0^1\int  _{s\in (0, t), \rho (t-s)<\sigma }\!\!\![\cdots]dsdt\\
\le C\int _0^1\int  _{s\in (0, t), \rho (t-s)>\sigma } ||  \widetilde w(s)||^2 _ 2dsdt
+ \int _0^1\int  _{s\in (0, t), \rho (t-s)<\sigma } \varepsilon ||\widetilde w(s)||^2 _{ H^{\sigma  }} dsdt\\
+ \int _0^1 \int _{s\in (0, t), \rho (t-s)<\sigma } \varepsilon ^{-  \frac {\sigma -\rho (t-s)} {\rho (
t-s)}} ||\widetilde w(s)|| _2dsdt.
\end{align*}
As we have seen in the estimate of $g_4$, we must  have $\tau\rho (t)<\kappa _0t$. Let us then take
\begin{align*}
&\rho (t)=\kappa_1 t,\,\,0<\kappa_1 <\kappa _0,\\
&\varepsilon =e^{-B(t-s)},\,\, B>0
\end{align*}
from where
\begin{align*}
 \varepsilon ^{-  \frac {\sigma -\rho (t-s)} {\rho (t-s)}}=e^{-B(t-s)}\,  \left(e^{-B(t-s)}\right)^{-\frac {\sigma } {\kappa _1(t-s)}}=e^{-B(t-s)}e^{\frac {\sigma B} {\kappa _1 }}
\end{align*}
 Then,
\begin{align*}
e^{\frac {\sigma B} { \kappa _1 }}&\int _0^1 \int _{s\in (0, t), \rho (t-s)<\sigma }  e^{-B(t-s)}||\widetilde w(s)|| _{ L^2(I_1) }dsdt\le \\
&\le e^{\frac {\sigma B} { \kappa _1 }}  \int _0^1 \int _0^t ||\widetilde w(s)|| _{ L^\infty(I_1) }  dsdt \le C e^{\frac {\sigma B} { \kappa _1 }} 
\int _0^1||\widetilde w(s)|| _{ L^\infty (I_1) }ds,
\end{align*}
and,
\begin{align*}
\int _0^1\int  _{s\in (0, t), \rho (t-s)<\sigma } \varepsilon ||\widetilde w(s)||^2 _{ H^{\sigma  }} dsdt\le 
\int _0^1\int _0^t e^{-B(t-s)} ||\widetilde w(s)||^2 _{ H^{\sigma  }} dsdt\\
=\int _0^1 ||\widetilde w(s)||^2 _{ H^{\sigma  }}\int _s^1e^{-B(t-s)}dtds\le \frac {1} {B}\int _0^1 ||\widetilde w(s)||^2 _{ H^{\sigma  }}ds
\end{align*}
It follows,
\begin{align*}
\int _0^1 \int _0^t || \widetilde w(s)  ||^2_{H _{ \sigma -\rho (t-s)}}ds dt
\le \frac {C} {B} \int _0^1\ ||\widetilde w(s)||^2 _{ H^{\sigma  }} dsdt+C e^{\frac {\sigma B} { \kappa _1 }} 
\int _0^1||\widetilde w(s)|| _{ L^\infty (I_1) }ds
\end{align*}
Then, for $B >0$ large enough,
\begin{align}
\label{S3.2SBrl}
 \int _0^1||g_4(t)||^2 _{ H^\sigma  }dt\le  \frac {C} {B} \int _0^1\ ||\widetilde w(s)||^2 _{ H^{\sigma  }} ds+C e^{\frac {\sigma B} { \kappa _1 }} 
\int _0^1||\widetilde w(s)|| _{ L^\infty (I_1) }ds.
\end{align}

\subsubsection{Estimate of $g_2$.}
By Lemma \ref{S3.3L3.6}
\begin{align*}
\left|\left|\int _0^t S _{ \xi _0 }(t-s) \kappa _0
\int  _{ \RR } (\chi (\xi )-\chi (\zeta ))\widetilde v(s, \zeta )\left(\frac {1} {|e^\xi-e^\zeta |}-\frac {1} {e^\xi+e^\zeta } \right) e^{\zeta } d\zeta ds\right|\right| 
_{ H^ {{\sigma +\frac {t \kappa _0} {2}  }} }\\
\le \frac {\kappa _0} {2}
\int _0^t  \left|\left|
\int  _{ \RR } (\chi (\xi )-\chi (\zeta ))\widetilde v(s, \zeta )\left(\frac {1} {|e^\xi-e^\zeta |}-\frac {1} {e^\xi+e^\zeta } \right) e^{\zeta } d\zeta \right|\right| 
_{ H^\sigma }ds\\
\le \frac {C \kappa _0} {2}\int _0^t||\widetilde w(s)|| _{ H^{(\sigma -1)_+} }ds
\end{align*}
and
\begin{align}
\label{S3.3EstG2}
\int _0^1||g_2(t)||^2 _{ H^\sigma  }dt\le C \kappa _0^2\int _0^t||\widetilde w(s)||^2 _{ H^{(\sigma -1)_+} }ds.
\end{align}
\underline{Addition of all the terms.} 
\begin{align}
\int _0^1||g(t)|| _{ H^\sigma  }^2dt\le C||w|| _{ L^\infty ((0, 1)\times  I_1)}^2+C(1+\kappa _0^{-2})\int _0^1||\chi \widetilde Q_2(s) ||^2 _{ H^\sigma _{ \log^{-1} }  }ds+\nonumber\\
+C \kappa _0^2\int _0^t||\widetilde w(s)|| _{ H^{(\sigma -1)_+} }ds+K'  \delta  _{ \xi _0 } ^2\int _0^t||\widetilde v(s)|| ^2_{ H^\sigma  }ds+C_{ \delta  _{ \xi _0 }}
\int _0^1||\widetilde v(s)||^2 _{ L^\infty  }ds+\nonumber\\
+\frac {C} {B} \int _0^1\ ||\widetilde w(s)||^2 _{ H^{\sigma  }} ds.\label{mdrd?}
\end{align}

Using a partition of the unity of the interval $I_2$,
\begin{align*}
\int _0^1||\widetilde v(t)|| _{ H^\sigma  }^2dt\le C||w|| _{ L^\infty ((0, 1)\times  I_1)}^2+C(1+\kappa _0^{-2})\int _0^1||\chi \widetilde Q_2(s) ||^2 _{ H^\sigma _{ \log^{-1} }  }ds+\\
+C \kappa _0^2\int _0^t||\widetilde w(s)|| _{ H^{(\sigma -1)_+} }ds+K'  \delta  _{ \xi _0 } ^2\int _0^t||\widetilde w(s)|| ^2_{ H^\sigma  }ds+C_{ \delta  _{ \xi _0 }}
\int _0^1||\widetilde v(s)||^2 _{ L^\infty(I_1)}ds+\\
+\frac {C} {B} \int _0^1\ ||\widetilde w(s)||^2 _{ H^{\sigma  }} ds.
\end{align*}
If $B>0$ is large enough and $\delta  _{ \xi _0 }>0$ is small enough
\begin{align*}
\int _0^1||\widetilde w(t)|| _{ H^\sigma  }^2dt\le C\int _0^1||w(s)|| _{ L^\infty ( I_1)}^2ds+C\int _0^1||  \widetilde Q_2(s) ||^2 _{ H^\sigma _{ \log^{-1} }  }ds+\\
+C \int _0^t||\widetilde w(s)|| ^2_{ H^{(\sigma -1)_+} }ds.
\end{align*}
A simple   interpolation argument  gives for all $\varepsilon >0$ the existence of a constant $C_\varepsilon >0$ such that
\begin{align*}
||\widetilde w(s)|| _{ H^{(\sigma -1)_+} }\le 
\varepsilon ||\widetilde w(s)|| _{ H^{\sigma  }} +C_\varepsilon  || w(s)||_{ L^\infty (I_1) }. 
\end{align*}
and then,
\begin{align*}
\int _0^1||\widetilde w(t)|| _{ H^\sigma  }^2dt\le Ce^{\frac {\sigma B} { \kappa _1 }}\int _0^1||w(s)|| _{ L^\infty ( I_1)}^2ds
+C  \int _0^1||  \chi _0 Q(s) ||^2 _{ H^\sigma _{ \log^{-1} }  }ds.
\end{align*}

\subsection{Proof of Theorem. \ref{S2Th3.1}, (ii).}
The notation is the same as in the previous Section, in particular the intervals $I_\ell, \ell=1, 2, 3, 4$, the functions  $\chi _0, \chi , \eta$ and $w, \widetilde w, g$. The function $g$ satisfies now,
\begin{align}
\frac {\partial g(t, \xi )} {\partial t}-&P _{ \xi _0 }(g(t))(\xi )+a(t)g(t, \xi ) =-\frac { \kappa _0} {2}
 \int  _{ \RR } (\chi (\xi )-\chi (\zeta ))\widetilde w(\zeta )\left(\frac {1} {|e^\xi-e^\zeta |}-\frac {1} {e^\xi+e^\zeta } \right) e^{\zeta } d\zeta+\nonumber\\
 &+\frac {\chi (\xi )(e^{-\frac {\xi } {2}}-e^{-\frac {\xi_0 } {2}})} {2}
 \int  _{ \RR } (\widetilde w(\zeta )-\widetilde w(\xi ) )\left(\frac {1} {|e^\xi-e^\zeta |}-\frac {1} {e^\xi+e^\zeta } \right) e^{\zeta } d\zeta+\chi  (\widetilde Q_1+\widetilde Q_2) 
\end{align}

The function $g$ is given by
\begin{align}
g(t, \xi )&= \int _0^tS _{ \xi _0 }(t-s) \chi  (\widetilde Q_1(s)+\widetilde Q_2(s))ds \nonumber\\
&-\int _0^t S _{ \xi _0 }(t-s)  \frac {\kappa _0} {2}\int  _{ \RR } (\chi (\xi )-\chi (\zeta ))\widetilde v(s, \zeta )\left(\frac {1} {|e^\xi-e^\zeta |}-\frac {1} {e^\xi+e^\zeta } \right) e^{\zeta } d\zeta ds+\nonumber\\
&+\int _0^tS _{ \xi _0 }(t-s)P_0\Big( \chi (M _{ \xi  }-M _{ \xi _0 }) \widetilde w(s)\Big)(\xi )ds+\nonumber\\
&+\int _0^t S _{ \xi _0 }(t-s)\Big[\chi (M _{ \xi  }-M _{ \xi _0 }),P_0 \Big](\widetilde w(s))(\xi )ds-\nonumber\\
&=g_1(t, \xi )+g_2(t, \xi )+g_3(t, \xi )+g_4(t, \xi ).\label{S3emc9}
\end{align}
\subsubsection{Estimate of $g_1$.}  For $T\in (0, 1]$ the same argument as in the proof of point (i) shows,
\begin{align*}
&\left(\int _T^{T+1}||g_1(t)||^2 _{ H^\sigma  }dt\right)^{1/2}\le \left(\int _0^2||g_1(t)||^2 _{ H^\sigma  }dt\right)^{1/2}\\
&\le  C\left(\int _0^2 ||w(t)|| ^2_{ L^\infty (I_1)}dt\right)^{1/2}+C\left(\int _0^2||\chi_0  Q(s) ||^2 _{ H^\sigma _{ \log^{-1} }  }ds\right)^{1/2}
\end{align*}
For $T>1$, use of the change of variables $t=(T-1)+\tau $ gives,

\begin{align*}
\left(\int _T^{T+1}||g_1(t)||^2 _{ H^\sigma  }dt\right)^{1/2}\le \sum _{ n=1 }^{[T]}\left(\int _T^{T+1}\left|\left | \int  _{ n-1 }^{n}
S _{ \xi _0 }(t-s)\chi (\widetilde Q(s))ds\right|\right| _{ H^\sigma  }dt \right)^{1/2}+\\
+\left(\int _T^{T+1}\left|\left |\int  _{ [T] }^{T+1}S _{ \xi _0 }(t-s)\chi (\widetilde Q(s))ds\right|\right| _{ H^\sigma  }dt \right)^{1/2}=I_1+I_2.
\end{align*}
The estimate of the term $I_2$ follows as in point (i) and gives,

\begin{align*}
I_2\le C \left(\int _T^{T+1}||w(t)||^2 _{ L^\infty (I_1)}dt\right)^{1/2}+C\left(\int _T^{T+1}||\chi_0 Q(s) ||^2 _{ H^\sigma _{ \log^{-1} }  }ds\right)^{1/2}.
\end{align*}
For the estimate of $I_1$ the change of variables $t=\tau +(T-n)$ gives,
\begin{align*}
I_1=\sum _{ n=1 }^{[T]}\left(\int _n^{n+1}\left|\left | \int  _{ n-1 }^{n}S _{ \xi _0 }(\tau +(T-n)-s)\chi (\widetilde Q(s))ds\right|\right| _{ H^\sigma  }d\tau  \right)^{1/2}.
\end{align*}
Since for every $n\in [1, [T]]$ and every $h\in H^\sigma $, $||S _{ \xi _0 }(T-n)h|| _{ H^\sigma  }\le  ||h|| _{ H^\sigma  }$,
\begin{align*}
\left|\left | \int  _{ n-1 }^{n}S _{ \xi _0 }(\tau +(T-n)-s)\chi (\widetilde Q(s))ds\right|\right| _{ H^\sigma  }=\\
=\left|\left | \int  _{ n-1 }^{n}S _{ \xi _0 }(\tau +(T-n)-s) \left(\chi (\widetilde Q(s))\right)ds\right|\right| _{ H^\sigma  }.
\end{align*}
Then,
\begin{align*}
&\int _n^{n+1}\left|\left | \int  _{ n-1 }^{n}S _{ \xi _0 }(\tau +(T-n)-s)\chi (\widetilde Q(s))ds\right|\right| _{ H^\sigma  }d\tau \le\\
&\le \int _n^{n+1}\left|\left | \int  _{ n-1 }^{n}S _{ \xi _0 }(\tau- s)\beta (n, s)\chi (\widetilde Q(s))ds\right|\right| _{ H^\sigma  }d\tau\\
&\le \int  _{ n-1 }^{n+1}\left|\left | \int  _{ n-1 }^{\tau }S _{ \xi _0 }(\tau- s)\1 _{ (n-1, n )}(s)\chi (\widetilde Q(s))ds\right|\right| _{ H^\sigma  }d\tau\\
&=\int  _{ 0 }^{2}\left|\left | \int  _{ 0 }^{\tau' }S _{ \xi _0 }(\tau'- s')\1 _{ (0, 1) }(s')\chi (\widetilde Q(n-1+s'))ds'\right|\right| _{ H^\sigma  }d\tau'
\end{align*}

By Lemma...,
\begin{align*}
\int _n^{n+1}\left|\left | \int  _{ n-1 }^{n}S _{ \xi _0 }(\tau +(T-n)-s)\chi (\widetilde Q(s))ds\right|\right| _{ H^\sigma  }d\tau \le\\
\le C \int _{n-1}^{n+1}\left|\left |\1 _{ (n-1, n) }(s)\chi (\widetilde Q(s))\right|\right| _{ H^{\sigma} _{ \log^{-1} }  }ds\\
\le C \int _{n-1}^{n}\left|\left |\chi \widetilde Q(s))\right|\right|  _{ H^{\sigma} _{ \log^{-1} }  }ds.
\end{align*}
We have then obtained,
\begin{align*}
&I_1+I_2\le C\sum _{ n=1 }^{[T]} \left(\int _{n-1}^{n}\left|\left |\chi \widetilde Q(s))\right|\right| ^2_{ H^{\sigma} _{ \log^{-1} }  }ds\right)^{1/2}+\\
&\hskip 2.5cm + C\left(\int _T^{T+1}||v(t) ||^2 _{ L^\infty(I_1) }dt\right)^{1/2}+C\left(\int  _{ [T] }^{T+1}\left|\left |\chi \widetilde Q(s))\right|\right| ^2 _{ H^{\sigma} _{ \log^{-1} }  }ds \right)^{1/2}\\
&\hskip 2cm\le C\left(\int _T^{T+1}||v(t) ||^2 _{ L^\infty(I_1) }dt\right)^{1/2}+C\sup _{ 0\le T\le T _{ \max } }\left(\int  _{ T }^{T+1}\left|\left |\chi \widetilde Q(s))\right|\right|^2 _{ H^{\sigma} _{ \log^{-1} }  }ds \right)^{1/2}
\end{align*}
and
\begin{align*}
&\sup _{ 0\le T\le T _{ \max } }\left(\int  _{ T }^{\min(T+1, T _{ +1\max })} ||g_1(s)||^2 _{ H^\sigma  }  ds \right)^{1/2}\le \\
&\le C\sup _{ 0\le T\le T _{ \max } }\left(\int  _{ T } ^{\min(T+1, T _{ +1\max })}\left|\left |\chi _0 Q(s))\right|\right|^2  _{ H^{\sigma} _{ \log^{-1} }  }ds \right)^{1/2}+
C\left(\int _T^{T+1}||v(t) ||^2 _{ L^\infty(I_1) }dt\right)^{1/2}
\end{align*}

\subsubsection{Estimate of $g_3$.}  For $T\in (0, 1]$ the same argument as in the estimate of $g_3$ in point  (i) (cf. proof of (\ref{S3.2EG3})) shows that for all $\varepsilon _0$ arbitrarily small, there exists a positive constant  $C>0$ such that  
\begin{align*}
\left(\int _T^{T+1}||g_3(t)|| _{ H^\sigma  }^2dt\right)^{1/2}&\le  \left(\int _0^{2}||g_3(t)|| _{ H^\sigma  }^2dt\right)^{1/2}\\
&\le    \varepsilon _0\left(\int _0^{2)}||\widetilde v(s)|| ^2_{ H^\sigma  }ds\right)^{1/2}+C\left(\int _T^{T+1}||v(t) ||^2 _{ L^\infty(I_1) }dt\right)^{1/2}.
\end{align*}

When $T>1$, using again the function  $ \psi  (s, \xi )=\chi  (\xi )(M_\xi -M _{ \xi _0 })\widetilde w(s, \xi )$ introduced in the proof of point (i), 
\begin{align*}
\left(\int _T^{T+1}||g_3(t)|| _{ H^\sigma  }^2dt\right)^{1/2}\le \sum _{ n=1 }^{[T]}\left(\int  _{ T }^{T+1}\left|\left|\int _{ n-1 }^n
 S _{ \xi _0 }(t-s)P_0\Big(\psi (s)\Big)ds  \right|\right| ^2_{ H^\sigma  }dt\right)^{1/2}+\\
+\left(\int  _{ T }^{T+1}\left|\left|\int _{ [T] }^t
 S _{ \xi _0 }(t-s)P_0\Big(\psi (s)\Big)ds  \right|\right| ^2_{ H^\sigma  }dt\right)^{1/2}=I_1+I_2.
\end{align*}

As in the case where $T\in (0, 1)$, for all $\varepsilon _0$ arbitrarily small, there exists a positive constant  $C>0$ such that
\begin{align}
\label{S4.1EG3}
I_2 \le    \varepsilon _0\left(\int _T^{\min (T+1, T _{ \max })}||\widetilde v(s)|| ^2_{ H^\sigma  }ds\right)^{1/2}+C_{ \delta  _{ \xi _0 }} \left(\int _T^{T+1}||v(t) ||^2 _{ L^\infty(I_1) }dt\right)^{1/2} .
\end{align}
As in the estimate of $g_1$, in order to estimate $I_1$ we first use the change of time variable $t=\tau +(T-n)$ to obtain,
\begin{align*}
I_1\le \sum _{ n=1 }^{[T]}\Bigg(\int _n^{n+1}&\Big|\Big|\int  _{ n-1 }^nS _{ \xi _0 }(\tau +(T-n)-s) P_0(\psi (s)ds\Big|\Big| _{ H^\sigma  }^2d\tau 
 \Bigg)^{1/2}.
\end{align*}
and then, since $T-n\ge 0$,
\begin{align*}
I_1\le \sum _{ n=1 }^{[T]}\Bigg(\int _n^{n+1}&\Big|\Big|\int  _{ n-1 }^nS _{ \xi _0 }(\tau-s) P_0(\psi (s)ds\Big|\Big| _{ H^\sigma  }^2d\tau 
 \Bigg)^{1/2}.
\end{align*}
For each $n\in [1, \cdots, [T]]$,
\begin{align*}
\int _n^{n+1}&\Big|\Big|\int  _{ n-1 }^nS _{ \xi _0 }(\tau-s) (P_0(\psi (s)ds\Big|\Big| _{ H^\sigma  }^2d\tau\\
\le \int  _{ n-1 }^{n+1}&\Big|\Big|\int  _{ n-1 }^\tau S _{ \xi _0 }(\tau-s) P_0\Big(\1 _{ (n-1, n) }(s)  \psi (s)\Big)ds\Big|\Big| _{ H^\sigma  }^2d\tau
\end{align*}
and the same argument as in the proof of (\ref{S3.2EG3}) shows that  for all $\varepsilon _0$ arbitrarily small, there exists a positive constant  $C>0$ such that
\begin{align*}
\int  _{ n-1 }^{n+1}\Big|\Big|\int  _{ n-1 }^\tau S _{ \xi _0 }(\tau-s) P_0\Big(\1 _{ (n-1, n) }(s)  \psi (s)\Big)ds\Big|\Big| _{ H^\sigma  }^2d\tau\nonumber\\
 \le   \left(\varepsilon _0 \int  _{ n-1 }^{n}||\widetilde v(s)|| ^2_{ H^\sigma  }ds +C_{ \delta  _{ \xi _0 }}\int  _{ n-1 }^n||g(s)||^2 _2ds \right).
\end{align*}
It follows
\begin{align}
&I_1\le \varepsilon _0\sum _{ n=1 }^{[T]}  \left(\int  _{ n-1 }^{n}||g(s)|| ^2_{ H^\sigma  }ds\right)^{1/2}+C\sum _{ n=1 }^{[T]}
\left(\int  _{ n-1 }^n||g(s)||^2 _{ 2 }\right)^{1/2}\nonumber\\
&\le \varepsilon  _{ 0 }\!\!\!\!\!\sup _{ 0\le T\le T _{ \max } }\left(\int _T^{\min(T+1, T _{ \max })}\!\!\!||g(s)|| _{ H^\sigma  }^2ds\right)^{1/2}
\!\!\!\!\!+C\!\!\!\sup _{ 0\le T\le T _{ \max } }\left(\int _T^{\min(T+1, T _{ \max })}\!\!\!||g(s)|| _2^2ds\right)^{1/2}\label{S4EI1ii}
\end{align}
then,
\begin{align*}
I_1+I_2\le   \varepsilon _0\left(\int _T^{\min (T+1, T _{ \max })}||g(s)|| ^2_{ H^\sigma  }ds\right)^{1/2}+C\left(\int _T^{T+1}||v(t) ||^2 _{ L^\infty(I_1) }dt\right)^{1/2}
\end{align*}
and,
\begin{align*}
&\sup _{ 0\le T\le T _{ \max } }\left(\int _T^{\min(T+1, T _{ \max })}||g_3(s)|| _{ H^\sigma  }^2ds\right)^{1/2}\le \\
&\le \varepsilon  _{ 0 }\sup _{ 0\le T\le T _{ \max } }\left(\int _T^{\min(T+1, T _{ \max })}||g(s)|| _{ H^\sigma  }^2ds\right)^{1/2}+C\left(\int _T^{T+1}||v(t) ||^2 _{ L^\infty(I_1) }dt\right)^{1/2}.
\end{align*}
\subsubsection{Estimate of $g_4$ and $g_2$.}
With the same method, and arguing as in the estimate  (\ref{S3.2estG4}) of the term $g_4$ in part (i) of the Theorem,
\begin{align*}
&\sup _{ 0\le T\le T _{ \max } }\left(\int _T^{\min(T+1, T _{ \max })}||g_4(t)||^2 _{ H^\sigma  }dt)\right)^{1/2}\le \\
&\le C\varepsilon \sup _{ 0\le T\le T _{ \max } }\left( \int  _{ T }^{\min(T+1, T _{ \max })}||\widetilde w(s)|| _{ \sigma  }^2dsdt\right)^{1/2}+ C\left(\int _T^{T+1}||v(t) ||^2 _{ L^\infty(I_1) }dt\right)^{1/2}
\end{align*}
\underline{Estimate of $g_2$.}
Same method combined with the arguments leading to (\ref{S3.3EstG2}) give
\begin{align*}
&\sup _{ 0\le T\le T _{ \max } }\left(\int _T^{\min(T+1, T _{ \max })}||g_2(t)||^2 _{ H^\sigma  }dt)\right)^{1/2}\le C\sup _{ 0\le T\le T _{ \max } }\left(\int _T^{\min(T+1, T _{ \max })}||\widetilde w(s)||^2 _{ H^{\sigma-1}  }dt)\right)^{1/2}
\end{align*}

Adding the estimates of all the different terms $g_1, \cdots, g_4$,

\begin{align*}
&\sup _{ 0\le T\le T _{ \max } }\left(\int _T^{\min(T+1, T _{ \max })}||g(t)||^2 _{ H^\sigma  }dt\right)^{1/2}\le C\left(\int _T^{T+1}||v(t) ||^2 _{ L^\infty(I_1) }dt\right)^{1/2}+ \\
&+ C\sup _{ 0\le T\le T _{ \max } }\left(\int _T^{\min(T+1, T _{ \max })}||\widetilde w(s)||^2 _{ H^{\sigma-1}  }dt)\right)^{1/2}+\\
&+C\varepsilon \sup _{ 0\le T\le T _{ \max } }\left( \int  _{ T }^{\min(T+1, T _{ \max })}  ||\widetilde w(s)|| _{H^ \sigma}^2dsdt\right)^{1/2}+\\
&+C\sup _{ 0\le T\le T _{ \max } }\left(\int  _{ T } ^{\min(T+1, T _{ +1\max })}\left|\left |\chi _0 Q(s))\right|\right|^2 _{ H^{\sigma} _{ \log^{-1} }  }ds \right)^{1/2}.
\end{align*}
Using a partition of the unity,
\begin{align}
&\sup _{ 0\le T\le T _{ \max } }\left(\int _T^{\min(T+1, T _{ \max })}||\widetilde w(t)||^2 _{ H^\sigma  }dt\right)^{1/2}\le C\left(\int _0^{T _{ \max }}||v(t) ||^2 _{ L^\infty(I_1) }dt\right)^{1/2}+ \nonumber\\
&+ C\sup _{ 0\le T\le T _{ \max } }\left(\int _T^{\min(T+1, T _{ \max })}||\widetilde w(s)||^2 _{ H^{\sigma-1}  }dt)\right)^{1/2}+\nonumber\\
&+C\varepsilon \sup _{ 0\le T\le T _{ \max } }\left( \int  _{ T }^{\min(T+1, T _{ \max })} ||\widetilde w(s)|| _{H^ \sigma }^2dsdt\right)^{1/2}+\nonumber\\
&+C\sup _{ 0\le T\le T _{ \max } }\left(\int  _{ T } ^{\min(T+1, T _{ +1\max })}\left|\left |\chi _0 Q(s))\right|\right|^2 _{ H^{\sigma} _{ \log^{-1} }  }ds \right)^{1/2}. \label{S4PUE}
\end{align}
and (\ref{S4PUE3}) follows.

\subsection{Proof of Theorem. \ref{S2Th3.1}, (iii).} 
The proof of  (\ref{S3emc8}) is based on the estimate of 
$T_1(g )$ using again (\ref{S3emc9}). 

\noindent
\subsubsection{Estimate of $g_1$.} In the first term, if $T<1$,  since,
$$
g_1(t)=\int _0^t  S _{ \xi _0 }(t-s) (\chi  \widetilde Q(s))ds
$$
use of Lemma \ref{SapPGCD0} gives,
\begin{align*}
\left(\int _T^{T+1}||T_1(g_1)(t)||^2 _{ H^\sigma}dt\right)^{1/2}\le \left(\int _0^{2}||T_1(g_1)(t)|| _{ H^\sigma}dt\right)^{1/2}\le C\left(\int _0^2||\widetilde Q||^2 _{ H^\sigma  }dt\right)^{1/2}
\end{align*}
When $T>1$, write,
\begin{align*}
&\left(\int _T^{T+1}||T_1 g_1(t)||^2 _{ H^\sigma  }dt\right)^{1/2}\le \sum _{ n=1 }^{[T]}\left(\int _T^{T+1}\left|\left | \int  _{ n-1 }^{n} T_1\left(S _{ \xi _0 }(t-s) (\chi \widetilde Q(s))\right)ds\right|\right| _{ H^\sigma  }dt \right)^{1/2}+\\
&+\left(\int _T^{T+1}\left|\left |\int  _{ [T] }^{T+1} T_1\left(S _{ \xi _0 }(t-s) (\chi \widetilde Q(s))\right)ds\right|\right| _{ H^\sigma  }dt \right)^{1/2}=I_1+I_2.
\end{align*}
The second term is estimated as,
\begin{align*}
I_2\le C\left(\int _T^{T+1}||\widetilde Q||^2 _{ H^\sigma  }dt\right)^{1/2}.
\end{align*}
In the term $I_1$ perform the change of variables $t=\tau +(T-n)$,
\begin{align*}
I_1=&\sum _{ n=1 }^{[T]}\Bigg(\int _n^{n+1}\Big|\Big | \int  _{ n-1 }^{n} T_1\left(S _{ \xi _0 }(\tau +(T-n)-s)(\chi \widetilde Q(s))\right)ds\Big|\Big| _{ H^\sigma  }d\tau  \Bigg)^{1/2}.
\end{align*}
Since for every $n\in [1, [T]]$ and every $h\in H^\sigma $, $||S _{ \xi _0 }(T-n)h|| _{ H^\sigma  }\le  ||h|| _{ H^\sigma  }$,
\begin{align*}
&\left|\left | \int  _{ n-1 }^{n}T_1\left(S _{ \xi _0 }(\tau +(T-n)-s) (\chi \widetilde Q(s))\right)ds\right|\right| _{ H^\sigma  }=\\
&\qquad =\left|\left | \int  _{ n-1 }^{n}T_1\left(S _{ \xi _0 }(\tau +(T-n)-s) \left( (\chi \widetilde Q(s))\right)\right)ds\right|\right| _{ H^\sigma  }.
\end{align*}
Then,
\begin{align*}
\int _n^{n+1}&\left|\left | \int  _{ n-1 }^{n}T_1\left(S _{ \xi _0 }(\tau +(T-n)-s)(\chi \widetilde Q(s))\right)ds\right|\right| _{ H^\sigma  }d\tau \le\\
&\le  \int _n^{n+1}\left|\left | \int  _{ n-1 }^{n}T_1\left(S _{ \xi _0 }(\tau- s)(\chi \widetilde Q(s))\right)ds\right|\right| _{ H^\sigma  }d\tau\\
&\le  \int  _{ n-1 }^{n+1}\left|\left | \int  _{ n-1 }^{\tau }T_1\left(S _{ \xi _0 }(\tau- s)\1 _{ (n-1, n )}(s)\chi (\widetilde Q(s))\right)ds\right|\right| _{ H^\sigma  }d\tau\\
&=\int  _{ 0 }^{2}\left|\left | \int  _{ 0 }^{\tau' }T_1\Big(S _{ \xi _0 }(\tau'- s')\1 _{ (0, 1) }(s')\chi (\widetilde Q(n-1+s'))\Big)ds'\right|\right| _{ H^\sigma  }d\tau'.
\end{align*}
By Lemma \ref{SapPGCD0} 
\begin{align*}
&\int _n^{n+1}\left|\left | \int  _{ n-1 }^{n} T_1\left(S _{ \xi _0 }(\tau +(T-n)-s)(\chi \widetilde Q(s))\right)ds\right|\right| _{ H^\sigma  }d\tau \le\\
&\le C   \int _{n-1}^{n+1}\left|\left |\1 _{ (n-1, n) }(s) (\chi \widetilde Q(s))\right|\right| _{ H^{\sigma}    }ds
\le C  \int _{n-1}^{n}\left|\left |\chi \widetilde Q(s))\right|\right|  _{ H^{\sigma} }ds.
\end{align*}
We have then obtained,
\begin{align*}
&I_1+I_2\le C\sum _{ n=1 }^{[T]}e^{-A(T-n)} \left(\int _{n-1}^{n}\left|\left |\chi \widetilde Q(s))\right|\right| ^2_{ H^{\sigma}  }ds\right)^{1/2}+\\
&\hskip 2cm +\left(\int _T^{T+1} ||w(t)||^2 _{ L^2( I_1) }dt\right)^{1/2}+C\left(\int  _{ [T] }^{T+1}\left|\left |\chi \widetilde Q(s))\right|\right| ^2 _{ H^{\sigma}    }ds \right)^{1/2}\\
&\hskip 1.5cm\le  \left(\int _T^{T+1} ||w(t)||^2 _{ L^2( I_1) }dt\right)^{1/2}+C\sup _{ 0\le T\le T _{ \max } }\left(\int  _{ T }^{T+1}\left|\left |\chi \widetilde Q(s))\right|\right|^2 _{ H^{\sigma}    }ds \right)^{1/2}
\end{align*}
and
\begin{align*}
&\sup _{ 0\le T\le T _{ \max } }\left(\int  _{ T }^{\min(T+1, T _{ +1\max })} ||T_1(g_1)(s)||^2 _{ H^\sigma  }  ds \right)^{1/2}\le \\
&\le C\!\!\sup _{ 0\le T\le T _{ \max } }\left(\int  _{ T } ^{\min(T+1, T _{ +1\max })}\left|\left |\chi _0 Q(s))\right|\right|^2  _{ H^{\sigma} _{ \log^{-1} }  }ds \right)^{1/2}
\!\!\!\!+
 \left(\int _0^{T _{ \max }} ||w(t)||^2 _{ L^2( I_1) }dt\right)^{1/2}.
\end{align*}

\noindent
\subsubsection{Estimate of $g_3$.} The estimate is proved first for $T\in (0, 1)$, using arguments similar to those of the estimate of $g_3$ in case (i) of the Theorem. Then the estimate is proved for $T>1$ using the same technique  as in previous steps. Suppose then first $T\in (0, 1)$ and 
denote  here again $\psi (t, \xi )=\chi (\xi )(M_\xi -M _{ \xi _0 })\widetilde w(t, \xi )$. By definition,

\begin{align*}
&|\widehat {T_1g_3(t)}(k )|^2 \le  \int _0^t\int _0^t  \mathscr Re (\rho _0(k))^2 \exp \left( - (t-s_1)\kappa _0\rho _0(k)\right)  \widehat{ P_0\psi (s_1)}(k)\times \\
&\times \exp \left( - (t-s_2) \kappa _0\overline{\rho _0(k)}\right) \overline{ \widehat{ P_0\psi (s_2)}(k)}ds_1ds_2\\
&\le \left(\int _0^t  \mathscr Re (\rho _0(k))\exp \Big( - (t-s) \kappa _0\mathscr Re(\rho _0(k))\Big)  \left|\widehat{ P_0\psi (s)}(k)\right|ds\right)^2\\
&=  \left(\int _0^t \mathscr Re (\rho _0(k))  \exp \Big( - (t-s) \kappa _0\mathscr Re(\rho _0(k))\Big)\left| \kappa _0 \rho _0(k)\right| |\widehat \psi (s, k)| ds\right)^2\\
&\le  \kappa _0^2\left(\int _0^t  \mathscr Re (\rho _0(k)) \exp \Big( - (t-s) \kappa _0\mathscr Re(\rho _0(k))\Big)\left| \rho _0(k)\right| |\widehat \psi (s, k) |ds\right)^2
\end{align*}

After multiplication by $(1+|k|^2)^\sigma $,
\begin{align*}
|\widehat {T_1g_3(t)} (k )|^2(1+|k|^2)^\sigma \le \kappa _0^2 \Bigg(\int _0^t  \mathscr Re (\rho _0(k))  \exp \Big( - (t-s)\kappa _0\mathscr Re(\rho _0(k))\Big)\times \\
\times \left| \rho _0(k)\right|
(1+|k|^2)^{\sigma /2}| \widehat  \psi (s, k)| ds\Bigg)^2.
\end{align*}
By Proposition \ref{S2PP1},
\begin{align*}
|\rho _0(k)|\le C\left(\mathscr Re(\rho _0(k))\1 _{ |k|\ge 1 }+\1 _{ |k|\le 1 }\right)
\end{align*}
and
\begin{align}
&|\widehat {T_1g_3(t)} (k )|^2  \le C   \kappa _0^2 \Bigg(\int _0^t \mathscr Re (\rho _0(k))  \exp \Big( - (t-s)  \kappa _0\mathscr Re(\rho _0(k))\Big) \times \\
&\times \mathscr F(T_1(M\psi (s)))(k) ds \Bigg)^2+C   \kappa _0^2\Bigg(\int _0^t  \mathscr Re (\rho _0(k))  \exp \Big( - (t-s)\kappa _0\mathscr Re(\rho _0(k))\Big) 
\times \nonumber\\
&\times  \mathscr F(T_2(M\psi (s)))(k)ds \Bigg)^2=r _1(t, k)+r _2(t, k).\label{S5Er1r2}
\end{align}

The first term $r _1(t, \xi )$ may be written
\begin{align*}
\left(\int _0^t    \mathscr Re (\rho _0(k))\exp \Big( - (t-s) \kappa _0\mathscr Re(\rho _0(k))\Big) 
  \mathscr F(T_1(M\psi (s)))(k)  ds \right)^2=\\
=\left(\int _0^t \mathscr F \left[ T_1 e^{\kappa _0  T_1( t-s)}
T_1(M\psi (s)))\right](k) ds \right)^2 
\end{align*}
from where,
\begin{align*}
\int  _{ \RR } r _1(t, k)(1+|k|^2)^\sigma dk=\left|\left|   \int _0^t  T_1\left( e^{\kappa _0  T_1(t-s)}
T_1(M\psi (s))\right) ds  \right|\right| ^2 _{ H^\sigma  }.
\end{align*}
By Lemma  \ref{SapPGCD0},
\begin{align*}
\int _0^2 \left|\left|   \int _0^t  T_1\left( e^{\kappa _0  T_1(t-s)}
T_1(M\psi (s))\right) ds  \right|\right| ^2 _{ H^\sigma  }dt\le C\int _0^2||T_1(M\psi (s))||^2 _{ H^\sigma  }ds
\end{align*}
Arguing as in the estimate of $g_3$ in point (i), using Lemma \ref{SapPPCM0},   for all $\varepsilon _0>0$ as small as desired, there exists a constant $C$ such that,
\begin{align}
\label{S5Er1}
\int _0^2 \left|\left|   \int _0^t  T_1\left( e^{\kappa _0  T_1(t-s)}
T_1(M\psi (s))\right) ds  \right|\right| ^2 _{ H^\sigma  }dt\le \nonumber  \\
\le \varepsilon \int _0^2||T_1 \widetilde w (s)||^2 _{ H^\sigma  }ds+C\int _0^1||w(t)||^2 _{ L^2(I_1)}dt
\end{align}
In the second term in the right hand side of (\ref{S5Er1r2}),
\begin{align*} 
\mathscr Re (\rho _0(k)) \mathscr F(T_2(M\psi (s)))(k) (1+|k|^2)^{\sigma /2} &\le (1+|k|^2)^{\sigma /2} \mathscr Re (\rho _0(k)) |\widehat \psi (s, k)|\1 _{ |k|\le 1 }\\
&\le  C  |\widehat \psi (s, k)|\\
\Longrightarrow   \int  _{ \RR }r _2(t, k)(1+|k|^2)^\sigma  dk & \le C t\,   \kappa _0^2\int _0^t ||\psi (s)||  _{ L^\infty(I_1) }^2 ds
\end{align*}
from where,
\begin{align}
&\rho _2(t, k)\le  C   \kappa _0^2 \left(\int _0^t 
(1+|k|^2)^{\sigma /2} |\widehat{ \psi (s, k)}|   ds \right)^2
\le C t\,  \kappa _0^2\int _0^t  |\widehat{ \psi (s, k)}|^2 ds\nonumber \\
&\Longrightarrow \int  _{ \RR }\rho _2(t, k) dk \le C t\,   \kappa _0^2\int _0^t ||\psi (s)|| _ 2^2 ds\le C t\,   \kappa _0^2\int _0^t ||\psi (s)|| _ \infty^2 ds, \label{S5Er2}
\end{align}
and, from (\ref{S5Er1}) and (\ref{S5Er2}),
\begin{align*}
\int _0^2||g_3(t)||^2 _{ H^\sigma  }dt \le    \varepsilon _0\int _0^2||T_1g (s)|| ^2_{ H^\sigma  }ds+C_{ \delta  _{ \xi _0 }} \int _0^2||w(t)||^2 _{ L^\infty(I_1)}dt.
\end{align*}
It immediately follows, for $T\in (0, 1)$,
\begin{align*}
\int _T^{T+1}||g_3(t)||^2 _{ H^\sigma  }dt \le    \varepsilon _0\int _0^2||\widetilde w(s)|| ^2_{ H^\sigma  }ds+C_{ \delta  _{ \xi _0 }}\int _0^2||w(t)||^2 _{ L^\infty(I_1)}dt.
\end{align*}
If $T>1$,  using again the function  $ \psi  (s, \xi )=\chi  (\xi )(M_\xi -M _{ \xi _0 })\widetilde w(s, \xi )$ introduced in the proof of point (i), 
\begin{align*}
&\left(\int _T^{T+1}||T_1g_3(t)|| _{ H^\sigma  }^2dt\right)^{1/2}\le\sum _{ n=1 }^{[T]}\left(\int  _{ T }^{T+1}\left|\left|\int _{ n-1 }^n
 T_1\left(S _{ \xi _0 }(t-s)P_0\Big(\psi (s)\Big)\right)ds  \right|\right| ^2_{ H^\sigma  }dt\right)^{1/2}+\\
&+\left(\int  _{ T }^{T+1}\left|\left|\int _{ [T] }^t
S _{ \xi _0 }(t-s)P_0\Big(\psi (s)\Big)ds  \right|\right| ^2_{ H^\sigma  }dt\right)^{1/2}=I_1+I_2.
\end{align*}
The  estimate of $I_2$ follows arguing  as for the case where $T\in (0, 1)$, 
\begin{align}
\label{S5Eg3iiiI2}
I_2 \le   \varepsilon _0\left(\int _T^{\min (T+1, T _{ \max })}||\widetilde w(s)|| ^2_{ H^\sigma  }ds\right)^{1/2}+C \int _0^2||w(t)||^2 _{ L^\infty(I_1)}dt.
\end{align}
In the first term, write first,
\begin{align*}
I_1\le \sum _{ n=1 }^{[T]}\Bigg(\int _n^{n+1}&\Big|\Big|\int  _{ n-1 }^n T_1S _{ \xi _0 }(\tau +(T-n)-s) P_0(\psi (s)ds\Big|\Big| _{ H^\sigma  }^2d\tau 
 \Bigg)^{1/2}.
\end{align*}
Arguing as in the proof of (\ref{S4EI1ii}), for each $n\in {1, \cdots, [T]}$, for all $\varepsilon >0$ there exists a constant $C>0$ such that,
\begin{align*}
\int _n^{n+1}&\Big|\Big|\int  _{ n-1 }^nT_1S _{ \xi _0 }(\tau +(T-n)-s) P_0(\psi (s)ds\Big|\Big| _{ H^\sigma  }^2d\tau \le \\
&\hskip 2cm\le \left(
 \varepsilon _0\left(\int  _{ n-1 }^ n ||T_1g(s)|| ^2_{ H^\sigma  }ds\right)^{1/2}+C\int _0^{T _{ \max }}||w(t)||^2 _{ L^\infty(I_1) }dt\right).
\end{align*}
Therefore,
\begin{align}
\label{S5Eg3iiiI1}
I_1&\le \varepsilon _0 \sum _{ n=1 }^{[T]} 
\left(\int  _{ n-1 }^ n ||T_1g(s)|| ^2_{ H^\sigma  }ds\right)^{1/2}+C\sum _{ n=1 }^{[T]} 
  ||w||^2 _{ L^2((0, 2)\times \RR) } \nonumber\\
&\le \varepsilon _0\left(\int _T^{\min (T+1, T _{ \max })}||T_1g|| ^2_{ H^\sigma  }ds \right)^{1/2}+ C\int _0^{T _{ \max }}||w(t)||^2 _{ L^\infty(I_1) }dt.
\end{align}
From (\ref{S5Eg3iiiI2}) and  (\ref{S5Eg3iiiI1})
\begin{align*}
I_1+I_2&\le \varepsilon _0\left(\int _T^{\min (T+1, T _{ \max })}||T_1g|| ^2_{ H^\sigma  }ds \right)^{1/2}+C\int _0^{T _{ \max }}||w(t)||^2 _{ L^\infty(I_1) }dt,
\end{align*}
and 
\begin{align*}
\sup _{ 0\le T\le T _{ \max } }\left(\int_T^{\min(T+1, T _{ \max })}||T_1 g_3(s)|| _{ H^\sigma  }^2ds \right)^{1/2}\le \varepsilon _0\left(\int _T^{\min (T+1, T _{ \max })}||T_1g|| ^2_{ H^\sigma  }ds \right)^{1/2}+\\
 +C\int _0^{T _{ \max }}||w(t)||^2 _{ L^\infty(I_1) }dt.
\end{align*}
 
\subsubsection{Estimate of $g_4$.}  In order to estimate the term $g_4$ it is enough to combine the same method,  with the argument used to prove (\ref{S3.2estG4}) with (\ref{S7LapbcE2}) instead of (\ref{S7LapbcE1}),

\begin{align*}
&\sup _{ 0\le T\le T _{ \max } }\left(\int_T^{\min(T+1, T _{ \max })}||T_1 g_4(s)|| _{ H^\sigma  }^2ds \right)^{1/2}\le \\
&\le  \varepsilon _0\left(\int _T^{\min (T+1, T _{ \max })}||T_1g|| ^2_{ H^\sigma  }ds \right)^{1/2}
 +C\left(\int _0^{T _{ \max }}||w(t)||^2 _{ L^\infty(I_1) }dt\right)^{1/2}.
\end{align*}

\subsubsection{Estimate of $g_2$.} Use of the same method  with Lemma \ref{SapPGCD0} and Lemma \ref{S3.3L3.6}, yield,
\begin{align*}
&\sup _{ 0\le T\le T _{ \max } }\left(\int_T^{\min(T+1, T _{ \max })}||T_1 g_2(s)|| _{ H^\sigma  }^2ds \right)^{1/2}\le \\
&\le   C\left(\int _T^{\min (T+1, T _{ \max })}||\widetilde w(s)|| ^2_{ H^{\sigma-1}  }ds \right)^{1/2}
+C\int _0^{T _{ \max }}||w(t)||^2 _{ L^\infty(I_1) }dt.
\end{align*}

Adding the estimates of $g_1,\cdots, g_4$, it follows
\begin{align*}
&\sup _{ 0\le T\le T _{ \max } }\left(\int _T^{\min(T+1, T _{ \max })} ||T_1g(s)|| _{ H^\sigma  }^2ds\right)^{1/2}\le \\
&\le \varepsilon \sup _{ 0\le T\le T _{ \max } }\left(\int _T^{\min(T+1, T _{ \max })} ||T_1 g (s)|| _{ H^\sigma  }^2ds\right)^{1/2}+\\
&+  C\left(\int _T^{\min (T+1, T _{ \max })}||\widetilde w(s)|| ^2_{ H^{\sigma-1}  }ds \right)^{1/2}+C\left(\int _0^{T _{ \max }}||\widetilde w(t)||^2 _{ L^\infty(I_1) }dt\right)^{1/2}+\\
&+ C\sup _{ 0\le T\le T _{ \max } }\left(\int  _{ T } ^{\min(T+1, T _{ +1\max })}\left|\left |\chi _0 Q(s))\right|\right|^2  _{ H^{\sigma} _{ \log^{-1} }  }ds \right)^{1/2}
\end{align*}
and then,
\begin{align*}
&\sup _{ 0\le T\le T _{ \max } }\left(\int _T^{\min(T+1, T _{ \max })} ||T_1g(s)|| _{ H^\sigma  }^2ds\right)^{1/2}\le \\
&\le   C\left(\int _T^{\min (T+1, T _{ \max })}||\widetilde w(s)|| ^2_{ H^{\sigma-1}  }ds \right)^{1/2}+C\left(\int _0^{T _{ \max }}||\widetilde w(t)||^2 _{ L^\infty(I_1) }dt\right)^{1/2}+\\
&+ C\sup _{ 0\le T\le T _{ \max } }\left(\int  _{ T } ^{\min(T+1, T _{ +1\max })}\left|\left |\chi _0 Q(s))\right|\right|^2  _{ H^{\sigma} _{ \log^{-1} }  }ds \right)^{1/2}.
\end{align*}
Use of  a partition of the unity gives then,
\begin{align*}
&\sup _{ 0\le T\le T _{ \max } }\left(\int _T^{\min(T+1, T _{ \max })} ||T_1\widetilde w(s)|| _{ H^\sigma  }^2ds\right)^{1/2}\le \\
&\le   C\left(\int _T^{\min (T+1, T _{ \max })}||\widetilde w(s)|| ^2_{ H^{\sigma-1}  }ds \right)^{1/2}+C\left(\int _0^{T _{ \max }}||\widetilde w(t)||^2 _{ L^\infty(I_1) }dt\right)^{1/2}+\\
&+ C\sup _{ 0\le T\le T _{ \max } }\left(\int  _{ T } ^{\min(T+1, T _{ +1\max })}\left|\left |\chi _0 Q(s))\right|\right|^2  _{ H^{\sigma} _{ \log^{-1} }  }ds \right)^{1/2}.
\end{align*}

\section{Back to the $\omega, X $ variables.}
\label{back}
\setcounter{equation}{0}
\setcounter{theo}{0}
This Section contains the proof of Theorem \ref{mtheo}. 
If $v(X)=w(\xi )$ with $X=e^\xi $ it follows, for all $k\in \CC$,
\begin{align*}
\widehat w(k)=\frac {1} {\sqrt{2\pi }}\int  _{ \RR }e^{-i\xi k}w(\xi )d\xi =\frac {1} {\sqrt{2\pi }}\int  _{ \RR }X^{-1-i k} v(X) dX
=\frac {1} {\sqrt{2\pi }}\mathscr M(v)(-i k)
\end{align*}
where $\mathscr M$ denotes the Mellin transform. Theorem \ref{S2Th3.1} may now be translated in terms of the function $v$. To this end, define,
\begin{align*}
M_\sigma (0, \infty)=\left\{v\in L^2(0, \infty); ||v|| _{ M_\sigma  } <\infty \right\}\\
||v||^2  _{ M_\sigma  }=\int  _{ \RR }(1+|k|^2)^\sigma |\mathscr M(v)(-i k)|^2dk<\infty
\end{align*}
and $v\in M_\sigma (0, \infty)$ if and only if $w\in H^\sigma (\RR)$.

Given an open  interval $J\subset (0, \infty)$  we will say that $v\in M_\sigma (J)$ if and only if the function $w$ defined as $w(\xi )=v(e^\xi )$ is such that $w\in H^\sigma (I)$ with $I=\log (J)$ where 
we denote
\begin{align}
&I=\log (J)=\{\xi \in \RR; \,\ \exists X\in J,\,\,\xi =\log X\} \label{STEIJ1}\\
&J=e^I=\{X>0; \,\ \exists \xi \in I,\,\,X =e^\xi \}\label{STEIJ2}\\
&\forall R>0,\,\,RJ=\left\{Rz,\,z\in J \right\}\label{STEIJ3}.
\end{align}
As usual, the norms of the  local Sobolev spaces $H^\sigma (I)$ are given by,
\begin{align*}
&||w||_{H^\sigma (I)}^2=\sum _{ m=0 }^{[\sigma  ]}||w^{(m)}||^2 _{ L^2(I) }+[w]^2 _{ \sigma , I } \\
&[w]^2 _{ \sigma , I }=\int_I \int_I \frac {|w^{(m)}(\xi )-w^{(m)}(\zeta )|^2} {|\xi -\zeta |^{2(\sigma-[\sigma ]) +1}}d\xi d\zeta
\end{align*}
where $[\sigma ]$ denotes the entire part of $\sigma $.

Moreover, for $\sigma =m\in \NN$,
\begin{align*}
||w|| ^2_{ H^m _{ \log }(I) }=||w||^2 _{ H^m(I) }+[w^{(m)}]^2 _{ 0 , I }\\
[h]^2 _{ 0 , I }=\int_I \int_I \frac {|h(\xi )-h(\zeta )|^2} {|\xi -\zeta |}d\xi d\zeta.
\end{align*}

\underline{Local version of the spaces $M_\sigma $.} 
As for $M_\sigma (0, \infty)$, given an open interval $J\subset (0, \infty)$ we define, 
\begin{align*}
&M_\sigma (J)=\left\{v\in L^2(J); ||v|| _{ M_\sigma(J)  } <\infty \right\}\\
&||v||  _{ M_\sigma(J)  }=||w|| _{ H^\sigma (\log J) }
\end{align*}
In particular the following properties are going to be needed. Given an open bounded interval $J$ such that $J\subset \overline J\subset (0, \infty)$ and $R>0$ we denote,
\begin{align*}
I=\{\xi \in \RR; e^\xi \in J\};\,\,\,
I+\log R=\{\xi +\log R,\,\xi \in I\},\,\,RJ=\{RX,\,\,X\in J\}.
\end{align*}
\begin{lem}
\label{lzm75}
Suppose that $\sigma \ge 0$ and $J$ is any open bounded interval such that $I\subset \overline I\subset (0, \infty)$ and $R>0$ fixed. Then,  the spaces $H^\sigma (RJ)$ and $M_\sigma (RJ)$ are the same (in the sense of the equivalence of the norms). In particular,  there exists two constant $C>0$ and $C'>0$ independent on $R$, but that may depend on $I$, such that if $\sigma \in (0, 1)$,

\begin{align}
C||v||^2 _{ M _{ \sigma  }(RJ)}
& \le  R^{-1}\int  _{ RJ }|v ( X  )|^2 dX  +
R^{2\sigma -1}\iint\limits_{( RJ)^2} \frac {|v  (X  )-v  (Y )|^2} {|X   -Y |^{2\sigma +1}}dXdY \le C'||v||^2 _{ M _{ \sigma  }(RJ) }. \label{pgcd1}
\end{align}
and, if $\sigma \in (1, 2)$, 
\begin{align}
C ||v||^2 _{ M _{ \sigma  }(RJ) }&\le  R^{-1}||v||^2 _{ L^2(RJ) }
+R ||\omega' ||^2 _{ L^2(RJ) }+\nonumber \\
&+R^{2\sigma -1} \iint _{ (RJ)^2 }
\frac {|\omega'(X)-v'(Y)|^2} {|X-Y|^{2\sigma -1}}dXdY\le  C ||v||^2 _{ M _{ \sigma  }(RJ) }.\label{pgcd2}
\end{align}
\end{lem}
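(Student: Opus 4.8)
The plan is to isolate all the dependence on $R$ into elementary scaling identities, thereby reducing the statement to the case $R=1$, where it becomes the invariance of fractional Sobolev norms under the smooth diffeomorphism $X\mapsto\log X$ on a compact interval. The key observation is that a dilation in the variable $X$ becomes a translation in the variable $\xi$. Writing $w(\xi)=v(e^\xi)$ and $v_R(X)=v(RX)$, the function attached to $v_R$ is $w(\cdot+\log R)$; since $\log(RJ)=\log J+\log R$ and the $H^\sigma(\RR)$ norm is translation invariant, the very definition of $M_\sigma$ gives
\[
\|v\|_{M_\sigma(RJ)}=\|w\|_{H^\sigma(\log(RJ))}=\|w(\cdot+\log R)\|_{H^\sigma(\log J)}=\|v_R\|_{M_\sigma(J)},
\]
so the right-hand sides of (\ref{pgcd1})--(\ref{pgcd2}) are turned into $R$-free quantities once rewritten through $v_R$.

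Next I would rewrite the intrinsic left-hand quantities through the same dilation $X=RX'$. For $\sigma\in(0,1)$ the substitution gives immediately $R^{-1}\int_{RJ}|v|^2\,dX=\|v_R\|_{L^2(J)}^2$, while using $|X-Y|=R|X'-Y'|$ and $dX\,dY=R^2\,dX'\,dY'$,
\[
R^{2\sigma-1}\iint_{(RJ)^2}\frac{|v(X)-v(Y)|^2}{|X-Y|^{2\sigma+1}}\,dX\,dY=\iint_{J^2}\frac{|v_R(X')-v_R(Y')|^2}{|X'-Y'|^{2\sigma+1}}\,dX'\,dY',
\]
so the whole middle expression of (\ref{pgcd1}) equals $\|v_R\|_{H^\sigma(J)}^2$. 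For $\sigma\in(1,2)$ the same dilation, together with $(v_R)'(X')=R\,v'(RX')$, converts the three terms of the middle expression of (\ref{pgcd2}) into $\|v_R\|_{L^2(J)}^2$, $\|(v_R)'\|_{L^2(J)}^2$ and $[v_R]_{\sigma,J}^2$ respectively, whose sum is again exactly $\|v_R\|_{H^\sigma(J)}^2$. These are purely algebraic computations and are precisely what produce the powers $R^{-1}$, $R$ and $R^{2\sigma-1}$ that appear in the statement.

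It then remains to prove the $R$-independent equivalence $\|v_R\|_{H^\sigma(J)}\approx\|v_R\|_{M_\sigma(J)}$, that is, the coincidence up to equivalent norms of $H^\sigma(J)$ and $M_\sigma(J)$ for a fixed bounded interval $J\subset\overline J\subset(0,\infty)$. For $\sigma\in(0,1)$ I would compare the Gagliardo seminorms term by term under $\xi=\log X$: on $\overline J$ the derivative $1/X$ is bounded above and below, so $|\log X-\log Y|\approx|X-Y|$ and $dX\approx d\xi$ with constants depending only on $J$, and the equivalence follows by dominating each integrand. For $\sigma\in(1,2)$ one must in addition use $v'(X)=w'(\log X)/X$ and the stability of the order-$(\sigma-1)$ fractional seminorm under multiplication by the smooth weight $1/X$ and composition with the smooth diffeomorphism $\log$. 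This last point is where the genuine difficulty lies, since one has to control the fractional seminorm of a \emph{product} and of a \emph{composition} rather than merely perform a change of variables; a Leibniz-type bound for the $H^{\sigma-1}$ seminorm together with the bi-Lipschitz and $C^\infty$ character of $\log$ on $\overline J$ supplies the required two-sided estimate. Combining this fixed-$J$ equivalence with the scaling identities and with $\|v_R\|_{M_\sigma(J)}=\|v\|_{M_\sigma(RJ)}$ then yields (\ref{pgcd1}) and (\ref{pgcd2}) with constants $C,C'$ depending on $J$ but independent of $R$, and in particular the asserted equivalence of $H^\sigma(RJ)$ and $M_\sigma(RJ)$.
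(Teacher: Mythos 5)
Your proposal is correct, and it takes a genuinely different route from the paper. The paper proves the equivalence directly on $RJ$ for general $\sigma\ge 0$: it changes variables $\xi=\log X$, expands $w^{(m)}$ in terms of $X^k v^{(k)}$, splits the Gagliardo integrand into difference terms $X^{2k}|v^{(k)}(X)-v^{(k)}(Y)|^2$ and commutator-type terms $|X^k-Y^k|^2|v^{(k)}(Y)|^2$, and controls everything with the two-sided inequality $\min(x,y)/|x-y|\le 1/|\log x-\log y|\le \max(x,y)/|x-y|$, carrying the powers of $R$ through each estimate by hand and only at the end specializing to $\sigma\in(0,2)$ to obtain (\ref{pgcd1})--(\ref{pgcd2}). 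You instead factor the dilation out \emph{exactly} at the start: the identity $\|v\|_{M_\sigma(RJ)}=\|v_R\|_{M_\sigma(J)}$ (dilation in $X$ is a translation in $\xi$, and the local $H^\sigma$ norm is a translation isometry) together with the elementary substitution $X=RX'$ converts both middle expressions into the $R$-free quantities $\|v_R\|^2_{H^\sigma(J)}$, reducing the lemma to the fixed-interval equivalence $H^\sigma(J)\approx M_\sigma(J)$. What each approach buys: yours makes the $R$-uniformity of $C,C'$ automatic and transparent --- there is no bookkeeping of factors such as $R^{2k-1}$ or $(1+R)$, which in the paper's intermediate bounds obscure why the final constants are $R$-independent --- and it isolates the only analytic content in a scale-free statement; the paper's computation, on the other hand, treats all $\sigma\ge 0$ (arbitrary $m=[\sigma]$) in a single pass and produces the explicit weighted quantities without a separate reduction step. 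The one point you leave as a sketch, the Leibniz-type bound needed for $\sigma\in(1,2)$ when handling $v'(X)=w'(\log X)/X$, is exactly the kind of estimate the paper carries out (its $|X^k-Y^k|$ terms), and it closes by the standard splitting $v'(X)-v'(Y)=\bigl(w'(\log X)-w'(\log Y)\bigr)/X+w'(\log Y)\bigl(X^{-1}-Y^{-1}\bigr)$, the second term being integrable against $|X-Y|^{-(2\sigma-1)}$ since $3-2\sigma>-1$; so the gap is routine to fill and your argument is sound.
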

\begin{proof} 
Denote $m=[\sigma ]$, $s=\sigma -m$ and  and $\sigma \in (0, 1)$ and suppose first that $v\in H^\sigma (RJ)$. Then $v\in H^m(RJ)$ and then, $w\in H^m(I+\log R)$ and for all $k=1, \cdots, m$
\begin{align}
\int  _{ I+\log R }|w^{(k)}(\xi )|^2d\xi \le \sum_{ j=1}^k C_j \int  _{ RJ } |X|^{2j}|v^{(j)}|^2\frac {dX} {X}\le
\sum_{ j=1}^k C_jR^{2j-1}\int  _{ RJ } ||v^{(j)}||^2 _{ L^2(RJ) }. \label{pnyx}
\end{align}
Moreover, for $m\ge 1$,
\begin{align*}
&\frac {|w^{(m)}(\xi )-w^{(m)}(\zeta  )|^2} {|\xi -\zeta |^{2s+1}}\le R[X, Y, v]\\
&R[X, Y, v]=\sum _{ k=1 }^{m}C_k
\frac {X^{2k}\big|(v^{(k)}(X)-\omega^{(k)}(Y))\big|^2} {|\log X-\log Y|^{2s+1}}+\sum _{ k=1 }^{m}C_k
\frac {|X^k-Y^k|^2 |\omega^{(k)}(Y)|^2} {|\log X-\log Y|^{2s+1}}
\end{align*}
There exists a constant $C>0$ such that for $X\in RJ$ and $Y\in RJ$, $|X^k-Y^k|\le C|X-Y|R^{k-1}$. Using
\begin{align}
\frac {\min (x, y)} {|x-y|} \le \frac{1}{ |\log x-\log y|} \le \frac {\max (x, y)}{|x-y|}  \label{S6Elog}
\end{align}
it follows,
\begin{align}
\iint  _{ (RJ)^2 }\frac {|X^k-Y^k|^2 |\omega^{(k)}(Y)|^2} {|\log X-\log Y|^{2s+1}}\frac {dX} {X}\frac {dY} {Y}\le CR^{2s+2k-3}
\iint  _{ (RJ)^2 }\frac {|X-Y|^2 |\omega^{(k)}(Y)|^2} {|X-Y|^{2s+1}}dXdY\nonumber\\
\le CR^{2k-1}\int  _{ RJ }|v^{(k)}(Y)|^2dY. \label{pnyx60}
\end{align}
On the other hand, 
\begin{align}
\label{pnyx57C}
\iint _{ (RJ)^2 } \frac { \big|(v^{(m)}(X)-\omega^{(m)}(Y))\big|^2} {| X- Y|^{2s+1}}dXdY\le C||v||^2 _{ H^\sigma (RJ) }
\end{align}
from where,
\begin{align*}
&\iint  _{ (RJ)^2 }
\frac {X^{2m}\big|(v^{(m)}(X)-\omega^{(m)}(Y))\big|^2} {|\log X-\log Y|^{2s+1}}\frac {dX} {X}\frac {dY} {Y}
\le CR^{2\sigma -1}||v||^2 _{ H^\sigma (RJ) }.
\end{align*}
If $m\ge 2$,  for $k=1,\cdots, m-1$
\begin{align*}
\iint _{ (RJ)^2 } \frac { \big|(v^{(k)}(X)-\omega^{(k)}(Y))\big|^2} {| X- Y|^{2s+1}}dXdY&\le
\begin{cases}
 CR^{-2s+3}||v||^2 _{ H^\sigma (RJ) },\,\text{if}\,\,\,m+\frac {1} {2}<\sigma <m+1\nonumber\\
 CR^{-2s+2}||v^{(k+1)}||^2 _{ L^2(RJ) },\,\text{if}\,\,\,\sigma <m+\frac {1} {2} 
\end{cases}
\nonumber\\
&\le C(1+R)R^{-2s+2}||v||^2 _{ H^\sigma (RJ) } 
\end{align*}
from where for $k=1,\cdots, m-1$
\begin{align}
&\iint  _{ (RJ)^2 }
\frac {X^{2k}\big|(v^{(k)}(X)-\omega^{(k)}(Y))\big|^2} {|\log X-\log Y|^{2s+1}}\frac {dX} {X}\frac {dY} {Y}\le \nonumber\\
&\le CR^{2k+2s-1}\iint  _{ (RJ)^2 }
\frac { \big|(v^{(k)}(X)-\omega^{(k)}(Y))\big|^2} {| X- Y|^{2s+1}}dXdY\le CR^{2k+1}(1+R) ||\omega|| ^2_{ H^{\sigma  }(RJ) }.\label{pnyx57}
\end{align}
From  (\ref{pnyx60}), (\ref{pnyx57C}) and (\ref{pnyx57})
\begin{align}
\iint _{ (I+\log R)^2 } \frac {|w^{(m)}(\xi )-w^{(m)}(\zeta  )|^2} {|\xi -\zeta |^{2s+1}}d\xi d\zeta \le 
C\left(R^{2\sigma -1}+R(1+R)\sum _{ k=0 }^{m-1}
R^{2k}\right) ||\omega|| ^2_{ H^{\sigma }(RJ) }+\nonumber \\
+C\sum _{ k=0 }^m R^{2k-1}||v^{(k)}|| ^2_{ L^2(RJ) }.\label{pnyxB}
\end{align}
It follows from (\ref{pnyx}) and  (\ref{pnyxB}) that $w\in H^\sigma (I+\log R)$ and then $v\in M_\sigma (RJ)$, and
\begin{align}
||v||^2 _{ M_\sigma ( RJ) } &\le   \sum_{ k=1 }^m \sum_{ j=1}^k C_jR^{2j-1}\int  _{ RJ } ||v^{(j)}||^2 _{ L^2(RJ) }+
\nonumber\\
&+C\left(R^{2\sigma -1}+R(1+R)\sum _{ k=1 }^{m-1}
R^{2k}\right) ||\omega|| ^2_{ H^{\sigma }(RJ) }
+C\sum _{ k=0 }^m R^{2k-1}||v^{(k)}|| ^2_{ L^2(RJ) } \label{pgcd2.21}
\end{align}
On the other hand, if $v\in M_\sigma (RJ)$, then $w\in H^\sigma (I+\log R)$, $m\ge 1$, $k=1, \dots, m$:
\begin{align*}
\int  _{ RJ } |v^{(k)}(X)|^2dX\le \sum _{ j=1 }^k D_j\int  _{ I+\log R }e^{-2k\xi }|w^{(j)}(\xi )|^2e^\xi d\xi 
\le   \sum _{ j=1 }^k D'_jR^{-2k+1}||w^{(j)} ||^2 _{ L^2(I+\log R) }.
\end{align*}
Moreover, using that for $m\ge 2$
\begin{align*}
v^{(m)}(X)-v^{(m)}(X)=e ^{-m\xi }\sum _{ k=1 }^m\left(w ^{(k)}(\xi )-w ^{(k)}(\zeta ) \right)
\end{align*}
A similar argument as in the previous step gives, for $m\ge 2$,
\begin{align*}
&\iint _{ (RJ)^2 } \frac {|v^{(m)}(X )-v^{(m)}(Y  )|^2} {|X-Y |^{2s+1}}dX dY \le C
R^{-2m-2s+1} ||w|| ^2_{ H^{\sigma }(I+\log R) }+\\
&+C\sum _{ k=1 }^m R^{-2k+1}||w ^{(k)}|| ^2_{ L^2(I+\log R) }\le C\left(R^{-2m-2s+1}+ \sum _{ k=1 }^m R^{-2k+1}\right)
 ||w|| ^2_{ H^{\sigma }(I+\log R) }
\end{align*}
from where  for $m\ge 2$,
\begin{align}
||v||^2 _{ H^\sigma (RJ) }& \le 
C\sum_{ j=1}^m  \left(\frac {R^{2j-1}-R^{2m+1}} {1-R^2}\right)  ||v^{(j)}||^2 _{ L^2(RJ) }+\nonumber\\
&+C\left(R^{-2m-2s+1}+ 
\frac {R^{2m-1}} {(R^2-1)R^{2m-1}}\right)
 ||w|| ^2_{ H^{\sigma }(I+\log R) }.
 \label{pgcd2.22}
\end{align}
The equivalence of $||v|| _{ H^\sigma  }(0, \infty)$ and $||v|| _{ M_\sigma  }(0, \infty)$ follows then from  (\ref{pgcd2.21}) and (\ref{pgcd2.22}). Calculations are slightly simplified when $\sigma \in (0, 2)$ to give (\ref{pgcd1}) and (\ref{pgcd2}).
\end{proof}

\begin{cor}
\label{S6Cor1}
Suppose that $v$ is the solution to  (\ref{S2Ewxi2L}), (\ref{S2Ewxi2Lb}) given by Corollary \ref{S7cor2} where $\nu$ satisfies (\ref{mtheoE1}). Then, 

\begin{align*}
(a)\qquad (i)&\sup _ {\substack{ 0< t_0 < T^*\\ 0<R<1}}\Bigg(\int  _{ t_0}^{\min(t_0+R^{1/2}, T_*)} || v(t)||^2 _{ M_\sigma (RJ_3)}dt \Bigg)^{1/2}\le \nonumber\\
&\le C\sup _{ 0<R<1 } \left(  \int  _{ 0}^{  T_*}||v(t)||^2 _{ L^\infty(RJ_1 ) }dt \right)^{1/2} +\nonumber\\
&+C  \sup _ {\substack{ 0< t_0 < T^*\\ 0<R<1}}\left(  R \int  _{ t_0 }^{\min(t_0+R^{1/2}, T_*)}|| \eta _{ 0, R } \nu (t)||^2 _{ M _{ \sigma , \log^{-1} }  }dt  \right)^{1/2}. 
\end{align*}
\begin{align*}
\qquad (ii) \,\,&\sup _{ R>1 } \Bigg(\int _0^{ T_*} ||v(t)||^2 _{ M_\sigma(RJ_3)}dt \Bigg)^{1/2}\le C  \left(\int  _{ 0}^{  T_*}||v(t)||^2 _{ L^\infty(RJ_1) }dt \right)^{1/2}  +\nonumber\\
 &+C\sup _{ R>1 }  \left(R \int  _{ 0 } ^{T_* } \left|\left |\eta _{ 0, R }\nu(t ))\right|\right|^2 _{ M _{ \sigma , \log^{-1} }}dt  \right)^{1/2}.
\end{align*}

\begin{align*}
(b) & \sup _ {\substack{ 0< t_0 < T^*\\ 0<R<1}}\Bigg(\int  _{ t_0}^{\min(t_0+R^{1/2}, T_*)} N _{ R, \sigma  }[v(t)]dt \Bigg)^{1/2}\le C\sup _{ 0<R<1 } \left(  \int  _{ 0}^{  T_*}||v(t)||^2 _{ L^\infty(RJ_1 ) }dt \right)^{1/2} +\\
&+C  \sup _ {\substack{ 0< t_0 < T^*\\ 0<R<1}}\left(  R \int  _{ t_0 }^{\min(t_0+R^{1/2}, T_*)}|| \eta _{ 0, R } \nu (t)||^2 _{ M _{ \sigma  }  }dt  \right)^{1/2}.\\ 
&\text{where},\,(N _{ R, \sigma  }[v(t) ])^2=\frac {1} {R}\int  _{ \RR }\Big| \mathscr M\left( \eta_0\, v_R(t)\Big)(-ik)\right|^2(1+|k|^2)^\sigma \times \\
&\hskip 7cm \times \left(1 _{ |k|<1 }+\1 _{ |k|>1 }\rho_0 (k)\right)dk
\end{align*}
with,  for all $x>0$, $v_R(t, x)=v(t, Rx)$.
\end{cor}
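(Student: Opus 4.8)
The plan is to read Corollary \ref{S6Cor1} as the $(\omega,X)$-translation of Theorem \ref{S2Th3.1}, the two ingredients being the Mellin--Fourier dictionary $\widehat w(k)=(2\pi)^{-1/2}\mathscr M(v)(-ik)$ recalled at the start of this Section, and the dilation homogeneity of $\mathscr L$. First I would record that homogeneity: writing $v_R(X)=v(RX)$ and substituting $Y\mapsto RY$ in (\ref{S3E23590L})--(\ref{S3E23590M}) one finds $\mathscr L(v_R)=R^{1/2}(\mathscr L v)_R$, so $\mathscr L$ is homogeneous of degree $-1/2$. Consequently, for fixed $R>0$ the parabolically rescaled function
\begin{align*}
V(t,X)=v(R^{1/2}t,RX),\qquad W(t,\xi)=V(t,e^\xi)=w(R^{1/2}t,\xi+\log R)
\end{align*}
solves $\partial_t V=\mathscr L(V)+R^{1/2}\nu(R^{1/2}t,R\,\cdot)$, equivalently $\partial_t W=P(W)+\mathcal Q$ with $\mathcal Q(t,\xi)=R^{1/2}Q(R^{1/2}t,\xi+\log R)$, i.e. exactly the equation (\ref{S2Ewxi2L2}) treated in Theorem \ref{S2Th3.1}. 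The point is that the fixed unit time-windows of that theorem become, after undoing the time scaling $s=R^{1/2}t$, windows of length $R^{1/2}$, which is precisely the $\min(t_0+R^{1/2},T_*)$ appearing in the statement.

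Before invoking the theorem I would check that $W$ meets its hypotheses: the regularity $W\in L^\infty\cap H^1$ on the relevant sets, and the membership of $\mathcal Q$ in the required log-space, follow from the properties of $v$ furnished by Theorem \ref{S7cor2} (in particular $v\in L^\infty_{\mathrm{loc}}$, $\partial_t v,\mathscr L v\in L^\infty_{\mathrm{loc}}$ and the decay in part (v)); the localization is carried by $\chi_0$, supported in $J_2$ and equal to $1$ on $J_3$, while the $L^\infty(J_1)$ term on the right absorbs the nonlocal commutator contribution $\widetilde Q_1$. Granting this, I would apply part (ii) of Theorem \ref{S2Th3.1} to $W$ and unwind each term by translation invariance in $\xi$: since $\log(RJ_3)=J_3+\log R$ and $\chi_0\equiv 1$ on $J_3$,
\begin{align*}
\|v(s)\|_{M_\sigma(RJ_3)}=\|w(s,\cdot)\|_{H^\sigma(J_3+\log R)}=\|W(R^{-1/2}s,\cdot)\|_{H^\sigma(J_3)}\le\|\chi_0W(R^{-1/2}s)\|_{H^\sigma},
\end{align*}
and likewise $\|W(\tau)\|_{L^\infty(J_1)}=\|v(R^{1/2}\tau)\|_{L^\infty(RI_1)}$ and, using $\eta_{0,R}(e^\xi)=\chi_0(\xi-\log R)$ and the translation invariance of $H^\sigma_{\log^{-1}}$, $\|\chi_0\mathcal Q(\tau)\|_{H^\sigma_{\log^{-1}}}=R^{1/2}\|\eta_{0,R}\nu(R^{1/2}\tau)\|_{M_{\sigma,\log^{-1}}}$. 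Changing variables $s=R^{1/2}\tau$ in every time-integral reconstructs the window of length $R^{1/2}$ and the weight $R$ on the source, the remaining powers of $R$ cancelling to $R$-independent constants; this gives (a)(i). Part (a)(ii) is identical but run with $R>1$ and the full interval $(0,T_*)$, using the global form of part (ii).

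For (b) I would identify $N_{R,\sigma}[v]$ as the $X$-space avatar of a generalized Liouville / $H^\sigma_{\log}$-type norm of $\chi_0W$. Since $\mathscr M(\eta_0 v_R)(-ik)=\sqrt{2\pi}\,\widehat{\chi_0 W}(k)$ and, by Proposition \ref{S2PP1}, $\mathbf 1_{|k|<1}+\rho_0(k)\mathbf 1_{|k|>1}\lesssim 1+(\mathscr Re\,\rho_0(k))^2\mathbf 1_{|k|>1}$ because $\log|k|\lesssim 1+(\log|k|)^2$, one obtains
\begin{align*}
(N_{R,\sigma}[v])^2\lesssim \tfrac1R\Big(\|\chi_0W\|_{H^\sigma}^2+\|T_1(\chi_0W)\|_{H^\sigma}^2\Big).
\end{align*}
The two summands are controlled by parts (ii) and (iii) of Theorem \ref{S2Th3.1} respectively, through the very same scaling; since $M_\sigma\hookrightarrow M_{\sigma,\log^{-1}}$ the source is absorbed into the single $M_\sigma$ norm appearing in (b). The announced $\sigma=0$ specialization then follows by rewriting $M_0(RJ_3)$ and the log-gain through Lemma \ref{lzm75} and the equivalent Gagliardo form (\ref{S1EHl2}), turning the abstract weight into the explicit $R^{-1}\|v\|_{L^2(RJ_3)}^2+R^{-2}[[v]]^2_{RJ_3}$.

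The main obstacle, beyond the tedious check that the localization hypotheses of Theorem \ref{S2Th3.1} survive for the globally defined $v$ (handled via the decay of Theorem \ref{S7cor2} and the decay of the kernel, so that the far field enters only through the $L^\infty(J_1)$ term), is the bookkeeping of the powers of $R$: one must track simultaneously the $R^{1/2}$ from the Jacobian $ds=R^{1/2}d\tau$, the $R^{1/2}$ generated by the homogeneity of $\mathscr L$ in the source, and the weights $R^{-1},R$ built into the $M_\sigma$ and $N_{R,\sigma}$ norms, and verify that they recombine into the stated $R$-independent constants.
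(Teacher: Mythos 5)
Your proposal is correct and follows essentially the same route as the paper's own proof: the parabolic rescaling $t=R^{1/2}\tau$, $X=Rx$ exploiting the degree $-1/2$ homogeneity of $\mathscr L$, the identification of the localized source via translation in $\xi$ together with translation invariance of the $H^\sigma_{\log^{-1}}$ norm (the paper records this as $\mathscr F\big(\chi _0\, \tau _{ \log R }[Q]\big)(k)=R^{ik}\mathscr F\big(\tau _{ -\log R }[\chi _0]Q\big)(k)$), application of Theorem \ref{S2Th3.1} (ii), (i), (iii) on unit time windows which unwind to windows of length $R^{1/2}$, and for part (b) the identification of $N_{R,\sigma}$ with the $(I+T_1)$-weighted $H^\sigma$ norm of the localized scaled function, with the source absorbed through $M_\sigma \subset M_{\sigma,\log^{-1}}$. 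Your bookkeeping of the powers of $R$ (Jacobian $R^{1/2}$, homogeneity factor $R^{1/2}$ on the source, and the built-in weights) reproduces exactly the cancellations in the paper's computation.
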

\begin{rem}
Since no general inner description of generalized Liouville spaces is known  for $\sigma\not \in \NN$ it does not seems easy  to deduce an estimate for  some norm of the function $v$ itself only  depending of its values on the interval $RJ_3$. 
\end{rem}
\begin{proof}
\noindent
(a)\,\,\,\underline{Scaled version of $v$.}

For all open and bounded interval $J\subset (0, \infty)$, $R>0$ and $T_*>0$ consider $X\in RJ$, $Y\in RJ$, $t\in (0, T_*$. We then define the new variable $X=x R$, $Y=yR$, $t=\tau \sqrt R$ such that $x\in J$, $y\in J$ and $\tau \in (0, T_*R^{-1/2})$. Define also  $\Psi (\tau , x)=v(t, X)$ and $\tilde \nu (\tau , x)=\nu(t, X)$. Then,

\begin{align*}
&||\Psi   (\tau )||^2 _{ M_\sigma(J)}=||v (t )||^2 _{ M_\sigma(RJ)}\\
&\left|\left |\widetilde \nu(\tau ))\right|\right|^2 _{ M_{\sigma } (J) }= \left|\left | \nu(t ))\right|\right|^2 _{ M_{\sigma} ( RJ) }.
\end{align*}
Let us  also denote,
\begin{align*}
x=e^\zeta,\,\, \Omega (\tau , \zeta )=\Psi (\tau , e^\zeta ), \\
X=e^{\xi },\,\,\, W(t, \xi )=v(t, e^\xi )
\end{align*}
By a simple change of variables,

\begin{align*}
\left|\left |\Omega (\tau )\right|\right|^2 _{ L^2  (I) }&=\int  _{ e^{I} }|\Psi (\tau , x)|^2\frac {dx} {x}=\int  _{ Re^{I} }|v (t, X)|^2\frac {dX} {X}\\
\left[\Omega (\tau )\right]^2 _{ \sigma , I }&=\int  _{ e^{I} }\int  _{ e^{I} }\frac {|\Psi (\tau , x)-\Psi(\tau , y) |^2} {|\log x-\log y|^{2\sigma +1}}\frac {dx} {x}\frac {dy} {y}=\int  _{ Re^{I} }\int  _{ Re^{I} }\frac {|v(t, X)-v(t, Y) |^2} {|\log X-\log Y|^{2\sigma +1}}\frac {dX} {X}\frac {dY} {Y} \\
&\ge \int  _{ Re^{I} }\int  _{ Re^{I} }\frac {|v(t, X)-v(t, Y) |^2} {|\log X-\log Y|^{2\sigma +1}}\frac {dX} {X}\frac {dY} {Y}
\end{align*}
If $v$ satisfies,
\begin{align*}
\frac {\partial v(t, X)} {\partial \tau }&=\mathscr L(v(t))+\nu(t, X)\\
&= \int _0^\infty (v(t, Y)-v(t, X))\left(\frac {1} { | X -Y |}-\frac {1} { (X +Y ) } \right) \frac {dY} {X  ^{1/2} }+\nu (t, X)
\end{align*}
and
\begin{align*}
\frac {\partial \Psi (\tau , x)} {\partial \tau }&=\frac {\partial v(t, X)} {\partial t}\frac {dt} {d\tau }=R^{1/2}\frac {\partial v(t, X)} {\partial t}\\
\mathscr L(\Psi  (\tau ))&= \int _0^\infty (\Psi  (\tau , y)-\Psi  (\tau , x))\left(\frac {1} { | x -y |}-\frac {1} { (x +y ) } \right) \frac {dy} {x  ^{1/2} }\\
& = \int _0^\infty (v(t, Y)-v(t, X))\left(\frac {R} { | X -Y |}-\frac {R} { (X +Y ) } \right) \frac {R^{-1}dY} {R^{-1/2}X  ^{1/2} }\\
&= R^{1/2}\int _0^\infty (v(t, Y)-v(t, X))\left(\frac {1} { | X -Y |}-\frac {1} { (X +Y ) } \right) \frac { dY} { X  ^{1/2} }=R^{1/2}\mathscr L(v(t))
\end{align*}
then,
\begin{align*}
&\frac {\partial \Psi (\tau , x)} {\partial \tau }=\mathscr L(\Psi  (\tau ))+R^{1/2}\widetilde \nu (\tau , x)\\
&\widetilde \nu (\tau , x)=\nu(t, X).
\end{align*}
Denote, also
\begin{align*}
Q(t, \xi )=V(\tau , \zeta ),\,\,\xi =\zeta +\log R.
\end{align*}
and then
\begin{align*}
&X=xR\Longrightarrow \xi =\zeta+\log R\\
&\Psi (\tau, x)=v(t, X)\Longrightarrow \Omega(\tau , \zeta )=w(t, \xi )=w(t, \zeta+\log R)
\end{align*}
Suppose now that $\nu$ satisfies (\ref{mtheoE1}). Denote $\tau  _A$ the operator acting on a function $\varphi $ as $\tau  _A[\varphi] (\xi )=\varphi (\xi +A)$
Then by definition $\chi _R(X)\nu(t, X)=\chi _0(\log (X/R))\nu(t, X)=\tau   _{ -\log R }[\chi _0](\xi)Q(t, \xi )$, and $\tau   _{ -\log R }[\chi _0]Q(t)\in  L^2((0, 2); H^\sigma  _{ \log^{-1} })$.

On the other hand,  for all $x>0$, $R>0$ and $\zeta =\log x$,
\begin{align*}
\chi _0(\log x)\widetilde \nu(\tau , x)&=\chi _0(\log x) \nu(t, Rx)=\chi _0(\log (x))Q(t, \log (Rx)\\
&=\chi _0(\zeta )Q(t, \zeta +\log R)=\chi _0(\zeta )\, \tau   _{ \log R }[Q(t)](\zeta )=\chi _0(\zeta )V(\tau , \zeta ).
\end{align*}
Since 
\begin{equation*}
\mathscr F\Big(\chi _0\, \tau   _{ \log R }[Q(t)] \Big)(k)=R^{ik}\mathscr F\Big( \tau   _{ -\log R }[\chi _0]Q(t)\Big)(k),\,\,\forall k\in \RR,
\end{equation*}
it follows that $\chi _0 V(\tau ) \in  L^2((0, 2); H^\sigma  _{ \log^{-1} })$.

Point (ii) in  Theorem \ref{S2Th3.1} may then be applied and  the function $\Omega (\tau , \zeta  )$ satisfies,
\begin{align}
\sup& _{ 0\le T\le  T_*R^{-1/2} }\Bigg(\int _T^{\min(T+1,  T_*R^{-1/2})} ||\chi _0\Omega  (\tau )||^2 _{ H^\sigma}d\tau \Bigg)^{1/2}\le C\left(\int _0^{T_*R^{-1/2}}|| \Omega (\tau )||^2 _{ L^\infty (I_1) }dt\right)^{1/2} +\nonumber\\
&+C\sup _{ 0\le T\le  T_*R^{-1/2} }\left(\int  _{ T } ^{\min(T+1,  T_*R^{-1/2})}\left|\left |R^{1/2}\chi _0 V(\tau ))\right|\right|^2 _{ H^{\sigma} _{ \log^{-1} } }d\tau  \right)^{1/2}. \label{S4PUE3B0}
\end{align}

Since $\chi _0\equiv 1$ on the interval $I_3$,
\begin{align*}
||\Omega  (\tau )||^2 _{ H^\sigma (I_3) }\le C ||\chi _0\Omega  (\tau )||^2 _{ H^\sigma}
\end{align*}
and then,
\begin{align}
\sup& _{ 0\le T\le  T_*R^{-1/2} }\Bigg(\int _T^{\min(T+1,  T_*R^{-1/2})}   ||\Omega  (\tau )||^2 _{ H^\sigma (I_3) }d\tau \Bigg)^{1/2}\le  C\left(\int _0^{T_*R^{-1/2}}|| \Omega (\tau )||^2 _{ L^\infty (I_1) }dt\right)^{1/2} +\nonumber\\
&+C\sup _{ 0\le T\le  T_*R^{-1/2} }\left(\int  _{ T } ^{\min(T+1,  T_*R^{-1/2})}\left|\left |R^{1/2}\chi _0 V(\tau ))\right|\right|^2 _{ H^{\sigma} _{ \log^{-1} } }d\tau  \right)^{1/2}. \label{S4PUE3B}
\end{align}

Since,
\begin{align*}
||\Omega(\tau ) ||^2 _{ L^\infty ( I_1) }=||v(t ) ||^2 _{ L^\infty ( RJ_1) }
\end{align*}
 then, for all $T\in (0, T_*R^{-1/2})$,
\begin{align}
\label{S5Emc2}
\int _0^{ T_*R^{-1/2}} ||\Omega(\tau ) ||^2 _{ L^\infty ( I_1) }d\tau\le R^{-1/2}\int  _{ TR^{1/2} }^{  T_*} ||v(t ) ||^2 _{ L^\infty ( RJ_1) }dt.
\end{align}
By the same argument, since  $\eta _{ 0, R }(X)=\eta_0(X/R)=\chi_0\left(\log (X/R) \right)$
\begin{align}
\label{S5Emc4}
\int \limits _{ T } ^{\min(T+1,  T_*R^{-1/2})}&\left|\left |R^{1/2}\chi _0 V(\tau ))\right|\right|^2 _{ H^{\sigma} _{ \log^{-1} }   }d\tau=R^{1/2}  \int\limits  _{ TR^{1/2} }^{\min((T+1)R^{1/2}, T_*)}  ||\eta _{ 0, R } \nu (t)||^2 _{ M _{ \sigma , \log^{-1} }  }dt.
\end{align}
By definition,
\begin{align*}
&||v||^2 _{ M _{ \sigma  }(RJ_3) }\le C||\Omega  (\tau )||^2 _{ H^\sigma (I_3) }
\end{align*}
and it then follows from  (\ref{S4PUE3B}), for all $R\in (0, 1)$ and $T\in (0, T_*)$
\begin{align*}
 \Bigg( \int\limits  _{ TR^{1/2} }^{\min((T+1)R^{1/2}, T_*)}\hskip -0.5cm||v(t)|| ^2_{ M_\sigma(RJ_3) }dt \Bigg)^{1/2}\le 
 \left(\int  _{ TR^{1/2} }^{  T_*}||v(t)||^2 _{ L^\infty(RJ_1) }dt \right)^{1/2}+\nonumber\\
+C \left(R\int  _{ TR^{1/2} }^{\min((T+1)R^{1/2}, T_*)}|| \eta _{ 0, R } \nu (t)||^2 _{ M _{ \sigma , \log^{-1} }   }dt  \right)^{1/2}.
\end{align*}

If now  $t_0=TR^{1/2}$, then for all $t_0\in (0, T^*)$,
\begin{align*}
\Bigg(\int  _{ t_0}^{\min(t_0+R^{1/2}, T_*)} ||v(t)||^2 _{ M_\sigma(RJ_3)}dt \Bigg)^{1/2}\le 
\left( \int  _{ t_0}^{  T_*}||v(t)||^2 _{ L^\infty(RJ_1) }dt \right)^{1/2}+\nonumber\\
+C\left( R  \int  _{ t_0 }^{\min(t_0+R^{1/2}, T_*)}||\chi _R \nu (t)||^2 _{ M _{ \sigma , \log^{-1} }  }dt  \right)^{1/2},
\end{align*}
and,
\begin{align}
\sup _ {\substack{ 0< t_0 < T^*\\ 0<R<1}}\Bigg(&\int  _{ t_0}^{\min(t_0+R^{1/2}, T_*)} || v(t)||^2 _{ M_\sigma (RJ_3)}dt \Bigg)^{1/2}\le C\sup _{ 0<R<1 } \left(  \int  _{ 0}^{  T_*}||v(t)||^2 _{ L^\infty(RJ_1 ) }dt \right)^{1/2} +\nonumber\\
&+C  \sup _ {\substack{ 0< t_0 < T^*\\ 0<R<1}}\left(  R \int  _{ t_0 }^{\min(t_0+R^{1/2}, T_*)}||\eta _{ 0, R } \nu (t)||^2 _{ M _{ \sigma , \log^{-1} }  }dt  \right)^{1/2}. \label{S4PUE7}
\end{align}

Suppose on the other hand that $R>1$.
\begin{align*}
&\frac {\partial \Psi (\tau , x)} {\partial \tau }= \mathscr L(\Psi  (\tau ))+R^{1/2}\widetilde \nu (\tau , x)\\
&\widetilde \nu (\tau , x)=\nu(t, X).
\end{align*}
By point (i) in Theorem \ref{S2Th3.1}   applied to the function $\Omega (t, \zeta )$

\begin{align*}
\Bigg(\int _0^{T_*R^{-1/2}} ||\chi _0\Omega  (\tau )||^2 _{ H^\sigma }d\tau \Bigg)^{1/2}\le C\left(\int _0^{T_*R^{-1/2}}|| \Omega(\tau ) ||^2 _{ L^\infty ((I_1)}d\tau\right)^{1/2} +\\
 +C\left(\int  _{ 0 } ^{ T_*R^{-1/2}}\left|\left |R^{1/2} \chi  _0\widetilde V(\tau ))\right|\right|^2 _{ H^{\sigma} _{ \log^{-1} }  }d\tau  \right)^{1/2},
\end{align*}
and then, arguing as for (\ref{S5Emc2})-(\ref{S4PUE7}),
\begin{align}
\sup _{ R>1 } \Bigg(\int _0^{ T_*} ||v(t)||^2 _{ M_\sigma(RJ_3)}dt \Bigg)^{1/2}\le &C  \left(\int  _{ 0}^{  T_*}||v(t)||^2 _{ L^\infty(RJ_1) }dt \right)^{1/2}  +\nonumber\\
 &+C\sup _{ R>1 }  \left(R \int  _{ 0 } ^{T_* } \left|\left |\eta _{ 0, R }\nu(t ))\right|\right|^2 _{ M _{ \sigma , \log^{-1} }}dt  \right)^{1/2}. \label{S4PUE5}
\end{align}
\vskip 0.5cm 
\noindent
{\it Proof of part }(b).  By (iii) of Theorem  \ref{S2Th3.1}  and arguing as fin the Proof of  (\ref {S4PUE3B0})  it follows, for $R\in (0, 1)$,
 \begin{align}
&\sup _{ 0\le T\le  T_*R^{-1/2} }\Bigg(\int _T^{\min(T+1,  T_*R^{-1/2})} ||T_1\chi _0\Omega  (\tau )||^2 _{ H^\sigma}d\tau \Bigg)^{1/2}\le \nonumber \\
&\le C\left(\int _0^{T_*R^{-1/2}}|| \Omega (\tau )||^2 _{ L^\infty (I_1) }dt\right)^{1/2} +\nonumber\\
&+C\sup _{ 0\le T\le  T_*R^{-1/2} }\left(\int  _{ T } ^{\min(T+1,  T_*R^{-1/2})}\left|\left |R^{1/2}\chi _0 V(\tau ))\right|\right|^2 _{ H^{\sigma}   }d\tau  \right)^{1/2}. \label{S4PUE3B5}
\end{align}
Then, by (\ref{S4PUE3B}) and (\ref{S4PUE3B5})
\begin{align}
&\sup _{ 0\le T\le  T_*R^{-1/2} }\Bigg(\int _T^{\min(T+1,  T_*R^{-1/2})} ||(I+T_1)\chi _0\Omega  (\tau )||^2 _{ H^\sigma}d\tau \Bigg)^{1/2}\le \nonumber \\
&\le C\left(\int _0^{T_*R^{-1/2}}|| \Omega (\tau )||^2 _{ L^\infty (I_1) }dt\right)^{1/2} +\nonumber\\
&+C\sup _{ 0\le T\le  T_*R^{-1/2} }\left(\int  _{ T } ^{\min(T+1,  T_*R^{-1/2})}\left|\left |R^{1/2}\chi _0 V(\tau ))\right|\right|^2 _{ H^{\sigma} }d\tau  \right)^{1/2}. \label{S4PUE3B9}
\end{align}
The right hand side is now bounded as in the proof of the point (a). In the left hand side,
\begin{align*}
||(I+T_1)\chi _0\Omega  (\tau )||^2 _{ H^\sigma}&=\int  _{ \RR }\left| \mathscr F\left(\chi _0 \Omega (\tau ) \right)(k)\right|^2(1+|k|^2)^\sigma \left(1 _{ |k|<1 }+\1 _{ |k|>1 }\rho (k)\right)dk,\\
\mathscr F\left(\chi _0 \Omega (\tau ) \right)(k)&=\int  _{ \RR }e^{-ik\zeta  }\chi _0(\zeta )\Omega (\tau , \zeta )d\zeta =R^{-1}\int _0^\infty\chi _0(\log x)v(t, R x)x^{-ik-1}dx\\
&=R^{-1}\mathscr M\left( \eta_0\, v_R(t)\right)(-ik)
\end{align*}
where $v_R(t, x)=v(t, Rx)$ and $\eta_0(x)=\chi _0(\log x)$, from where,
\begin{align*}
||(I+T_1)\chi _0\Omega  (\tau )||^2 _{ H^\sigma}&=R^{-1}\int  _{ \RR }\left| \mathscr M\left( \eta_0\, v_R(t)\right)(-ik)\right|^2(1+|k|^2)^\sigma \left(1 _{ |k|<1 }+\1 _{ |k|>1 }\rho (k)\right)dk.
\end{align*}
Arguing as in the proof of (a),
 \begin{align*}
 \Bigg( \int\limits  _{ TR^{1/2} }^{\min((T+1)R^{1/2}, T_*)}\hskip -0.5cm N _{ R, \sigma  }[v(t)]dt \Bigg)^{1/2}\le 
 \left(\int  _{ TR^{1/2} }^{  T_*}||v(t)||^2 _{ L^\infty(RJ_1) }dt \right)^{1/2}+\nonumber\\
+C \left(R\int  _{ TR^{1/2} }^{\min((T+1)R^{1/2}, T_*)}||\eta _{ 0, R } \nu (t)||^2 _{ M _{ \sigma , }   }dt  \right)^{1/2}.
\end{align*}
from where, as for (\ref{S4PUE7})
\begin{align}
\sup _ {\substack{ 0< t_0 < T^*\\ 0<R<1}}\Bigg(&\int  _{ t_0}^{\min(t_0+R^{1/2}, T_*)} N _{ R, \sigma  }[v(t)]dt \Bigg)^{1/2}\le C\sup _{ 0<R<1 } \left(  \int  _{ 0}^{  T_*}||v(t)||^2 _{ L^\infty(RJ_1 ) }dt \right)^{1/2} +\nonumber\\
&\hskip 3cm +C  \sup _ {\substack{ 0< t_0 < T^*\\ 0<R<1}}\left(  R \int  _{ t_0 }^{\min(t_0+R^{1/2}, T_*)}||\eta _{ 0, R } \nu (t)||^2 _{ M _{ \sigma }  }dt  \right)^{1/2}. \label{S4PUE77}
\end{align}
and the proof of (b) concludes as for (a) using (\ref{crdb1}).
\end{proof}

\begin{proof}
[\upshape\bfseries{Poof of Theorem \ref{mtheo}}]
Under the assumptions of Theorem \ref{mtheo}, 
by Corollary \ref{S7cor1}, and Holder's inequality,
\begin{align*}
||v(t)|| _{ \infty }\le C\left( t^{-2\theta}||\nu(t)|| _{ \theta, \rho  }+\int _0^t||\nu(s)|| _{ \theta, \rho  }(t-s)^{-2\theta}ds\right)\\
\le C\left( t^{-2\theta}||\nu(t)|| _{ \theta, \rho  }+t^{\frac {1-4\theta} {2}}\left(\int _0^t||\nu(s)|| _{ \theta, \rho  } ^2ds\right)^{1/2}\right)
\end{align*} 
and using Fubini's Theorem,
\begin{align}
\int _0^{T_*}||v(t)||^2 _{ \infty }dt\le C\int _0^{T_*} t^{-4\theta} ||\nu(t)|| ^2_{ \theta, \rho  } dt+ 
\left(\int _0^t||\nu(s)|| _{ \theta, \rho  } ^2ds\right)\int _0^{T^*}t^{1-4\theta}dt \nonumber \\
\le  C\int _0^{T_*} t^{-4\theta} ||\nu(t)|| ^2_{ \theta, \rho  } dt+CT_*^{(2-4\theta)}\int _0^{T_*} ||\nu(s)||^2 _{ \theta, \rho  }ds\nonumber\\
\le C\left(1+T_*^{(2-4\theta)}\right)\int _0^{T_*} \left(1+t^{-4\theta}\right) ||\nu(t)|| ^2_{ \theta, \rho  } dt.
\label{crdb1}
\end{align} 
Properties (a) and (b) of  Theorem \ref{mtheo} follow from Corollary \ref{S6Cor1}. Suppose $\sigma= 0$, by Proposition \ref{S2PP1}, and the definition of $H^0  _{ \log }(\RR)$ one has $||\eta_0w_R(t)||^2 _{  H ^0  _{ \log }(\RR)}\le C(N _{ R, 0   }[v(t) ])^2$.
Then, by Lemma \ref{S7Alocloc}, 
\begin{align*}
\iint  _{ I_3\times I_3 }\frac {| w_R(t, \xi )-w_R(t, \zeta )|^2} {|\xi -\zeta |}\le C(N _{ R, \sigma   }[v(t) ])^2
\end{align*}
and,
\begin{align*}
&\iint  _{ (RJ_3)^2 }\frac {|v(X)-v(Y)|^2} {|X-Y|}dXdY=R\iint  _{ J_3^2 }\frac {|\omega_R (X)-\omega_R (Y)|^2} {|X-Y|}dXdY\\
&\le CR^2\iint  _{ J_3^2 }\frac {|v(X)-v(Y)|^2} {|\log X- \log Y|}\frac {dX} {X}\frac {dY} {Y}=
CR^2\iint  _{ I_3^2 }\frac {| w_R(t, \xi )-w_R(t, \zeta )|^2} {|\xi -\zeta |}\\
&\le CR^2(N _{ R, \sigma   }[v(t) ])
\end{align*}
and then,
\begin{align}
\label{S6Cor2E1}
&R^{-1}||v(t)|| _{ L^2(RJ_3) }+R^{-2}\iint  _{ (RJ_3)^2 }\frac {|v(X)-v(Y)|^2} {|X-Y|}dXdY
\le C(N _{ R, 0  }[v(t) ])^2
\end{align}
On the other hand,
\begin{align*}
||\chi _R \nu(t)|| ^2_{ M _{ 0} }&= ||\chi _0 V(\tau )||^2 _{ L^2 }=\int  _{ \RR }\left|\widehat{\chi _0 V(\tau )}(k) \right|^2dk\\
& = \int  _{ \RR }\left|\widehat{\chi _0 V(\tau )}(k) \right|^2 dk= \int  _{ \RR }\left| \chi _0 (\zeta ) V(\tau )(\zeta ) \right|^2 d\zeta \\
&\le \int  _{I_2  }\left| V(\tau, \zeta ) \right|^2 dk=\int  _{ RJ_2 }|\nu(t, X)|^2\frac {dX} {X}\le \frac {C} {R}\int  _{ RJ_2 }|\nu(t, X)|^2dX.
\end{align*}
Then for $R\in (0, 1)$,
\begin{align*}
& \sup _ {\substack{ 0< t_0 < T^*\\ 0<R<1}}\Bigg(\int  _{ t_0}^{\min(t_0+R^{1/2}, T_*)}  \left(R^{-1}||v(t)||^2 _{ L^2(RJ_3) }+R^{-2}[[v (t )]]^2 _{RJ_3 }\right)dt \Bigg)^{1/2} \le \\
&\le C\left(1+T_*^{2(1-\theta)}\right)\left(\int _0^{T_*} \left(1+t^{-4\theta}\right) ||\nu(t)|| ^2_{ \theta, \rho  } dt\right)^{1/2}
+\\
&+C \sup _ {\substack{ 0< t_0 < T^*\\ 0<R<1}}  \left(  \int  _{ t_0 }^{\min(t_0+R^{1/2}, T_*)}||  \nu (t)||^2 _{ L^2(RJ_2)}dt  \right)^{1/2}
\end{align*}
and (\ref{sgfrd1}) follows.
\end{proof}
\begin{rem}
\label{theta14}
If $\nu \in L^\infty ((0, T); X _{ \theta, \rho  })$ then Theorem \ref{S7cor2} may be used instead of Corollary \ref{S7cor1} to obtain $||v(t)|| _{ \infty  }\le C\sup _{ 0\le s\le t }||\nu(s)|| _{ \theta, \rho  }(1+t^{-2\theta})$ and then, for $\theta<1/2$, the first term in the right hand sides of (\ref{mtheoE2a}), (\ref{mtheoE2b}) and (\ref{mtheoE2c}) may be substituted by $C(T^*+T*^{1-2\theta})\sup _{ 0\le s\le T^* }||\nu(s)||^2 _{ \theta, \rho  }$.
\end{rem}
\section{Appendix: Some Lemmas.}
\label{somelemmas}
\setcounter{equation}{0}
\setcounter{theo}{0}
Let us define,
\begin{align*}
&\widehat {-T_1(h)}(k)=\hat h(k)\mathscr R e(\rho _0(k))\1 _{ |k|>1 }\\
&\widehat {-T_2(h)}(k)=\hat h(k)\1 _{ |k|\le 1 }\\
&\widehat {\mu _\sigma h}(k)=|k|^{\sigma }\widehat h(k),\,\,\widehat {\mu  _{ \sigma , {\rho } }h}(k)=|k|^\sigma (1+\log (1+|k|)^\rho \widehat h(k)\\
&\mathscr F(\mu  _{ \sigma , \log^{-\rho } }h)(k)=|k|^\sigma (1+\log (1+|k|)^{-\rho  },\\
&\nu(k)=|k|^2\1 _{ |k|<1 }+\log |k|\,\1 _{ |k|>1 },
\end{align*}

The first Lemma is a small variation of Lemma 3.7  in   \cite{EscVelTransAMS}.

\begin{lem}
\label{SapPPCM0}
 Suppose $\alpha _0\in C^\infty_c(\RR)$ is such that, for some constants $C>0$ and $\varepsilon >0$,
\begin{align}
|\widehat \alpha_0 (k)|&\le \frac {C \delta^2  _{ \xi _0 }} { 1+\delta^2  _{ \xi _0 }k^2}\,\,\forall k\in \RR.\label{S3.3Fa21}
\end{align}
Then there exists positive constants $K$ and $C$, with $K$ independent of $\varepsilon $ such that,
\begin{align*}
||\alpha _0 f|| _{ H^\sigma  (\RR)}+||T_1\alpha _0 f|| _{ H^\sigma  (\RR)}\le K\varepsilon ||f|| _{ H^\sigma  (\RR)}+C||f|| _{ L^\infty(\RR) }
\end{align*}
\end{lem}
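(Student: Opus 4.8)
The plan is to combine the classical fractional Leibniz (Kato--Ponce) inequality with the fact that the hypothesis on $\widehat{\alpha_0}$ pins down the size of $\|\alpha_0\|_{L^\infty}$. Integrating the bound $|\widehat{\alpha_0}(k)|\le C\delta_{\xi_0}^2(1+\delta_{\xi_0}^2k^2)^{-1}$ gives $\|\widehat{\alpha_0}\|_{L^1(\RR)}\le C\delta_{\xi_0}$, hence $\|\alpha_0\|_{L^\infty}\le(2\pi)^{-1/2}\|\widehat{\alpha_0}\|_{L^1}\le K\delta_{\xi_0}$; I read the $\varepsilon$ of the statement as this small parameter $\delta_{\xi_0}$. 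Since $\alpha_0\in C^\infty_c(\RR)$, each $\|\alpha_0\|_{H^m}$ is a finite constant $C_{\delta_{\xi_0}}$. For the first summand, the product estimate $\|\alpha_0f\|_{H^\sigma}\le K\|\alpha_0\|_{L^\infty}\|f\|_{H^\sigma}+C\|\alpha_0\|_{H^{\lceil\sigma\rceil}}\|f\|_{L^\infty}$, with $K$ the universal Kato--Ponce constant (independent of $\alpha_0$ and $f$), already delivers $\|\alpha_0f\|_{H^\sigma}\le K\varepsilon\|f\|_{H^\sigma}+C_{\delta_{\xi_0}}\|f\|_{L^\infty}$.

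For the second summand I would split $T_1(\alpha_0f)=\alpha_0\,T_1f+[T_1,\alpha_0]f$ and first dispose of the commutator. By Proposition \ref{S2PP1}, $\rho_0(k)=\gamma_E+\log|k|+O(|k|^{-1})$ as $|k|\to\infty$, so the symbol of $T_1$ has derivative $O(|k|^{-1})$; the principal symbol of $[T_1,\alpha_0]$ is therefore $O(|k|^{-1})\,\alpha_0'(\xi)$, of order $-1$, and the symbolic expansion yields $\|[T_1,\alpha_0]f\|_{H^\sigma}\le C_{\delta_{\xi_0}}\|f\|_{H^{\sigma-1}}$. Interpolating $\|f\|_{H^{\sigma-1}}\le\varepsilon\|f\|_{H^\sigma}+C_{\varepsilon,\delta_{\xi_0}}\|f\|_{L^2}$, and using that the commutator localises $f$ to the bounded support of the derivatives of $\alpha_0$ so that the local $L^2$ norm is dominated by $\|f\|_{L^\infty}$, places this term inside the admissible right-hand side.

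The real work is the leading term $\alpha_0\,T_1f$, which I would treat by a frequency cut at the scale $\Lambda=\varepsilon^{-1}$ matching the concentration scale $\delta_{\xi_0}^{-1}$ of $\widehat{\alpha_0}$. On $\{|k|\le\Lambda\}$ the multiplier $\mathscr{Re}(\rho_0(k))\1_{|k|>1}$ is bounded by $C\log\Lambda$, so this portion is at most $C\log\Lambda\,\|\alpha_0f\|_{H^\sigma}$; inserting the first-summand estimate and using $\varepsilon\log(1/\varepsilon)\to0$ keeps it of the form $K\varepsilon\|f\|_{H^\sigma}+C_{\delta_{\xi_0}}\|f\|_{L^\infty}$ after a harmless relabelling of $\varepsilon$. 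On $\{|k|>\Lambda\}$ I would pass to $\widehat{\alpha_0f}=(2\pi)^{-1/2}\widehat{\alpha_0}\ast\widehat f$ and split the convolution according to whether $|k-\ell|\le|k|/2$ or not: the off-diagonal piece enjoys $|\widehat{\alpha_0}(k-\ell)|\le C|k|^{-2}$ and is absorbed into $C_{\delta_{\xi_0}}\|f\|_{L^\infty}$, while on the near-diagonal region one has $|\log|k|-\log|\ell||\le C$, so the logarithmic weight at $k$ may be exchanged for its value at $\ell$ up to a bounded factor.

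The main obstacle is exactly this last exchange. Replacing $\log|k|$ by $\log|\ell|$ on the near-diagonal threatens to produce, after Young's inequality with $\|\widehat{\alpha_0}\|_{L^1}\le K\varepsilon$, a term proportional to $\varepsilon\|T_1f\|_{H^\sigma}$ rather than $\varepsilon\|f\|_{H^\sigma}$; reaching the stated right-hand side requires showing that this transferred logarithm is genuinely reabsorbed by the combination of the $\varepsilon$-smallness of $\alpha_0$ and the matching $\Lambda=\delta_{\xi_0}^{-1}$ of the truncation to the spectral concentration of $\widehat{\alpha_0}$, so that no autonomous $H^\sigma_{\log}$ norm of $f$ survives. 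This is the delicate point of the argument and the place where the precise decay hypothesis on $\widehat{\alpha_0}$, and not merely $\|\alpha_0\|_{L^\infty}=O(\varepsilon)$, must be used; it is the step I would check most carefully, following the corresponding estimate in Lemma 3.7 of \cite{EscVelTransAMS}.
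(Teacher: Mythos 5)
Your reconstruction cannot be compared line-by-line with the paper's proof, because the paper does not give one: its entire argument is to verify condition (3.39) of Lemma 3.7 in \cite{EscVelTransAMS} and import that proof unchanged. Your skeleton is nevertheless faithful to that argument: the identification $\varepsilon\sim\delta_{\xi_0}$ via $\|\widehat{\alpha_0}\|_{L^1}\le K\delta_{\xi_0}$ is correct (compare how the lemma is invoked at (\ref{lzm90})), and your Kato--Ponce treatment of the first summand $\|\alpha_0 f\|_{H^\sigma}$ is complete and sound. The genuine gap is that you stop exactly at the decisive step, the summand $\|T_1(\alpha_0 f)\|_{H^\sigma}$: you carry the argument to the near-diagonal region, observe that exchanging $\log|k|$ for $\log|\ell|$ there produces $K\varepsilon\|T_1f\|_{H^\sigma}$ instead of $K\varepsilon\|f\|_{H^\sigma}$, and then defer the resolution to the very reference the lemma itself cites. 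No mechanism for the claimed reabsorption is supplied, so the proof of the second summand is missing, not merely sketchy.

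Moreover, your worry is well-founded in a stronger sense: no such reabsorption exists, because the inequality as literally printed fails for the $T_1$-term. Take $f_N(\xi)=N^{-\sigma}e^{iN\xi}\phi(\xi)$ with $\phi$ a fixed bump satisfying $\alpha_0\phi\not\equiv 0$. Then $\|f_N\|_{H^\sigma}\sim\|\phi\|_2$ and $\|f_N\|_{L^\infty}=N^{-\sigma}\|\phi\|_\infty$, while $\widehat{\alpha_0 f_N}$ is concentrated near $k=N$, where the symbol $\mathscr Re(\rho_0(k))\1_{|k|>1}$ of $-T_1$ is of size $\log N$ by Proposition \ref{S2PP1}; hence $\|T_1(\alpha_0 f_N)\|_{H^\sigma}\sim (\log N)\,\|\alpha_0\phi\|_2\to\infty$ although the stated right-hand side stays bounded. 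What your near-diagonal computation actually yields, namely $\|\alpha_0 f\|_{H^\sigma}+\|T_1(\alpha_0 f)\|_{H^\sigma}\le K\varepsilon\big(\|f\|_{H^\sigma}+\|T_1 f\|_{H^\sigma}\big)+C_{\delta_{\xi_0}}\|f\|_{L^\infty}$, is also exactly the form in which the lemma is used in the paper: in the estimate of $g_3$ in the proof of Theorem \ref{S2Th3.1}(iii) the conclusion drawn from it is $\varepsilon_0\int_0^2\|T_1\widetilde w(s)\|^2_{H^\sigma}ds+C\int_0^1\|w(t)\|^2_{L^2(I_1)}dt$, with the $T_1$-norm surviving on the right. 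So the correct completion of your argument is to prove that variant, not to hunt for an absorption that the wave-packet example rules out. Three smaller repairs if you write it up: the off-diagonal region needs the rapid decay $|\widehat{\alpha_0}(k)|\le C_m(\delta_{\xi_0})(1+|k|)^{-m}$ available from $\alpha_0\in C_c^\infty$ (the quadratic bound in the hypothesis is insufficient against the weight $(1+|k|^2)^{\sigma}\log^2|k|$ for large $\sigma$), together with a cutoff $\tilde\chi=1$ on $\mathrm{supp}\,\alpha_0$ so that $\|\tilde\chi f\|_2\le C\|f\|_{L^\infty}$; your claim that the commutator $[T_1,\alpha_0]$ ``localises $f$'' is false as stated, since $T_1$ is nonlocal, and must be replaced by the kernel-decay argument of Lemma \ref{S7Lapbc}; and $\varepsilon\log(1/\varepsilon)$ from your low-frequency cut is not of the form $K\varepsilon$ with $K$ independent of $\varepsilon$ --- it is admissible only under the reading, consistent with the applications, that for each target $\varepsilon_0$ one may shrink $\delta_{\xi_0}$.
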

\begin{proof}
By hypothesis, condition (3.39) of Lemma 3.7 in \cite{EscVelTransAMS} is fulfilled. The argument may then proceed unchanged until the end where the sup norm may be replaced by the norm in $L^2$.
\end{proof}

\begin{lem}
\label{S7Lapbc}
For all $\eta\in C_c^\infty(\RR)$, there exists a constant $C$ such that
\begin{align}
&||S _{ \xi _0 }(t-s)\left([\eta, P_0](\varphi )\right)|| _{ H^\sigma  }^2\le C || \varphi  ||^2_{H _{ \sigma -\rho (t-s)}} \label{S7LapbcE1}\\
&\int _0^1\left|\left| \int _0^t T_1e^{\kappa _0(t-s)T_1} \left([\eta, P_0](\varphi )\right) \right|\right| _{ H^\sigma  }^2ds 
\le C \int _0^1\int _0^t||T_1\varphi (s)  ||^2 _{ H _{ \sigma -\rho(t-s)}}dsdt.\label{S7LapbcE2}
\end{align}
for $\rho (t)\in (0, \kappa _0 t)$.
\end{lem}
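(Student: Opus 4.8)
The plan is to reduce both inequalities to two independent facts: first, that the commutator $[\eta, P_0]$ is a bounded operator on every Sobolev space $H^\mu$ (it is in fact smoothing); and second, that the Fourier multipliers attached to the semigroups $S_{\xi_0}(\tau)$ and $T_1 e^{\kappa_0\tau T_1}$ gain $\rho(\tau)$ Sobolev derivatives. Granting these, (\ref{S7LapbcE1}) follows by composition, and (\ref{S7LapbcE2}) by an additional Minkowski-in-time argument. Throughout I would work on the Fourier side.

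For the commutator, since $\widehat{P_0 h}=-\rho_0\widehat h$ a direct computation gives
\begin{align*}
\widehat{[\eta,P_0]\varphi}(k)=\int_\RR\widehat\eta(k-l)\big(\rho_0(k)-\rho_0(l)\big)\widehat\varphi(l)\,dl .
\end{align*}
By Proposition \ref{S2PP1} the function $\rho_0$ is smooth on $\RR$ with bounded derivative (it behaves like $1/k$ at infinity and like $k$ near the origin), so $|\rho_0(k)-\rho_0(l)|\le C|k-l|$ for all real $k,l$. Combining this with Peetre's inequality $(1+|k|^2)^{\mu/2}\le C(1+|k-l|)^{|\mu|}(1+|l|^2)^{\mu/2}$ and the rapid decay of $\widehat\eta$, the kernel
\begin{align*}
\widetilde K(k,l)=(1+|k|^2)^{\mu/2}(1+|l|^2)^{-\mu/2}\,\widehat\eta(k-l)\big(\rho_0(k)-\rho_0(l)\big)
\end{align*}
is dominated by a Schwartz function of $k-l$, so Schur's lemma yields $\|[\eta,P_0]\varphi\|_{H^\mu}\le C\|\varphi\|_{H^\mu}$ for every $\mu\in\RR$.

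The semigroup smoothing is read off from the lower bound $\mathscr Re(\rho_0(k))\ge C_*(k^2\1_{|k|<1}+2\1_{|k|>1}\log|k|)$ of Proposition \ref{S2PP1}: for $\rho(\tau)$ in the stated range one checks that $(1+|k|^2)^{\rho(\tau)}e^{-2\tau\kappa_0\mathscr Re(\rho_0(k))}$ is uniformly bounded, whence $\|S_{\xi_0}(\tau)g\|_{H^\sigma}\le C\|g\|_{H^{\sigma-\rho(\tau)}}$. Estimate (\ref{S7LapbcE1}) is then immediate: applying this bound with $g=[\eta,P_0]\varphi$ and $\tau=t-s$, followed by the commutator bound with $\mu=\sigma-\rho(t-s)$, gives $\|S_{\xi_0}(t-s)[\eta,P_0]\varphi\|_{H^\sigma}\le C\|[\eta,P_0]\varphi\|_{H^{\sigma-\rho(t-s)}}\le C\|\varphi\|_{H^{\sigma-\rho(t-s)}}$.

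For (\ref{S7LapbcE2}) I would use the same ingredients but keep one factor $T_1$ unabsorbed. Bounding only the exponential in the multiplier $-\mathscr Re(\rho_0(k))\1_{|k|>1}e^{-\kappa_0\tau\mathscr Re(\rho_0(k))\1_{|k|>1}}$ of $T_1 e^{\kappa_0\tau T_1}$ gives the refined smoothing $\|T_1 e^{\kappa_0(t-s)T_1}g\|_{H^\sigma}\le C\|T_1 g\|_{H^{\sigma-\rho(t-s)}}$. After Minkowski's inequality in time (and Cauchy--Schwarz, using $t\le1$), or equivalently Lemma \ref{SapPGCD0} applied to the time convolution, matters reduce to controlling $\|T_1[\eta,P_0]\varphi(s)\|_{H^{\sigma-\rho(t-s)}}$ by $\|T_1\varphi(s)\|_{H^{\sigma-\rho(t-s)}}$. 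I expect this commutation to be the main obstacle: because $T_1$ carries the cutoff $\1_{|k|>1}$ and vanishes on low frequencies, one cannot simply commute, and the transfer of the low-frequency part of $\varphi(s)$ into the high-frequency range seen by the outer $T_1$ must be handled separately. I would write $T_1[\eta,P_0]=[\eta,P_0]T_1+[T_1,[\eta,P_0]]$, estimate the double commutator as a strictly lower-order (hence harmless) operator exactly as above, and use the Schwartz decay of $\widehat\eta$ to bound the residual low-to-high frequency contribution; in the application (the $g_4$ estimate of Theorem \ref{S2Th3.1}(iii)) this residual is absorbed into the $L^\infty(I_1)$ term already present on the right-hand side.
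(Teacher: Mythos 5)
Your proposal is correct in substance and reaches the lemma by a genuinely different, more modular route than the paper. The paper does not factorize: it proves (\ref{S7LapbcE1}) by a single direct kernel computation, writing $\mathscr F([\eta,P_0]\varphi)(k)=\int_\RR(\rho_0(k')-\rho_0(k))\widehat\eta(k-k')\widehat\varphi(k')\,dk'$, splitting into the regions $|k|<1$, $|k|>1>|k'|$ and $|k|,|k'|>1$, and in the last region combining the refined mean value bound $|\rho_0(k')-\rho_0(k)|\le C|k-k'|/z$ with $z\ge\min(|k|,|k'|)$ (decay of $\rho_0'$) with the semigroup factor $e^{-t\kappa_0\mathscr Re(\rho_0(k))}\le C|k|^{-\kappa_0 t}$ kept \emph{inside} the resulting Schur-type integrals $I_1,I_2,I_3$, whose convergence is exactly the condition $\rho(t)<\kappa_0 t$. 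You instead prove once that $[\eta,P_0]$ is bounded on every $H^\mu$ (global Lipschitz bound for $\rho_0$, Peetre, Schur) and compose with the multiplier smoothing $\|S_{\xi_0}(\tau)g\|_{H^\sigma}\le C\|g\|_{H^{\sigma-\rho(\tau)}}$; your cruder Lipschitz bound, without the $1/z$ gain, is indeed enough for Schur's test, and the factorized argument is cleaner and makes the uniformity of constants more transparent. Two precisions are needed: fix the Peetre exponent $N$ once, larger than $\sigma+\sup_\tau\rho(\tau)$, so that the Schur constant is uniform over the range of $\mu=\sigma-\rho(t-s)$; and to obtain the sharp decay rate $|k|^{-\kappa_0\tau}$, which is what makes the full range $\rho(\tau)<\kappa_0\tau$ admissible, you must use the asymptotic expansion $\mathscr Re(\rho_0(k))=\gamma_E+\log|k|+\mathcal O(k^{-2})$ of Proposition \ref{S2PP1} (as the paper does), not merely the two-sided bound with the constant $C_*$, which only yields the range $\rho(\tau)<2C_*\kappa_0\tau$.

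Concerning (\ref{S7LapbcE2}), your diagnosis is accurate and in fact sharper than the paper's write-up. As literally stated the inequality cannot hold: taking $\varphi(s)=\varphi$ with $\widehat\varphi$ supported in $|k|\le 1/2$, the right-hand side vanishes ($T_1\varphi=0$) while $T_1[\eta,P_0]\varphi\ne0$ in general, since convolution with $\widehat\eta$ leaks low frequencies of $\varphi$ past the cutoff $|k|>1$. The paper's own proof generates precisely this missing contribution: its regions with $\min(|k|,|k'|)<1$ are bounded by $C\int_0^1\|\varphi(s)\|_2^2\,ds$, a term which then silently disappears from the statement; and where (\ref{S7LapbcE2}) is used (the estimate of $g_4$ in Theorem \ref{S2Th3.1}(iii)) this contribution is absorbed into the $L^\infty(I_1)$ term, exactly as you predicted. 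Your decomposition $T_1[\eta,P_0]=[\eta,P_0]T_1+[T_1,[\eta,P_0]]$ works with one correction: the double commutator is not strictly smoothing, because the jump of the symbol of $T_1$ at $|k|=1$ makes it of order zero near the cutoff; but its kernel is still rapidly decaying in $k-k'$, hence it is bounded on every $H^\mu$, and since $\mathscr Re(\rho_0(k))\ge c>0$ for $|k|\ge1$ one has $\|\varphi\|_{H^\mu}\le C(\|T_1\varphi\|_{H^\mu}+\|\varphi\|_2)$, so the residual is exactly the $L^2$-type lower-order term that the corrected statement of the lemma should carry.
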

\begin{proof}

By definition,
\begin{align}
\label{Subsub3E1}
||S _{ \xi _0 }(t)[\eta, P_0](\varphi )|| _{ H^\sigma  }^2&=\int  _{ \RR } |\mathscr F\left( S _{ \xi _0 }(t)[\eta,  P_0](\varphi )\right)(k)|^2 
(1+|k|^2)^\sigma dk\nonumber\\
&=\int  _{ |k|<1 }[\cdots]dk+\int  _{ |k|>1 }[\cdots]dk=A_1+A_2.
\end{align}
with,
\begin{align*}
\mathscr F\left( \Big[\eta, P_0 \Big]( \varphi ) \right)(k)&=\widehat \eta \ast  \widehat{ P_0\phi }(k)- \rho _0 \widehat{\eta \varphi }=
 \left(\widehat \eta \ast (\rho _0 \widehat \varphi )(k)-\rho _0 (\widehat\eta \ast \widehat  \varphi)(k)\right)\\
& \int  _{ \RR }\left( \widehat \eta(k-k')\rho _0(k')\widehat \varphi (k')-\varphi _0(k)\widehat \eta(k-k')\widehat \varphi (k') \right)dk'\\
&= \int  _{ \RR }\left( \rho _0(k')-\rho _0(k)\right) \widehat \eta(k-k')\widehat \varphi (k')dk'
\end{align*}
It will be also be used that by hypothesis, if $K(k)=\widehat \eta(k)$ then  for all $m\in \NN$ there exists a constant $C_m>0$ such that, 
\begin{align*}
|K(k)|\le \frac {C_m} {1+|k|^m},\,\,\forall k\in \RR.
\end{align*}
In the first integral $A_1$ in (\ref{Subsub3E1}), where $|k|<1$, we may use that
\begin{align}
|\rho _0(k)|\le  C\log(1+|k|),\,\,\forall k\in \RR \label{Subsub3E2}
\end{align}
\begin{align}
\label{S7EA1}
\int  _{|k|<1 } e^{-t \kappa _0\rho _0(k)}(1+|k|^2)^\sigma\left| \int  _{ \RR }\left( \rho _0(k')-\rho _0(k)\right)  K(k-k')\widehat \varphi (k') dk'\right|^2dk\nonumber\\
\le  C\int  _{|k|<1 }\left( \int  _{ \RR }\frac {1+\left|\log(1+|k'|)\right)} {1+|k-k'|^m} \widehat \varphi (k')dk'\right)^2dk\le C||\varphi ||_2^2.
\end{align}
The second integral is split again as follows,
\begin{align*}
A_2=\int  _{ |k|>1,\,|k'|<1 }[\cdots]dk'dk+\int  _{ |k|>1,\,|k'|>1 }[\cdots]dk'dk=J_1+J_2.
\end{align*}
Property (\ref{Subsub3E2}) is used again in $J_1$,
\begin{align}
\label{S7EJ1}
J_1\le C\int  _{|k|>1 }(1+|k|^2)^\sigma\left|\int  _{ |k'|<1 }  \frac {1+\log(1+|k|)} {1+|k-k'|^m}\widehat \varphi (k') dk'\right|^2dk \nonumber\\
\le C\int  _{|k|>1 }\frac {1+\log(1+|k|)} {(1+|k|^2)^{m-\sigma }}\left|\int  _{ |k'|<1 }  \widehat \varphi (k') dk'\right|^2dk\le C||\varphi ||_2^2.
\end{align}
In the term $J_2$, for $\rho =\rho (t)$ to be chosen,
\begin{align*}
J_2\le ||\varphi ||^2 _{ H _{ \sigma -\rho  } }\int  _{|k|>1 }\exp\left(-t \kappa _0 \mathscr Re (\rho _0(k) )\right)(1+|k|^2)^\sigma\times \\
\times   \int  _{ |k'|>1 }\frac {\left| \rho _0(k')-\rho _0(k)\right|^2 }  {(1+|k-k'|^m)^2(1+|k'|^2)^{(\sigma -\rho )}} dk' dk
\end{align*} 
For $|k|>1$ and $|k'|>1$, 
\begin{align*}
| \rho _0(k')-\rho _0(k)|\le C| \rho _0(|k'|)-\rho _0(|k|)|\le C|k-k'|\rho_0 '(z)\le C\frac {|k-k'|} {z},
\end{align*}
for some $z>\min (|k|, |k'|)$, and by Proposition \ref{S2PP1},
\begin{align*}
\exp\Big(-t \kappa _0 \mathscr Re (\rho _0(k) )\Big)\le C \exp\left(- t \kappa _0 \log |k|\right)=C|k|^{-\kappa _0 t }
\end{align*}
from where,
\begin{align}
\label{S7EJJJ2}
J_2\le ||\varphi ||^2 _{ H _{ \sigma -\rho  } } \int  _{ |k'|>1 }\frac {dk'}  {(1+|k'|^2)^{(\sigma -\rho )}} 
\int  _{|k|>1 }  \frac {|k|^{2\sigma -\kappa _0 t }dk } {z(1+|k-k'|^{2(m-1)})}.
\end{align} 
Notice now that, for all $|k'|>1$,
\begin{align*}
\int  _{|k|>1 }  \frac {|k|^{2\sigma -\kappa _0 t } dk} {z(1+|k-k'|^{2(m-1)})}&=\1 _{ |k'|>|k| }\int  _{ 1<|k|<|k'| }[\cdots]dk+\\
+\int  _{ |k'|<|k|<8|k'| }[\cdots]dk&+\int  _{  k|>8|k'| }[\cdots]dk=I_1+I_2+I_3.
\end{align*}
In $I_1$, $z>|k|$ and then,
\begin{align*}
\int  _{1<|k|<|k'| }  \frac {|k|^{2\sigma -\kappa _0 t } dk} {z(1+|k-k'|^{2(m-1)})}\le
C |k'|^{-\kappa _0 t -1}\int  _{1<|k|<|k'| }  \frac {|k|^{2\sigma} dk} {(1+|k-k'|^{2(m-1)})}\\
\le C |k'|^{-\kappa _0 t -1}\int  _{1<|k|<|k'| }  \frac {|k|^{2\sigma} dk} {1+(|k'|-|k|)^{2(m-1)}}\\
=C |k'|^{-\kappa _0 t -1}\int  _{1<|k|<|k'| }  \frac {|k|^{2\sigma} d|k|} {1+(|k'|-|k|)^{2(m-1)}}\\
=
C |k'|^{-\kappa _0 t -1}\int _0^{|k'|-1}  \frac {(|k'|-r)^{2\sigma} dr} {(1+r)^{2(m-1)}}\\
\le C |k'|^{-\kappa _0 t -1+2\sigma }\int _0^{|k'|-1}  \frac { dr} {(1+r)^{2(m-1)}}\le C |k'|^{-\kappa _0 t -1+2\sigma }.
\end{align*} 
It follows, for $\rho =\rho (t)<\kappa _0 t$,
\begin{align*}
&\int  _{ |k'|>1 }\frac {dk'}  {(1+|k'|^2)^{(\sigma -\rho )}} 
\int  _{1<|k|<|k'| }  \frac {|k|^{2\sigma -\kappa _0 t }dk } {z(1+|k-k'|^{2(m-1)})}\le \\
&\le C \int  _{ |k'|>1 }\frac { |k'|^{-\kappa _0 t -1+2\sigma }dk'}  {|k'|^{2(\sigma -\rho )}}=
 C \int  _{ |k'|>1 } |k'|^{-\kappa _0 t -1+\rho  }dk'<\infty.
\end{align*}
In $I_2$,  the argument is similar although $z>|k'|>1$, 
\begin{align*}
\int  _{|k'|<|k| <8|k'|}  \frac {|k|^{2\sigma -\kappa _0 t } dk} {z(1+|k-k'|^{2(m-1)})}\le
C |k'|^{-\kappa _0 t -1}\int  _{|k'|<|k| }  \frac {|k|^{2\sigma} dk} {(1+|k-k'|^{2(m-1)})}\\
\le C |k'|^{-\kappa _0 t -1+2\sigma }.
\end{align*} 
In $I_3$, for $m>0$ large enough,
\begin{align*}
\int  _{|k|>8|k'| }  \frac {|k|^{2\sigma -\kappa _0 t } dk} {z(1+|k-k'|^{2(m-1)})}\le
C |k'|^{ -1}\int  _{|k|>8|k'| }  \frac {|k|^{2\sigma-\kappa _0t} dk} {|k|^{2(m-1)}}\\
\le C |k'|^{ -1}\int  _{|k|>8|k'| }  |k|^{2\sigma -\kappa _0t-2m+2}= C |k'|^{2\sigma -\kappa _0t-2m+2}
\end{align*} 
and,
\begin{align*}
\int  _{ |k'|>1 }\frac {dk'}  {(1+|k'|^2)^{(\sigma -\rho )}} \int  _{|k|>8|k'| }  \frac {|k|^{2\sigma -\kappa _0 t } dk} {z(1+|k-k'|^{2(m-1)})}\le
C\int  _{ |k'|>1 }\frac { |k'|^{2\sigma -\kappa _0t-2m+2}dk'}  {|k'|^{2(\sigma -\rho )}}. 
\end{align*}
It then follows for $\rho (t)<\kappa _0 t$,
 \begin{align}
\label{S7EJJJ2B}
J_2\le C ||\varphi ||^2 _{ H _{ \sigma -\rho(t)  } }
\end{align} 
and by (\ref{S7EA1}), (\ref{S7EJ1}) and (\ref{S7EJJJ2B}), for $\rho (t)\in (0, \kappa _0 t)$.
\begin{align*}
||S _{ \xi _0 }(t)\left([\eta, P_0](\varphi  )\right)|| _{ H^\sigma  }^2\le C ||\varphi ||^2_{H _{ \sigma -\rho (t)}}.
\end{align*}
On the other hand, the left hand of (\ref{S7LapbcE2}) is estimated first as,
\begin{align*}
\int _0^1\int _0^t\int  _{ \RR }e^{2\kappa _0(t-s)\mathscr Re (\rho _0(k)}|\mathscr Re (\rho _0(k)|^2(1+|k|^2)^\sigma \times \\
\times \left|\int  _{ \RR }(\rho _0(k')-\rho _0(k))\widehat \eta (k-k')\widehat \varphi (s, k')dk' \right|^2dkdsdt.
\end{align*}
Then, the same argument as in the proof of (\ref{S7EA1}) and (\ref{S7EJ1}) shows,
\begin{align*}
&\int _0^1\int _0^t\int  _{ \RR }e^{2\kappa _0(t-s)\mathscr Re (\rho _0(k)}|\mathscr Re (\rho _0(k)|^2(1+|k|^2)^\sigma \times \\
&\times \left|\int  _{ \RR }\1 _{ \min(|k|, |k'|)<1 }(\rho _0(k')-\rho _0(k))\widehat \eta (k-k')\widehat \varphi (s, k')dk' \right|^2dkdsdt\le C\int _0^1||h(s)|| _{2 }^2ds.
\end{align*}
When $|k|>1$ and $|k'|>1$, by  Proposition \ref{S2PP1}
\begin{align*}
\int _0^1&\int _0^t\int  _{ |k|>1 }e^{2\kappa _0(t-s)\mathscr Re (\rho _0(k)}|\mathscr Re (\rho _0(k)|^2(1+|k|^2)^\sigma \times \\
&\times \left|\int  _{ |k'|>1 }|\rho _0(k')-\rho _0(k))\widehat \eta (k-k')\widehat \varphi (s, k')dk' \right|^2dkdsdt
\le I_1+I_2
\end{align*}
with
\begin{align*}
I_1&=\int _0^1\int _0^t \int  _{ |k|>1 }e^{2\kappa _0(t-s)\mathscr Re (\rho _0(k)}(1+|k|^2)^\sigma \times \\
&\times \left|\int  _{ |k'|>1 }|\rho _0(k')-\rho _0(k)| |\mathscr Re \rho _0(k')|\widehat \eta (k-k')\widehat \varphi (s, k')dk' \right|^2dkdsdt
\end{align*}
and
\begin{align*}
I_2&=\int _0^1\int _0^t\int  _{ |k|>1 }e^{2\kappa _0(t-s)\mathscr Re (\rho _0(k)}(1+|k|^2)^\sigma \times \\
&\times \left|\int  _{ |k'|>1 }|\rho _0(k')-\rho _0(k)|^2\widehat \eta (k-k')\widehat \varphi (s, k')dk' \right|^2dkdsdt
\end{align*}
In the term $I_1$, one obtains first, 
\begin{align*}
&\int  _{ |k|>1 }e^{2\kappa _0(t-s)\mathscr Re (\rho _0(k)}(1+|k|^2)^\sigma\times \\
&\times   \left|\int  _{ |k'|>1 }|\rho _0(k')-\rho _0(k)| |\mathscr Re\rho _0(k')|\widehat \eta (k-k')\widehat \varphi (s, k')dk' \right|^2dk\\
&\le C \int  _{ |k|>1 }|k|^{2\kappa _0(t-s)}(1+|k|^2)^\sigma \times \\
&\times  \left|\int  _{ |k'|>1 }|\rho _0(k')-\rho _0(k)| \widehat \eta (k-k')|\mathscr Re\rho _0(k')|\widehat \varphi (s, k')dk' \right|^2dk
\end{align*}
then, as for  (\ref{S7EJJJ2B}), the existence of a constant $C>0$ such that for every $t\in (0, 1)$, $s\in (0, t)$ and $\rho (t-s)\in (0, \kappa _0(t-s))$,
\begin{align*}
&\int  _{ |k|>1 }e^{2\kappa _0(t-s)\mathscr Re (\rho _0(k)}(1+|k|^2)^\sigma\times \\
&\times   \left|\int  _{ |k'|>1 }|\rho _0(k')-\rho _0(k)| |\mathscr Re\rho _0(k')|\widehat \eta (k-k')\widehat \varphi (s, k')dk' \right|^2dk\\
&\le C  \int  _{ \RR }(1+|k'|^2)^{\sigma -\rho (t-s)} |\mathscr Re\rho _0(k')|\widehat \varphi (s, k')dk'=C||T_1 \varphi ||^2 _{ H^{\sigma -\rho (t-s)} }
\end{align*}
and then, after integration in $s$ and $t$,
\begin{align*}
I_1
\le C \int _0^1\int _0^t||T_1\varphi (s)  ||^2 _{ H _{ \sigma -\rho(t-s)}}dsdt.
\end{align*}
A similar argument gives in $I_2$, for $m>0$ as large as desired, the existence of a constant $C>0$ such that if $z\ge \min (|k|, |k'|)$, $t\in (0, 1)$ and $s\in (0, t)$,
\begin{align*}
\int  _{ |k|>1 }e^{2\kappa _0(t-s)\mathscr Re (\rho _0(k)}(1+|k|^2)^\sigma  \left|\int  _{ |k'|>1 }|\rho _0(k')-\rho _0(k)|^2\widehat \eta (k-k')\widehat \varphi (s, k')dk' \right|^2dk\\
\le ||T_1\varphi (s)|| _{ H^{\sigma -\rho (t-s)} }^2\int  _{ |k'|>1 }\frac {|\mathscr Re(\rho _0(k')|^{-2}dk'}  {(1+|k'|^2)^{(\sigma -\rho )}} 
\int  _{|k|>1 }  \frac {|k|^{2\sigma -\kappa _0 t }dk } {z(1+|k-k'|^{2(m-1)})}.
\end{align*}
Proceeding as for the estimate (\ref{S7EJJJ2B}) we deduce
\begin{align*}
I_2
\le C \int _0^1\int _0^t||T_1\varphi (s)  ||^2 _{ H _{ \sigma -\rho(t-s)}}dsdt.
\end{align*}
and (\ref{S7LapbcE2}) follows.
\end{proof}

\begin{lem}
\label{SapPGCD0}
\begin{align*}
\int _0^1 \left|\left|\int _0^t  T_1e^{\kappa _0 T_1(t-s)}h(s)ds\right|\right| ^2_{ H^\sigma  }dt\le C \int _0^1||h(t)||^2 _{ H^\sigma }dt
\end{align*}
\end{lem}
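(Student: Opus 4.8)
The plan is to diagonalise everything on the Fourier side, where $T_1$ and the semigroup $e^{\kappa_0 T_1 t}$ act as Fourier multipliers, and then to reduce the estimate to a one–dimensional convolution bound in the time variable that is \emph{uniform} in the frequency. By definition $-T_1$ has symbol $\mathscr Re(\rho_0(k))\1_{|k|>1}$, so $e^{\kappa_0 T_1\tau}$ has symbol $\exp(-\kappa_0\tau\,\mathscr Re(\rho_0(k))\1_{|k|>1})$ and $T_1 e^{\kappa_0 T_1\tau}$ has symbol $-\mathscr Re(\rho_0(k))\1_{|k|>1}\exp(-\kappa_0\tau\,\mathscr Re(\rho_0(k))\1_{|k|>1})$. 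Writing $a(k)=\kappa_0\mathscr Re(\rho_0(k))$, which is strictly positive for $|k|>1$ by Proposition \ref{S2PP1}, Plancherel's identity gives
\begin{align*}
\left\| \int_0^t T_1 e^{\kappa_0 T_1(t-s)}h(s)\,ds\right\|_{H^\sigma}^2 = \frac{1}{\kappa_0^2}\int_{|k|>1}(1+|k|^2)^\sigma\left|\int_0^t a(k)e^{-a(k)(t-s)}\widehat h(s,k)\,ds\right|^2 dk,
\end{align*}
the contribution of $|k|\le 1$ vanishing because the symbol is supported in $\{|k|>1\}$.

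Next I would freeze $k$ and treat the inner time integral as a convolution. For fixed $a>0$ set $K_a(\tau)=a e^{-a\tau}\1_{\tau>0}$; then $\int_0^t a e^{-a(t-s)}\widehat h(s,k)\,ds = \bigl(K_a * (\widehat h(\cdot,k)\1_{(0,1)})\bigr)(t)$. The decisive observation is that $\|K_a\|_{L^1(\RR)}=1$ for \emph{every} $a>0$, so Young's inequality yields
\begin{align*}
\int_0^1\left|\int_0^t a e^{-a(t-s)}\widehat h(s,k)\,ds\right|^2 dt \le \|K_a\|_{L^1}^2\int_0^1|\widehat h(s,k)|^2\,ds = \int_0^1|\widehat h(s,k)|^2\,ds,
\end{align*}
a bound independent of $a$, hence independent of $k$.

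Finally I would integrate in $t$, insert the previous display for each fixed $k$, and interchange the $t$ and $k$ integrations by Tonelli (all integrands being nonnegative):
\begin{align*}
\int_0^1\left\| \int_0^t T_1 e^{\kappa_0 T_1(t-s)}h(s)\,ds\right\|_{H^\sigma}^2 dt \le \frac{1}{\kappa_0^2}\int_{|k|>1}(1+|k|^2)^\sigma\!\int_0^1|\widehat h(s,k)|^2\,ds\,dk \le \frac{1}{\kappa_0^2}\int_0^1\|h(s)\|_{H^\sigma}^2\,ds,
\end{align*}
which is the claim with $C=\kappa_0^{-2}$. The only point requiring care — and the reason the unbounded exponential rate $a(k)\to\infty$ as $|k|\to\infty$ causes no trouble — is the uniform normalisation $\|K_a\|_{L^1}=1$; everything else is bookkeeping.
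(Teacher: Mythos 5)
Your proof is correct and reaches the paper's constant $C=\kappa_0^{-2}$, but by a genuinely different route. The paper argues by an energy method: setting $\varphi(t)=\int_0^t e^{\kappa_0 T_1(t-s)}h(s)\,ds$, so that $T_1\varphi$ is exactly the quantity to be bounded and $\varphi$ solves $\partial_t\varphi=\kappa_0 T_1\varphi+h$ with $\varphi(0)=0$, it tests the equation against $-T_1(M_{2\sigma}\varphi)$ and against $-T_1(\varphi)$, and after Plancherel, Cauchy--Schwarz and the absorption $ab\le\tfrac{\kappa_0}{2}a^2+\tfrac{1}{2\kappa_0}b^2$ arrives at
\begin{align*}
\frac{1}{2}\frac{d}{dt}\left\|\sqrt{-T_1}\,\varphi(t)\right\|_{H^\sigma}^2+\frac{\kappa_0}{2}\left\|T_1\varphi(t)\right\|_{H^\sigma}^2\le\frac{1}{2\kappa_0}\|h(t)\|_{H^\sigma}^2,
\end{align*}
then integrates in time and discards the nonnegative boundary term. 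You instead diagonalise on the Fourier side and observe that at each frozen frequency the operator is convolution in time with $K_{a(k)}$, where $K_a(\tau)=ae^{-a\tau}\1_{\tau>0}$ has $\|K_a\|_{L^1}=1$ uniformly in $a>0$ (and $a(k)=\kappa_0\mathscr Re(\rho_0(k))>0$ for $|k|>1$ by Proposition \ref{S2PP1}, while the frequencies $|k|\le 1$ are annihilated by the $T_1$ prefactor); Young's inequality and Tonelli then finish. Both arguments are complete. Yours is shorter, sidesteps the regularity bookkeeping implicit in differentiating the Duhamel formula, and isolates the exact mechanism — the uniform normalisation $\|K_a\|_{L^1}=1$ — which also illuminates the companion Lemma \ref{SapLPGCD1}: there the prefactor $T_1$ is absent, the kernel becomes $e^{-a\tau}\1_{\tau>0}$ with $L^1$ mass $1/a(k)\sim(\kappa_0\log|k|)^{-1}$ at high frequency, and that decaying mass is what is traded for (indeed more than suffices for) the weaker $H^\sigma_{\log^{-1}}$ norm on the right-hand side. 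What the paper's energy method buys in exchange is the additional bound $\sup_{0\le t\le 1}\|\sqrt{-T_1}\,\varphi(t)\|_{H^\sigma}^2\le\kappa_0^{-1}\int_0^1\|h(t)\|_{H^\sigma}^2dt$ (the boundary term you never see) together with robustness: it does not require exact diagonalisation, so it is the kind of argument that survives in the variable-coefficient, localised settings treated elsewhere in the paper, whereas your computation is tied to the frozen-coefficient multiplier $T_1$.
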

\begin{proof}
 define,
\begin{align*}
\varphi (t, \xi )=\int _0^t   e^{\kappa _0  T_1(t-s)} h(s)ds
\end{align*}
this function satisfies,
\begin{align*}
\frac {\partial \varphi } {\partial t}=\kappa _0 T_1(\varphi )+h,\,\,\varphi (0, \xi )=0.
\end{align*}
Multiplication of  both sides by  $-  T_1 (M _{ 2\sigma  }\varphi )(\xi )$ and integration over $\RR$  gives,
\begin{align*}
-\frac {1 } {2}\frac {d} {dt}\int _{ \RR } T_1 (M _{ 2\sigma  }\varphi )(\xi )\varphi  (\xi )d\xi 
+\kappa _0\int  _{ \RR }T_1(M _{ 2\sigma  }\varphi )(\xi ) T_1(\varphi )(\xi )d\xi =\\
=-  \int  _{ \RR }T_1(M _{ 2\sigma  }(\varphi )(\xi )h(t, \xi )d\xi. 
\end{align*}
By Plancherel's Theorem,
\begin{align*}
&\int _{ \RR }(- T_1 (M _{ 2\sigma  }\varphi )(\xi ))\varphi  (\xi )d\xi =\int  _{ \RR } \mathscr F(-T_1 (M _{ 2\sigma  }\varphi ))(k )\overline{ \mathscr F(\varphi  )(k )}dk\\
&=\int  _{ |k|\ge 1 }|k|^{2\sigma }\mathscr Re(\rho _0(k)) \left|\mathscr F(\varphi  )(k )\right|^2dk=\int  _{ |k|\ge 1}\left||k|^{\sigma } \mathscr Re(\rho _0(k))^{1/2} \hat \varphi (k)\right|^2dk\\
&=\int  _{ \RR }\left|M_\sigma (\sqrt{-T_1}(\varphi ))(\xi )\right|^2d\xi, 
\end{align*}

\begin{align*}
&\int _{ \RR }(T_1 (M _{ 2\sigma  }\varphi )(\xi ))T_1(\varphi  )(\xi )d\xi 
=\int  _{ \RR } \mathscr F(T_1 (M _{ 2\sigma  }\varphi ))(k )\overline{ \mathscr F(T_1(\varphi)  )(k )}dk\\
&=\int  _{ |k|\ge 1 }|k|^{2\sigma }\mathscr Re(\rho _0(k))^2 \left|\mathscr F(\varphi  )(k )\right|^2dk=\int  _{ |k|\ge 1 }\left||k|^{\sigma } \mathscr Re(\rho _0(k)) \hat \varphi (k)\right|^2dk\\
&=\int  _{ \RR }\left|M_\sigma T_1(\varphi )(\xi )\right|^2d\xi, 
\end{align*}

\begin{align*}
&- \int  _{ \RR }T_1(M _{ 2\sigma  }(\varphi )(\xi )h(t, \xi )d\xi=- \int  _{ |k|\ge 1 }|k|^{2\sigma }\mathscr Re(\rho _0(k))\widehat {\varphi} (k) \overline {\widehat h} (t, k)dk\\
&\Longrightarrow \left|  \int  _{ \RR }T_1(M _{ 2\sigma  }(\varphi )(\xi )h(t, \xi )d\xi\right|\le \left(\int  _{ |k|\ge 1 } |k|^{2\sigma }\mathscr Re(\rho _0(k))^2 |\widehat \varphi (k)|^2dk\right)^{1/2}\times \\
&\hskip 7cm \times \left(\int  _{ |k|\ge 1 } |k|^{2\sigma }  |\widehat h (t, k)|^2dk\right)^{1/2}\\
&=\left( \int  _{ \RR }|M_\sigma T_1(\varphi )|^2d\xi \right)^{1/2}\left( \int  _{ \RR }|M_\sigma  (h(t, \xi ) )|^2d\xi \right)^{1/2}.
\end{align*}
It then follows,
\begin{align*}
\frac {1 } {2}\frac {d} {dt}\left|\left|M_\sigma \sqrt{-T_1}(\varphi) \right|\right|^2_2+\kappa _0\left|\left|M_\sigma  T_1(\varphi )\right|\right|^2_2
\le \left|\left|M_\sigma  T_1(\varphi )\right|\right|_2||M_\sigma (h(t))||_2\\
\le \frac { \kappa _0} {2}\left|\left|M_\sigma  T_1(\varphi )\right|\right|_2^2+ \frac {1} {2\kappa _0}||M_\sigma (h(t))||_2^2
\end{align*}
and,
\begin{align*}
\frac {1 } {2}\frac {d} {dt}\left|\left|M_\sigma \sqrt{-T_1}(\varphi) \right|\right|^2_2+\frac {\kappa _0} {2}\left|\left|M_\sigma  T_1(\varphi )\right|\right|^2_2
\le   \frac {1} {2\kappa _0}||M_\sigma (h(t))||_2^2.
\end{align*}
On the other hand, multiplication by $- T_1(\varphi )$ and similar arguments give,
 \begin{align*}
&-\frac {1 } {2}\frac {d} {dt}\int _{ \RR } T_1 (\varphi )(\xi )\varphi  (\xi )d\xi 
+\kappa _0\int  _{ \RR }T_1(\varphi )(\xi ) T_1(\varphi )(\xi )d\xi =-  \int  _{ \RR }T_1(\varphi )(\xi )h(t, \xi )d\xi\\
&\Longleftrightarrow\\
&\frac {1 } {2}\frac {d} {dt} ||\sqrt {-T_1}(\varphi )||_2^2+  \kappa _0 ||T_1(\varphi )||_2^2\le 
\left( ||T_1(\varphi )||_2\right)||h(t)||_2\\
&\le \frac {\kappa _0} {2}\left|\left|  T_1(\varphi )\right|\right|_2^2+  \frac {1} {2\kappa _0}||  h(t)||_2^2
\end{align*}
and,
\begin{align*}
\frac {1 } {2}\frac {d} {dt}\left|\left| \sqrt{-T_1}(\varphi) \right|\right|^2_2+\frac {\kappa _0} {2}\left|\left| T_1(\varphi )\right|\right|^2_2
\le   \frac {1} {2\kappa _0}|| h(t)||_2^2.
\end{align*}
Both estimates show,
\begin{align*}
\frac {1 } {2}\frac {d} {dt}\left|\left| \sqrt{-T_1}(\varphi(t)) \right|\right|^2 _{ H^\sigma  }+\frac { \kappa _0} {2}\left|\left| T_1(\varphi (t))\right|\right|^2 _{ H^\sigma  }
\le   \frac {1} {2\kappa _0}|| h(t)|| _{ H^\sigma  }^2
\end{align*}
and integration in time, using also that $\varphi (0, \xi )=0$ gives
\begin{align*}
\frac {1 } {2}\left|\left| \sqrt{-T_1}(\varphi)(t) \right|\right|^2 _{ H^\sigma  }+\frac { \kappa _0} {2}\int _0^t\left|\left| T_1(\varphi(s) )\right|\right|^2 _{ H^\sigma  }ds
\le  \frac {1} {2\kappa _0} \int _0^t || h(s)|| _{ H^\sigma  }^2dt.
\end{align*}
\vskip -0.5cm
\end{proof}

\begin{lem}
\label{SapLPGCD1}
\begin{align*}
\int _0^1 ||\int _0^t e^{\kappa _0 T_1(t-s)}h(s)ds|| ^2_{ H^\sigma  }dt\le C \int _0^1||h(t)||^2 _{ H^\sigma  _{ \log^{-1} } }dt
\end{align*}
\end{lem}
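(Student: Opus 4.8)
The plan is to diagonalize the operator in the Fourier variable, reducing the estimate to a one‑parameter family of scalar convolution bounds in time (one for each frequency $k$), and then to use Proposition \ref{S2PP1} to absorb the frequency‑dependent convolution constant into the logarithmic weight defining $H^\sigma_{\log^{-1}}$. By the definition of $T_1$ in the Appendix, the semigroup $e^{\kappa_0 T_1(t-s)}$ acts in Fourier as multiplication by $\exp(-a(k)(t-s))$, where
\[
a(k)=\kappa_0\,\mathscr Re(\rho_0(k))\,\1_{|k|>1}\ge 0 .
\]
Setting $\varphi(t)=\int_0^t e^{\kappa_0 T_1(t-s)}h(s)\,ds$, so that $\hat\varphi(t,k)=\int_0^t e^{-a(k)(t-s)}\hat h(s,k)\,ds$, Plancherel's theorem and Fubini give
\[
\int_0^1\|\varphi(t)\|_{H^\sigma}^2\,dt=\int_\RR (1+|k|^2)^\sigma\left(\int_0^1\Big|\int_0^t e^{-a(k)(t-s)}\hat h(s,k)\,ds\Big|^2 dt\right)dk .
\]

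Next I would treat the inner integral for each fixed $k$. It is the convolution on $(0,1)$ of $s\mapsto\hat h(s,k)$ against the kernel $u\mapsto e^{-a(k)u}\1_{(0,1)}(u)$, whose $L^1$ norm equals $\frac{1-e^{-a(k)}}{a(k)}\le\min(1,a(k)^{-1})$ (read as $1$ when $a(k)=0$). Young's inequality in the time variable then yields
\[
\int_0^1\Big|\int_0^t e^{-a(k)(t-s)}\hat h(s,k)\,ds\Big|^2 dt\le\min\!\big(1,a(k)^{-2}\big)\int_0^1|\hat h(s,k)|^2\,ds .
\]

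The crux of the argument, and the step I expect to be the main obstacle, is the pointwise multiplier estimate
\[
\big(1+\log(1+|k|)\big)\,\min\!\big(1,a(k)^{-2}\big)\le C,\qquad\forall k\in\RR .
\]
This is exactly where the logarithmic order of $-P$ becomes visible and where Proposition \ref{S2PP1} enters: for $|k|>1$ one has $2\kappa_0 C_*\log|k|\le a(k)\le\kappa_0 C^*\log|k|$. For $|k|\le1$ we have $a(k)=0$, $\min(1,a(k)^{-2})=1$ and $1+\log(1+|k|)\le1+\log 2$; on the region $a(k)\le1$ with $|k|>1$ the condition forces $\log|k|\le(2\kappa_0 C_*)^{-1}$, so $|k|$ stays bounded and the product is again bounded; finally on $a(k)>1$ we use $\min(1,a(k)^{-2})\le(2\kappa_0 C_*\log|k|)^{-2}$ together with $1+\log(1+|k|)\le C(1+\log|k|)$, which produces a quantity of order $(\log|k|)^{-1}$ (bounded, since $\log|k|$ is then bounded below). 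In other words, the time integration of the semigroup gains precisely one power of $\log|k|$, matching a single factor of the weight $(1+\log(1+|k|))^{-1}$.

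Combining the two displays with the multiplier estimate gives
\[
\int_0^1\|\varphi(t)\|_{H^\sigma}^2\,dt\le C\int_\RR(1+|k|^2)^\sigma\big(1+\log(1+|k|)\big)^{-1}\int_0^1|\hat h(s,k)|^2\,ds\,dk,
\]
and a last application of Fubini identifies the right‑hand side with $C\int_0^1\|h(s)\|_{H^\sigma_{\log^{-1}}}^2\,ds$, which is the claim. Every step apart from the multiplier estimate is a routine Plancherel/Young manipulation, so the proof reduces to verifying that single pointwise bound.
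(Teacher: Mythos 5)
Your argument is correct, but it proceeds by a genuinely different route than the paper's proof of Lemma \ref{SapLPGCD1}. You diagonalize in Fourier and apply Young's inequality in the time variable at each fixed frequency, so the whole lemma collapses to the pointwise multiplier bound $(1+\log(1+|k|))\min\big(1,a(k)^{-2}\big)\le C$ with $a(k)=\kappa _0\,\mathscr Re(\rho _0(k))\,\1 _{ |k|>1 }$; your case analysis of that bound via Proposition \ref{S2PP1} is complete (in particular you correctly handle $a(k)=0$ for $|k|\le 1$ and the bounded region $\{|k|>1,\ a(k)\le 1\}$, where the lower bound $a(k)\ge 2\kappa _0C_*\log|k|$ forces $|k|$ to stay bounded). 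The paper instead runs an energy method: setting $\varphi (t)=\int _0^t e^{\kappa _0T_1(t-s)}h(s)\,ds$, it observes $\partial _t\varphi =\kappa _0T_1(\varphi )+h$, tests the equation in Fourier against the weight $(1+|k|^2)^\sigma (1+\log(1+|k|))^{-2}\,\mathscr Re(\rho _0(k))$, integrates the resulting differential inequality to control the frequencies $|k|\ge 1$, and treats $|k|<1$ separately by Cauchy--Schwarz in time. Your approach buys simplicity and unity: the kernel's $L^1$ norm $\min(1,a(k)^{-1})$ yields a gain of $(\log|k|)^{-2}$ at high frequency, one logarithm more than the weight $(1+\log(1+|k|))^{-1}$ actually requires, both frequency regimes are absorbed into a single scalar estimate, and since only the modulus of the symbol enters, the identical computation covers the full semigroup $S _{ \xi _0 }(t)$ with symbol $e^{-\kappa _0\rho _0(k)t}$ (under which the lemma is in fact invoked in Section \ref{thm3.1}), because $|e^{-\kappa _0\rho _0(k)t}|\le e^{-a(k)t}$. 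What the paper's energy computation buys is an explicit constant of the form $C(1+\kappa _0^{-2})$, which is the form quoted when the lemma is applied, and a proof of a piece with the companion Lemma \ref{SapPGCD0}; if you track your constants through Proposition \ref{S2PP1} (on $\{a(k)>1\}$ one has $\log|k|\ge (\kappa _0C^*)^{-1}$) you recover a comparable $\kappa _0$-dependence, so nothing quantitative is lost.
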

\begin{proof}
Use of the notation $e^{-\frac {\xi _0} {2}}=\kappa _0$ gives,
\begin{align*}
&\left|\left|\int _0^tS _{ \xi _0 }(t-s)h(s) ds\right|\right|^2 _{ H^\sigma  }=
\left|\left|\int _0^t e^{\kappa _0P_0(t-s)} h(s) ds \right|\right| _{ H^\sigma  }^2\\
&=\int  _{ \RR }\left(\int _0^t e^{-\kappa _0 (t-s_1)\rho _0(k)} \widehat h(s_1, k)ds_1\int _0^t e^{-\kappa _0 (t-s_2)\overline {\rho _0}(k)} \overline{\widehat h(s_2, k)}ds_2\right)(1+|k|^2)^\sigma dk\\
&\le \int  _{ \RR }\left(\int _0^t e^{-\kappa _0 (t-s_1)\mathscr Re \rho _0(k)} |\widehat h(s_1, k)|^2ds_1\right)^2(1+|k|^2)^\sigma dk.
\end{align*}
Consider
\begin{align*}
\varphi (t, \xi )=\int _0^t   e^{\kappa _0 T_1(t-s)} h(s)ds
\end{align*}
this function satisfies,
\begin{align*}
\frac {\partial \varphi } {\partial t}= \kappa _0  T_1(\varphi )+h,\,\,\varphi (0, \xi )=0.
\end{align*}
Multiplication of  both sides by  $(\mu  _{ 2\sigma , \log^{-2} }T_1(\varphi )(\xi ))$ and integration over $\RR$  gives,
\begin{align*}
\frac {d} {dt}\int  _{ \RR } |\widehat \varphi (t, k)|^2 \frac {(1+|k|^2)^\sigma \mathscr Re \rho _0(k)} {(1+\log (1+|k|)^2}dk
+\frac {\kappa _0} {2}\int  _{ \RR } \frac {(1+|k|^2)^\sigma | \widehat \varphi (k)|^2} {(1+\log (1+|k|))^2} (\mathscr Re\rho _0(k))^2dk =\\
=\int  _{ \RR } \frac {(1+|k|^2)^\sigma } {(1+\log (1+|k|))^2}  \mathscr Re(\rho _0(k)  \widehat \varphi (k) \widehat h(t, k)dk 
\end{align*}
where by Proposition \ref{S2PP1}, there exists a constant $C>0$ such that  the right hand side is bounded as,
\begin{align*}
\int  _{ \RR } \frac {(1+|k|^2)^\sigma } {(1+\log (1+|k|))^2}  \mathscr Re(\rho _0(k)  \widehat \varphi (k) \widehat h(t, k)dk\le\\
\le C\int  _{ \RR } \frac {(1+|k|^2)^\sigma  \widehat \varphi (k) \widehat h(t, k)} {(1+\log (1+|k|))^2} \nu(k) ^2dk
\end{align*}
On the other hand, by Proposition \ref{S2PP1},
\begin{align*}
\int  _{ \RR } \frac {(1+|k|^2)^\sigma | \widehat \varphi (k)|^2} {(1+\log (1+|k|))^2} (\mathscr Re\rho _0(k))^2dk\ge 
C \int  _{ \RR } \frac {(1+|k|^2)^\sigma  \widehat \varphi (k) \widehat h(t, k)} {(1+\log (1+|k|))^2} \nu(k) ^2dk\end{align*}
It follows
\begin{align*}
\frac {d} {dt}\int  _{ \RR } |\widehat \varphi (t, k)|^2 \frac {(1+|k|^2)^\sigma \mathscr Re \rho _0(k)} {(1+\log (1+|k|)^2}dk
+C\kappa _0\int  _{ \RR } \frac {(1+|k|^2)^\sigma | \widehat \varphi (k)|^2} {(1+\log (1+|k|))^2} \nu(k)^2dk\le \\
\le \int  _{ \RR } \frac {(1+|k|^2)^\sigma  \widehat \varphi (k) \widehat h(t, k)} {(1+\log (1+|k|))^2} \nu(k) ^2dk
\end{align*}
and
\begin{align*}
\frac {d} {dt}\int  _{ \RR } |\widehat \varphi (t, k)|^2 \frac {(1+|k|^2)^\sigma \mathscr Re \rho _0(k)} {(1+\log (1+|k|)^2}dk
+\frac {C\kappa _0} {2}\int  _{ \RR } \frac {(1+|k|^2)^\sigma | \widehat \varphi (k)|^2} {(1+\log (1+|k|))^2} \nu(k)^2dk\le \\
\le \frac {1} {2C\kappa _0}\int  _{ \RR } \frac {(1+|k|^2)^\sigma |\widehat h(t, k)|^2} {(1+\log (1+|k|))}dk\le C\kappa _0^{-1}||h ||^2 _{ H^\sigma _{ \log^{-1} }  }
\end{align*}
Integration in time and use of $\varphi (0, \xi )=0$ for all  $\xi \in \RR$ give
\begin{align*}
\int _0^t \int  _{ \RR } \frac {(1+|k|^2)^\sigma | \widehat \varphi (k)|^2} {(1+\log (1+|k|))^2} \nu(k)^2dkds\le C\kappa _0^{-2} \int _0^t||h ||^2 _{ H^\sigma _{ \log^{-1} }  }ds
\end{align*}
from where,
\begin{align*}
\int _0^t \int  _{ |k|\ge1 } (1+|k|^2)^\sigma | \widehat \varphi (k)|^2dkds\le C\kappa _0^{-2} \int _0^t||h ||^2 _{ H^\sigma _{ \log^{-1} }  }ds.
\end{align*}

On the other hand, for $s\in (0, t)$,
\begin{align*}
\int  _{ |k|<1 }|\widehat \varphi (s, k)|^2(1+|k|^2)^\sigma dk
\le  \int  _{ |k|<1 } \left|\int _0^s e^{-\kappa _0\mathscr Re(\rho _0(k))}\widehat h(\tau , k)d\tau \right|^2(1+|k|^2)^\sigma dk\\
\le  s\int  _{ |k|<1 }\int _0^s  \left| e^{-\kappa _0\mathscr Re(\rho _0(k))}\right|^2 |\widehat h(\tau , k)| ^2d\tau (1+|k|^2)^\sigma dk\\
\le s\int _0^s \int  _{ |k|<1 } |\widehat h(\tau , k)| ^2(1+|k|^2)^\sigma dk d\tau \\
\le Cs\int _0^s \int  _{ |k|<1 } |\widehat h(\tau , k)| ^2(\frac {1+|k|^2)^\sigma } {(1+\log (1+|k|))}dk ds
= Cs\int _0^s ||h(\tau )||^2 _{ H^\sigma  _{ \log^{-1} } }d\tau. 
\end{align*}
We deduce,
\begin{align*}
\int _0^t \int  _{ |k|<1 }|\widehat \varphi (s, k)|^2(1+|k|^2)^\sigma dkds
\le C\int _0^ts\int _0^s ||h(\tau )||^2 _{ H^\sigma  _{ \log^{-1} } }d\tau ds\\
\le C\int _0^t(t-s) ||h(s )||^2 _{ H^\sigma  _{ \log^{-1} } }ds
\end{align*}
and,
\begin{align*}
\int _0^t ||\varphi || ^2_{ H^\sigma  }d\sigma \le C\int _0^t(t-s) ||h(s )||^2 _{ H^\sigma  _{ \log^{-1} } }ds
+C\kappa _0^{-2} \int _0^t||h(s) ||^2 _{ H^\sigma _{ \log^{-1} }  }ds.
\end{align*}
\vskip -0.5cm 
\end{proof}

\begin{lem}
\label{SapLPGCD2}
Given $\sigma >0$ and $\beta >1$, there exists a constant $C>0$ such that, for all $Q\in H^\sigma  _{ \log^{-1} }$ and  $m\in H^{\sigma +\beta }\cap H^{1+\beta }$, 
\begin{align*}
||mQ|| _{ H^\sigma  _{ \log^{-1} } }\le C||Q|| _{ H^\sigma  _{ \log^{-1} } }\left(||m||  _{ H^{\sigma +\beta }}+||m||  _{ H^{1 +\beta }} \right)
\end{align*}
\end{lem}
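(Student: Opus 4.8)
The plan is to work on the Fourier side, where the weighted norms are diagonal. Writing $\langle k\rangle=(1+|k|^2)^{1/2}$ and $\ell(k)=1+\log(1+|k|)$, the statement is equivalent to an $L^2(\RR)$ bound for $F(k)=\langle k\rangle^\sigma\ell(k)^{-1/2}\,\widehat{mQ}(k)$, because $||mQ||_{H^\sigma_{\log^{-1}}}=||F||_2$ and, likewise, $||Q||_{H^\sigma_{\log^{-1}}}=||G||_2$ with $G(k)=\langle k\rangle^\sigma\ell(k)^{-1/2}|\widehat Q(k)|$. Using $\widehat{mQ}=(2\pi)^{-1/2}\,\widehat m\ast\widehat Q$, I would bound $|F(k)|\le C\langle k\rangle^\sigma\ell(k)^{-1/2}\int_\RR|\widehat m(k-k')|\,|\widehat Q(k')|\,dk'$ and then reduce the whole estimate to showing that this quantity is dominated by $(K_1\ast G)(k)+(K_2\ast G)(k)$ for two convolution kernels $K_1,K_2$ built out of $m$ alone. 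Once this is done, Young's inequality $||K_i\ast G||_2\le ||K_i||_1\,||G||_2$ closes the argument.

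The heart of the matter is the handling of the two $k$-dependent weights sitting inside the convolution, and for this I would use two elementary facts. First, from $1+|k'|\le(1+|k|)(1+|k-k'|)$ one gets the subadditivity $\ell(k')\le\ell(k)+\ell(k-k')$. Second, I split the frequency line into $B=\{|k-k'|\le|k'|\}$, the high frequencies of $Q$, and $A=\{|k-k'|>|k'|\}$, the high frequencies of $m$. On $B$ one has $\langle k\rangle\le 2\langle k'\rangle$, and the subadditivity bound gives $\ell(k)^{-1/2}\le C\,\ell(k')^{-1/2}\ell(k-k')^{1/2}$, which extracts the factor $G(k')$ together with the kernel $K_1(u)=\ell(u)^{1/2}|\widehat m(u)|$. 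On $A$ one instead has $\langle k\rangle\le 2\langle k-k'\rangle$, $\ell(k)^{-1/2}\le 1$, and $\langle k'\rangle^{-\sigma}\ell(k')^{1/2}\le\ell(k-k')^{1/2}$, which produces $G(k')$ and the kernel $K_2(u)=\langle u\rangle^\sigma\ell(u)^{1/2}|\widehat m(u)|$.

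It then remains to verify $K_1,K_2\in L^1(\RR)$ with the asserted norms, and this is exactly where the extra regularity $\beta>1$ is spent. By Cauchy--Schwarz,
$$||K_1||_1\le\Big(\int_\RR|\widehat m(u)|^2\langle u\rangle^{2(1+\beta)}du\Big)^{1/2}\Big(\int_\RR\frac{\ell(u)}{\langle u\rangle^{2(1+\beta)}}du\Big)^{1/2}\le C\,||m||_{H^{1+\beta}},$$
the second factor being finite since $\int_\RR\langle u\rangle^{-2(1+\beta)}\ell(u)\,du<\infty$ for $\beta>1$; the same computation with the weight $\langle u\rangle^{\sigma+\beta}$ yields $||K_2||_1\le C\,||m||_{H^{\sigma+\beta}}$, the relevant integral $\int_\RR\langle u\rangle^{-2\beta}\ell(u)\,du$ again converging for $\beta>1$. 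Summing the two regions gives $||F||_2\le C(||K_1||_1+||K_2||_1)\,||G||_2\le C(||m||_{H^{1+\beta}}+||m||_{H^{\sigma+\beta}})\,||Q||_{H^\sigma_{\log^{-1}}}$, which is the desired inequality. The only genuine difficulty is the mismatch between $\ell(k)^{-1/2}$ and $\ell(k')^{-1/2}$, since $\ell$ is subadditive but not multiplicative; the subadditivity estimate converts this mismatch into the harmless extra factor $\ell(k-k')^{1/2}$ in the kernel, whose slow logarithmic growth is comfortably absorbed by the decay of $\widehat m$ furnished by $\beta>1$.
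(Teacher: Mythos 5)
Your proof is correct and follows essentially the same route as the paper's: both estimate $\widehat{mQ}=c\,\widehat m\ast\widehat Q$ on the Fourier side, split into two pieces according to which factor carries the weight $\langle k\rangle^{\sigma}$ (your regions $A$, $B$ play exactly the role of the paper's Peetre-type bound $\langle k\rangle^{\sigma}\le C(\langle k-\ell\rangle^{\sigma}+\langle\ell\rangle^{\sigma})$), and close with Young's inequality after putting the $\widehat m$-kernels in $L^{1}$ by Cauchy--Schwarz, which is where the hypothesis $\beta>1$ is spent (in fact $\beta>1/2$ already suffices for the relevant integrals, as the paper's own proof notes). The only genuine divergence is the treatment of the logarithmic weight: you exploit the submultiplicativity $\ell(k')\le 2\,\ell(k)\,\ell(k-k')$, which costs only a factor $\ell(k-k')^{1/2}$ in the kernel, whereas the paper uses the bound $\sup_{k,\ell}\langle k-\ell\rangle^{-1}\ell(\ell)^{1/2}\ell(k)^{-1/2}<\infty$ and so trades the mismatch for a full power $\langle k-\ell\rangle$ in its kernel $h_2=\langle\cdot\rangle\,|\widehat m|$ (and gets $\Vert Q\Vert_{2}$ rather than your $\Vert Q\Vert_{H^{\sigma}_{\log^{-1}}}$ in the first piece, harmless since $\Vert Q\Vert_{2}\le C_{\sigma}\Vert Q\Vert_{H^{\sigma}_{\log^{-1}}}$ for $\sigma>0$); your variant is marginally sharper but immaterial under the stated hypotheses.
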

\begin{proof}
By definition
\begin{align*}
||mQ|| _{ H^\sigma  _{ \log^{-1} } }^2=\int  _{ \RR }|\widehat {mQ}(k)|^2\frac {(1+|k|^2)^{\sigma }dk} {(1+\log(1+|k|))}\\
\end{align*}
\begin{align*}
\frac {(1+|k|^2)^{\sigma /2}} {(1+\log(1+|k|))^{1/2}}\le C\frac {(1+|k-\ell|^2)^{\sigma /2}+(1+|\ell|^2)^{\sigma /2}} {(1+\log(1+|k|))^{1/2}}\\
\le C (1+|k-\ell|^2)^{\sigma /2}+C\frac {(1+|\ell|^2)^{\sigma /2}} {(1+\log(1+|k|))^{1/2}}
\end{align*}
Since,
\begin{align*}
\sup _{ (k, \ell)\in \RR^2 }C\frac {(1+|k-\ell|^2)^{-1 /2} (1+\log(1+|\ell|))^{1/2}} {(1+\log(1+|k|))^{1/2}}=C'<\infty\\
\Longrightarrow C\frac {(1+|\ell|^2)^{\sigma /2}} {(1+\log(1+|k|))^{1/2}}\le C'\frac {(1+|\ell|^2)^{\sigma /2}} {(1+\log(1+|\ell|))^{1/2}}(1+|k-\ell|^2)^{1 /2}
\end{align*}
then,
\begin{align*}
\frac {(1+|k|^2)^{\sigma /2}} {(1+\log(1+|k|))^{1/2}}\le C\frac {(1+|k-\ell|^2)^{\sigma /2}+(1+|\ell|^2)^{\sigma /2}} {(1+\log(1+|k|))^{1/2}}\\
\le C (1+|k-\ell|^2)^{\sigma /2}+C'\frac {(1+|\ell|^2)^{\sigma /2}} {(1+\log(1+|\ell|))^{1/2}}(1+|k-\ell|^2)^{1 /2}.
\end{align*}
It follows,
\begin{align*}
|\widehat m(k-\ell)||\widehat Q(\ell)|\frac {(1+|k|^2)^{\sigma /2}} {(1+\log(1+|k|))^{1/2}}\le C (1+|k-\ell|^2)^{\sigma /2}|\widehat m(k-\ell)||\widehat Q(\ell)|+\\
+C'(1+|k-\ell|^2)^{1 /2}|\widehat m(k-\ell)| \frac {(1+|\ell|^2)^{\sigma /2}} {(1+\log(1+|\ell|))^{1/2}}|\widehat Q(\ell)|
\end{align*}
and,
\begin{align*}
||mQ|| _{ H^\sigma  _{ \log^{-1} } }^2\le C||h_1\ast \widehat Q||^2_2+C'||h_2\ast h_3||^2_2
\end{align*}
where,
\begin{align*}
&h_1(k)=(1+|k|^2)^{\sigma /2}|\widehat m(k),\,\,\,h_2(k)=(1+|k|^2)^{1 /2}|\widehat m(k)\\
&h_3(k)= \frac {(1+|\ell|^2)^{\sigma /2}} {(1+\log(1+|\ell|))^{1/2}}|\widehat Q(\ell)|.
\end{align*}
By hypothesis $\widehat Q\in L^2(\RR)$. On the other hand since $m\in H^{\sigma +\beta }$ for $\beta >1/2$ and $m\in H^{1 +\beta }$ for $\beta >1/2$, $h_1\in L^1(\RR)$ and $h_2\in L^1(\RR)$. And since $Q\in H^\sigma  _{ \log^{-1} }$, $h_3\in L^2(\RR)$. Therefore,
\begin{align*}
||mQ|| _{ H^\sigma  _{ \log^{-1} } }^2&\le C||h_1||_1^2||\widehat Q||_2^2+C'||h_2||_1^2||h_3||_2^2\\
&\le C||m||^2 _{ H^{\sigma +\beta } }||Q||^2_2+C||m||^2 _{ H^{1 +\beta } }||Q||^2 _{ H^\sigma  _{ \log ^{-1} } }.
\end{align*}
\vskip -0.5cm
\end{proof}
Let us prove now the following auxiliary Lemma,
\begin{lem}
\label{S3.3L3.6}
For all $\chi\in C_c^\infty(\RR)$, for all $\sigma >1$ there exists a constant $C>0$ such that for all $h\in H^\sigma $,
\begin{align*}
\left|\left| \int  _{ \RR } (\chi (\xi )-\chi (\zeta )) h(s, \zeta ) \left(\frac {1} {|e^{\xi-\zeta }-1 |}-\frac {1} {e^{\xi-\zeta }+ 1} \right)  d\zeta \right|\right| _{ H^\sigma  }\le C||h|| _{ H^{\sigma -1} }
\end{align*}
\end{lem}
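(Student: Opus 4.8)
The plan is to recognize the operator in the statement as a commutator of multiplication by $\chi$ with the convolution part $P_0$ of $P$, and then to reduce the $H^{\sigma-1}\to H^\sigma$ bound to a Fourier–multiplier estimate of exactly the type already carried out in Lemma \ref{S7Lapbc}. First I would record that, with $h=\xi-\zeta$, the kernel $K(\xi-\zeta):=\frac{1}{|e^{\xi-\zeta}-1|}-\frac{1}{e^{\xi-\zeta}+1}$ equals $\big(\frac{1}{|1-e^{\zeta-\xi}|}-\frac{1}{1+e^{\zeta-\xi}}\big)e^{\zeta-\xi}$, which is precisely twice the kernel of $P_0$ written in difference form, i.e. $2P_0 g(\xi)=\int_\RR (g(\zeta)-g(\xi))K(\xi-\zeta)\,d\zeta$. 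A one-line manipulation then shows that the integral operator $Th$ to be estimated satisfies
\begin{equation*}
Th=2\big(\chi\,P_0 h-P_0(\chi h)\big)=2[M_\chi,P_0]h,
\end{equation*}
where $M_\chi$ denotes multiplication by $\chi$. Recalling $\widehat{P_0 g}=-\rho_0\,\widehat g$ and $\widehat{\chi g}=\tfrac{1}{\sqrt{2\pi}}\,\widehat\chi\ast\widehat g$, I pass to the Fourier side to obtain
\begin{equation*}
\widehat{Th}(k)=\frac{2}{\sqrt{2\pi}}\int_\RR \widehat\chi(k-k')\big(\rho_0(k)-\rho_0(k')\big)\widehat h(k')\,dk'.
\end{equation*}
Everything is thereby reduced to controlling the off-diagonal decay of $\widehat\chi$ against the increments of $\rho_0$.

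Next I would set up the target inequality $\int_\RR(1+k^2)^\sigma|\widehat{Th}(k)|^2\,dk\le C\int_\RR(1+k^2)^{\sigma-1}|\widehat h(k)|^2\,dk$. Since $\chi\in C_c^\infty(\RR)$, $\widehat\chi$ is Schwartz, so $|\widehat\chi(k-k')|\le C_N(1+|k-k'|)^{-N}$ for every $N$. I would then split the $k'$–integration exactly as for the terms $J_1,J_2$ in the proof of Lemma \ref{S7Lapbc}: the region $\min(|k|,|k'|)<1$ and the region $|k|\ge 1,\ |k'|\ge 1$. On the first region one uses the logarithmic bound $|\rho_0(k)|\le C\log(1+|k|)$ of Proposition \ref{S2PP1}, and the rapid decay of $\widehat\chi$ absorbs all polynomial weights, giving a contribution bounded by $C\|h\|_2^2\le C\|h\|_{H^{\sigma-1}}^2$. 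On the second region I would invoke the mean value bound $|\rho_0(k)-\rho_0(k')|\le C\,|k-k'|/z$ with $z\ge\min(|k|,|k'|)$, which follows from $\rho_0'(k)=k^{-1}+O(k^{-2})$ in Proposition \ref{S2PP1} together with the fact that $\rho_0$ is controlled on $\RR$ by a function of $|k|$.

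The decisive point is that this last bound supplies exactly the one missing power of $|k|$. On the effective support of $\widehat\chi(k-k')$ one has $|k-k'|\lesssim 1$, hence $|k|\sim|k'|$ and, by the standard weight inequalities, $(1+k^2)^{\sigma/2}\lesssim (1+k'^2)^{(\sigma-1)/2}(1+|k|^2)^{1/2}$; multiplying by the factor $|k-k'|/\min(|k|,|k'|)$ then cancels the extra $(1+|k|^2)^{1/2}\sim|k|$ and leaves the integrable, $k'$–uniform kernel $C_N(1+|k-k'|)^{-N}|k-k'|$. A Schur test (equivalently, Young's inequality in the variable $k-k'$) yields the bound against the weight $(1+k'^2)^{(\sigma-1)/2}|\widehat h(k')|$, i.e. the $H^{\sigma-1}$ norm of $h$; the hypothesis $\sigma>1$ only serves to make $\sigma-1>0$ so that this right-hand norm is the genuine Sobolev norm in the statement. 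I expect the main obstacle to be the bookkeeping of the three geometric regimes — $|k|\sim|k'|$, the crossed cases $|k'|<1<|k|$ and its symmetric counterpart, and $|k|,|k'|\ge 1$ but far apart (where $\widehat\chi$'s decay must beat the polynomial weights) — and in each verifying uniform Schur–boundedness of the effective kernel. This, however, is precisely the computation already performed for $J_1,J_2,I_1,I_2,I_3$ in Lemma \ref{S7Lapbc}, which I would adapt rather than repeat.
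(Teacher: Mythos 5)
Your proposal is correct, and it reaches the lemma by a genuinely different route than the paper. Your structural identification is sound: after the substitution $u=\zeta-\xi$ one has $|e^{-u}-1|=e^{-u}|1-e^{u}|$, so the kernel in the statement is exactly the difference-form kernel of $2P_0$, the operator equals $2[M_\chi,P_0]h$, and the Fourier-side formula $\widehat{Th}(k)=\tfrac{2}{\sqrt{2\pi}}\int_\RR\widehat\chi(k-k')\bigl(\rho_0(k)-\rho_0(k')\bigr)\widehat h(k')\,dk'$ follows from $\widehat{P_0g}=-\rho_0\widehat g$. Your mechanism for the one-derivative gain — the mean-value bound $|\rho_0(k)-\rho_0(k')|\le C|k-k'|/\min(|k|,|k'|)$, coming from $\rho_0'(k)\sim k^{-1}$, combined with Peetre's inequality $(1+k^2)^{\sigma/2}\lesssim(1+k'^2)^{(\sigma-1)/2}(1+k^2)^{1/2}$ on the near-diagonal region and a Schur test — is valid, and $\sigma>1$ is indeed used only so that the low-frequency and off-diagonal contributions, bounded via $\|h\|_2$, are controlled by $\|h\|_{H^{\sigma-1}}$. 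The paper, by contrast, never invokes the commutator identity or the symbol $\rho_0$ in its proof of this lemma: it writes $\widehat{B(h)}(k)=\int_\RR\widehat P(k-\ell,\ell)\widehat h(\ell)\,d\ell$ with $\widehat P$ the two-dimensional Fourier transform of $M(\xi,\zeta)Q(\zeta)$, where $M=(\chi(\xi)-\chi(\xi-\zeta))/\zeta$ is the smooth difference quotient of $\chi$ and $Q(\zeta)=\zeta\bigl(|e^\zeta-1|^{-1}-(e^\zeta+1)^{-1}\bigr)$ decays exponentially, proves $|\widehat P(k,\ell)|\le C_n(1+|k|^n)^{-1}(1+|\ell|)^{-1}$ by integrations by parts, and concludes by splitting $|\ell|\le|k|/2$ against $|\ell|\ge|k|/2$; there the single power of decay in $\ell$, extracted from the kernel side, plays exactly the role your factor $|k-k'|/z$ plays on the symbol side. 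Your route buys conceptual economy: it explains structurally why the gain is exactly one derivative and recycles the region analysis ($J_1$, $J_2$, $I_1$--$I_3$) of Lemma \ref{S7Lapbc}; the paper's route is more self-contained and would apply to kernels with no explicit symbol. Two small points you should make explicit when writing this up: the conclusion of Lemma \ref{S7Lapbc} cannot be cited as a black box, since it carries the semigroup factor $e^{-t\kappa_0\mathscr Re(\rho_0(k))}$ and proves only a loss $\rho(t)<\kappa_0 t$ rather than a gain of a full derivative, so its internal estimates must be re-run without that factor (your Schur-test sketch does this correctly); and since $\widehat\chi$ is Schwartz rather than compactly supported, the phrase ``effective support'' must be implemented by checking the far-off-diagonal regime, where the rapid decay of $\widehat\chi$ absorbs all polynomial weights and the logarithmic bound $|\rho_0(k)-\rho_0(k')|\le C\log(2+|k|)+C\log(2+|k'|)$ of Proposition \ref{S2PP1} replaces the mean-value bound, in particular when $k$ and $k'$ have opposite signs.
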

\begin{proof}
By a simple change of variables,
\begin{align*}
&\int  _{ \RR } (\chi (\xi )-\chi (\zeta )) h(s, \zeta ) \left(\frac {1} {|e^{\xi-\zeta }-1 |}-\frac {1} {e^{\xi-\zeta }+ 1} \right)  d\zeta=\\
&\int  _{ \RR } (\chi (\xi )-\chi (\xi -\zeta  )) h(s, \xi -\zeta  ) \left(\frac {1} {|e^{\zeta  }-1 |}-\frac {1} {e^{\zeta  }+ 1} \right)  d\zeta=:B(h)(\xi )
\end{align*}
and,
\begin{align*}
||B(h)||^2 _{ H^\sigma  }=\int  _{ \RR }\left|\widehat {B(h)}(k)\right|^2(1+|k|^2)^\sigma dk
\end{align*}
with 
\begin{align*}
\widehat {B(h)}(k)&= \int  _{ \RR }e^{-ik\xi }\int  _{ \RR } (\chi (\xi )-\chi (\xi -\zeta  )) h(s, \xi -\zeta  ) \left(\frac {1} {|e^{\zeta  }-1 |}-\frac {1} {e^{\zeta  }+ 1} \right)  d\zeta d\xi\\
&=\frac {1} {2\pi }  \int  _{ \RR } \int  _{ \RR }  \int  _{ \RR }e^{-ik\xi }
(\chi (\xi )-\chi (\xi -\zeta  )) h(s, \xi -\zeta  )\times \\
&\hskip 4cm \times  \left(\frac {1} {|e^{\zeta  }-1 |}-\frac {1} {e^{\zeta  }+ 1} \right)e^{i\ell (\xi -\zeta ) }\widehat h(t, \ell)d\zeta  d\xi d\ell\\
&=\int  _{ \RR }\widehat P(k-\ell, \ell)\widehat h(\ell)d\ell
\end{align*}
where,
\begin{align*}
\widehat P(k, \ell)&=\int  _{ \RR }\int  _{ \RR }e^{-i(\xi k+\zeta \ell)}(\chi (\xi )-\chi (\xi -\zeta  )) \left(\frac {1} {|e^{\zeta  }-1 |}-\frac {1} {e^{\zeta  }+ 1} \right)d\zeta d\xi \\
&=\int  _{ \RR }\int  _{ \RR }e^{-i(\xi k+\zeta \ell)}M(\xi , \zeta)  Q(\zeta )d\zeta d\xi\\
P(\xi , \zeta )&=M(\xi , \zeta)Q(\zeta )\\
M(\xi , \zeta)&=\frac {(\chi (\xi )-\chi (\xi -\zeta  ))} {\zeta }\\
Q(\zeta )&= \left(\frac {1} {|e^{\zeta  }-1 |}-\frac {1} {e^{\zeta  }+ 1} \right)\zeta 
\end{align*}
By the regularity properties of $\chi$, for all $n\in \NN$,
\begin{align*}
\frac {\partial^n M} {\partial \xi ^n}(\xi, \zeta )&=\frac {\chi ^{(n)}(\xi )-\chi ^{(n)}(\xi-\zeta  )} {\zeta }\\
&= \int _0^1\chi ^{(n+1)} \left(\theta \xi +(1-\theta)(\xi -\zeta )\right)d\theta\le 
\left|\frac {\partial^n M} {\partial \xi ^n}(\xi, \zeta )\right|\le 1
\end{align*}
from where, for some constant $C_n>0$,
\begin{align*}
\left|\frac {\partial^n M} {\partial \xi ^n}(\xi, \zeta )\right|\le \frac {C_n} {1+|\zeta |},\,\,\forall \zeta \in \RR.
\end{align*}
A similar argument shows, for some other constant $C>0$,
\begin{align*}
|Q(\zeta )|\le C|\zeta |e^{-|\zeta |},\,\,\forall \zeta \in \RR.
\end{align*}
Then, since for all $\zeta \in \RR$,  $\text{supp}(M(\cdot , \zeta )\subset I(\zeta )$ with $I(\zeta )= \zeta +(\xi _0-2\delta  _{ \xi _0,} \xi _0+2\delta  _{ \xi _0 } )$, by Fubini's Theorem
\begin{align*}
\int  _{ \RR } \int  _{ \RR } \left|M(\xi , \zeta )\right|\left|Q(\zeta )\right|d\xi d\zeta \le C\int  _{ \RR} \frac {|\zeta |e^{-|\zeta |}} {1+|\zeta |} 
\int _{ I(\zeta ) }d\xi d\zeta = C\delta  _{ \xi 0 }\int  _{ \RR} \frac {|\zeta |e^{-|\zeta |}} {1+|\zeta |}d\zeta <\infty
\end{align*}
and $P\in L^1(\RR ^2)$ from where $\widehat P(k, \ell)$ is well defined for all $(k, \ell)\in \RR^2$. We obtain now decaying properties of $|\widehat P(k, \ell)|$ as $|k|+|\ell|\to \infty$. On the one hand, by integration by parts,
\begin{align*}
\widehat P(k, \ell)=\frac {1} {2\pi }(i\, k)^{-n}\int  _{ \RR }e^{-i\zeta \ell}S_n(k, \zeta )Q(\zeta )d\zeta \\
S_n(k, \zeta )=\int  _{ \RR }e^{-i\xi k}\frac {\partial ^n M(\xi , \zeta )} {\partial \xi ^n}d\xi. 
\end{align*}
If $|\zeta |\le 1$, use now, for all $n\in \NN$ and all $m\in \NN$ and $p\in \NN$,
\begin{align*}
\frac {\partial ^{n+m+p}M(\xi , \zeta )} {\partial \xi ^{n+p}\partial \zeta ^m}&=(-1)^m \int _0^1 \chi ^{(n+m+p+1)}\left( \theta \xi +(1-\theta)(\xi -\zeta )\right)(1-\theta)^m d\theta \\
&\Longrightarrow \left|\frac {\partial ^{n+m+p}M(\xi , \zeta )} {\partial \xi ^{n+p}\partial \zeta ^m} \right|\le 1.
\end{align*}
We deduce first for all $k$,
\begin{align}
\frac {\partial ^m S_n(k, \zeta )} {\partial \zeta ^m}&= \int  _{ \RR }e^{-i\xi k}\frac {\partial ^{n+m}M(\xi , \zeta )} {\partial \xi ^{n+p}\partial \zeta ^m}d\xi \label{lndn1} \\
\left|\frac {\partial ^m S_n(k, \zeta )} {\partial \zeta ^m}\right|&\le  \int  _{  I(\zeta ) }\left|\frac {\partial ^{n+m}M(\xi , \zeta )} {\partial \xi ^{n}\partial \zeta ^m}\right|d\xi \le 4\delta  _{ \xi _0 }.\nonumber
\end{align}
But for $|k|>1$, an  integration by parts  gives,
\begin{align*}
\frac {\partial ^m S_n(k, \zeta )} {\partial \zeta ^m}&=\frac {1} {(-ik)^p}\int  _{ \RR }e^{-i\xi k}\frac {\partial ^{n+m+p}M(\xi , \zeta )} {\partial \xi ^{n+p}\partial \zeta ^m}d\xi \\
\left|\frac {\partial ^m S_n(k, \zeta )} {\partial \zeta ^m}\right|&\le |k|^{-p}\int  _{  I(\zeta ) }\left|\frac {\partial ^{n+m+p}M(\xi , \zeta )} {\partial \xi ^{n+p}\partial \zeta ^m}\right|d\xi \le 4 |k|^{-p}\delta  _{ \xi _0 }
\end{align*}
Similarly, when $|\zeta |>1$, by the hypothesis on $\chi $,  there exists $C>0$, depending on $m, n$ and $p$ such that
\begin{align*}
\left| \frac {\partial ^{m+n+p} M(\xi , \zeta )} { \partial \xi ^{n+p} \partial \zeta ^m}\right|\le \frac {C} {|\zeta |},\,\,\forall \zeta \in \RR,\, |\zeta |>1
\end{align*}
Then, using (\ref{lndn1} ),
\begin{align*}
\left|\frac {\partial ^m S_n(k, \zeta )} {\partial \zeta ^m}\right|\le \int  _{  I(\zeta ) }\left|\frac {\partial ^{n+m}M(\xi , \zeta )} {\partial \xi ^{n}\partial \zeta ^m}\right|d\xi
\le C\delta  _{ \xi _0 }
\end{align*}
and, again by integration by parts if $|k|>1$,
\begin{align*}
\frac {\partial ^m S_n(k, \zeta )} {\partial \zeta ^m}=\frac {(-1)^m} {(-ik)^p}\int  _{ \RR }e^{-i\xi k}\frac {\partial ^{n+m+p}M(\xi , \zeta )} {\partial \xi ^{n+p}\partial \zeta ^m}d\xi \\
\left|\frac {\partial ^m S_n(k, \zeta )} {\partial \zeta ^m}\right|\le \int  _{  I(\zeta ) }\left|\frac {\partial ^{n+m+p}M(\xi , \zeta )} {\partial \xi ^{n+p}\partial \zeta ^m}\right|d\xi
\le \frac {C} {|k |^p|\zeta |}\delta  _{ \xi _0 }.
\end{align*}
On has then,
 \begin{align*}
\left|\frac {\partial ^m S_n(k, \zeta )} {\partial \zeta ^m}\right|\le \frac {C} {(1+|k|^p)(1+|\zeta |)}.
\end{align*}
If we consider now $\widehat P(k, \ell)$,
\begin{align*}
\widehat P(k, \ell)= \frac {1} {2\pi }(i\, k)^{-n}\int  _{ \RR }e^{-i\zeta \ell}S_n(k, \zeta )Q(\zeta )d\zeta 
\end{align*}
an integration by parts would give,
\begin{align*}
\widehat P(k, \ell)= \frac {1} {2\pi }(i\, k)^{-n}(i\, \ell)^{-1})\int  _{ \RR }e^{-i\zeta \ell}\frac {\partial } {\partial  \zeta }\left(S_n(k, \zeta )Q(\zeta )\right)d\zeta 
\end{align*}
and  simple calculus gives,
\begin{align*}
\left|\frac {\partial } {\partial  \zeta }\left(S_n(k, \zeta )Q(\zeta )\right) \right|\le \frac {C(1+|\zeta |)e^{-|\zeta |}} {(1+|k|^p)(1+|\zeta |)}
\end{align*}
and then, for all $n\in \NN$ there exists a constant $C>0$ such that
\begin{align*}
\left|\widehat P(k, \ell)\right|\le  \frac {C} {(1+|k|^n)(1+|\ell|)},\,\,\forall k\in \RR, \forall \ell \in \RR.
\end{align*}
It follows,
\begin{align*}
||B(h)|| _{ H^{\sigma   }}^2 &\le \int  _{ \RR }(1+|k|^{2})^{\sigma }\left|\int  _{ \RR }\widehat P(k-\ell, \ell)\widehat h(\ell)d\ell \right|^2dk\\
&\le C \int  _{ \RR }(1+|k|^{2})^{ \sigma }\left|\int  _{ \RR } \frac {\widehat h(\ell)} {(1+|k-\ell|^n)(1+|\ell|)}d\ell \right|^2dk\\
&\le C \int  _{ \RR }(1+|k|^{2})^{ \sigma }\left|\int  _{|\ell|\le \frac {|k |} {2} } \frac {\widehat h(\ell)} {(1+|k-\ell|^n)(1+|\ell|)}d\ell \right|^2dk+\\
&+C \int  _{ \RR }(1+|k|^{2})^{ \sigma }\left|\int  _{|\ell|\ge \frac {|k |} {2} } \frac {\widehat h(\ell)} {(1+|k-\ell|^n)(1+|\ell|)}d\ell \right|^2dk=B_1+B_2.
\end{align*}
Since $|\ell-k|>|k|/2>|\ell|$ in $B_1$ one has  for $n$ large enough,
\begin{align*}
B_1&\le C\int  _{ \RR }\frac {(1+|k|^{2})^{ \sigma }} {(1+|k|^{2})^{ n}}\left| \int  _{ |\ell|\le  \frac {|k|} {2} } \frac {\widehat h(\ell)} {(1+|\ell|^{n/2})(1+|\ell|)}d\ell\right|^2dk\\
&\le C\int  _{ \RR }\frac {(1+|k|^{2})^{ \sigma }} {(1+|k|^{2})^{ n}}dk \int  _{ \RR } \frac {d\ell } {(1+|\ell|^{n})(1+|\ell|^2)}\int  _{  \RR }|\widehat h(\ell)|^2 d\ell\le ||h||_2^2.
\end{align*}
In the term $B_2$, use of $|\ell|\ge |k|/2$ gives

\begin{align*}
B_2&\le C \int  _{ \RR }\frac{(1+|k|^{2})^{ \sigma }}{(1+|k|^{2})^{ \sigma }}\left|\int  _{|\ell|\ge \frac {|k |} {2} } \frac {(1+|\ell|^2)^{\sigma /2}\widehat h(\ell)} {(1+|k-\ell|^n)(1+|\ell|)}d\ell \right|^2dk\\
&\le C \int  _{ \RR }\left|\int  _{|\ell|\ge \frac {|k |} {2} } \frac {(1+|\ell|^2)^{\frac {\sigma -1} {2}}\widehat h(\ell)} {(1+|k-\ell|^n)}d\ell \right|^2dk\\
&\le C \int  _{ \RR } \int  _{\RR}  (1+|\ell|^2)^{\sigma -1}|\widehat h(\ell)|^2d\ell\,\left| \int  _{ \RR }\frac {d\ell } {(1+|k-\ell|^n)}\right|^2dk
\le C||h|| ^2_{ (\sigma -1)^+ }.
\end{align*}
\vskip -0.5cm 
\end{proof}

\begin{lem}
\label{S7Alocloc}
For all open bounded interval $I\subset \RR$ and all $\chi \in C_c^1(\RR)$ such that $\chi =1$ on $I$, there exists a constant $C_I>0$ such that if $m=[\sigma ]$,
\begin{align*}
&\int  _{I }\int  _{ I}\frac {|w (\xi )-w (\zeta  )|^2} {|\xi -\zeta |} d\zeta  d\xi
\le C_I ||\chi w||^2 _{ H^0  _{ \log }(\RR) } 
\end{align*}
\end{lem}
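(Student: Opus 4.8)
The plan is to reduce everything to the equivalent description (\ref{S1EHl2}) of the $H^0_{\log}(\RR)$ norm together with a splitting of the integration region according to the size of $|\xi-\zeta|$. First I would set $u=\chi w$; since $\chi\equiv 1$ on $I$, one has $w(\xi)=u(\xi)$ and $w(\zeta)=u(\zeta)$ for all $\xi,\zeta\in I$, so the left hand side is unchanged when $w$ is replaced by $u$. The gain is that $u=\chi w$ is globally defined on $\RR$, so the characterization (\ref{S1EHl1})--(\ref{S1EHl2}) applies to $u$ and gives
\begin{align*}
||u||^2_{H^0_{\log}(\RR)}=||u||^2_{L^2(\RR)}+\int_{B_{1/3}}\int_\RR\frac{|u(\xi+h)-u(\xi)|^2}{|h|}\,d\xi\,dh.
\end{align*}

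Next I would split the double integral over $I\times I$ into the near-diagonal piece $\{|\xi-\zeta|<1/3\}$ and the far piece $\{|\xi-\zeta|\ge 1/3\}$, the threshold being chosen to match the radius of the ball $B_{1/3}$ in (\ref{S1EHl2}). On the far piece one has $1/|\xi-\zeta|\le 3$, so using $|u(\xi)-u(\zeta)|^2\le 2|u(\xi)|^2+2|u(\zeta)|^2$ and $|I|<\infty$ bounds this contribution by $C|I|\,||u||^2_{L^2(I)}\le C|I|\,||u||^2_{H^0_{\log}(\RR)}$.

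For the near-diagonal piece I would change variables $\zeta=\xi+h$, so that $|\xi-\zeta|=|h|$, and then drop the constraint $\xi+h\in I$ while enlarging the $\xi$-integration to all of $\RR$ (legitimate because the integrand is nonnegative and Fubini applies), obtaining
\begin{align*}
\iint_{\substack{\xi,\zeta\in I\\ |\xi-\zeta|<1/3}}\frac{|u(\xi)-u(\zeta)|^2}{|\xi-\zeta|}\,d\zeta\,d\xi\le \int_{B_{1/3}}\int_\RR\frac{|u(\xi+h)-u(\xi)|^2}{|h|}\,d\xi\,dh\le ||u||^2_{H^0_{\log}(\RR)},
\end{align*}
where the middle integral is exactly the seminorm appearing in (\ref{S1EHl2}). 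Adding the two pieces and recalling $u=\chi w$ yields the claim with $C_I=1+C|I|$.

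The argument is entirely elementary, so there is no genuine obstacle; the only points requiring care are the bookkeeping of matching the localization threshold $1/3$ to the ball $B_{1/3}$ in (\ref{S1EHl2}) and the verification that extending the inner integral from $\{\xi+h\in I\}$ to all of $\RR$ is valid, which holds by nonnegativity of the integrand. (The parameters $m=[\sigma]$ mentioned in the statement play no role here, since the displayed inequality involves only the $H^0_{\log}$ norm.)
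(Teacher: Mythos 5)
Your proof is correct and takes essentially the same route as the paper's: both substitute $u=\chi w$ on $I\times I$ (legitimate since $\chi\equiv 1$ on $I$), enlarge the integration region by nonnegativity, and invoke the characterization (\ref{S1EHl2}) of $\|\cdot\|_{H^0_{\log}(\RR)}$. Your explicit near/far splitting at $|\xi-\zeta|=1/3$, with the far region absorbed into the $L^2$ part, is precisely the small argument the paper leaves implicit when it asserts the seminorm bound for an arbitrary bounded $h$-interval $I'$, so if anything your write-up is slightly more complete.
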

\begin{proof} 
It readily follows from  property (\ref{S1EHl2})) that for all bounded interval $I'\subset \RR$, there exists a constant such that
\begin{align*}
\int  _{ \RR }\int  _{ h\in I' }\frac {|w(\xi )-w(\xi -h )|^2} {|\xi -\zeta |} dh d\xi \le C ||w|| ^2_{ H^0 _{ \log } }
=\int  _{ \RR }\int  _{ \xi +I' }\frac {|w(\xi )-w(\zeta  )|^2} {|\xi -\zeta |} d\zeta  d\xi \le C ||w|| ^2_{ H^0 _{ \log } }.
\end{align*}
after the change of variable $h=\xi -\zeta$ .
If we suppose now that $I=(\xi _0, \xi _1)$ for some $\xi _0<\xi _1$ and consider $I'=(-(\xi _1-\xi _0), \xi _1-\xi _0)$, then, 
$I\times I\subset \left\{(\xi , \zeta );\,\,\xi \in I,\,\zeta \in \xi +I' \right\}$
and,
\begin{align*}
&\int  _{I }\int  _{ I}\frac {|w(\xi )-w(\zeta  )|^2} {|\xi -\zeta |} d\zeta  d\xi=
\int  _{I }\int  _{ I}\frac {|\chi (\xi )w(\xi )-\chi (\zeta )w(\zeta  )|^2} {|\xi -\zeta |} d\zeta  d\xi\\
&\le \int_I\int  _{ \xi +I' }\frac {|\chi (\xi )w(\xi )-\chi (\zeta )w(\zeta  )|^2} {|\xi -\zeta |} d\zeta  d\xi
\le  \int _{ \RR }\int  _{ \xi +I' }\frac {|\chi (\xi )w(\xi )-\chi (\zeta )w(\zeta  )|^2} {|\xi -\zeta |} d\zeta  d\xi
\end{align*}
from where Lemma follows.
\end{proof}

\textbf{Acknowledgments.}
The research of the author is supported by grant PID2020-112617GB-C21 of MCIN, grant  IT1247-19 of the Basque Government and grant RED2022-134784-T funded by MCIN/AEI/10.13039/501100011033.

\end{document}